\newtheorem{thm}{Theorem}[section]
\newtheorem{cor}[thm]{Corollary}
\newtheorem{lem}[thm]{Lemma}
\newtheorem{prop}[thm]{Proposition}
\theoremstyle{remark}
\newtheorem{rem}[thm]{Remark}
\newtheorem{defn}[thm]{Definition}
\newtheorem{assum}[thm]{Assumption}
\newcommand{\lp}[2]{\Vert \, #1 \, \Vert_{#2}}
\newcommand{\slp}[2]{\Vert  #1  \Vert_{#2}}
\newcommand{\LLp}[2]{|\!|\!| \, #1\,  |\!|\!|_{#2} }
\newcommand{\sLLp}[2]{|\!|\!|  #1  |\!|\!|_{#2} }
\newcommand{\td}{\widetilde}
\newcommand{\snabla}{ {\, {\slash\!\!\! \nabla  }} }
\newcommand{\bL}{  {\underline{L}} }
\newcommand{\CE}{ {E} }
\newcommand{\WLE}{ {W\!L\!E} }
\newcommand{\dint}{ \int_0^T\!\!\!\!\!\int_{\mathbb{R}^3\setminus\mathcal{K}} }
\newcommand{\bu}{ {\underline{u}} }
\newcommand{\la}{\langle}
\newcommand{\ra}{\rangle}
\newcommand{\NLE}{ {N\!L\!E} }
\newcommand{\LE}{ {L\!E} }
\newcommand{\Part}[2]{\noindent\textbf{Part #1:}(\emph{#2})}
\newcommand{\case}[2]{\noindent\textbf{Case #1:}(\emph{#2})}
\newcommand{\step}[2]{\noindent\textbf{Step #1:}(\emph{#2})}
\begin{document}

\title[Vector Fields]
{A Vector Field Method for Radiating 
Black Hole Spacetimes}

\author{Jes\'us Oliver}
\address{Department of Mathematics, California State University, East Bay (CSUEB) 25800 Carlos Bee Blvd, Hayward, 
CA 94542 USA}
\email{jesus.oliver@csueastbay.edu}
\author{Jacob Sterbenz}
\address{Department of Mathematics,
University of California, San Diego (UCSD)
9500 Gilman Drive, Dept 0112
La Jolla, CA 92093-0112 USA}
\email{ jsterben@math.ucsd.edu}

\thanks{}
\subjclass{}
\keywords{}
\date{}
\dedicatory{}
\commby{}

%%% ----------------------------------------------------------------------

\begin{abstract}
We develop a  commuting vector field method for a  general class of   radiating spacetimes.
The   metrics we consider  include  those constructed from global stability
problems in general relativity, and  our method provides sharp peeling estimates for solutions to both linear and (null form)
nonlinear  scalar fields.
\end{abstract}

%%% ----------------------------------------------------------------------
\maketitle
\tableofcontents
%%% ----------------------------------------------------------------------
%%%%%%%%%%%%%%%%%%%%%%%%%%%%%%%%%
%%%-----------------------------------------------------------------------

%-------------------------------------------------------------------------
%-------------------------------------------------------------------------
%-------------------------------------------------------------------------
%-------------------------------------------------------------------------
%-------------------------------------------------------------------------
%%%%%%%%%%%%%%%%%%%%%%%%%%%%%
%-------------------------------------------------------------------------

\section{Introduction}

In this paper we develop a sharp variant of Klainerman's vector field method for solutions to scalar wave equations
%$\Box_g=\frac{1}{\sqrt{|g|}}\partial_\alpha \sqrt{|g|} g^{\alpha\beta}\partial_\beta$ 
on a generic class of asymptotically flat
spacetimes. These are taken to be certain
long range perturbations of Minkowski space
which enjoy a standard local energy decay assumption.

The first main  consideration here
is to place the minimal conditions on  our metrics at null infinity which are compatible 
with gravitational  radiation, but at the same time  are also strong enough 
to provide full peeling estimates for   scalar waves. 
Our conditions   turn out to be natural even if one 
is only interested in stationary long range perturbations of Minkowski space, because they highlight  
certain peeling properties of  Lorentzian metrics at null infinity which appear to be necessary in order to produce
estimates on the order of the classical Morawetz conformal energy.

The  second main consideration here  is to produce a collection of norms which are natural for
studying nonlinear stability problems, at least when the quadratic part of nonlinearity enjoys 
a certain generalized null condition. In fact, we will produce a  range of   norms with weights that are also
capable of handling systems satisfying weaker ``null conditions'', so  
our setup should also be useful for a class of
quasilinear stability problems where one assumes certain bounds on
Ricci curvature;  but we leave this to a subsequent work.

First we discuss the basic assumptions and results which are contained in this paper. 
Additional remarks and references  with then follow.

\subsection{Basic Notation and Metric Assumptions}

When stating inequalities we will use $A\lesssim B$ to mean $A\leq CB$ for some fixed $C>0$
independent of $A,B$. We also employ the index notation $A\lesssim_k B$ when $C=C(k)$ depends
on some auxiliary parameter $k$ (although we will not always use such notation when a dependency exists).

The setting for the paper is the following: 
We fix some compact  $\mathcal{K}\subset \mathbb{R}^3$ with smooth boundary such that
$\mathbb{R}^3\setminus \mathcal{K}$ is connected. On
the manifold $\mathcal{M}=[0,\infty)\times (\mathbb{R}^3\setminus \mathcal{K})$
we suppose there is given a smooth Lorentzian metric $g_{\alpha\beta}$. We write $(t,x)$ for rectangular coordinates
$0\leq t<\infty$ and $x\in \mathbb{R}^3\setminus \mathcal{K}$. Note that we may 
assume $\mathcal{K}=\emptyset$. In the sequel we will also assume that the level sets $t=const.$ are uniformly
spacelike in the sense that $-C<g^{00}<-c$.

With respect to the coordinates $(t,x)$ and Euclidean measure $dtdx$ we have $L^p$ norms restricted to time slabs 
in $\mathcal{M}$ of the form $[0,T]\times (\mathbb{R}^3\setminus \mathcal{K})$. These will be denoted by
$L^p[0,T]$. On the time slice $t=const$ we  denote by $L^p_x$ the corresponding restricted 
norm, and we denote by $L^p_tL^q_x[0,T]$ the mixed norms. 
% All of these norms are taken with respect to the natural measures $dxdt$ or $dx$. 
It will also be useful to employ various inhomogeneous Besov versions of these spaces. For example
we denote   $\lp{\phi}{\ell^p_r L^q[0,T]}^p=\sum_{j\geq 0}\lp{\chi_j\phi}{ L^q[0,T]}^p$ 
where $\chi_j$ is a spatial cutoff on scale $\la x\ra\approx 2^j$. Similar notation is used for dyadic summations with respect
to the time variable $t$, and other auxiliary variables as well (such as the optical functions described below).
We   define fixed time and space time Sobolev spaces which will be denoted by $H^s_x$ and $H^s[0,T]$,
with the convention that in both cases we consider \emph{all}  $(t,x)$ derivatives $\partial^I$ for multiindex $|I|\leq s$.

In this work we make two basic assumptions about our spacetimes $(\mathcal{M},g)$. The first is an asymptotic condition on the metric:

\begin{defn}[Asymptotically flat radiating spacetimes]\label{rad_metrics}
Suppose $(\mathcal{M},g)$ is given as above. Then we say $g_{\alpha\beta}$
is ``outgoing radiating'' if the following hold:
\begin{enumerate}[I)]
%		\item (Time Orientation) The level sets $t=const$ are uniformly space-like in the sense
%		that $-C<g^{tt}<-C^{-1}$.
		\item \label{opt_cond} (Weak ``Optical Function'') On $\mathcal{M}$ there is defined
		a smooth  $u=u(t,x)$ such that
		$(u,x)$ forms a uniform set of coordinates in the sense that $C^{-1}<u_t <C$, and 
		$u$ has the symbol bounds:
		\begin{equation}
				\lp{(\tau_-\partial_t )^i(\tau_x  \tau_0 \partial_x)^J (\partial_t u - 1, 
				\partial_i u + \omega^i) }{\ell^1_r L^\infty[0,\infty)} \ < \ \infty 
				\ , \qquad
				\hbox{for all\ \ \ }  (i,J)\in\mathbb{N}\times \mathbb{N}^4
				\ , \label{u_sym_bnd}
		\end{equation}
		where $\tau_x=\la x \ra$, $\tau_-=\la u\ra $,    $\tau_+=\la(t,x)\ra$, 
		$\tau_0=\tau_-\tau_+^{-1}$, and where $\omega^i=x^i\tau_x^{-1}$.  
		Henceforth we let $\partial=(\partial_t,\partial_x)$ to denote the $(t,x)$ coordinate derivatives,
		and likewise $\partial^b=({\partial_u^b},\partial_x^b)$ will denote the $(u,x)$ coordinate derivatives.
		\item\label{sym_bnds} (Outgoing Radiation Condition) First define symbol classes
		$\mathcal{Z}^k$ in terms of the the seminorms (restricted to $t\in [0,\infty)$):
		\begin{equation}
				\LLp{q}{k,N} \ := \ \sum_{i+|J|\leq N}
				\lp{\tau_0^{-k}(\tau_-{\partial_u^b} )^i (\tau_x \partial_x^b)^J q }{\ell^1_rL^\infty} 
				+\sum_{i+|J|\leq N}
				\lp{  \tau_0^{-k}(\tau_-{\partial_u^b} )^i (\tau_x \partial_x^b)^J
				q }{\ell^1_u\ell^1_r L^\infty(\frac{1}{2}t<r<2t)} 
				\ . \label{Zk_defn}
		\end{equation}
		Then for the inverse metric $g^{\alpha\beta}$ in $(u,x)$ coordinates one has the symbol bounds:
		\begin{equation}
			  g^{\alpha\beta}-h^{\alpha\beta} \ \in \
			 \mathcal{Z}^0 \ ,  \qquad	
			  \sqrt{|g|}g^{iu} +  \omega^i \ \in \
			 \mathcal{Z}^\frac{1}{2} \ , \qquad
			  g^{ui}- \omega^i\omega_j g^{uj}   \ \in \
			 \mathcal{Z}^1 \ ,  \qquad
			   g^{uu}  \ \in \
			 \mathcal{Z}^2  \ , \label{mod_coords} %\label{mod_coords4}
		\end{equation}
		where:
		\begin{equation}
			h^{uu}\ = \ 0 \ , \qquad
			h^{ui} \ = \ -\omega^i \ , \qquad
			h^{ij} \ = \ \delta^{ij} \ . \label{h_tensor}
		\end{equation}
\end{enumerate}
\end{defn}

%Notice  at this stage we do not make any assumptions about the nature of the boundary $\partial\mathcal{K}$
%or the geometry of the level sets $t=const.$ close to $\mathcal{K}$. These considerations are 
%subsumed by our second set of assumptions discussed in the next subsection.
A number of further remarks are in order concerning the previous definition. 
Proofs are provided in Appendix \ref{mod_coords_sect}.

\begin{rem}
An immediate consequence of  \eqref{u_sym_bnd} is that one has
a uniformly bounded change of frame between $(\partial_t,\partial_x)$
and $({\partial_u^b},\partial_x^b)$. Specifically:
\begin{equation}
		\partial^b_\alpha \ = \ e^\beta_\alpha \partial_\beta \ , \ \ 
		\sum_{|I|\leq k} |\partial^I e^\beta_\alpha| \ \lesssim_k \ 1 \ , \qquad\qquad
		\partial_\alpha \ = \ f^\beta_\alpha \partial^b_\beta \ , \ \ 
		\sum_{|I|\leq k} |(\partial^b)^I f^\beta_\alpha| \ \lesssim_k \ 1 \ . \label{bondi_to_rec}
\end{equation}
In particular one may use either $(\partial_t,\partial_x)$
or $({\partial_u^b},\partial_x^b)$ to define the Sobolev spaces $H^s[0,T]$ and $H^s_x$, and for this purpose
we will use these frames
interchangeably in the sequel. Note however that the frames $(\partial_t,\partial_x)$
and $({\partial_u^b},\partial_x^b)$ are not interchangeable in all contexts. See for instance
Remark \ref{NLE_frame_rem}  below.
\end{rem}

\begin{rem}\label{optical_remark}
The condition \ref{opt_cond}) implies that $\tau_+^{-1}(u+\tau_x-t)=o_r(1)$, and together with \ref{sym_bnds}) 
we have:
 \begin{equation}
		\lp{(\tau_-\partial_t )^i(\tau_x  \tau_0 \partial_x)^J (g-\eta) }{\ell^1_r L^\infty[0,\infty)} \ < \ \infty \ , \qquad
		\hbox{for all\ \ \ }  (i,J)\in\mathbb{N}\times \mathbb{N}^4 \ , 
		\notag %\label{weak_asym_flat}
\end{equation}
where $\eta=diag(-1,1,1,1)$ is the Minkowski metric in $(t,x)$ coordinates. In particular
$g$ is weakly asymptotically flat in the sense that $\partial^J(g-\eta)=o_r(1)$ for all $J\in\mathbb{N}^4$,
and away from the region $\la t-r \ra\ll \la t+r \ra $
this can be strengthened to $(t\partial_t)^i (\tau_x \partial)^J(g-\eta)=o_r (1)$ 
for all $(i,J)\in\mathbb{N}\times \mathbb{N}^4$.
\end{rem}

\begin{rem}\label{coord_rem}
Definition  \ref{rad_metrics} allows us  to replace  $u$ by
$\td{u}= \chi u + (1-\chi)(t-\tau_x)$, where $\chi$ is a cutoff 
supported  where $\la t-r \ra\ll \la t+r \ra $ with bounds $|(\tau_+\partial)^J \chi|\lesssim 1$. Thus, in the sequel
we shall always assume that $u=t-\tau_x $ away from the ``wave zone'' $\la t-r \ra\ll \la t+r \ra $.
\end{rem}

\begin{rem}
For metrics which are quasi-stationary and quasi-spherical in the sense that $g=g_0+g_1$ where
$g_0$ is spherical in $(t,x)$ coordinates and:
\begin{equation}
	\slp{ \ln^2(1+\tau_x)  (t\partial_t)^i (\tau_x \partial)^J(g_0-\eta)}{\ell^1_rL^\infty[0,\infty)} 
	+\slp{ \tau_x  (t\partial_t)^i (\tau_x \partial)^J g_1}{\ell^1_rL^\infty[0,\infty)}  \ < \ \infty
	\ , \quad
	\hbox{for all\ \ \ }  (i,J)\in\mathbb{N}\times \mathbb{N}^4 \ ,
	\notag
\end{equation}
it can be shown the the condition of Definition \ref{rad_metrics} hold.  In particular this guarantees that 
our conditions hold for certain stationary long range perturbations of the Kerr family of spacetimes. In general
we leave as an open question to find  natural  (e.g.~geometric) conditions on 
metrics which  satisfy only: 
$$\lp{ \tau_x^a  (t\partial_t)^i (\tau_x \partial)^J(g-\eta)}{\ell^1_rL^\infty[0,\infty)}<\infty$$
for some $0\leq a<1$,  which would guarantee the existence of an optical function such as in 
Definition \ref{rad_metrics}. 

%{\color{red} REVAMP THIS NEXT COMMENT ...}
%An interesting question is if  weakening the bounds on $g_1$ above
%adversely affects the decay properties of linear waves on such backgrounds. The conditions of
%Definition \ref{rad_metrics} appear to be the weakest which allow one to produce a vector field
%method for the wave equation which includes conformal energies.
\end{rem}

\subsection{The Local Energy Decay   Assumption}

The second main assumption of this paper concerns the behavior of solutions to the inhomogeneous
scalar wave equation on $(\mathcal{M},g)$.
Following Tataru etal  (\cite{MMTT}, \cite{TT},\cite{T_price}) we define the local energy decay norms.

\begin{defn}[``Classical'' Local Energy Decay Norms]\label{class_LE_defn}
First set:
\begin{equation}
		\lp{\phi}{\LE[0,T]} \ = \ \lp{\tau_x^{-\frac{1}{2}} \phi}{\ell^\infty_r L^2[0,T]} \ , \qquad
		\lp{F}{\LE^{*}[0,T]} \ = \ \lp{\tau_x^{\frac{1}{2}} F}{\ell^1_r L^2[0,T]} \ . \notag
\end{equation}
Next, for a fixed $R_0\geq 1$ sufficiently large ($r<R_0$ may
be assumed to contain $\mathcal{K}$), and integer $s\geq 0$, we define:
\begin{subequations}\label{class_LE_norms}
\begin{align}
		\lp{\phi}{\LE^{s }_{class}[0,T]} \ &= \
		\sum_{|J| \leq s}\big( 
		\lp{\tau_x^{-1} \partial^J \phi}{\LE[0,T]} + 
		\lp{ \partial \partial^{J} \phi}{\LE[0,T]} \big) \ , \label{class_LE_norm1}\\
		\slp{\phi}{\WLE^s_{class}[0,T]} \ &= \
		\sum_{|J| \leq s} \big( 
		\slp{\tau_x^{-1} \partial^J \phi}{\LE[0,T]} + 
		\slp{ \partial \partial^{J} \phi}{\LE(r>R_0)[0,T]} 
		\big) \ , \label{class_LE_norm2}\\
		\lp{F}{\LE^{*,s} [0,T]} \  &= \  \sum_{|J| \leq s} 
		\lp{ \partial^J F }{\LE^*[0,T]} \ , \label{class_LE_norm3}\\
		\lp{F}{\WLE^{*,s}[0,T]} \  &= \  \sum_{|J| \leq s} \big(
		\lp{ \partial^J F }{\LE^*[0,T]} 
		+ \lp{\partial  \partial^{J}F}{L^2(r<R_0)[0,T]} 
		\big)\ . \label{class_LE_norm4}
\end{align}
\end{subequations}
\end{defn}

In terms of these spaces the second main assumption of the paper is:

\begin{assum}[Weak and Stationary Energy Boundedness/Decay Estimates]
For  $R_0$ as in Definition \ref{class_LE_defn} above and any
$s\geq 0$ there holds the estimates:
\begin{subequations}\label{class_LE_assm}
\begin{align}
		\sup_{0\leq t\leq T}\
		\slp{\partial\phi(t)}{H^{s}_x} + \slp{\phi}{\WLE^s_{class}[0,T]}  \ &\lesssim_s\  \slp{\partial \phi(0)}{H^{s}_x} 
		+ \slp{\Box_g \phi }{(\WLE^{*,s}+L^1_t H^s_x)[0,T]} \ , \label{basic_LE}\\
		\sup_{0\leq t\leq T}
		\slp{\partial\phi(t)}{H^{s}_x}\!+\! \slp{\phi}{\LE^s_{class}[0,T]}   \ &\lesssim_s\  \slp{\partial \phi(0)}{H^{s}_x} 
		\!+\! \slp{ (\phi,\partial_t \phi)}{L^2\times H^s(r<R_0)[0,T]}
%		&\hspace{.2in}
%		\!+\! \slp{\partial_t\phi}{H^{s}(r<R_0)[0,T]}
		\!+\! \slp{\Box_g \phi }{\LE^{*,s} [0,T]} \ . \label{basic_stat_LE}
\end{align}
\end{subequations}
\end{assum}

In practice it is useful to have a bound which does not include the low frequency error on the 
RHS\eqref{basic_stat_LE}. This is had by concatenating \eqref{basic_LE}--\eqref{basic_stat_LE} which gives:
\begin{equation}
		 \sup_{0\leq t\leq T}
		\lp{\partial\phi(t)}{H^{s}_x}+ \lp{\phi}{\LE^s_{class}[0,T]}  \ \lesssim_s  \ \lp{\partial \phi(0)}{H^{s}_x} 
		+ \lp{\partial_t\phi}{H^{s}(r<R_0)[0,T]}
		+ \lp{\Box_g \phi }{\WLE^{*,s}[0,T]} \ . \label{concat_LE}
\end{equation}

Before continuing we make a number of remarks about these assumptions.

\begin{rem}
Estimates of the form \eqref{basic_LE} have a long history  for both asymptotically flat
nontrapping spacetimes, and for the Kerr family of metrics. For the case of black holes we refer the reader 
to \cite{BS}, \cite{DR}, \cite{MMTT},  \cite{TT}, and \cite{DR_LE} and the references therein for more detailed discussions. 
In the sharp form needed for the present paper
the estimate \eqref{basic_LE} was proved by Marzuola-Metcalfe-Tataru-Tohaneanu \cite{MMTT} for Schwarzschild space and
Tataru-Tohaneanu \cite{TT} for Kerr with $|a|\ll M$. More recently a 
slightly less precise version of 
\eqref{basic_LE} was established for the full subextremal range of Kerr by Dafermos-Rodnianski and Shlapentokh-Rothman
\cite{DR_LE}. It is expected that \eqref{basic_LE}
holds for a wide class of asymptotically flat   spacetimes which satisfy certain natural structural conditions
similar to those of the Kerr metric, as well as certain natural spectral assumptions. We refer the reader 
to \cite{MST} for a definitive account in the case of nontrapping spacetimes.
%We refer the reader to  \cite{MTS} for a detailed theory in 
%the case of scalar  wave equations on nontrapping spacetimes.
\end{rem}

\begin{rem}
The second  estimate   \eqref{basic_stat_LE} is essentially
the ``stationary local energy decay'' estimate of Tataru etal \cite{MTT}.
It can be shown  that \eqref{basic_stat_LE} follows from estimates similar to \eqref{basic_LE} when 
$\partial_t$ is timelike on a set $\mathcal{T}$ where one looses regularity in the 
local energy decay norms \eqref{class_LE_norms}.
%, and in addition  
%the metric is stationary on $\mathcal{T}$ in the sense that 
%$\chi_\mathcal{T} \partial_t g_{\alpha\beta}\in L^1_t L^\infty_x$. 
See Appendix 2 for  a proof. 
We remark that such  timelike  conditions hold for the Kerr family of metrics
when the angular momentum satisfies $0\leq |a| <a_0$, for some $0<a_0<M$. On the other hand
one should still expect \eqref{basic_stat_LE} to hold for the
full subextremal range $0\leq |a| <M$ of Kerr  
 thanks to the (microlocal) nonvanishing of the symbol of $\partial_t$ on the trapped set. 
\end{rem}

%\begin{rem}
%The null energies included in the $\LE^s$ and $\LE^s_{weak}$ norms defined above are not part 
%of the traditional definition (see \cite{}, \cite{}, ...). Unfortunately at the regularity level of the metrics described  
%by \eqref{mod_coords}, it is not immediately clear that one even has conservation of energy
%on outgoing null hypersurfaces. This because we are only assuming $\partial_t g \in \tau_-^{-1}\cdot \ell^1_t$
%when $r\approx t$, so care needs to be taken when using $\partial_t$ as a multiplier.

%However, in an appendix we will give a (nontrivial) demonstration of that fact that
% the null energy portion of the $\LE$ norms is implied by classical definition of the local 
 %energy decay norms. Specifically:
 
%\begin{prop}\label{null_energy_prop}
%Assume  both \eqref{basic_LE} and \eqref{basic_stat_LE} 
%with $\LE^s_{weak}[0,T]$ replaced by the norm:
%\begin{equation}
%		 \lp{\phi}{\td{\LE}^s_{weak}[0,T]} \ = \
%		\sum_{|J| \leq s-1} \big( 
%		\lp{\la x\ra^{-1} \nabla^J \phi}{\LE[0,T]} + 
%		\lp{ \nabla \nabla^{J} \phi}{\LE(r>R_0)[0,T]} \big) \ . \notag
%\end{equation}
% \end{prop}
 %Then both \eqref{basic_LE} and \eqref{basic_stat_LE} follow.
%\end{rem}

\subsection{Norms and vector fields}

We now introduce the weighted local energy decay norms that will play a leading role in the remainder of the paper.

\begin{defn}[Null Energy Decay Norms]
First  we define a null energy seminorm with the same scaling as $\LE$:
\begin{equation}
		\lp{\phi}{\NLE[0,T]} \ = \ 
		\lp{\tau_-^{-\frac{1}{2}}  \phi}{\ell^\infty_uL^2(\frac{1}{2}t<r<2t)[0,T]} \ , \qquad
		\lp{F}{\NLE^*[0,T]} \ = \  
		\lp{ \tau_-^\frac{1}{2}   F}{\ell^1_u L^2(\frac{1}{2}t<r<2t)[0,T]} \ . \notag
\end{equation}
Next, we  have the following  generalization of   $\LE^s_{class}$  
and $\WLE^s_{class}$ which include these semi-norms:
\begin{subequations}\label{LE_norms}
\begin{align}
		\lp{\phi}{\LE^{s }[0,T]} \ &= \ \lp{\phi}{\LE^{s }_{class}[0,T]}+
		\sum_{|J| \leq s} 
		\lp{( \partial_x^b (\partial^b)^J\phi,\tau_x^{-1}\partial^J\phi )}{\NLE(r>R_0)[0,T]}  \ , \label{LE_norm1}\\
		\lp{\phi}{\WLE^s[0,T]} \ &= \ \lp{\phi}{\WLE^s_{class}[0,T]}+
		\sum_{|J| \leq s} 
		\lp{  (\partial_x^b (\partial^b)^J \phi,\tau_x^{-1}\partial^J\phi)}{\NLE(r>R_0)[0,T]} \ . \label{LE_norm2}
\end{align}
\end{subequations}
\end{defn}

\begin{rem}\label{NLE_frame_rem}
Notice  we have specifically used  Bondi coordinate derivatives $({\partial_u^b},\partial_x^b)$ inside the
gradient portion of RHS\eqref{LE_norms}. This appears to be necessary because condition \eqref{u_sym_bnd}
does not guarantee good peeling estimate for $(\partial^b_x)^I \partial u$. In particular we don't assume improved control
of the commutators $[\partial_x^b,\partial_\alpha]$ beyond the bounds 
\eqref{bondi_to_rec}.
\end{rem}

Next, we define   $\LE$ type norms with uniform weights in time. These  will play a central
role in establishing peeling estimates for solutions to the wave equation.

\begin{defn}[Weighted LE and Conformal Energy Norms]
For functions $\phi$ which solve the wave equation and $0\leq a\leq 1$ we set:
\begin{subequations}\label{weighted_LE}
\begin{align}
		\lp{\phi(t)}{\CE^a} \ &= \ \lp{\tau_+^a\tau_0^{\max\{a,\frac{1}{2}\}} \partial \phi(t)}{L^2_x } +
		 \lp{\tau_+ ^a (\partial^b_x \phi(t),\tau_x^{-1}\phi(t))}{L^2_x }
		  \ , \label{CE_def}\\
		\lp{\phi}{S^a[0,T]} \ &= \  
		 \lp{\tau_+^a \tau_0^{\max \{a,\frac{1}{2}\}}
		\partial\phi }{\LE[0,T]}
		+ \lp{\tau_+ ^a (\partial^b_x \phi,\tau_x^{-1}\phi) }{\LE[0,T]} +
		\lp{ \tau_+^a  \tau_x^{-1}\partial_r^b(\tau_x  \phi)}{\NLE[0,T]}
		 \ ,  \label{Sa_def}\\
		\lp{\phi}{S^{1,\infty}[0,T]} \ &= \  \lp{\phi}{\ell^\infty_t S^{1}[0,T]}+
		\lp{ \tau_+ \tau_x^{-1}\partial_r^b(\tau_x  \phi)}{\NLE[0,T]} \ , \label{S100_def}	
\end{align}
\end{subequations}
For source terms we fix  a parameter $ R_0$ as in Definition \ref{class_LE_defn} and set:
\begin{subequations}\label{N_norms}
\begin{align}
		\lp{F}{N^a[0,T]}  \ &=  
		\lp{\tau_+^a\tau_0^\frac{1}{2} F}{\LE^* [0,T]}  +
		\lp{\tau_+^a \partial F}{L^2(r<R_0) [0,T]}  +
		\lp{\tau_+^a  F}{  \NLE^*[0,T]}  
		\ . \label{Na_def}\\
		\lp{F}{N^{1,1}[0,T]}  \ &=  
		\lp{\tau_+ \tau_0^\frac{1}{2} F}{\ell^1_t \LE^* [0,T]}  +
		\lp{\tau_+ \partial F}{\ell^1_t L^2(r<R_0) [0,T]} +
		\lp{\tau_+   F}{  \NLE^*[0,T]}  \ . \label{N11_def}
\end{align}
\end{subequations}
%{\color{red} Is it really $\LE^*\cap \NLE^*[0,T]$ on this last line, or can we use $\LE^*$ alone?}
%{\color{red} WHY TO WE NEED THE CONDITION $R_0'> R_0$? THATS PRETTY OBNOXIOUS ...}
\end{defn}
  
Finally, we  construct higher order versions of all the above norms.

\begin{defn}[Modified Vector Fields]\label{mod_vect_defn}
 First  define the approximate Lie algebras:
\begin{equation}
		\mathbb{L}_0 \ = \ \{ {\partial_u^b} , \partial_i^b-\omega^i {\partial_u^b} \} \ , \quad
		\mathbb{L} \ = \ \{S, \Omega_{ij} \}\cup \mathbb{L}_0 \ , \qquad
		\hbox{where\ \ } S = u{\partial_u^b} + r\partial_r^b \ , \ \ 
		\Omega_{ij} \ = \ x^i \partial_j^b - x^j\partial_i^b \ . \label{VF}
\end{equation}
Note that all members of $\mathbb{L}$ commute modulo $\mathbb{L}$ with the exception 
of:
\begin{equation}
		[\partial_i^b-\omega^i {\partial_u^b},S ] \ = \ \partial_i^b-\omega^i {\partial_u^b} + \tau_x^{-2}\omega^i {\partial_u^b} \ 
		. \label{comm_fail}
\end{equation}
For a function $\phi$ we write:
\begin{equation}
		\phi^{(k)} \ = \ (\phi, \Gamma^{I_1}\phi, \Gamma^{I_2}\phi, \ldots) \ , \notag
\end{equation}
where the RHS is an array of all products $\Gamma^I$ of vector fields in $\mathbb{L}$
up to length $|I|\leq k$. If $\lp{\cdot }{}$ is any norm we write:
\begin{equation}
		\lp{\phi^{(k)}}{} \ = \ \sum_{|I|\leq k}\lp{\Gamma^I\phi}{} \ , \qquad
		\Gamma\in \mathbb{L} \ . \notag
\end{equation}
We use a similar notation for pointwise identities, for example
$|\partial \phi^{(k)}|=\sum_{|I|\leq k}|\partial \Gamma^I\phi|$ etc. 
In the case of norms we use a subscript notation to denote higher order derivatives by vector fields:
\begin{equation}
		\lp{\phi (t)}{\CE^a_k} \ = \ \lp{\phi^{(k)} (t)}{\CE^a} \ , \qquad
		\lp{\phi}{S^a_k[0,T]} \ = \ \lp{\phi^{(k)}}{S^a[0,T]} \ , \qquad
		\lp{F}{N^a_{k}[0,T]} \ = \ \lp{F^{(k)}}{N^a[0,T]} \ , \notag
\end{equation}
and similarly for $\LE$ type norms. In addition we set $\lp{\phi}{H^{s}_k}=\lp{\phi^{(k)}}{H^s}$
and  $\lp{\phi(t)}{H^{s}_{x,k}}=\lp{\phi^{(k)}(t)}{H^s_x}$.
\end{defn}

%-------------------------------------------------------------------------

\subsection{Main Results I: Linear Estimates}
 
The main result of the paper can now be stated as follows.

\begin{thm}[Weighted Local Energy Decay Estimates]\label{main_thm}
Assuming estimates \eqref{basic_LE} and \eqref{basic_stat_LE}, then for $0\leq a\leq 1$
and fixed $s,k \geq 0$ there exists parameters $B_a=B_a(s,k)$ such that:

\begin{enumerate}[I)]
\item In the case $a=0$ one has:
\begin{multline}
		\sup_{0\leq t\leq T} \lp{\partial\phi(t)}{H^{s}_{x,k} } 
		+ \lp{\phi }{ \WLE^s_{k}[0,T]} 
		 \lesssim_{s,k} \  \sum_{|I|\leq k} \sum_{|J|\leq s}\lp{(\tau_x \partial_x)^I \partial_x^J \partial  \phi(0) }{L^2_{x}}
		+ \lp{\phi}{H^{s+3}_{k-1}(r<B_0)[0,T]}\\
		 + \lp{\Box_g \phi }{(\WLE^{*,s}_{k}+L^1_t H^s_{x,k})[0,T]}
		\ , \label{LE_null_est_k}
\end{multline}
where in the case $k=0$ we define  $\slp{\phi}{H^{s+3}_{-1}(r<B_0)[0,T]}=0$.  
 
\item For  $0<a<1$   one has:
\begin{equation}
		\sup_{0\leq t\leq T}\!\! \slp{\phi(t)}{\CE^a_k}+
		s\lp{\phi}{S^a_k[0,T]} 
		 \lesssim_{s,k,a}   \sum_{|J|\leq k}\slp{\tau_x^a (\tau_x \partial_x)^J\partial  \phi(0) }{ L^2_x}
		+ \slp{\tau_+^{a-1} \phi }{ H^{1}_{k+1}(r<B_a)[0,T]} + 
		\slp{\Box_g \phi}{N^a_{k}[0,T]}
		\ . \label{Sa_est_k}
\end{equation}

\item Corresponding to $a=1$   one has  the endpoint bound:
\begin{equation}
		\sup_{0\leq t\leq T}\slp{\phi(t)}{\CE^1_k}
		+ \slp{ \phi }{  S^{1,\infty}_k[0,T]}\lesssim_{s,k} 
		 \sum_{|J|\leq k}\slp{\tau_x  (\tau_x \partial_x)^J\partial  \phi(0) }{ L^2_x}
		+ \slp{\phi }{ \ell^1_t H^{1}_{k+1}(r<B_1)[0,T]}  
		  + \slp{ \Box_g\phi}{ N^{1,1}_{k}[0,T]} \ . \label{CE_est_k}
\end{equation}
\end{enumerate}
\end{thm}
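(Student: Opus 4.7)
The plan is to establish the three estimates in the order $a=0$, $0<a<1$, $a=1$, bootstrapping the weighted cases from the unweighted one. The baseline tool is the concatenated bound \eqref{concat_LE} together with commutation with the vector fields in $\mathbb{L}$, and the new ingredient producing the $\tau_+^a$-weighted bounds is a Dafermos--Rodnianski $r^p$-type multiplier adapted to the Bondi frame $({\partial_u^b},\partial_x^b)$ and to the outgoing radiation structure \eqref{mod_coords}--\eqref{h_tensor}.

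\textbf{Step 1 (case $a=0$, estimate \eqref{LE_null_est_k}).} Apply \eqref{concat_LE} to $\Gamma^I\phi$ for each $\Gamma\in\mathbb{L}$ with $|I|\le k$, and induct on $k$. The commutator $[\Box_g,\Gamma^I]$ is a second-order operator whose coefficients are governed by \eqref{mod_coords}: every factor of $\tau_+$ lost when $\Gamma$ hits the coefficients of $\Box_g$ is compensated by an equivalent factor of $\tau_0$ or $\tau_x^{-1}$ coming from the $\mathcal{Z}^k$ symbol classes, so $[\Box_g,\Gamma^I]\phi$ sits in $\WLE^{*,s}_{k-1}$ modulo a compactly supported term in $r<B_0$ that is absorbed by $\slp{\phi}{H^{s+3}_{k-1}(r<B_0)[0,T]}$. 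The anomalous identity \eqref{comm_fail} contributes only an extra $\tau_x^{-2}$-weighted term and is harmless. The $\NLE$-component of $\WLE^s_k$ is recovered because the commutation is carried out with respect to the Bondi frame, on which the $\NLE$ seminorm is defined (see Remark \ref{NLE_frame_rem}).

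\textbf{Step 2 (estimate \eqref{Sa_est_k} for $0<a<1$).} Pair $\Box_g\phi$ against the Dafermos--Rodnianski type multiplier
\[
	X_a\phi \ = \ \tau_+^{2a}\,\tau_x^{-1}\partial_r^b(\tau_x\phi) \ ,
\]
and integrate over $[0,T]\times\{r>R_0\}$. For the model metric $h$ in \eqref{h_tensor}, integration by parts produces a positive bulk density $\sim a(1-a)\tau_+^{2a-1}\bigl(|\partial_x^b\phi|^2+\tau_x^{-2}|\phi|^2\bigr)$, the good null flux $\tau_+^{2a}|\tau_x^{-1}\partial_r^b(\tau_x\phi)|^2$ on $\NLE$, and the $\CE^a$ trace at $t=0,T$. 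The $(g-h)$-errors, classified by \eqref{mod_coords}, carry at least one extra factor of $\tau_0$ or $\tau_x^{-1}$ and are absorbable via Cauchy--Schwarz against Step 1; the contribution from $r<R_0$ is handled by a local application of \eqref{concat_LE}, producing the $H^1_{k+1}(r<B_a)$ error. Commutation with $\mathbb{L}$ proceeds as in Step 1, yielding the $k$-dependence, and the $\tau_+^a\tau_0^{1/2}\partial\phi$ piece of $\CE^a$ is recovered by combining the flux of $X_a$ with the $\tau_+^{2a}$-weighted $\partial_t$-energy identity.

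\textbf{Step 3 and main obstacle (estimate \eqref{CE_est_k}).} At $a=1$ the coefficient $a(1-a)$ in the Step 2 positivity vanishes, so the $\LE$-type bulk gain is lost and only the $\NLE$ flux and the $\CE^1$ boundary trace survive; the source must then be paired in $\ell^1_t L^2$, which is exactly the $N^{1,1}$ hypothesis. The technical heart of the argument is Step 2: the symbol conditions \eqref{mod_coords} are placed exactly at threshold, so one must verify that $g^{uu}\in\mathcal{Z}^2$ suffices to absorb the worst $\partial_u^b\partial_u^b\phi$ term generated by $X_a$, and that $\sqrt{|g|}g^{iu}+\omega^i\in\mathcal{Z}^{1/2}$, together with a Hardy inequality in $u$ (exploiting $\tau_-=\la u\ra$), controls the cross terms without destroying the positive bulk. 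This threshold analysis is what forces the precise weights adopted in the $\mathcal{Z}^k$ classes of Definition \ref{rad_metrics}.
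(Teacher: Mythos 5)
Your overall architecture (interior via the local energy decay assumption, exterior via an interpolating family of $r^p$-type multipliers conjugated by $\Omega=\tau_x$, degenerating to a conformal multiplier at $a=1$) matches the paper's, but two of your steps contain gaps that would prevent the argument from closing. The most serious is in Step 2: your multiplier $X_a=\tau_+^{2a}\tau_x^{-1}\partial_r^b(\tau_x\,\cdot\,)$ has no ${\partial_u^b}$ component, so in the notation of \eqref{A0_coeff} you get $\mathcal{A}^u=-\partial_r^bY^u=0$ and the bulk controls only the good derivatives $\partial_x^b\phi$, $\tau_x^{-1}\phi$. The norms $\CE^a$ and $S^a$ also require the bad derivative, $\tau_+^a\tau_0^{\max\{a,1/2\}}\partial\phi$ in $L^2_x$ and $\LE$, together with the $\ell^\infty_{\bu}$ trace terms of \eqref{0-1/2_mult_est}--\eqref{1/2-1_mult_est}. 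Your proposed fix --- adding a $\tau_+^{2a}$-weighted $\partial_t$-energy identity --- does not work, because that identity generates an unsigned bulk term $\sim a\,\tau_+^{2a-1}|{\partial_u^b}\phi|^2$ which nothing in your scheme dominates. The paper's resolution is to take $Y^u\sim a(\tau_-^{2a}\chi_{<k}(\bu)+\tau_+^{2a}\tau_0^4)$, which is \emph{decreasing} in $r$ along $u=\mathrm{const}$ so that $\mathcal{A}^u=-\partial_r^bY^u>0$; this is the mechanism producing the $\tau_+^{a-\frac12}\tau_0^2{\partial_u^b}\phi$ spacetime bound and the $\ell^\infty_{\bu}$ traces, and it is absent from your proposal.

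The second gap concerns the endpoints $a=0$ and $a=1$. For $a=0$, applying \eqref{concat_LE} to $\Gamma^I\phi$ and commuting yields only the \emph{classical} norm $\WLE^s_{class,k}$; the new $\NLE$ seminorm in $\WLE^s_k$ requires a dedicated multiplier ($Y^u=\chi_{<j}(u)$, $Y^r=0$, as in \eqref{0_mult_est}), and "commuting in the Bondi frame" does not by itself produce the $\tau_-^{-1/2}\ell^\infty_uL^2$ control of $\partial_x^b(\partial^b)^J\phi$. Moreover, at both endpoints the commutator cannot simply be placed in the dual norm and closed by H\"older: the second-order part of $[\Box_g,\Gamma^I]$ carries a $\tau_0^{-1/2}$ degeneration near the cone, and pairing it against the unsigned multiplier ${\partial_u^b}\Gamma^I\phi$ (resp.\ $K_0\Gamma^I\phi$) puts two $\ell^\infty_u$-type factors together with no $\ell^1_u$ summability to spare. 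This is why the paper keeps the source term in the integrated form $\sup_j|\int\Box_g\phi\cdot X_j\phi\,dV_g|$ in \eqref{LE_null_est} and \eqref{CE_est} and then integrates the commutator by parts against the multiplier (estimates \eqref{dt_int_comm_est}, \eqref{K0_int_comm_est} via Lemma \ref{IBP_lem}); your induction in Step 1, which asserts the commutator "sits in $\WLE^{*,s}_{k-1}$", skips precisely this obstruction. Your threshold analysis of the $\mathcal{Z}^k$ classes in Step 3 is on target as far as it goes, but without the $Y^u$ component and the integration-by-parts closure the proof does not go through.
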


\begin{rem}\label{R0_rem}
In the proof of Theorem \ref{main_thm}  we find that for $a\neq 0,1$ one has
$B_a\to \infty$ as $a\to 0,1$. 
%On the other hand, once one has \eqref{LE_null_est_k}--\eqref{CE_est_k}
%and  only needs to apply \eqref{Sa_est_k} for finitely many values of $a$, one can without loss of generality
%redefine all of the above norms so $B_a\equiv R_0$ where $R_0$ was originally set on lines \eqref{class_LE_norms}.
\end{rem}

In addition to the above estimates we shall also prove the following analog of Klainerman's  
estimate:

\begin{thm}[Global Sobolev Estimate]\label{glob_sob_thm}

For $k\geq 1$ one has the estimate:
\begin{multline}
		\sum_{i+|J|\leq k}
		\lp{\tau_+^\frac{3}{2} \tau_0^\frac{1}{2}(\tau_-{\partial_u^b})^i
		(\tau_x \partial_x^b)^J\phi}{L^\infty[0,T]} \ \lesssim_k \
		 \sup_{0\leq t\leq T}\lp{\phi(t)}{E^1_{k+1}}
		 + \lp{\phi}{S^{1,\infty}_{k+2}[0,T]}\\
		 +\sum_{|J|\leq k}\lp{  \tau_x^2
		(\tau_x \partial)^J\Box_g\phi (0)}{  L^2_x}
		+\sum_{i+|J|\leq k+1}\lp{ (\tau_-{\partial_u^b})^i
		(\tau_x \partial_x^b)^J\Box_g\phi}{  N^{1,1}[0,T]} \ . \label{main_L00_est}
\end{multline}

\end{thm}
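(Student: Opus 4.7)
The plan is to establish the estimate as a global Sobolev inequality of Klainerman type adapted to the null geometry of $(\mathcal{M}, g)$, using the modified vector fields $\mathbb{L}$ of Definition \ref{mod_vect_defn} in place of the full Minkowski symmetry algebra (in particular without boosts). The target weight $\tau_+^{3/2}\tau_0^{1/2}=\tau_+\tau_-^{1/2}$ is the classical peeling rate; the right-hand side of \eqref{main_L00_est} is organized so that the $\tau_+$ decay comes from fixed-time conformal energy control via $\sup_t\lp{\phi(t)}{\CE^1_{k+1}}$, while the extra $\tau_-^{1/2}$ factor is extracted by a spacetime Sobolev argument in the null variable $u$ that exploits the norm $\lp{\phi}{S^{1,\infty}_{k+2}[0,T]}$.

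First I would reduce to the base case $k=0$. The weighted operators $(\tau_-{\partial_u^b})^i(\tau_x\partial_x^b)^J$ can be expanded as combinations of iterated $\mathbb{L}$-derivatives with coefficients of size $O(1)$; the only obstruction is the commutator defect \eqref{comm_fail}, which, being $\tau_x^{-2}{\partial_u^b}$, is strictly lower order and produces only harmless corrections after $k+2$ iterations. I would then decompose spacetime via smooth cutoffs into an interior region $r<t/4$, a wave zone $t/4\leq r\leq 4t$, and an exterior $r>4t$. In the interior and exterior one has $\tau_-\sim\tau_+$, so the target reduces to pure $\tau_+^{3/2}$ decay, which follows from a fixed-time spatial Sobolev embedding using $\CE^1_1$ together with the $\ell^\infty_t S^1$ structure encoded in $S^{1,\infty}$; the bounded region $r\lesssim 1$ is handled separately using the weighted local energy norms.

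The main case is the wave zone, where I would combine three one-dimensional Sobolev-type estimates. First, the angular embedding $H^2(S^2)\hookrightarrow L^\infty(S^2)$, implemented via the rotations $\Omega_{ij}\in\mathbb{L}$, reduces matters to bounding $\lp{\phi(t_0,r_0,\cdot)}{L^2_\omega}$. Second, a radial fundamental theorem identity
\begin{equation*}
	r_0^2\!\int_{S^2}\!|\phi(t_0,r_0,\omega)|^2\,d\omega \ = \ -\int_{r_0}^\infty \partial_r^b\Big(r^2\!\int_{S^2}\!|\phi(t_0,r,\omega)|^2\,d\omega\Big)\, dr \ ,
\end{equation*}
combined with Cauchy-Schwarz and the algebraic identity $\partial_r^b(r\phi)=r\tau_x^{-1}\partial_r^b(\tau_x\phi)+O(\phi)$, yields control by the $\tau_+(\partial_x^b\phi,\tau_x^{-1}\phi)$ portion of $\CE^1$. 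Third, to extract the $\tau_-^{1/2}$ factor I would apply a weighted Sobolev inequality in $u$ at fixed $(r,\omega)$,
\begin{equation*}
	\tau_-(u_0)\,|f(u_0)|^2 \ \lesssim \ \int_{\mathbb{R}}|f|^2\, du\ +\ \int_{\mathbb{R}} \tau_-^2|{\partial_u^b}f|^2\, du \ ,
\end{equation*}
to $f=r\phi$ and to its angular derivatives. Since $u_t$ is bounded above and below by condition \ref{opt_cond}, this integration in $u$ at a fixed spatial point corresponds to integration along a time curve of comparable unit speed, and the right-hand side is controlled precisely by the $\tau_+\tau_0^{1/2}\partial\phi$ piece of $\LE$ and the $\tau_+\tau_x^{-1}\partial_r^b(\tau_x\phi)$ piece of $\NLE$ that appear in $S^{1,\infty}_2$.

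Finally, the forcing data enter through the commutation procedure. The argument is carried out with $\phi$ replaced by $\phi^{(k)}$; the commutators $[\Box_g,\Gamma^I]$ with $\Gamma\in\mathbb{L}$ produce source terms absorbed into $\Box_g\phi^{(k+\mathrm{small})}$ via the symbol bounds \eqref{mod_coords}. To convert the intermediate $\CE^1$ and $S^{1,\infty}$ norms appearing in the chain of inequalities back to RHS\eqref{main_L00_est}, one invokes the endpoint conformal estimate \eqref{CE_est_k} of Theorem \ref{main_thm}, with the initial-data quantity $\lp{\tau_x^2(\tau_x\partial)^J\Box_g\phi(0)}{L^2_x}$ arising to absorb the extra time derivative generated when the $u$-Sobolev integration is carried down to the Cauchy hypersurface $t=0$. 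The chief technical obstacle is to propagate the commutator defect \eqref{comm_fail} through $k+2$ iterations without losing the sharpness of the peeling weights, and to verify the vanishing of the radial boundary term at $r=\infty$ in a trace sense compatible with $\CE^1$; both depend essentially on the peeling structure built into Definition \ref{rad_metrics}.
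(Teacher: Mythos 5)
Your overall architecture (interior/wave-zone/exterior splitting, angular Sobolev via the rotations, radial integration against the conformal energy) is reasonable, but there is a genuine gap at the very first step. The claim that the weighted operators $(\tau_-{\partial_u^b})^i(\tau_x \partial_x^b)^J$ ``can be expanded as combinations of iterated $\mathbb{L}$-derivatives with coefficients of size $O(1)$, the only obstruction being the commutator defect \eqref{comm_fail}'' is false. The collection $\mathbb{L}$ contains no Lorentz boosts, so in the intermediate region $1\ll \tau_-\lesssim \tau_+$ the two weighted transversal derivatives $\tau_-{\partial_u^b}\phi$ and $\tau_x\partial_r^b\phi$ cannot be separated: the only available combination is $S=u{\partial_u^b}+r\partial_r^b$, and solving for either factor individually forces coefficients of size $r$ or $|u|$, not $O(1)$. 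This is exactly the obstruction the Klainerman--Sideris identities (Lemma \ref{KS_lem}, estimate \eqref{KS_iden_higher}) are designed to remove: they trade all but one weighted derivative for vector fields in $\mathbb{L}$ at the cost of terms of the form $\tau_x^2\tau_0\,|(\cdots)\Box_g\phi|$, and this substitution --- not the commutators $[\Box_g,\Gamma^I]$, and not a trace of a time derivative down to $t=0$ --- is the primary reason the $\Box_g\phi$ norms appear on the right of \eqref{main_L00_est}. Without this ingredient your reduction to $k=0$ does not go through, and even the case $i+|J|=1$ cannot be closed, since the $H^2$-type embedding applied to $\tau_-{\partial_u^b}\phi$ requires two further weighted derivatives that the conformal energy does not control directly.

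The wave-zone mechanism also does not pair correctly with the norms. Your third step applies a one-dimensional weighted Sobolev inequality in $u$ at a fixed spatial point, but the quantities $\int |f|^2\,du$ and $\int \tau_-^2|{\partial_u^b} f|^2\,du$ along a single time curve are not controlled by the $L^2_x$-based norms $\LE$, $\NLE$, $S^{1,\infty}$; and after integrating the resulting bound against the angular and radial measures one arrives at spacetime quantities of the schematic size $\slp{\tau_-{\partial_u^b}\phi}{L^2(dtdx)}$, which exceed the $\LE$ component of $S^{1,\infty}$ by a factor $\tau_x^{1/2}$ on each dyadic block and require summability in $r$ rather than a supremum. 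The paper avoids this entirely: the factor $\tau_0^{1/2}$ is produced at \emph{fixed time} by the anisotropic Sobolev embedding \eqref{global_L00} of Lemma \ref{L00_lem}, where after localizing to $\tau_-\approx 2^k$, $\tau_x\approx 2^j$ and an angular sector one rescales $y^1=2^{-k}u$, $y^2=2^{-j}x^2$, $y^3=2^{-j}x^3$, and the volume factor $2^{k/2+j}$ of the thin slab supplies exactly the weight $\tau_x^{3/2}\tau_0^{1/2}$. Only the conformal energy $E^1_{k+1}$ (plus Klainerman--Sideris) is needed in the wave zone; the norm $S^{1,\infty}_{k+2}$ enters afterwards, through the trace estimates \eqref{w_trace1}--\eqref{w_trace3} that convert the fixed-time interior term $\slp{\tau_+^{3/2}\phi^{(k)}(t)}{H^2_{x}(r<R)}$ and the fixed-time $\Box_g\phi$ norms of Proposition \ref{main_L00_prop} into the stated right-hand side. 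Relatedly, in the interior a fixed-time Sobolev embedding from $\CE^1$ alone is insufficient: one needs the elliptic estimate of Lemma \ref{global_ell_lem} to manufacture the second spatial derivatives with the correct $\tau_x^{1/2}$ weights out of one derivative plus the equation.
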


One can combine the above two  results in a straightforward way, which produces
the first  main conclusion of our paper:

\begin{thm}[Peeling Estimates for the  Inhomogeneous Wave Equation]\label{main_lin_thm}
Given any $k\geq 1$ there exists  an integer $N=N(k) $ depending (linearly) on $k$
such that:
\begin{multline}
		\sum_{i+|J|\leq k}
		\lp{\tau_+^\frac{3}{2} \tau_0^\frac{1}{2}(\tau_-{\partial_u^b})^i
		(\tau_x \partial_x^b)^J\phi}{L^\infty[0,T]} \lesssim_k \
		 \sum_{|J|\leq N}\lp{\tau_x (\tau_x\partial_x)^J\partial  \phi(0) }{ L^2_x}
		 +\sum_{|J|\leq N}\lp{  \tau_x^2
		(\tau_x \partial)^J\Box_g\phi (0)}{  L^2_x}\\
		+ \sum_{i+|J|\leq N}
		\lp{ (\tau_-{\partial_u^b})^i
		(\tau_x \partial_x^b)^J\Box_g\phi}{  N^{1,1}[0,T]} \ . \label{lin_peeling}
\end{multline}
\end{thm}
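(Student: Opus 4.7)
The strategy is to chain the two preceding theorems, with a short auxiliary argument to absorb a local-in-space low-frequency error. First I would apply the global Sobolev estimate \eqref{main_L00_est} to the LHS of \eqref{lin_peeling}. The two terms on the RHS of \eqref{main_L00_est} that involve initial data and source of $\Box_g\phi$ already appear on the RHS of \eqref{lin_peeling} (after raising the overall index to some $N\geq k+C$ linearly in $k$), so this reduces matters to controlling
$\sup_{0\leq t\leq T}\|\phi(t)\|_{\CE^1_{k+1}}+\|\phi\|_{S^{1,\infty}_{k+2}[0,T]}$.

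Second, I would apply the endpoint conformal energy bound \eqref{CE_est_k} at vector-field order $k+2$. This dominates the above by the initial data $\sum_{|J|\leq k+2}\|\tau_x(\tau_x\partial_x)^J\partial\phi(0)\|_{L^2_x}$, the source $\|\Box_g\phi\|_{N^{1,1}_{k+2}[0,T]}$, and a localized low-frequency error $\|\phi\|_{\ell^1_tH^1_{k+3}(r<B_1)[0,T]}$. The first two fit directly into the RHS of \eqref{lin_peeling}.

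The main obstacle is absorbing the local-in-space low-frequency error, and my plan here is to invoke \eqref{Sa_est_k} at some fixed $a\in(1/2,1)$. On a compact set in $r$ the $S^a_{k+3}$-norm of $\phi$ contains a piece equivalent to $\|\tau_+^a\tau_x^{-1}\phi\|_{\LE[0,T]}\approx\|\tau_+^a\phi\|_{L^2_tL^2(r\lesssim 1)}$, and since $\sum_j(1+j)^{-2a}<\infty$ for $a>1/2$, a Cauchy--Schwarz argument over unit-length time slabs yields $\|\phi\|_{\ell^1_tH^1_{k+3}(r<B_1)[0,T]}\lesssim\|\phi\|_{S^a_{k+3}[0,T]}$. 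Applying \eqref{Sa_est_k} then produces a new compactly supported error $\|\tau_+^{a-1}\phi\|_{H^1_{k+4}(r<B_a)[0,T]}$; since $\tau_+^{a-1}\leq 1$ for $a\leq 1$, this is dominated by the unweighted $\|\phi\|_{L^2_tH^1_{k+4}(r<B_a)[0,T]}$, which in turn is controlled by the $a=0$ bound \eqref{LE_null_est_k} modulo a lower vector-field-order compactly supported error that closes by a short induction on the vector-field count. Tracking the accumulated loss of $\Gamma$-derivatives through each step then produces the claimed linear dependence $N=N(k)$.
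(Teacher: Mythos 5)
Your argument is correct and is essentially the route the paper intends: the paper leaves Theorem \ref{main_lin_thm} as a "straightforward combination" of Theorems \ref{main_thm} and \ref{glob_sob_thm}, and the chain you describe (apply \eqref{main_L00_est}, then \eqref{CE_est_k}, absorb the $\ell^1_t H^1(r<B_1)$ error via \eqref{Sa_est_k} with an intermediate $a$, and close the remaining compactly supported error with \eqref{LE_null_est_k} by induction on the vector-field count) is exactly the cascade the paper executes explicitly in Cases 1--3 of the proof of \eqref{NL_NS_bnd}, there with $a=\frac{1}{2}$. The only detail worth flagging is that each step of your final induction on the vector-field count raises the Sobolev index $s$ in $H^{s+3}_{k-1}(r<B_0)$ by $3$, terminating via the convention $\|\phi\|_{H^{s+3}_{-1}(r<B_0)}=0$ — which is precisely why $N(k)$ comes out linear in $k$.
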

 
%-------------------------------------------------------------------------

\subsection{Main Results II:  Nonlinear estimates}

The estimates of Theorems \ref{main_thm} and \ref{glob_sob_thm}
naturally lend themselves bounding solutions to semilinear wave equations of the form $\Box_g \phi = F(t,x,\phi,\partial\phi)$.
Rather than develop a comprehensive theory we concentrate on the
equations     $\Box_g \phi = \mathcal{N}^{\alpha\beta}(t,x,\phi)
\partial_\alpha\phi\partial_\beta\phi$
where the quadratic form $\mathcal{N}^{\alpha\beta}$ is sufficiently tame.

\begin{defn}[Generalized null forms]\label{NF_defn}
A two-tensor $\mathcal{N}^{\alpha\beta}$ is called a ``generalized null form'' with respect to
a (weak) optical function $u$ if 
its Bondi coordinate components satisfy:
\begin{equation}
		|(\tau_-{\partial_u^b})^i(\tau_x \partial_x^b)^J\partial_\phi^k \mathcal{N}^{\alpha\beta}| 
		 \ \lesssim_{i,J} \ c_k(|\phi|) \ , \qquad
		 |(\tau_-{\partial_u^b})^i(\tau_x \partial_x^b)^J\partial_\phi^k 
		 \mathcal{N}^{uu}| \ \lesssim_{i,J}\ c_k(|\phi|)\tau_0
		 \ . \label{NF_cond}
\end{equation}
\end{defn}

\begin{rem}
Natural examples of $\mathcal{N}$ satisfying Definition \ref{NF_defn} include multiples of the
inverse metric $g^{\alpha\beta}$ by  factors $\mathcal{N}(t,x,\phi)$ which satisfy derivative estimates
consistent with \eqref{NF_cond}. This would include the case of wave-maps from $\phi:(\mathcal{M},g)\to 
(\mathcal{M}',g')$
into Riemannian or Lorentzian targets targets where $\phi$ is close to a constant map, in either an
intrinsic or extrinsic formulation.

Another example of such $\mathcal{N}$ would be 
skew symmetric forms $\mathcal{N}^{\alpha\beta}=-\mathcal{N}^{\beta\alpha}$
which obey the first condition on line \eqref{NF_cond}.
%In general \eqref{NF_cond} appears to be the weakest natural condition on a quadratic nonlinearity which 
%would guarantee compatibility with the norms of the present paper.
\end{rem}

In order to prove a-priori estimates we define the following norms:
\begin{align}
		\LLp{\phi}{S_k[0,T]} \ &= \ 
		 \sum_{j=0}^{k+4} \big( \sup_{0\leq t\leq T} \slp{\partial\phi(t)}{H^{13 +3(k-j)}_{x,j} } 
		+ \slp{\phi }{ \WLE^{13 +3(k-j)}_j[0,T]}\big)  
		+ \lp{\phi}{S^\frac{1}{2}_{k+3}[0,T]} \label{big_S_norm} \\ 
		&\hspace{.2in} + \sup_{0\leq t\leq T}\lp{\phi(t)}{\CE^1_{k+2}}
		+ \lp{ \phi }{  S^{1,\infty}_{k+2}[0,T]} + 
		\sum_{i+|J|\leq k}
		\lp{\tau_+^\frac{3}{2} \tau_0^\frac{1}{2}(\tau_-{\partial_u^b})^i
		(\tau_x \partial_x^b)^J\phi}{L^\infty[0,T]}  \ , \notag\\ 
		 \LLp{F}{N_k[0,T]} \ &= \ \sum_{j=0}^{k+4} \slp{F }{(\WLE^{*,13 +3(k-j)}_j+L^1_t H^{13 +3(k-j)}_{x,j})[0,T]}  
%		  +\lp{ F}{ N^\frac{1}{2}_{k+4}[0,T]}
		+\lp{ F}{ N^{\frac{1}{2}}_{k+3}[0,T]}  \label{big_N_norm}\\
		&\hspace{.2in} +\sum_{|J|\leq k}\lp{  \tau_x^2
		(\tau_x \partial)^J F(0)}{  L^2_x}
		+ \sum_{i+|J|\leq k+2}
		\lp{ (\tau_-{\partial_u^b})^i
		(\tau_x \partial_x^b)^J F}{  N^{1,1}[0,T]} \ . \notag
\end{align}

With this notation the main nonlinear theorem of our paper is the following:

\begin{thm}\label{main_nonlin_thm_est}
Let $\phi$ be a smooth vector valued function and let 
$\mathcal{N}(\phi,\partial\phi)$ denote a quadratic form obeying \eqref{NF_cond}, where
we are using the notation
$\mathcal{N}(\phi,\partial\phi)=\mathcal{N}^{\alpha\beta}(t,x,\phi)
\partial_\alpha\phi\partial_\beta\phi$.
Then one has the bounds:
\begin{align}
		\LLp{\phi}{S_k[0,T]} \ &\lesssim_k \ 
		\sum_{|J|\leq 3k+13}\lp{\tau_x  (\tau_x \partial_x)^J\partial  \phi(0) }{ L^2_x}
		+\LLp{\Box_g\phi}{N_k[0,T]} \ , \label{NL_NS_bnd}\\
		\LLp{\mathcal{N}(\phi,\partial\phi)}{N_k[0,T]} \ &\lesssim \ C_k(\LLp{\phi}{S_k[0,T]})
		\LLp{\phi}{S_k[0,T]}(\LLp{\phi}{S_k[0,T]}
		+ \LLp{\Box_g\phi}{N_k[0,T]})   \ , \qquad
		k\geq 18
		 \ .  \label{NL_SN_bnd}
\end{align}
%where the last bound holds for $k\geq 30$, 
The functions $C_k(\cdot)$ are locally bounded functions  determined by $k$
as well as the functions $c_k$ on RHS \eqref{NF_cond}.
\end{thm}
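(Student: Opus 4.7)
The two bounds have different flavors: \eqref{NL_NS_bnd} is a \emph{linear} reassembly of Theorems \ref{main_thm}, \ref{glob_sob_thm} and \ref{main_lin_thm} applied at all the regularities and vector field counts appearing in the composite norm \eqref{big_S_norm}, while \eqref{NL_SN_bnd} is a genuine multilinear estimate that exploits the generalized null structure \eqref{NF_cond} in Bondi coordinates. I would prove the two bounds independently; the fact that they close together at $k\geq 18$ is then a regularity counting check, based on the fixed constants $13, 3, k+2, k+3, k+4$ built into \eqref{big_S_norm}.

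\textbf{Proof of \eqref{NL_NS_bnd}.} I apply each piece of Theorem \ref{main_thm} and Theorem \ref{glob_sob_thm} to the corresponding block of the $\slp{\cdot}{S_k}$ norm: the estimate \eqref{LE_null_est_k} handles the first block at regularity $s=13+3(k-j)$ with $j$ vector fields for $0\leq j\leq k+4$; \eqref{Sa_est_k} at $a=\tfrac12, k+3$ handles the $S^{1/2}_{k+3}$ block; \eqref{CE_est_k} handles the $\CE^1_{k+2}$ and $S^{1,\infty}_{k+2}$ block; and \eqref{main_L00_est} handles the $L^\infty$ block. The interior low-frequency terms appearing on the RHS of each such estimate, of the form $\slp{\phi}{H^{s+3}_{j-1}(r<B_a)[0,T]}$ and $\slp{\tau_+^{a-1}\phi}{H^{1}_{k+1}(r<B_a)[0,T]}$, are lower order: since $\tau_+\lesssim 1$ on the bounded region $r<B_a$, each such term is dominated by an element of the first block of \eqref{big_S_norm} at either one fewer vector field (justifying the iteration downward from $j=k+4$ to $j=0$), or at three more Sobolev derivatives (justifying the $+3$ pattern in the regularity budget $13+3(k-j)$). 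Summing over blocks yields \eqref{NL_NS_bnd}, with $N(k)=3k+13$ counting both the $j=0$ block and the vector fields picked up by the $a=1$ and Sobolev pieces.

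\textbf{Proof of \eqref{NL_SN_bnd}.} Expanding in the Bondi frame,
\begin{equation*}
\mathcal{N}(\phi,\partial\phi) \ = \ \mathcal{N}^{uu}({\partial_u^b}\phi)^2 + 2\mathcal{N}^{ui}\,{\partial_u^b}\phi\,\partial_i^b\phi + \mathcal{N}^{ij}\,\partial_i^b\phi\,\partial_j^b\phi \ .
\end{equation*}
After applying a product $\Gamma^I$ of vector fields in $\mathbb{L}$ and using Leibniz, one gets schematic sums $(\Gamma^{I_1}\mathcal{N}^{\alpha\beta})(\Gamma^{I_2}\partial_\alpha\phi)(\Gamma^{I_3}\partial_\beta\phi)$ with $|I_1|+|I_2|+|I_3|\leq |I|$. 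Since $\mathbb{L}_0$ consists of Bondi derivatives and $S,\Omega_{ij}$ act as weighted derivations compatible with the scalings $\tau_-{\partial_u^b}$ and $\tau_x\partial_x^b$ in \eqref{NF_cond}, the generalized null structure is preserved on each $\Gamma^{I_1}\mathcal{N}^{\alpha\beta}$. I then perform a high--low decomposition, placing the factor with $\leq k/2+O(1)$ vector fields in $L^\infty$ via the Klainerman--type block of $\slp{\phi}{S_k}$ (which gives $|\partial\phi^{(\lfloor k/2\rfloor)}|\lesssim\tau_+^{-3/2}\tau_0^{-1/2}$) and the other factor in the appropriate $L^2$-type piece of $\slp{\phi}{S_k}$. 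For the $\mathcal{N}^{ui}$ and $\mathcal{N}^{ij}$ terms, at least one good derivative $\partial_x^b\phi$ appears, which is controlled by the strong peeling weight $\tau_+^a(\partial_x^b\phi,\tau_x^{-1}\phi)$ in $S^a$ (no $\tau_0$ loss), and the estimate closes immediately. For the dangerous $\mathcal{N}^{uu}({\partial_u^b}\phi)^2$ term, the $\tau_0$ gain from \eqref{NF_cond} combines with the weight $\tau_+^{1/2}\tau_0^{1/2}{\partial_u^b}\phi$ in $S^{1/2}$ and the $L^\infty$ bound $\tau_+^{-3/2}\tau_0^{-1/2}$ to produce exactly the weight $\tau_+^{1/2}\tau_0^{1/2}$ demanded by the $\LE^*$ portion of $N^{1/2}$. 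The $\NLE^*, L^1_t$, and $N^{1,1}$ components are handled analogously, with the wave-zone cutoff in $\NLE$ matching the restriction in \eqref{Zk_defn}. The interior $L^2(r<R_0)$ and $L^1_t H^s_x$ pieces use no null structure; on bounded $r$ the first block of \eqref{big_S_norm} provides pointwise control of $\partial\phi$ via Sobolev embedding, and Cauchy--Schwarz closes. The initial-data term $\sum_{|J|\leq k}\slp{\tau_x^2(\tau_x\partial)^J F(0)}{L^2_x}$ is bounded pointwise by $|F|\lesssim C_k(|\phi|)|\partial\phi|^2$ and split $L^\infty\cdot L^2$ using the $t=0$ initial-data Sobolev embedding implied by $\slp{\phi}{S_k}$.

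\textbf{Main obstacle.} The sharp step is the $({\partial_u^b}\phi)^2$ estimate: the generalized null condition provides \emph{exactly} one extra factor of $\tau_0$, which is precisely what is needed since each ${\partial_u^b}\phi$ only carries $\tau_0^{1/2}$ peeling in the $S^{1/2}$ norm. Any weakening of \eqref{NF_cond} would produce a logarithmic divergence in the time integral. The second obstacle is purely bookkeeping: matching the regularity budgets $13+3(k-j)$, $k+2$, $k+3$, $k+4$ in \eqref{big_S_norm}--\eqref{big_N_norm} so that the high-low split of up to two factors of $\phi$ fits inside $\slp{\phi}{S_k}$ simultaneously. The choice $k\geq 18$ is forced by needing at least $\lfloor k/2\rfloor + O(1)$ vector fields available on the $L^\infty$ factor while still leaving enough on the $L^2$ factor to feed back into $\slp{F}{N_k[0,T]}$ at level $k$.
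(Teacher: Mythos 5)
Your architecture matches the paper's: \eqref{NL_NS_bnd} is proved block by block by feeding the interior error term of each linear estimate into a previously controlled block, and \eqref{NL_SN_bnd} rests on a Leibniz rule preserving the null structure, a high--low split placing the low factor in the $L^\infty$ block of $S_k$, and weight counting for the $\mathcal{N}^{uu}({\partial_u^b}\phi)^2$ term. Two points need repair. First, the claim that $\tau_+\lesssim 1$ on $r<B_a$ is false ($\tau_+\approx\la t\ra$ there); what is true is that the weights $\tau_+^{a-1}$, $a\leq 1$, are bounded. More importantly, the $\ell^1_t H^1_{k+3}(r<B_1)$ interior term produced by \eqref{CE_est_k} is \emph{not} dominated by the first ($\WLE$) block, which only gives $L^2_t$ integrability on compact spatial sets: one needs the $\tau_+^{1/2}$-weighted $\LE$ norms of the $S^{1/2}_{k+3}$ block to gain $\ell^1_t$ summability via Cauchy--Schwarz over dyadic time blocks. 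This is precisely why $S^{1/2}_{k+3}$ is built into \eqref{big_S_norm}, and the cascade must run $\WLE\to S^{1/2}\to S^{1,\infty}$ in that order.

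Second, and more seriously, your sketch of \eqref{NL_SN_bnd} never explains why $\LLp{\Box_g\phi}{N_k[0,T]}$ appears on the right-hand side. The $N^{1,1}$ and initial-data components of \eqref{big_N_norm} require bounding $(\tau_-{\partial_u^b})^i(\tau_x\partial_x^b)^J\mathcal{N}$ for $i+|J|\leq k+2$; after the Leibniz rule the high factor carries up to $k+2$ of the weighted coordinate derivatives $\tau_-{\partial_u^b}$, $\tau_x\partial_x^b$, and these must be converted into admissible vector fields $\Gamma\in\mathbb{L}$ before they can be absorbed by the $L^2$-based pieces of $S_k$, all of which are built from $\mathbb{L}$. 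That conversion is the Klainerman--Sideris inequality \eqref{KS_iden_higher}, and it is exactly this step that produces the $\Box_g\phi$ error terms in \eqref{NL_SN_bnd}. Without it your high--low split has nowhere to put the high factor for those two components. Your weight counting for the $\mathcal{N}^{uu}$ term and your treatment of the $\WLE^{*}$ and $N^{1/2}$ components are otherwise consistent with the paper's Proposition \ref{N_space_prop}, though the threshold $k\geq 18$ arises there from the unbalanced case $\max\{j_1,j_2\}\geq k$ (forcing $\min\{j_1,j_2\}\leq 4$ and Sobolev index at most $13$) rather than from a $\lfloor k/2\rfloor$ split.
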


From this and a standard continuity argument one has the nonlinear analog of Theorem \ref{main_lin_thm}:

\begin{thm}[Peeling Estimates for Solutions to Null Form Systems]\label{main_nonlin_thm}
Suppose quadratic  forms $\mathcal{N}$ are given which
satisfy \eqref{NF_cond}. Let $\phi$ be a sufficiently smooth and well localized  
solution to the system of semilinear equations:
\begin{equation}
		\Box_g \phi \ = \ \mathcal{N}^{\alpha\beta}(t,x,\phi)
		\partial_\alpha\phi\partial_\beta\phi \ , \label{box_NF}
\end{equation}
which is assumed to hold   on a time interval $[0,T]$.
Then given  any $k\geq 18$  there exists  an $\epsilon_0=\epsilon_0(k)>0$  
such that one has the a-priori estimate:
\begin{equation}
		 \sum_{|J|\leq 3k+13}\lp{\tau_x (\tau_x\partial_x)^J\partial  \phi(0) }{ L^2_x} \ = \ \epsilon \ \leq \ \epsilon_0
		 \ \ \ \Longrightarrow \ \ \ 
		\LLp{\phi}{S_k[0,T]} \ \lesssim_k \ \epsilon \ . \notag
\end{equation}
In particular for sufficiently smooth, small, and well localized  
initial data the solution to \eqref{box_NF} exists globally and enjoys the peeling estimates:
\begin{equation}
		\sum_{i+|J|\leq k}
		\lp{\tau_+^\frac{3}{2} \tau_0^\frac{1}{2}(\tau_-{\partial_u^b})^i
		(\tau_x \partial_x^b)^J\phi}{L^\infty[0,T]} \ \lesssim_k  \ \epsilon \ , \notag
\end{equation}
for the same $k$ as above.  
\end{thm}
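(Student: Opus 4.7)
The plan is to close a standard continuity/bootstrap argument directly from the two estimates of Theorem~\ref{main_nonlin_thm_est}. Fix $k\geq 18$ and set $X(T)=\LLp{\phi}{S_k[0,T]}$, $Y(T)=\LLp{\Box_g\phi}{N_k[0,T]}$. Let $T^{*}$ be the supremum of those $T\in[0,T_{\max})$ for which the bootstrap hypothesis $X(T)\leq M\epsilon$ holds, where $M=M(k)$ is a large constant to be fixed shortly and $T_{\max}$ is the maximal time of smooth existence guaranteed by the standard local well-posedness theory for \eqref{box_NF} applied to the initial data. Continuity of $X(\cdot)$ in $T$ together with smallness of the data give $T^{*}>0$.

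On $[0,T^{*}]$ the equation \eqref{box_NF} substituted into \eqref{NL_SN_bnd} yields
\[
Y(T) \ \lesssim \ C_k(M\epsilon)\, M\epsilon \, \bigl(M\epsilon + Y(T)\bigr).
\]
Choose $\epsilon_{0}$ small enough (depending on $M$ and $k$) that $C_k(M\epsilon)\,M\epsilon$ is bounded by a fixed small absolute constant; this allows the $Y(T)$ term on the right to be absorbed into the left, producing $Y(T)\lesssim (M\epsilon)^{2}$. Substituting this back into the linear estimate \eqref{NL_NS_bnd} gives $X(T)\leq C_k \epsilon + C_k (M\epsilon)^{2}$. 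Selecting $M=2C_k$ and shrinking $\epsilon_{0}$ once more so that $C_k M\epsilon_{0}\leq \frac{1}{2}$, we obtain $X(T)\leq \frac{M}{2}\epsilon$, which strictly improves the bootstrap assumption. A standard open/closed argument now forces $T^{*}=T_{\max}$ and the bound $X(T)\leq M\epsilon$ persists throughout the lifespan of the solution.

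Global existence follows from a routine continuation criterion: the weighted $L^{\infty}$ pointwise term in \eqref{big_S_norm}, augmented by Sobolev embedding applied to the $H^{13+3(k-j)}_{x,j}$ summands (in particular at $j=k+4$), controls enough pointwise derivatives of $\phi$ and of $\partial\phi$ to keep the coefficients of $\mathcal{N}(\phi,\partial\phi)$ bounded in the topology required by the local theory for \eqref{box_NF}. Since $X(T)\leq M\epsilon$ remains finite, the smooth solution cannot break down in finite time, hence $T_{\max}=\infty$. The stated peeling estimate is then read off directly from the pointwise summand in the definition of $\LLp{\phi}{S_k[0,T]}$ in \eqref{big_S_norm}, and the top-order data bound reduces to the hypothesis of the theorem.

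The only technical subtlety lies in the absorption step, which demands that the right hand side of \eqref{NL_SN_bnd} be genuinely superlinear in $X+Y$. This is precisely what the generalized null-form hypothesis \eqref{NF_cond} delivers inside Theorem~\ref{main_nonlin_thm_est}: the extra $\tau_{0}$ weight on the $\mathcal{N}^{uu}$ component tames the otherwise logarithmically divergent $({\partial_u^b}\phi)^{2}$ self-interaction when measured in the $N^{1,1}$ and $N^{1/2}$ pieces of $\LLp{\cdot}{N_k}$. Aside from verifying that this quadratic structure is indeed inherited by the bootstrap inequality, the remainder of the proof is entirely structural, all analytic content having been concentrated in Theorem~\ref{main_nonlin_thm_est}.
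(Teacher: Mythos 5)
Your proposal is correct and coincides with the paper's intended argument: the paper derives Theorem \ref{main_nonlin_thm} from Theorem \ref{main_nonlin_thm_est} by exactly the "standard continuity argument" you spell out, namely bootstrapping $\LLp{\phi}{S_k}\leq M\epsilon$, absorbing $\LLp{\Box_g\phi}{N_k}$ via the smallness of $C_k(M\epsilon)M\epsilon$ in \eqref{NL_SN_bnd}, and feeding the resulting quadratic bound back into \eqref{NL_NS_bnd} to close. The continuation and peeling statements are read off from the constituent pieces of \eqref{big_S_norm} as you describe, so there is nothing to add.
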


%-------------------------------------------------------------------------

\subsection{Some remarks and references}

The vector field method for the wave equation on asymptotically flat 
spacetimes has a long and well developed history.
%We highlight some of this history here although we do not attempt a comprehensive discussion. 
In the case of small perturbations of Minkowski space there is Klainerman's original 
works \cite{K_vect}, \cite{K_null}, followed by  the Christodoulou-Klainerman proof \cite{CK} 
of the nonlinear stability  of Minkowski space itself.
In the latter work a vector field method is developed for radiating spacetimes  where 
 control is 
ultimately provided through certain peeling estimates for the curvature tensor of $g$. 
In a related vein there is work of Bieri \cite{B} concerning nonlinear stability under the much less restrictive
decay assumptions on the initial data. In this case the peeling properties of the metric end up being closer the the thresholds
\eqref{mod_coords}.

Going in a  different direction there
is  the   proof of stability of Minkowski space and its asymptotics in wave coordinates
by Lindblad-Rodnianski and Lindblad \cite{LR}, \cite{L_asym}. Here one proceeds
more directly via Minkowksi and Schwarzschild vector fields.   
This produces  estimates with a small
loss due to the  divergence  of radiating null hypersurfaces compared to their  
stationary counterparts. 
%For the spacetimes these authors construct sharp decay for scalar waves 
%should be available using spatial wave coordinates $x^i$ and 
%an optical function $u$  as in Definition \ref{rad_metrics}. 
On the other hand strong
peeling estimates such \eqref{lin_peeling} cannot hold for the (Minkowski difference of the) metric 
in wave coordinates due to obstructions at the level of semilinear terms. More specifically,
condition \eqref{NF_cond} seems to fail when writing the Einstein equations in any reasonable way
as a system of second order equations for the metric.

Next, we turn to vector field methods on spacetimes which are locally large perturbations of Minkowski space.
There are mainly two innovations  here, and we make a heavy use of both
  in the sequel. The first innovation,  due to 
Klainerman-Sideris \cite{KS}, allows one to replace Lorentz boosts  with
 certain weighted  identities involving
the wave operator $\Box_g$ (specifically see Lemma \ref{KS_lem} below). The second innovation,
due initially to Keel-Smith-Sogge \cite{KSS} and used by many authors,
concerns the use of local energy decay estimates 
such as \eqref{class_LE_assm} in order to control  localized errors generated by
commutations with vector fields.  For further background and developments concerning the combination of
local energy decay and vector fields on various large perturbations of Minkowski space
we refer the reader to \cite{MS}, \cite{MS_null}, \cite{BH}, \cite{W_null}, and 
\cite{Y} and the extensive references contained therein.

Concerning the application of vector fields to the class of black hole spacetimes there has recently been a great deal of progress.
The works most closely related to the present paper
are \cite{BS}, \cite{DR_LE}, \cite{Luk_lin} and \cite{Luk_nonlin} which proceed by way of conformal energy  
estimates and time dependent weights.  We also mention the recent work of 
Lindblad-Tohaneanu \cite{LT} concerning  certain quasilinear 
wave equations (in this case the vector field approach is similar to \cite{LR}).
%We especially highlight  \cite{Luk_lin} and \cite{Luk_nonlin} which 
%also contain applications to null form equations. 
We  remark that all of these works are written specifically to cover  the case of Schwarzschild 
or Kerr with small angular momentum. 

For more general ``black hole''  backgrounds there is the recent work of Moschidis \cite{M_VF}
building on ideas of Dafermos-Rodnianski \cite{DR_phys} (see also Proposition 6.1 of  \cite{S_WM} where weighted estimates
based on outgoing null vector fields were   introduced).
Here one produces a vector field method with time 
independent weights. This  shares a number of similarities with the present paper, in particular the production
of a  family of weighted spacetime energy estimates depending on a parameter $0\leq a\leq 1$ which interpolates
between the standard local energy decay estimates and the conformal energy. On the other hand our
 approach and the one of Dafermos-Rodnianski and Moschidis
 appear to diverge  in many  ways, especially  regarding   the issue of time 
dependent weights. Our method also appears to be more directly applicable to studying  nonlinear problems.  Indeed, with the
machinery of estimates \eqref{LE_null_est_k}--\eqref{CE_est_k} our proof of Theorem \ref{main_nonlin_thm_est} 
 occupies only a few additional pages.

%The main points of divergence 
%between \cite{M_VF} and the present work seem to be  on the issue  of scaling vector fields and uniform
%time weights in the region $r<\frac{1}{2}t$. Our outgoing conditions on the metric are also
%weaker than those proposed in \cite{M_VF}, while the peeling estimates we achieve are a bit more precise.
%We are also not aware of any applications of time independent weights to non-linear problems except
%some special cases which are either close to Schwarzschild space.

Finally, one should also mention the works on Price's law 
for scalar waves \cite{T_price} and  \cite{MTT}  which uses vector fields as a launching point 
for much sharper estimates. An interesting open question in this regard is to find an appropriate collection of norms
capable of producing interior decay rates better than $t^{-\frac{3}{2}}$, 
but which are still  compatible with  semilinear problems  such as in Theorem \ref{main_nonlin_thm}.

We close with a few additional comments  on the specific methods we employ in this work. These are a natural 
outgrowth of the papers \cite{BS}, \cite{S_MKG}, \cite{S_WM},  \cite{O_thesis}, and  \cite{DR_phys}. In \cite{S_MKG} 
we developed a conformal multiplier
technique that works well for perturbations of the wave equation, and then used it to produce a collection of conformal
energies with weights depending on a parameter. These ideas will again play a central role in the present work. In \cite{S_WM}
we introduced a Morawetz type estimate 
based on the multiplier $f(r) (\partial_t+\partial_r)$
and used it to control null tangential derivatives for solutions to a certain wave equation
with a potential (Proposition 6.1 of  that paper). This idea was expanded in \cite{DR_phys}
to prove global decay estimates. We will use similar multipliers in this paper
to produce a key portion of our estimates.\footnote{For a complete the list of multipliers
we use here, see Lemma \ref{core_mult_lem} and the proofs immediately following.} 
 In \cite{BS} we produced a weighted in time local energy decay bound, and then used
this to control the multiplier error from the conformal vector field. This technique is used again here for a wider range
of weights, albeit assuming the local energy decay bound as a black box.
 Finally, the first authors thesis \cite{O_thesis} and \cite{O_VF} developed  vector fields
on radiating nontrapping spacetimes with even weaker local bounds on $\partial_t g$. We will
use much of the setup from  \cite{O_VF} in the present work.

%-------------------------------------------------------------------------

\subsection{Outline of the paper}

In Section \ref{conf_mult_sect}
we record a number of algebraic identities for  energy momentum 
and  deformation tensors. These form the basis for all  the
multiplier and commutator identities needed in the sequel.

In Section \ref{bondi_alg_sect} we specialize the formulas of Section \ref{conf_mult_sect}
to the case of Bondi coordinates satisfying Definition \ref{rad_metrics} and  
vector fields from Definition \ref{mod_vect_defn}.

Section \ref{asym_bondi_sect} is the  technical heart of the paper. We begin
by producing symbol bounds for the Lie derivatives of  two-tensors which satisfy certain natural
asymptotic estimates.
%that model the behavior of deformation tensors arising from 
%multipliers and commutators of the wave equation with metrics as in Definition \ref{rad_metrics}.
Building on this we construct a generic multiplier estimate for vector fields satisfying certain
structural properties. This  estimate covers all of the multiplier bounds needed in the sequel, and may
be useful for other applications as well which is one of our motivations for introducing
an  axiomatic setup.
Following this we move on to estimates for commutators. This is done  in a way that 
allows us to perform some delicate integration by parts  later in Section \ref{comm_sect}, and is also 
convenient form proving pointwise bounds.
We end by proving  several generalized Klainerman-Sideris type identities, and then use these
to conclude the discussion of pointwise bounds for commutators.

In Section \ref{L2_est_sect} we apply the abstract multiplier bound from Section  
\ref{asym_bondi_sect} to produce the three  main estimates of Theorem \ref{main_thm}
at  level $k=0$. For the bounds \eqref{LE_null_est_k} and \eqref{CE_est_k} this is done in such a way 
that the source error term can be integrated by parts; this form is needed later to establish
bounds for commutators.

In Section \ref{comm_sect} we apply the formulas of the previous two sections
to prove the estimates of Theorem \ref{main_thm} for an arbitrary number of vector fields.  For 
the bound \eqref{Sa_est_k} this is more or less straight froward. However, for the bounds
\eqref{LE_null_est_k} and \eqref{CE_est_k} the argument is more involved because one cannot
proceed directly via H\"older's inequality, and  several  additional integrations  seem
 necessary to close the argument.
 
Section \ref{L00_sect} we prove some sharp $L^\infty$ decay estimates for functions
in terms of the conformal energy norms \eqref{CE_def}, \eqref{S100_def}, and \eqref{N11_def}.
This established Theorem \ref{glob_sob_thm}.

In Section \ref{nonlin_sect} we prove Theorem \ref{main_nonlin_thm_est} which concludes
the main nonlinear application of the paper.

In Appendix 1 we provide proofs of a number of the remarks following Definition \ref{rad_metrics}.
In particular for  large class of stationary metrics which are also spherically symmetric to highest order,
we construct optical functions satisfying bounds \eqref{mod_coords}. 
Such metrics include radial  long rage perturbations 
of the Kerr family of metrics.

In Appendix 2 we show that estimate \eqref{basic_LE} implies estimate \eqref{basic_stat_LE} at least when the metric satisfies 
structural assumptions similar to the Kerr family  with angular momentum in a certain range.

Finally, in Appendix 3 we record a number of elementary Hardy and trace type estimates. 
These will be used throughout the body of the paper.

%-------------------------------------------------------------------------
%%%%%%%%%%%%%%%%%%%%%%%%%%%%%
%-------------------------------------------------------------------------

\section{Formulas for Commutators and Multipliers}\label{conf_mult_sect}

In this section we recall  some basic formulas which underlie the energy 
method for the  wave equation on curved backgrounds.
We do this first  with respect to the metric original $g$.  We then generalize
such formulas cover the case of metric conformal to $g$. The latter will form the basis
for most of the multiplier estimates in the sequel.

%-------------------------------------------------------------------------

\subsection{Identities involving $g$}

We begin with a basic definition:

\begin{defn}[Normalized deformation tensor]
Let $X$ be a vector field, and set ${}^{(X)}\!\pi=\mathcal{L}_X g$. Then we define 
${}^{(X)} \widehat{\pi} = {}^{(X)}\!\pi - \frac{1}{2}g \cdot \hbox{trace}({}^{(X)}\!\pi)$
to be the ``normalized deformation tensor'' of $X$.
\end{defn}

The quantity ${}^{(X)} \widehat{\pi}$ underlies all of our formulas for multipliers
and commutators as the following result shows:

\begin{lem}[Formulas involving ${}^{(X)} \widehat{\pi}$]\label{basic_iden_lemma}
Let $\phi$ be a scalar field, and $X$ a vector field. As usual set 
$T_{\alpha\beta}=\partial_\alpha\phi\partial_\beta\phi - \frac{1}{2}g_{\alpha\beta} g^{\alpha'\beta'}
\partial_{\alpha'}\phi\partial_{\beta'}\phi$ to be the energy momentum tensor of $\phi$. Then 
one has the identities:
\begin{subequations}
\begin{align}
		{}^{(X)} \widehat{\pi}^{\alpha\beta} \ &= \ -\frac{1}{\sqrt{|g|}} X(\sqrt{|g|} g^{\alpha\beta})
		- g^{\alpha\beta }\partial_\gamma X^\gamma + g^{\alpha\gamma}\partial_\gamma( X^\beta)
		+ g^{\beta\gamma}\partial_\gamma( X^\alpha) \ , \label{pi_hat_formula}\\
		[\Box_g,X] \ &= \ \nabla_\alpha {}^{(X)} \widehat{\pi}^{\alpha\beta}\nabla_\beta
		- \frac{1}{2}\hbox{trace}({}^{(X)}\! \widehat{\pi})\Box_g
%		+(\nabla_\alpha X^\alpha)\Box_g 
		\ , \label{X_comm_formula}\\
		\nabla^\alpha( T_{\alpha\beta}X^\beta) \ &= \ \frac{1}{2}{}^{(X)} \widehat{\pi}^{\alpha\beta}
		\partial_\alpha\phi \partial_\beta\phi + \Box_g\phi X\phi \ . \label{divergence_formula}
\end{align}
\end{subequations}
Finally, if $q$ is a smooth function then one has the commutator formula:
\begin{equation}
		{}^{(qX)} \widehat{\pi}^{\alpha\beta} - q\, {}^{(X)} \widehat{\pi}^{\alpha\beta} \ = \ 
		X^\alpha \nabla^\beta q + X^\beta \nabla^\alpha q - g^{\alpha\beta} Xq
		\ . \label{comm_pi_form}
\end{equation}
\end{lem}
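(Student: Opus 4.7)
The four identities in Lemma \ref{basic_iden_lemma} are standard pieces of the curved-space energy method, and the plan is to establish them in the order \eqref{pi_hat_formula}, \eqref{comm_pi_form}, \eqref{divergence_formula}, \eqref{X_comm_formula}. The commutator formula \eqref{X_comm_formula} will be the main obstacle and is saved for last.

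For \eqref{pi_hat_formula}, the key step is to differentiate $g^{\alpha\gamma}g_{\gamma\beta}=\delta^\alpha_\beta$ along $X$ to conclude that $\mathcal{L}_X g^{\alpha\beta}=-\,{}^{(X)}\!\pi^{\alpha\beta}$. Expanding the left-hand side as $X^\gamma\partial_\gamma g^{\alpha\beta}-g^{\beta\gamma}\partial_\gamma X^\alpha-g^{\alpha\gamma}\partial_\gamma X^\beta$ and subtracting the trace correction $\frac{1}{2}g^{\alpha\beta}g_{\gamma\delta}\,{}^{(X)}\!\pi^{\gamma\delta}=g^{\alpha\beta}\nabla_\gamma X^\gamma$ then produces the claimed coordinate expression, once $\nabla_\gamma X^\gamma=\frac{1}{\sqrt{|g|}}\partial_\gamma(\sqrt{|g|}X^\gamma)$ is used to absorb the $X(\sqrt{|g|})$ piece into the first term on the right of \eqref{pi_hat_formula}. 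Identity \eqref{comm_pi_form} follows immediately from the Leibniz rule $\mathcal{L}_{qX}g_{\alpha\beta}=q\,\mathcal{L}_X g_{\alpha\beta}+X_\alpha\partial_\beta q+X_\beta\partial_\alpha q$ after raising indices, noting that the trace of $\mathcal{L}_{qX}g$ receives an extra contribution $2\,Xq$ which produces the $-g^{\alpha\beta}Xq$ term upon forming $\widehat{\pi}$.

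Identity \eqref{divergence_formula} is the classical stress--energy conservation argument. The product rule combined with the symmetry $\nabla_\alpha\nabla_\beta\phi=\nabla_\beta\nabla_\alpha\phi$ gives $\nabla^\alpha T_{\alpha\beta}=\Box_g\phi\,\partial_\beta\phi$, and contracting against $X^\beta$ while using the symmetry of $T_{\alpha\beta}$ to symmetrize $\nabla^\alpha X^\beta$ produces $\nabla^\alpha(T_{\alpha\beta}X^\beta)=\Box_g\phi\,X\phi+\frac{1}{2}T_{\alpha\beta}\,{}^{(X)}\!\pi^{\alpha\beta}$. A short calculation using the definitions of $T_{\alpha\beta}$ and $\widehat{\pi}$ then shows that the trace corrections built into each factor cancel against one another to yield the substitution $T_{\alpha\beta}\,{}^{(X)}\!\pi^{\alpha\beta}={}^{(X)}\!\widehat{\pi}^{\alpha\beta}\partial_\alpha\phi\partial_\beta\phi$.

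The main work lies in \eqref{X_comm_formula}, which I plan to derive by dualizing through the divergence identity just established. For compactly supported scalars $\phi,\psi$ one polarizes \eqref{divergence_formula} to obtain $\nabla^\alpha(T_{\alpha\beta}[\phi,\psi]X^\beta)=\Box_g\phi\,X\psi+\Box_g\psi\,X\phi+{}^{(X)}\!\widehat{\pi}^{\alpha\beta}\partial_\alpha\phi\partial_\beta\psi$, where $T_{\alpha\beta}[\phi,\psi]=\partial_\alpha\phi\partial_\beta\psi+\partial_\alpha\psi\partial_\beta\phi-g_{\alpha\beta}g^{\gamma\delta}\partial_\gamma\phi\partial_\delta\psi$ is the polarized stress--energy; integrating against $\sqrt{|g|}\,dx$ annihilates the left side by the divergence theorem. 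Formal self-adjointness of $\Box_g$ rewrites $\int(\Box_g\psi)\,X\phi\,\sqrt{|g|}\,dx=\int\psi\,\Box_g(X\phi)\,\sqrt{|g|}\,dx$; moving the $X$ past the volume form in the first term produces the remainder $-\int\Box_g\phi\cdot\psi\,\nabla_\alpha X^\alpha\,\sqrt{|g|}\,dx$; and integration by parts on the $\widehat{\pi}$ term yields $-\int\psi\,\nabla_\alpha({}^{(X)}\!\widehat{\pi}^{\alpha\beta}\nabla_\beta\phi)\,\sqrt{|g|}\,dx$. Combining these and using the trace identity $\nabla_\alpha X^\alpha=-\tfrac{1}{2}\hbox{trace}({}^{(X)}\!\widehat{\pi})$ (immediate from tracing \eqref{pi_hat_formula} in $n=4$), one arrives at $\int\psi\bigl([\Box_g,X]\phi-\nabla_\alpha({}^{(X)}\!\widehat{\pi}^{\alpha\beta}\nabla_\beta\phi)+\tfrac{1}{2}\hbox{trace}({}^{(X)}\!\widehat{\pi})\Box_g\phi\bigr)\sqrt{|g|}\,dx=0$ for arbitrary $\psi$, and \eqref{X_comm_formula} follows pointwise. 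The main obstacle is the bookkeeping of the trace correction: it arises from combining the $X(\sqrt{|g|})$ contribution generated in the first integration by parts with the $g_{\gamma\delta}{}^{(X)}\!\pi^{\gamma\delta}$ piece hidden inside $\widehat{\pi}$, and one must verify these combine into precisely the $-\tfrac{1}{2}\hbox{trace}({}^{(X)}\!\widehat{\pi})\Box_g\phi$ correction.
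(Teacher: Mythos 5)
Your treatments of \eqref{pi_hat_formula}, \eqref{comm_pi_form}, and \eqref{divergence_formula} coincide with the paper's: the first by Lie-differentiating $g^{\alpha\gamma}g_{\gamma\beta}=\delta^\alpha_\beta$ and subtracting $\nabla_\gamma X^\gamma=\frac{1}{\sqrt{|g|}}\partial_\gamma(\sqrt{|g|}X^\gamma)$, the second as an immediate consequence, and the third via $\nabla^\alpha T_{\alpha\beta}=\Box_g\phi\,\partial_\beta\phi$ together with the (correct) observation that $T_{\alpha\beta}\,{}^{(X)}\!\pi^{\alpha\beta}={}^{(X)}\!\widehat{\pi}^{\alpha\beta}\partial_\alpha\phi\partial_\beta\phi$. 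Where you genuinely diverge is \eqref{X_comm_formula}: the paper first proves the operator identity $[X,\nabla_\alpha\mathcal{R}^{\alpha\beta}\nabla_\beta]=\nabla_\alpha(\mathcal{L}_X\mathcal{R})^{\alpha\beta}\nabla_\beta+\mathcal{R}^{\alpha\beta}\nabla_\alpha(\nabla_\gamma X^\gamma)\nabla_\beta$ for an \emph{arbitrary} contravariant two-tensor $\mathcal{R}$ (using $X(\nabla_\alpha Y^\alpha)-Y(\nabla_\alpha X^\alpha)=\nabla_\alpha[X,Y]^\alpha$ applied to $Y^\alpha=\mathcal{R}^{\alpha\beta}\nabla_\beta\phi$ and $[\mathcal{L}_X,d]=0$), then specializes to $\mathcal{R}=g^{-1}$; your route instead polarizes the multiplier identity and dualizes, identifying $[\Box_g,X]$ weakly against an arbitrary test function $\psi$ and invoking locality of the operators to pass to a pointwise statement. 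Your computation checks out — the correction term is exactly $-\Box_g\phi\,\nabla_\gamma X^\gamma=\tfrac{1}{2}\hbox{trace}({}^{(X)}\!\widehat{\pi})\Box_g\phi$ produced when the $X$ is moved off $\psi$ in $\int\Box_g\phi\,X\psi\,\sqrt{|g|}\,dx$ (it does not need to be ``combined'' with anything hidden in $\widehat{\pi}$, contrary to your closing remark, but that imprecision is harmless). The trade-off is that the paper's general formula \eqref{R_comm_form} is not a throwaway: it is reused repeatedly in Section 4 to commute vector fields with the operators $\nabla_\alpha\mathcal{R}^{\alpha\beta}\nabla_\beta$ arising from earlier commutations, so the duality shortcut, while perfectly valid for the lemma as stated, would leave you re-deriving that tool later.
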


We'll prove each of these formulas separately.

\begin{proof}[Proof of  \eqref{pi_hat_formula} and \eqref{comm_pi_form}]
We have ${}^{(X)} \pi^{\alpha\beta} = 
g^{\alpha\alpha'}g^{\beta\beta'}(\mathcal{L}_x g)_{\alpha'\beta'} =
- X  g^{\alpha\beta} 
+ g^{\alpha\gamma}\partial_\gamma( X^\beta)
+ g^{\beta\gamma}\partial_\gamma( X^\alpha)$. On the other hand 
$\frac{1}{2}\hbox{trace}({}^{(X)}\!\pi)=\nabla_\alpha X^\alpha = \frac{1}{\sqrt{|g|}}\partial_\alpha (\sqrt{|g|}
X^\alpha)=\frac{1}{\sqrt{|g|}}X (\sqrt{|g|}) + \partial_\alpha X^\alpha $. Subtracting the last two
identities gives \eqref{pi_hat_formula}.
A direct application of \eqref{pi_hat_formula} shows \eqref{comm_pi_form}.
\end{proof}

\begin{proof}[Proof of  \eqref{X_comm_formula}]
We begin with a formula that will also be useful in the sequel. Let 
$\mathcal{R}^{\alpha\beta}$ be any contravariant two tensor, then we claim:
\begin{equation}
		[X,\nabla_\alpha \mathcal{R}^{\alpha\beta}\nabla_\beta] 
		\ = \ \nabla_\alpha \td {\mathcal{R}}^{\alpha\beta}\nabla_\beta + \td{\mathcal{S}}^\beta
		\nabla_\beta \ , 
		\qquad \hbox{where\ \ }  \td {\mathcal{R}} \ = \ \mathcal{L}_X\mathcal{R} \ , \ \ 
		\hbox{and\ \ } \td{\mathcal{S}}^\beta \ =  \ \mathcal{R}^{\alpha\beta}
		\nabla_\alpha(\nabla_\gamma X^\gamma) \ . \label{R_comm_form}
\end{equation}
 To prove it, first note
that a straightforward calculation using the coordinate based formula for
$\nabla_\alpha X^\alpha$ above
shows $X(\nabla_\alpha Y^\alpha)- Y(\nabla_\alpha X^\alpha)=\nabla_\alpha [X,Y]^\alpha$
for any pair of vector fields $X$ and $Y$.
Applying this last formula 
to the vector field $Y^\alpha=\mathcal{R}^{\alpha\beta}\nabla_\beta\phi$ and using the
Leibniz rule for Lie derivatives followed by $[\mathcal{L}_X,d]=0$ for the exterior
derivative $d$ gives \eqref{R_comm_form}.

Now apply \eqref{R_comm_form} to  $\mathcal{R}=g^{-1}$. Using 
$(\mathcal{L}_X\mathcal{R})^{\alpha\beta}=-{}^{(X)} {\pi}^{\alpha\beta}$
and $\nabla_\alpha X^\alpha=-\frac{1}{2}\hbox{trace}({}^{(X)}\!\widehat\pi)$ gives \eqref{X_comm_formula}.
\end{proof}

\begin{proof}[Proof of \eqref{divergence_formula}]
A standard calculation shows $\nabla^\alpha( T_{\alpha\beta}X^\beta) = 
\frac{1}{2}{}^{(X)}\pi^{\alpha\beta} T_{\alpha\beta} + \Box_g\phi X\phi$ and
\eqref{divergence_formula} follows easily.
\end{proof}

%-------------------------------------------------------------------------

\subsection{Conformal Changes for Scalar Fields}

In this section we recall a standard formula from geometry.  
Let $ g_{\alpha\beta} $ be a Lorentzian
metric on an $(3+1)$ dimensional spacetime. We consider 
a conformally equivalent metric 
$ \td{g}_{\alpha\beta} $ where
$\Omega^2 \td{g} =g $ 
for some weight function $\Omega>0$. Let $\td\nabla$ denote the Levi-Civita connection of $\td{g}$
and $\Box_{\td{g}}=\td\nabla^\alpha\td\nabla_\alpha$
the corresponding wave operator. Then we have: 

\begin{lem}[Identity for the conformal wave operator]
Let $\Box_g\phi =F$. Then 
%for the wave operator $\Box_{\td{g}}$ of the conformal metric 
%$\Omega^2\td{g}=g$ 
one has the formula:
\begin{equation}
		\Box_{\td{g}}\psi + V\psi \ = \ \Omega^3 F \ , \qquad
		\hbox{where\ \ } \psi \ = \ \Omega \phi \ , \quad
		\hbox{and\ \ } V \ = \ \Omega^3\Box_g \Omega^{-1} \ . \label{conf_wave_iden}
\end{equation}
\end{lem}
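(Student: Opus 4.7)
The cleanest route is to express $F$ directly via the Leibniz rule for $\Box_g$ applied to $\phi = \Omega^{-1}\psi$, and then use the standard conformal transformation law for the wave operator in dimension four to recognize the remaining terms as $\Box_{\tilde g}\psi$.

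First I would compute
\begin{equation*}
    \Omega^3 F \ = \ \Omega^3\Box_g(\Omega^{-1}\psi)
    \ = \ \Omega^3(\Box_g \Omega^{-1})\psi
    + \Omega^2\Box_g\psi + 2\Omega^3 g^{\alpha\beta}\partial_\alpha(\Omega^{-1})\partial_\beta\psi \ ,
\end{equation*}
where the first term on the right is precisely $V\psi$ by definition, and the last term simplifies using $\partial_\alpha \Omega^{-1}=-\Omega^{-2}\partial_\alpha \Omega$ to $-2\Omega\, g^{\alpha\beta}\partial_\alpha\Omega\,\partial_\beta\psi$. Thus it suffices to identify
\begin{equation*}
    \Box_{\tilde g}\psi \ = \ \Omega^2\Box_g\psi - 2\Omega\, g^{\alpha\beta}\partial_\alpha \Omega\, \partial_\beta\psi \ .
\end{equation*}

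To establish this conformal change formula I would use the coordinate expression $\Box_{\tilde g}\psi = |\tilde g|^{-1/2}\partial_\alpha(\sqrt{|\tilde g|}\,\tilde g^{\alpha\beta}\partial_\beta\psi)$ together with the two elementary identities $\sqrt{|\tilde g|}=\Omega^{-4}\sqrt{|g|}$ (valid in $n=4$ since $\det(\Omega^{-2}g)=\Omega^{-8}\det g$) and $\tilde g^{\alpha\beta}=\Omega^2 g^{\alpha\beta}$. These combine to give $\sqrt{|\tilde g|}\,\tilde g^{\alpha\beta}=\Omega^{-2}\sqrt{|g|}\,g^{\alpha\beta}$, so that
\begin{equation*}
    \Box_{\tilde g}\psi \ = \ \frac{\Omega^4}{\sqrt{|g|}}\partial_\alpha\!\left(\Omega^{-2}\sqrt{|g|}\,g^{\alpha\beta}\partial_\beta\psi\right)
    \ = \ \Omega^2\Box_g\psi + \Omega^4 (\partial_\alpha \Omega^{-2})g^{\alpha\beta}\partial_\beta\psi \ ,
\end{equation*}
and the second term is exactly $-2\Omega\, g^{\alpha\beta}\partial_\alpha\Omega\,\partial_\beta\psi$ as required.

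Substituting this identity back into the expansion of $\Omega^3 F$ gives $\Omega^3 F = V\psi+\Box_{\tilde g}\psi$, which is \eqref{conf_wave_iden}. There is no substantive obstacle here—the derivation is a short algebraic manipulation—so the main care is in the bookkeeping of the dimension-four conformal weights. I would emphasize in the writeup that the key reason the formula closes in this form (rather than carrying an explicit $g^{\alpha\beta}\partial_\alpha\Omega\,\partial_\beta\Omega$ term) is that the first-order derivatives of $\psi$ cancel precisely against the drift term in the conformal change of $\Box$.
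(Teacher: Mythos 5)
Your proof is correct and follows essentially the same route as the paper: both rest on the coordinate identity $\sqrt{|\td{g}|}\,\td{g}^{\alpha\beta}=\Omega^{-2}\sqrt{|g|}\,g^{\alpha\beta}$ and the resulting cancellation of the first-order drift term. The only (cosmetic) difference is that by applying the Leibniz rule to $\Box_g(\Omega^{-1}\psi)$ first, you obtain the potential $V=\Omega^3\Box_g\Omega^{-1}$ directly, whereas the paper arrives at it through the intermediate identity $-\Box_g\Omega+2\Omega\,\partial_\alpha\ln\Omega\,\partial^\alpha\ln\Omega=\Omega^2\Box_g\Omega^{-1}$.
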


\begin{proof}
Start with:
\begin{equation}
		\Box_{\td{g}} \ = \ \Omega^{4}\frac{1}{\sqrt{|g|}}\partial_\alpha ( \Omega^{-2}
		\sqrt{|g|} g^{\alpha\beta}
		\partial_\beta) \ = \ \Omega^{2}( \Box_g - 2 g^{\alpha\beta}
		\partial_\alpha \ln(\Omega)\partial_\beta ) \ . \notag
\end{equation}
To eliminate the second term on the RHS we rescale $\phi$ via 
$\psi= \Omega \phi$ which gives us:
\begin{equation}
		\Omega^{-2}\Box_{\td{g}}\psi \ = \ \Box_g\psi - 2 g^{\alpha\beta}
		\partial_\alpha \ln(\Omega)\partial_\beta\psi \ =\  \Omega 
		\Box_g \phi - W\phi \ , \notag
\end{equation}
where:
\begin{equation}
		W \ = \ -\Box_g( \Omega )
		+ 2\Omega 
		\partial_\alpha\ln(\Omega)\partial^\alpha\ln(\Omega)
		 \ = \ \Omega^{2}\Box_g  \Omega^{-1}  \ . \notag
\end{equation}
A straight forward manipulation of the last two lines gives \eqref{conf_wave_iden}.

\end{proof}
 
%-------------------------------------------------------------------------

\subsection{Conformal  Multipliers}

We now combine the identities of the last two section to produce conjugated
weighted $L^2$ identities. 
Our main result here is:

\begin{lem}[The conformal divergence identity]\label{conf_dividen_lem}
Let $\Box_g\phi=F$. Let $X$ be a vector field supported in the exterior region
$\mathbb{R}^3\setminus\mathcal{K}$. Let $\chi(t,x)$ be a smooth function and $\Omega>0$
a smooth weight. Then  in $(t,x)$ coordinates one has the divergence identity:
\begin{equation}
		\dint Q(X,\chi,\Omega,\phi)
%		\td{\nabla}^\alpha {}^{(X)}\!\td{P}_\alpha^\chi \, \Omega^{-4}
		\sqrt{|g|}dxdt
		\ = \ \int_{\mathbb{R}^3\setminus\mathcal{K}}  
		P(X,\chi,\Omega,\phi)
%		{}^{(X)}\!\td{P}_\alpha^\chi g^{\alpha 0}\Omega^{-2}
		\sqrt{|g|}dx \big|_{t=0}^{t=T}   \ , \label{dividen1}
\end{equation}
%and $d\omega=dx^1\wedge dx^2$ for some set of local coordinates 
%$x^A$, $A=1,2$, on $\mathbb{S}^2$. 
where:
\begin{equation}
%		{}^{(X)}\!\td{P}_\alpha^\chi 
		P(X,\chi,\Omega,\phi)
		\ = \ \Omega^{-2}  g^{0\alpha}\partial_\alpha (\Omega\phi ) X (\Omega\phi)  - 
		\frac{1}{2}\Omega^{-2} X^0 ( g^{\alpha  \beta } 
		\partial_{\alpha } (\Omega\phi )
		 \partial_{\beta } (\Omega\phi ) - \chi   V\phi^2)   \ ,    \label{P_chi_iden}
\end{equation}
with $V  =  \Omega^3\Box_g \Omega^{-1}$, and where: 
%where $\chi$ is an arbitrary smooth cutoff. In addition one has the identity:
\begin{equation}
%		\Omega^{-4}\td{\nabla}^\alpha {}^{(X)}\!\td{P}^\chi_\alpha 
		Q(X,\chi,\Omega,\phi) \ = \ 
		F \cdot \Omega^{-1}X(\Omega  \phi) + \Omega^{-2}A^{\alpha\beta}\partial_\alpha(\Omega\phi)
		 \partial_\beta(\Omega\phi) + 
		 B^\chi\phi^2 + C^\chi \phi\, \Omega^{-1}X(\Omega \phi) \ , \label{conf_div_chi}
\end{equation}
with:
\begin{equation}
		A \ = \ \frac{1}{2}  \big({}^{(X)} \widehat{\pi}
		+2X\ln (\Omega) g^{-1}\big)  , \quad
		B^\chi  \ =\     \frac{1}{2}\Omega^{-2}\big(  X(\chi V)- \hbox{trace}(A)\chi V
		\big)  \ ,    \quad
		C^\chi \ = \  \Omega^{-2}(\chi - 1)V \ . 
		\label{AB_formulas}
\end{equation}
On the RHS of  the last two lines above all contractions are computed with respect to $g$.
\end{lem}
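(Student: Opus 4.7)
My plan is to reduce the identity to the standard divergence identity \eqref{divergence_formula} applied to the conformally rescaled field $\psi=\Omega\phi$, which solves $\Box_{\tilde g}\psi=\Omega^3 F-V\psi$ by \eqref{conf_wave_iden}, and then carefully convert back to the $g$-volume form, handling the potential term by an integration-by-parts trick governed by the cutoff $\chi$.

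First I would apply \eqref{divergence_formula} with $g$ replaced by $\tilde g$ to obtain
\[
\tilde\nabla^\alpha(\tilde T_{\alpha\beta}X^\beta)\ =\ \tfrac{1}{2}{}^{(X)}\widehat{\td\pi}^{\alpha\beta}\partial_\alpha\psi\,\partial_\beta\psi+(\Omega^3F-V\psi)X\psi ,
\]
and integrate against the volume element $\sqrt{|\td g|}\,dxdt=\Omega^{-4}\sqrt{|g|}\,dxdt$ over the slab $[0,T]\times(\mathbb{R}^3\setminus\mathcal{K})$. Since $X$ is supported away from $\partial\mathcal{K}$ the spatial boundary contributions vanish, and Stokes' theorem produces the $t=0$ and $t=T$ terms that account for the first two pieces of $P$ in \eqref{P_chi_iden} (after rewriting $\psi=\Omega\phi$). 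The key algebraic observation needed here is the conformal relation
\[
{}^{(X)}\widehat{\td\pi}^{\alpha\beta}\ =\ \Omega^{2}\bigl({}^{(X)}\widehat{\pi}^{\alpha\beta}+2\,X\!\ln(\Omega)\,g^{\alpha\beta}\bigr)\ =\ 2\Omega^2 A^{\alpha\beta},
\]
which I would derive directly from \eqref{pi_hat_formula} using $\sqrt{|\td g|}\,\td g^{\alpha\beta}=\Omega^{-2}\sqrt{|g|}\,g^{\alpha\beta}$. This identifies the first bulk term in $Q$. The source contribution $\Omega^3FX\psi\cdot\Omega^{-4}\sqrt{|g|}$ simplifies immediately to $F\cdot\Omega^{-1}X(\Omega\phi)\sqrt{|g|}$, matching the first term of \eqref{conf_div_chi}.

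The main work is distributing the potential term $-V\psi X\psi\cdot\Omega^{-4}\sqrt{|g|}$. Splitting via $1=\chi+(1-\chi)$, the $(1-\chi)$ piece rewrites cleanly as $C^\chi\phi\cdot\Omega^{-1}X(\Omega\phi)\sqrt{|g|}$ using the definition of $C^\chi$. For the $\chi$ piece I would use the Leibniz rule $\chi V\psi X\psi=\tfrac{1}{2}X(\chi V\psi^2)-\tfrac{1}{2}X(\chi V)\psi^2$ and then integrate the exact divergence $X(\chi V\psi^2)$ against $\Omega^{-4}\sqrt{|g|}\,dxdt$ by parts. The divergence formula $\frac{1}{\sqrt{|g|}}\partial_\gamma(\sqrt{|g|}\,X^\gamma)=-\tfrac{1}{2}\mathrm{trace}({}^{(X)}\widehat\pi)$ together with $X(\Omega^{-4})=-4\Omega^{-4}X\!\ln(\Omega)$ produces an interior error proportional to $\chi V\phi^2\cdot[\tfrac{1}{2}\mathrm{trace}({}^{(X)}\widehat\pi)+4X\!\ln(\Omega)]=\chi V\phi^2\cdot\mathrm{trace}(A)$, which combines with $-\tfrac{1}{2}X(\chi V)\psi^2\Omega^{-4}=-\tfrac{1}{2}X(\chi V)\phi^2\Omega^{-2}$ to give exactly $-B^\chi\phi^2$. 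The boundary contribution of this IBP is $-\tfrac{1}{2}X^0\chi V\phi^2\Omega^{-2}\sqrt{|g|}\,dx\big|_0^T$, supplying the missing $\tfrac{1}{2}\Omega^{-2}X^0\chi V\phi^2$ piece of $P$.

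The only real obstacle is bookkeeping: tracking all the $\Omega$-weights through the change of volume form and verifying that the trace identity $\tfrac{1}{2}\mathrm{trace}({}^{(X)}\widehat\pi)+4X\!\ln(\Omega)=\mathrm{trace}(A)$ assembles the bulk error precisely into $B^\chi$. Once the conformal deformation tensor identity and the trace computation are in hand, the rest is algebraic assembly.
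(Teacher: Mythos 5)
Your proposal is correct and follows essentially the same route as the paper: conformally rescale to $\psi=\Omega\phi$, apply the divergence formula \eqref{divergence_formula} for $\td{g}$, and use the conformal identity $\widehat{\mathcal{L}_X\td g}=\Omega^{-2}\bigl(\widehat{\mathcal{L}_Xg}+2X\ln(\Omega)\,g\bigr)$ to identify $A$. The only (cosmetic) difference is that the paper absorbs the $\chi V\psi^2$ term into a modified energy--momentum tensor $\td T^\chi_{\alpha\beta}$ so that its integration by parts happens inside the covariant divergence, whereas you perform that same integration by parts explicitly in coordinates; your trace computation $\nabla_\gamma X^\gamma-4X\ln(\Omega)=-\mathrm{trace}(A)$ and the resulting boundary contribution both check out.
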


\begin{proof}
First define the tensors:
\begin{equation}
		\td{T}_{\alpha\beta}^\chi \ = \ \partial_\alpha\psi\partial_\beta\psi - \frac{1}{2}\td{g}_{\alpha\beta}
		(\td{g}^{\gamma\delta}\partial_\gamma\psi\partial_\delta\psi - \chi V\psi^2) \ , \qquad
		 {}^{(X)}\!\td{P}_\alpha^\chi \ = \ \td{T}_{\alpha\beta}^\chi X^\beta \ , \qquad
		 \hbox{where\ \ } \psi\ = \ \Omega\phi
		 \ . \notag
\end{equation}
Then by Stokes theorem and the support property of $X$ we have:
\begin{equation}
		\dint 
		\td{\nabla}^\alpha {}^{(X)}\!\td{P}_\alpha^\chi \,  \sqrt{|\td{g}|} dxdt
		\ = \ \int_{\mathbb{R}^3\setminus\mathcal{K}} \td{g}^{\alpha 0} \td{T}_{\alpha\beta}^\chi X^\beta
		 \sqrt{|\td{g}|}dx \big|_{t=0}^{t=T}   \ , \notag
\end{equation}
and RHS line \eqref{dividen1} follows by substituting $\Omega^{-2}g=\td{g}$ into the volume forms
and the RHS contractions. 

It remains to compute the $\td{g}$ contraction 
$\td{\nabla}^\alpha {}^{(X)}\!\td{P}_\alpha^\chi $
and show this produces the terms on LHS line \eqref{dividen1}.
To this end suppose $(\Box_{\td{g}}+V)\psi=G$. 
Then  one has: 
\begin{equation}
		\td{\nabla}^\alpha\td{T}_{\alpha\beta}^\chi \ = \ \big( (\chi-1)V\psi
		+G\big)\partial_\beta\psi + \frac{1}{2}\partial_\beta (\chi V)
		\psi^2 \ . \notag
\end{equation}
Using formula \eqref{divergence_formula}
we have:
\begin{equation}
		\td{\nabla}^\alpha {}^{(X)}\!\td{P}^\chi_\alpha \ = \ 
		\frac{1}{2} ( \widehat {\mathcal{L}_X \td{g}})^{\alpha\beta} \partial_\alpha\psi\partial_\beta\psi
		- \frac{1}{4}\hbox{trace} ( \widehat {\mathcal{L}_X \td{g}})\chi V\psi^2
		+\big( (\chi-1)V\psi
		+G\big)X\psi + \frac{1}{2}X (\chi V)
		\psi^2 \ , \label{div_P_iden}
\end{equation}
where all the contractions   are computed with respect to $\td{g}$.
To compute the first two RHS terms we use:
\begin{equation}
		\mathcal{L}_{X}\td{g}  
		\ = \ \Omega^{-2}\big(\mathcal{L}_X g - 2X\ln(\Omega)g \big) \ ,
		\qquad \hbox{and so\ \ }
		 \widehat {\mathcal{L}_X \td{g}}  \ =\  
		\Omega^{-2}\big(\widehat{\mathcal{L}_X g} 
		+ 2 X\ln (\Omega) g\big) \ . \notag
\end{equation}
Substituting the last  line into RHS \eqref{div_P_iden} and using
$G=\Omega^3 F$ we have \eqref{conf_div_chi} and \eqref{AB_formulas}.
%\begin{equation}
%		\td{g}^{\alpha\beta} (\widehat {\mathcal{L}_X \td{g}})_{\alpha\beta} \ = \ 
%		\hbox{trace} (\widehat {\mathcal{L}_X  g }) + 8X\ln(\Omega) \ . \notag
%\end{equation}
\end{proof}

%-------------------------------------------------------------------------
%%%%%%%%%%%%%%%%%%%%%%%%%%%%%
%-------------------------------------------------------------------------

\section{Algebraic Formulas Involving Bondi Coordinates}\label{bondi_alg_sect}

In this section we compute the key quantities from Lemmas \ref{basic_iden_lemma} 
and \ref{conf_dividen_lem} in Bondi coordinate  $(u,x)$.

\begin{lem}[Formulas for deformation tensors]
Let $d=|g|$ be the determinant $g_{\alpha\beta}$ 
in rectangular bondi coordinates $(u,x^i)$, and set $\Omega= \tau_x$. Then 
if $X=X^\alpha \partial_\alpha^b$  is any  vector field  
in Bondi coordinates  we have the following formula for contravariant tensors:
\begin{equation}
		 {}^{(X)} \widehat{\pi}
		+ 2 X\ln (\Omega) g^{-1}   \ = \ 
		- d^{-\frac{1}{2}}\big( \mathcal{L}_X h +
		({\partial_u^b} X^u + \partial_r^b X^r + \partial_i^b \overline{X}^i ) h
		\big) + 
		\mathcal{R}  \ , \label{polar_bondi_defiden1}
\end{equation}
where $ \overline{X}^i=X^i-r^{-2}x^i x_j X^j$ denotes the angular portion of $X$
and $X^r=r^{-1}x_i X^i$ the radial portion, and 
where $h$ is given on line \eqref{h_tensor}.
The remainder tensor  $\mathcal{R}$ is given by the covariant formula:
\begin{equation}
		\mathcal{R}  \ =\ - d^{-\frac{1}{2}} \mathcal{L}_X(d^\frac{1}{2}g^{-1}-h) 
		- d^{-\frac{1}{2}}({\partial_u^b} X^u + \partial_r^b X^r + \partial_i^b\overline{X}^i )
		 (d^\frac{1}{2} g^{-1}-h )  - 2r^{-1}\tau_x^{-2}X^r g^{-1}
		\ . \label{polar_bondi_defiden2}
\end{equation}
\end{lem}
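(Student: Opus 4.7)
The plan is to start from \eqref{pi_hat_formula} and first absorb its four-term right-hand side into a single object: namely, the Lie derivative of the weighted two-tensor $d^{\frac{1}{2}}g^{-1}$, rescaled by $d^{-\frac{1}{2}}$. A direct application of the Leibniz rule together with the coordinate formula for the Lie derivative of a $(2,0)$-tensor yields the compact identity
\begin{equation}
{}^{(X)}\widehat{\pi} \ = \ -d^{-\frac{1}{2}}\mathcal{L}_X(d^{\frac{1}{2}}g^{-1}) \ - \ g^{-1}\partial_\gamma X^\gamma \ . \notag
\end{equation}
This already gives a manifestly covariant form for $\widehat{\pi}^{(X)}$, and adding the conformal correction $2X\ln(\Omega)g^{-1}$ to both sides then reduces the proof to two elementary Bondi-coordinate manipulations: splitting $d^{\frac{1}{2}}g^{-1}$ as $h + (d^{\frac{1}{2}}g^{-1} - h)$ inside the Lie derivative, and computing the two scalar quantities $\partial_\gamma X^\gamma$ and $X\ln(\Omega)$ in the coordinates $(u,x)$.

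For the first manipulation, the splitting inside $\mathcal{L}_X$ immediately isolates the main term $-d^{-\frac{1}{2}}\mathcal{L}_X h$ of \eqref{polar_bondi_defiden1}, leaving $-d^{-\frac{1}{2}}\mathcal{L}_X(d^{\frac{1}{2}}g^{-1}-h)$ as the first contribution to the remainder in \eqref{polar_bondi_defiden2}. For the second, I would decompose $X^i = X^r\omega^i + \overline{X}^i$ and use the pointwise identities $\partial_i^b\omega^i = 2/r$ and $\omega^i\partial_i^b = \partial_r^b$ to obtain
\begin{equation}
\partial_\gamma X^\gamma \ = \ {\partial_u^b} X^u + \partial_r^b X^r + \partial_i^b\overline{X}^i + \frac{2X^r}{r} \ . \notag
\end{equation}
Multiplying by $-g^{-1}$ and writing $g^{-1} = d^{-\frac{1}{2}}h + (g^{-1} - d^{-\frac{1}{2}}h)$, the first three terms produce both the remaining main contribution $-d^{-\frac{1}{2}}({\partial_u^b} X^u + \partial_r^b X^r + \partial_i^b\overline{X}^i)h$ of \eqref{polar_bondi_defiden1} and the second piece of $\mathcal{R}$.

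It remains to match the residual $-\frac{2X^r}{r}g^{-1}$ against the conformal contribution $2X\ln(\Omega)g^{-1}$. Since $\Omega = \tau_x = \langle x\rangle$, one has $\partial_i^b\ln\tau_x = x_i/\tau_x^2$, hence $X\ln(\Omega) = rX^r/\tau_x^2$; using $\tau_x^2 - r^2 = 1$, the two terms collapse to
\begin{equation}
-\frac{2X^r}{r}g^{-1} + 2X\ln(\Omega)g^{-1} \ = \ 2X^r\Big(\frac{r}{\tau_x^2} - \frac{1}{r}\Big)g^{-1} \ = \ -\frac{2X^r}{r\tau_x^2}g^{-1} \ , \notag
\end{equation}
which is exactly the final summand of \eqref{polar_bondi_defiden2}. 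The argument is purely algebraic and does not use any of the symbol bounds in Definition \ref{rad_metrics}; the only delicate piece of bookkeeping is keeping track of the weight factor $d^{\frac{1}{2}}$ when converting between Lie derivatives of $g^{-1}$ and of $d^{\frac{1}{2}}g^{-1}$, which is precisely the step that condenses \eqref{pi_hat_formula} into the single Lie-derivative-plus-divergence identity above.
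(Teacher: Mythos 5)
Your proof is correct and follows essentially the same route as the paper: it condenses \eqref{pi_hat_formula} into ${}^{(X)}\widehat{\pi}=-d^{-\frac{1}{2}}\mathcal{L}_X(d^{\frac{1}{2}}g^{-1})-g^{-1}\partial_\gamma X^\gamma$, computes the Bondi divergence $\partial_\gamma^b X^\gamma={\partial_u^b} X^u+\partial_r^b X^r+\partial_i^b\overline{X}^i+2r^{-1}X^r$, and splits $d^{\frac{1}{2}}g^{-1}=h+(d^{\frac{1}{2}}g^{-1}-h)$, exactly as in the text. The only caveat is notational: the unit vector in your radial/angular decomposition must be $\hat{x}^i=x^i/r$ rather than the paper's $\omega^i=x^i\tau_x^{-1}$ (otherwise $X^i=X^r\hat x^i+\overline{X}^i$, $\hat{x}^i\partial_i^b=\partial_r^b$, and $\partial_i^b\hat{x}^i=2/r$ would all fail); with that reading, your final cancellation of $-2r^{-1}X^r g^{-1}$ against $2X\ln(\tau_x)\,g^{-1}$ via $\tau_x^2-r^2=1$ reproduces the term $-2r^{-1}\tau_x^{-2}X^r g^{-1}$ correctly.
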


\begin{proof}[Proof of formula \eqref{polar_bondi_defiden1}]
Starting with formula \eqref{pi_hat_formula} in Bondi coordinates 
and then using the identity:
\[ \partial_\gamma^b X^\gamma = 2X\ln(\tau_x) + {\partial_u^b} X^u + \partial_r^b X^r
+ \partial_i^b\overline{X}^i+2r^{-1}\tau_x^{-2}X^r
\] 
and setting $\Omega=\tau_x$, we have the following formula for raised indices:
\begin{equation}
		{}^{(X)} \widehat{\pi}
		+ 2 X\ln (\Omega) g^{-1}    \ = \ 
		- d^{-\frac{1}{2}}\mathcal{L}_X (d^\frac{1}{2} g^{-1})
		  - ({\partial_u^b} X^u + \partial_r^b X^r
		+ \partial_i^b\overline{X}^i +2r^{-1}\tau_x^{-2} X^r) g^{-1}   \ . \notag
\end{equation}
Writing $g^{-1}=d^{-\frac{1}{2}}(d^\frac{1}{2}g^{-1}-h) + d^{-\frac{1}{2}}h$
and inserting into this last line gives \eqref{polar_bondi_defiden1} and \eqref{polar_bondi_defiden2}.
\end{proof}

\begin{lem}[Formulas for commutators]
The following commutator formulas hold
where $d=|g|$ is computed in Bondi coordinates:
\begin{enumerate}[I)]
		\item For $X\in \{{\partial_u^b} , \partial_i^b - \omega^i{\partial_u^b} , \Omega_{ij}\}$ one has the formula:
		\begin{equation}
			[\Box_g,X] \  = \  \nabla_\alpha \mathcal{R}^{\alpha\beta}\nabla_\beta
			+ \frac{1}{2} ( X\ln (d) )  \Box_g \ ,  
			\qquad \hbox{where\ \ }
			\mathcal{R}  \ =\ - d^{-\frac{1}{2}} \mathcal{L}_X(d^\frac{1}{2}g^{-1}-h) +\mathcal{R}_1\ ,
			\label{L_comm_form}
		\end{equation}
		where $\mathcal{R}_1=0$ for 
		$X\in \{{\partial_u^b} ,  \Omega_{ij}\}$, and $\mathcal{R}_1^{\alpha\beta}=
		2d^{-\frac{1}{2}}\omega^i\tau_x^{-3} \delta_u^\alpha\delta_u^\beta$
		when $X= \partial_i^b - \omega^i{\partial_u^b}  $. Again
		$h$ is as defined on line \eqref{h_tensor}.
		\item For $X=S$ one has:
		\begin{equation}
			[\Box_g,S]   =   \nabla_\alpha \mathcal{R}^{\alpha\beta}\nabla_\beta
			+ \frac{1}{2} ( 4+ S \ln (d))  \Box_g \ , \qquad
			\hbox{where\ \ } 
			\mathcal{R}     = - d^{-\frac{1}{2}} \mathcal{L}_S(d^\frac{1}{2}g^{-1}-h) 
			-2d^{-\frac{1}{2}}  (d^\frac{1}{2}g^{-1}-h) +\mathcal{R}_1\ ,
			\label{S_comm_form}
		\end{equation}
		and where $\mathcal{R}^{\alpha\beta}_1=d^{-\frac{1}{2}}\omega^i\tau_x^{-2}
		(\delta_i^\alpha\delta_u^\beta+ 
		\delta_i^\beta\delta_u^\alpha)$.
\end{enumerate}
\end{lem}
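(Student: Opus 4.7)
The plan is to apply the general commutator identity \eqref{X_comm_formula} of Lemma \ref{basic_iden_lemma} and then convert ${}^{(X)}\widehat{\pi}$ into the $h$-adapted form using \eqref{polar_bondi_defiden1}--\eqref{polar_bondi_defiden2}. The $\Box_g$ coefficient on the right is immediate: by \eqref{pi_hat_formula} one has $-\tfrac{1}{2}\hbox{trace}({}^{(X)}\widehat{\pi}) = \nabla_\alpha X^\alpha = \tfrac{1}{2}X\ln d + \partial_\alpha^b X^\alpha$, which reduces to $\tfrac{1}{2}X\ln d$ for each vector field in Case I (their coordinate divergences vanish) and to $4 + \tfrac{1}{2}S\ln d$ for $X=S$ since $\partial_u^b u + \partial_i^b x^i = 1+3 = 4$.

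Next I would substitute \eqref{polar_bondi_defiden1}--\eqref{polar_bondi_defiden2} with $\Omega=\tau_x$ to write ${}^{(X)}\widehat{\pi}$ as a sum of six pieces: the weight term $-2X\ln\Omega\cdot g^{-1}$, the Lie pieces $-d^{-1/2}\mathcal{L}_X h$ and $-d^{-1/2}\mathcal{L}_X(d^{1/2}g^{-1}-h)$, the divergence pieces $-d^{-1/2}(\mathrm{div})h$ and $-d^{-1/2}(\mathrm{div})(d^{1/2}g^{-1}-h)$ with $\mathrm{div}=\partial_u^b X^u+\partial_r^b X^r+\partial_i^b\overline{X}^i$, and the polar correction $-2r^{-1}\tau_x^{-2}X^r g^{-1}$. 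The aim is to show that for each $X$ the pieces proportional to $g^{-1}$ and to $d^{-1/2}h$ combine cleanly, vanishing in Case I and collapsing to the extra $-2d^{-1/2}(d^{1/2}g^{-1}-h)$ appearing in the statement for $S$, while $-d^{-1/2}\mathcal{L}_X h$ delivers the claimed $\mathcal{R}_1$.

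For $X \in \{\partial_u^b,\Omega_{ij}\}$ the conclusion is immediate since $X\ln\Omega$, $X^r$, $\mathrm{div}$, and $\mathcal{L}_X h$ all vanish ($h$ is $u$-independent and rotation-invariant), giving $\mathcal{R}_1=0$. For $X=\partial_i^b-\omega^i\partial_u^b$, the Cartesian identity $\partial_\alpha^b X^\alpha = 0$ forces $\mathrm{div} = -2X\ln\Omega - 2r^{-1}\tau_x^{-2}X^r$, which is precisely the algebraic relation needed to annihilate all $g^{-1}$ and $d^{-1/2}h$ contributions. A component-wise computation of $\mathcal{L}_X h$, using $h^{ui}=-\omega^i=-x^i\tau_x^{-1}$ and $\partial_k\omega^i = \delta^i_k\tau_x^{-1} - x^i x_k\tau_x^{-3}$, shows $(\mathcal{L}_X h)^{uj}=0$ and $(\mathcal{L}_X h)^{jk}=0$, while the only surviving component is $(\mathcal{L}_X h)^{uu} = -2\omega^i\tau_x^{-3}$, producing exactly $\mathcal{R}_1 = 2d^{-1/2}\omega^i\tau_x^{-3}\delta^\alpha_u\delta^\beta_u$.

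For $X=S$ the same procedure applies with $\mathrm{div}=2$, $S\ln\Omega=r^2\tau_x^{-2}$, and $S^r=r$. The $g^{-1}$-proportional pieces then total $-2(r^2+1)\tau_x^{-2}g^{-1} = -2g^{-1}$ by $\tau_x^2=1+r^2$; rewriting $-2g^{-1} = -2d^{-1/2}(d^{1/2}g^{-1}-h) - 2d^{-1/2}h$ yields the extra $-2d^{-1/2}(d^{1/2}g^{-1}-h)$ in the statement's $\mathcal{R}$, and the surviving $d^{-1/2}h$ terms combine with $-d^{-1/2}\mathcal{L}_S h$ (computed via the nontrivial $S\omega^i=\omega^i\tau_x^{-2}$) to produce $\mathcal{R}_1 = d^{-1/2}\omega^i\tau_x^{-2}(\delta^\alpha_i\delta^\beta_u + \delta^\beta_i\delta^\alpha_u)$. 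The one subtlety worth flagging is that absorbing $-2g^{-1}$ into $\mathcal{R}$ also generates $-2\Box_g$ through $\nabla_\alpha(-2g^{\alpha\beta})\nabla_\beta$, which shifts the $\Box_g$ coefficient from the directly-computed $4+\tfrac{1}{2}S\ln d$ down to $\tfrac{1}{2}(4+S\ln d)$ as stated. The main obstacle throughout is the patient tracking of these $g^{-1}$ versus $d^{-1/2}h$ cancellations, together with the component-wise verification of $\mathcal{L}_X h$ against the $\tau_x$-based normalization $\omega^i = x^i\tau_x^{-1}$ (rather than the unit radial vector $x^i/r$), which is precisely what forces the small but nonzero $\mathcal{R}_1$ corrections.
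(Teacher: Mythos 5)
Your proof is correct and lands on the same two computations the paper's proof turns on: the coordinate divergence $\partial_\alpha^b X^\alpha$ (which is $0$ in Case I and $4$ for $S$) and the componentwise evaluation of $\mathcal{L}_X h$ (only $(\mathcal{L}_X h)^{uu}=-2\omega^i\tau_x^{-3}$ survives for $\partial_i^b-\omega^i{\partial_u^b}$, and $\mathcal{L}_S h+2h=S(h)$ produces the $ui$ remainder). The route differs in its starting identity: the paper rewrites \eqref{pi_hat_formula} directly as ${}^{(X)}\widehat{\pi}=-d^{-1/2}\mathcal{L}_X(d^{1/2}g^{-1}-h)-d^{-1/2}\mathcal{L}_X h-\partial_\alpha^b X^\alpha\, g^{-1}$ (line \eqref{pi_rewrite}), whereas you go through the conformally weighted formula \eqref{polar_bondi_defiden1}--\eqref{polar_bondi_defiden2} with $\Omega=\tau_x$ and then must cancel the $2X\ln\Omega\, g^{-1}$ term and the polar decomposition of the divergence against each other. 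Your cancellations do check out (the identity $\partial_\alpha^b X^\alpha = 2X\ln\tau_x + \mathrm{div} + 2r^{-1}\tau_x^{-2}X^r$ is exactly what collapses everything), but the detour buys you nothing and makes the $S$ case fragile: two distinct copies of $-2g^{-1}$ appear — one from the $X\ln\Omega$ and polar-correction pieces, one from recombining the two $(\mathrm{div})$-proportional pieces — and only one of them may be converted to the $-2\Box_g$ shift while the other must be split as $-2d^{-1/2}(d^{1/2}g^{-1}-h)-2d^{-1/2}h$ to produce the extra $\mathcal{R}$-term with coefficient $-2$ rather than $-4$. Your phrase ``absorbing $-2g^{-1}$ into $\mathcal{R}$ also generates $-2\Box_g$'' conflates these two roles; the arithmetic you describe is consistent, but in a written-out version you should track the two $-2g^{-1}$'s separately, or better, start from \eqref{pi_rewrite} as the paper does and avoid the issue entirely.
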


\begin{proof}[Proof of formulas \eqref{L_comm_form} and  \eqref{S_comm_form}]
First note that formula \eqref{pi_hat_formula} above can be rewritten
as:
\begin{equation}
		 {}^{(X)} \widehat{\pi}
		  \ = \ 
		- d^{-\frac{1}{2}} \mathcal{L}_X(d^\frac{1}{2}g^{-1}-h) 
		- d^{-\frac{1}{2}}\mathcal{L}_X h
		-  \partial_\alpha^b X^\alpha g^{-1}  \ , \qquad
		trace ({}^{(X)} \widehat{\pi}) \ = \ -2  \partial_\alpha^b X^\alpha - X\ln(d)
		  \ . \label{pi_rewrite}
\end{equation}

Next, for each $X\in \{{\partial_u^b} , \Omega_{ij},\partial_i^b-\omega^i{\partial_u^b}\} $
we have $ \partial_\alpha^b X^\alpha =0$, and for 
$X\in \{{\partial_u^b} , \Omega_{ij} \} $ we also have $\mathcal{L}_X h=0$. On the other hand for 
$X =\partial_i^b-\omega^i{\partial_u^b}$ one computes $\mathcal{L}_X h^{\alpha\beta} = 0$
for all but the $uu$ component, and for this  one has
$(\mathcal{L}_X h)^{uu}=-2\omega^i\tau_x^{-3}$. Combining this information with \eqref{pi_rewrite} above
and  \eqref{X_comm_formula} gives \eqref{L_comm_form}. 

Finally, in the case when $X=S$ we compute $\mathcal{L}_S h + 2h = -\omega^i  \tau_x^{-2}
(\delta_i^\alpha\delta_u^\beta+ \delta_i^\beta\delta_u^\alpha)$. This allows us to write for $X=S$:
\begin{equation}
	d^{-\frac{1}{2}}\mathcal{L}_X h +  \partial_\alpha^b X^\alpha g^{-1} \ = \ 
	2d^{-\frac{1}{2}}( d^\frac{1}{2}g^{-1}-h) - \mathcal{R}_1 + 2g^{-1} \ , \qquad
	\hbox{where\ \ }  \mathcal{R}_1^{\alpha\beta} \ =\  d^{-\frac{1}{2}}\omega^i  \tau_x^{-2}
	(\delta_i^\alpha\delta_u^\beta+ \delta_i^\beta\delta_u^\alpha) \ . \notag
\end{equation}
Using $trace ({}^{(X)} \widehat{\pi})=-8-S\ln(d)$ and combining everything with 
\eqref{pi_rewrite} above
and  \eqref{X_comm_formula} gives \eqref{S_comm_form}. 
\end{proof}

%-------------------------------------------------------------------------
%%%%%%%%%%%%%%%%%%%%%%%%%%%%%
%-------------------------------------------------------------------------

\section{Asymptotic Estimates Involving Bondi Coordinates}\label{asym_bondi_sect}

We now move on to the main technical calculations of the paper.  We record these
here in a general form that will be used throughout the sequel.

%---------------------------------------------------------------------------

\subsection{Basic Estimates for Derivatives}

\begin{lem}[Estimates for the determinant]
Let $d=|g|=|\det(g)|$ be the absolute determinant of $g$ computed in Bondi coodinates
$(u,x^i)$. Then for any $\mu\in\mathbb{R}$ one has the symbol bounds:
\begin{equation}
		 d^\mu-1 \ \in \ \mathcal{Z}^0 \ , \label{det_bnds}
\end{equation}
where the symbol spaces $\mathcal{Z}^k$ are defined on line \eqref{Zk_defn}.
\end{lem}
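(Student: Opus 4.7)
The plan is to reduce this to two ingredients: (i) $\mathcal{Z}^0$ is a differential algebra closed under Moser-type composition with smooth functions vanishing at zero, and (ii) an algebraic expansion of $\det g^{\alpha\beta}$ in terms of $\det h$ and the perturbation $p := g^{-1} - h$. The algebra property in (i) is immediate from the definition \eqref{Zk_defn} at $k=0$: the seminorms involve only the commuting weighted derivatives $\tau_-\partial^b_u$ and $\tau_x\partial^b_x$, which obey Leibniz, together with norms of the type $\ell^1_r L^\infty$ (and $\ell^1_u\ell^1_r L^\infty$) which multiply any bounded $L^\infty$ factor back into themselves. In particular the symbols $\omega^i$ satisfy $(\tau_x\partial^b_x)^J\omega^i \in L^\infty$ for every $J$, so multiplication by any polynomial in $\omega$ preserves $\mathcal{Z}^0$.

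First I would compute $\det h^{\alpha\beta}$ directly from \eqref{h_tensor}: the spatial block is the identity, and a block expansion along the $u$-row using $h^{uu}=0$ and $h^{ui}=-\omega^i$ gives $\det h = -|\omega|^2 = -1$. Writing $p = g^{-1} - h \in \mathcal{Z}^0$, multilinearity of the determinant yields
\begin{equation*}
\det g^{\alpha\beta} \ = \ -1 + Q(\omega, p) \ ,
\end{equation*}
where $Q$ is a polynomial of degrees one through four in the entries of $p$ whose coefficients are polynomials in the components of $\omega$. By the algebra property above, $a := Q(\omega, p) \in \mathcal{Z}^0$.

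Next I would invoke Lorentzian signature together with the standing hypothesis $-C < g^{00} < -c$ and the smoothness of $g$ to conclude that $d = -(\det g^{\alpha\beta})^{-1}$ is a smooth, strictly positive function bounded above and below by positive constants on $\mathcal{M}$; in particular $a = 1 - d^{-1}$ takes values in a bounded subset of $(-\infty, 1)$ uniformly bounded away from $1$. Consequently $d^\mu - 1 = F_\mu(a)$ where $F_\mu(y) = (1-y)^{-\mu} - 1$ is smooth on a neighborhood of the range of $a$ with $F_\mu(0) = 0$. Writing $F_\mu(a) = a\int_0^1 F'_\mu(sa)\,ds$ and applying the chain rule to the weighted derivatives defining $\mathcal{Z}^0$ produces the desired conclusion $d^\mu - 1 \in \mathcal{Z}^0$: every derivative is a bounded smooth function of $a$ times a product of at least one derivative of $a$, each of which lies in the appropriate $\ell^1_r L^\infty$ or $\ell^1_u\ell^1_r L^\infty$ space. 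The only real obstacle is verifying this Moser estimate in both constituent pieces of the $\mathcal{Z}^0$-norm; no deeper analytic difficulty arises because the weight level $k=0$ is multiplicatively trivial (higher $k$ would require tracking how $\tau_0^{-k}$ interacts with products, but that is not needed here).
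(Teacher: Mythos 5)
Your argument is essentially the paper's proof: both reduce $d^\mu-1$ to a smooth composition $F(a)$ with $F(0)=0$ applied to $a=1-d^{-1}$ (the paper writes $d^\mu-1=q_\mu(d^{-1}-1)$), and both obtain $a\in\mathcal{Z}^0$ from the multilinearity of the determinant together with the first inclusion on line \eqref{mod_coords} and the algebra property of $\mathcal{Z}^0$. The one slip is your computation $\det h=-|\omega|^2=-1$: since $\omega^i=x^i\tau_x^{-1}$ with $\tau_x=\la x\ra$, one has $|\omega|^2=r^2\tau_x^{-2}=1-\tau_x^{-2}$, so in fact $\det h=-1+\tau_x^{-2}$ and your expansion should read $\det g^{\alpha\beta}=-1+\tau_x^{-2}+Q(\omega,p)$. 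This is harmless because $\tau_x^{-2}\in\mathcal{Z}^0$ (as the paper also uses), so the extra term is simply absorbed into $a$; everything else, including your more explicit justification that the range of $a$ stays in a compact set where $F_\mu$ is smooth, is fine.
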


\begin{proof}[Proof of \eqref{det_bnds}]
We can write $d^\mu-1=q_\mu(d^{-1}-1)$ where $q_\mu(s)$ is smooth for $s>-1$ and $q(0)=0$. Thus, by Taylor 
expansion and the Leibniz rule it suffices to consider the case $\mu=-1$. From \eqref{mod_coords} we know that:
\begin{equation}
		d^{-1}-1+\tau_x^{-2} \ = \ \det(h)- \det(g^{-1}) \ \in\  \mathcal{Z}^0 \ , \notag
\end{equation}
where $h$ is given on line \eqref{h_tensor}. Since $\tau_x^{-2}\in \mathcal{Z}^0$ this completes the proof.
\end{proof}

A useful corollary of this last lemma and the assumptions \eqref{mod_coords} is the following:

\begin{cor}
Let $d=|g|=|\det(g)|$ be the absolute determinant of $g$ computed in Bondi coodinates
$(u,x^i)$, and $g^{-1}$ the inverse metric of $g$ in Bondi coodinates. Then if $h$ is as on
line \eqref{h_tensor} and one sets $\mathcal{R}^{\alpha\beta}=(d^\frac{1}{2}g^{-1}-h)^{\alpha\beta}$
there holds the bounds:
\begin{equation}
			  \mathcal{R}^{ij}   \ \in \
			   \mathcal{Z}^0  \ ,  \quad
			 \mathcal{R}^{ui}\ \in \
			    \mathcal{Z}^\frac{1}{2} \ , \quad
			 \mathcal{R}^{ui}
			-\omega^i\omega_j \mathcal{R}^{uj}   \ \in \
			    \mathcal{Z}^1 \ , \quad 
			 \mathcal{R}^{uu}\ \in \
			  \mathcal{Z}^2
			 \ . \label{R_sym_bnds}
\end{equation}
\end{cor}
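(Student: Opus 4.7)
The plan is to expand $\mathcal{R}^{\alpha\beta} = d^{1/2}g^{\alpha\beta} - h^{\alpha\beta}$ component-by-component using $d^{1/2} = 1 + q$, where by \eqref{det_bnds} we have $q = d^{1/2}-1 \in \mathcal{Z}^0$, and then apply the symbol estimates \eqref{mod_coords} directly. The first step is to confirm that $\mathcal{Z}^k$ behaves well under multiplication by factors with uniformly bounded Bondi-symbol derivatives; by the Leibniz rule applied to the seminorms \eqref{Zk_defn}, if $a$ has $\sum_{i+|J|\le N}\lp{(\tau_-\partial_u^b)^i(\tau_x\partial_x^b)^J a}{L^\infty} \lesssim 1$ and $b \in \mathcal{Z}^k$, then $ab \in \mathcal{Z}^k$. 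In particular $d^{\pm 1/2}$, the components $g^{\alpha\beta}$ themselves, and $\omega^i,\omega_j$ act as multipliers on each $\mathcal{Z}^k$.

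Next, I handle the four components in turn. For the angular part, write
\[
\mathcal{R}^{ij} \ = \ (g^{ij}-\delta^{ij}) + q\, g^{ij} \ ,
\]
and note that the first summand lies in $\mathcal{Z}^0$ by the first inclusion of \eqref{mod_coords}, while the second is a product of $q\in\mathcal{Z}^0$ with a bounded symbol. For the $ui$ component, observe $\mathcal{R}^{ui} = d^{1/2}g^{ui} - h^{ui} = \sqrt{|g|}\,g^{ui}+\omega^i$, which is precisely the quantity controlled by the second inclusion of \eqref{mod_coords}. For the $uu$ component, since $h^{uu}=0$ we have $\mathcal{R}^{uu}=d^{1/2}g^{uu}$, and the fourth inclusion of \eqref{mod_coords} combined with the multiplier property gives $\mathcal{R}^{uu}\in\mathcal{Z}^2$.

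The remaining case is the tangential combination $\mathcal{R}^{ui}-\omega^i\omega_j\mathcal{R}^{uj}$. Here the key algebraic observation is that the $h$-contribution drops out: since $h^{ui}=-\omega^i$ and $\omega_j\omega^j=1$, one has $\omega^i\omega_j h^{uj} = -\omega^i = h^{ui}$, so
\[
\mathcal{R}^{ui} - \omega^i\omega_j\mathcal{R}^{uj} \ = \ d^{1/2}\bigl(g^{ui} - \omega^i\omega_j g^{uj}\bigr) \ .
\]
The parenthesized quantity lies in $\mathcal{Z}^1$ by the third inclusion of \eqref{mod_coords}, and multiplication by $d^{1/2}$ preserves this space.

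The only point that requires real care is the multiplier property of $\mathcal{Z}^k$ under products; once that is in place, the four estimates in \eqref{R_sym_bnds} are just direct substitutions matching the assumed four symbol bounds in \eqref{mod_coords}, with the cancellation $h^{ui} = \omega^i\omega_j h^{uj}$ being the only nontrivial algebraic fact needed.
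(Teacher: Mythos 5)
Your overall strategy is the right one, and indeed the paper offers no separate argument for this corollary beyond declaring it a consequence of \eqref{det_bnds} and \eqref{mod_coords}: writing $d^{1/2}=1+q$ with $q\in\mathcal{Z}^0$, noting that factors with uniformly bounded Bondi-symbol derivatives multiply $\mathcal{Z}^k$ into itself, and matching the four components against the four inclusions of \eqref{mod_coords} is exactly what is intended. Your treatment of $\mathcal{R}^{ij}$, $\mathcal{R}^{ui}$, and $\mathcal{R}^{uu}$ is correct as written.

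There is, however, a concrete error in the one step you single out as ``the only nontrivial algebraic fact needed.'' In this paper $\omega^i=x^i\tau_x^{-1}$ with $\tau_x=\la x\ra$, so $\omega_j\omega^j=r^2\tau_x^{-2}=1-\tau_x^{-2}\neq 1$; the vector $\omega$ is \emph{not} unit length (this is precisely the content of identity \eqref{hat_omega_iden}, which distinguishes $\omega^i$ from $\hat x^i=r^{-1}x^i$). Consequently the $h$-contribution does not cancel: one gets
$h^{ui}-\omega^i\omega_j h^{uj}=-\omega^i+\omega^i(1-\tau_x^{-2})=-\omega^i\tau_x^{-2}$, and hence
\begin{equation}
\mathcal{R}^{ui}-\omega^i\omega_j\mathcal{R}^{uj} \ = \ d^{1/2}\bigl(g^{ui}-\omega^i\omega_j g^{uj}\bigr)+\omega^i\tau_x^{-2} \ . \notag
\end{equation}
The conclusion survives because the residual term lies in $\mathcal{Z}^1$: it is $u$-independent, each $(\tau_x\partial_x^b)^J$ preserves the bound $O(\tau_x^{-2})$, and since $\tau_0^{-1}=\tau_+/\tau_-\lesssim\tau_x$ one has $\tau_0^{-1}\tau_x^{-2}\lesssim\tau_x^{-1}\in\ell^1_rL^\infty$, with the $\ell^1_u\ell^1_rL^\infty(\tfrac12 t<r<2t)$ seminorm summing to a constant as well. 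You should add this verification; as your proof stands, the claimed exact cancellation is false and the $\mathcal{Z}^1$ membership of the leftover is an unaddressed (if routine) point.
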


Building on the last two results we have the following collection of symbol bounds which will underly
many of the error estimates in the sequel.

\begin{lem}[Basic Lie derivative estimates]\label{basic_lie_lem}
Let $X=X^\alpha\partial_\alpha$ be a vector field. Then the following hold:
\begin{enumerate}[I)]
	\item \label{main_LR_estimate} Suppose that in Bondi coordinates $X$ satisfies the symbol type 
	bounds for $a,b,c\in\mathbb{R}$:
	\begin{equation}
		\big| (\tau_-{\partial_u^b})^l (\tau_x \partial_x^b)^J X^u\big|
		\ \lesssim_{l,J} \  \tau_x ^{a}\tau_+^{b}\tau_-^{c+1}\Big(\frac{\tau_x}{\tau_+}\Big)^{\min\{|J|,1\}} 
		\ , \qquad\qquad
		\big| (\tau_-{\partial_u^b})^l (\tau_x \partial_x^b)^J X^i\big|
		\ \lesssim_{l,J} \ \tau_x^{a+1}\tau_+^{b}\tau_-^{c}  \ , \label{vect_sym_bnds}
	\end{equation}
	and obeys the conditions:
	\begin{equation}
		  \partial_r^b X^u \ = \ {\partial_u^b}(X^i) \ = \ 
		  \partial_r^b r^{-1}(X^i-r^{-2} x^i x_j X^j) 
		  \ =\  0 \ .
		\label{good_X_conds}
	\end{equation}
	Let $\mathcal{R}^{\alpha\beta}$ be any contravariant two tensor which 
	satisfies \eqref{R_sym_bnds} with similar estimates for $\mathcal{R}^{iu}$ (if it is nonsymmetric).
	Then its Lie derivative by $X$, denoted by $\mathcal{L}_X\mathcal{R}=\mathcal{R}_X$,
	satisfies:
	\begin{equation}
			 \mathcal{R}^{ij}_X   \ \in \
			 \tau_x^{a}\tau_+^{b}\tau_-^{c}\cdot \mathcal{Z}^0  \ ,  \quad
			 \mathcal{R}^{ui}_X\ \in \
			  \tau_x^{a}\tau_+^{b}\tau_-^{c}\cdot \mathcal{Z}^\frac{1}{2} \ , \quad
			 \mathcal{R}^{ui}_X
			-\omega^i\omega_j \mathcal{R}^{uj}_X   \ \in \
			  \tau_x^{a}\tau_+^{b}\tau_-^{c}\cdot \mathcal{Z}^1 \ , \quad 
			 \mathcal{R}^{uu}_X\ \in \
			  \tau_x^{a}\tau_+^{b}\tau_-^{c}\cdot \mathcal{Z}^2
			 \ , \label{R_X_sym_bnds}
	\end{equation}
	with similar bounds for $\mathcal{R}^{iu}_X$.
	\item  \label{R_alt1} Alternatively, if one drops the condition
	$ \partial_r^b X^u= 0$ but keeps the rest of \eqref{vect_sym_bnds} and \eqref{good_X_conds},
	then the previous conclusion holds with the last bound on
	line \eqref{R_X_sym_bnds} replaced by:
	\begin{equation}
			\mathcal{R}^{uu}_X  \ \in \  \tau_x^{a}\tau_+^{b}\tau_-^{c}\cdot \mathcal{Z}^\frac{3}{2} \ . \label{Ruu_alt}
	\end{equation}
	\item \label{R_alt2} Alternatively, if one
	drops the condition ${\partial_u^b} (X^r)=0$ but retains ${\partial_u^b}(X^i-r^{-2}x^ix_j X^j)=0$ and
	the rest of  \eqref{vect_sym_bnds} and \eqref{good_X_conds},
	then the result of part \ref{main_LR_estimate})
	holds with the first bound on line  \eqref{R_X_sym_bnds} replaced by the pair:
	\begin{equation}
		  \mathcal{R}^{ij}_X \ \in  \ 
		\tau_x^{a}\tau_+^{b}\tau_-^{c}\cdot \mathcal{Z}^{-\frac{1}{2}} \ , \qquad
		  \mathcal{R}^{ij}_X - 
		\omega^i\omega^j \omega_k\omega_l \mathcal{R}^{kl}_X
		\ \in  \ 
		\tau_x^{a}\tau_+^{b}\tau_-^{c}\cdot \mathcal{Z}^{0}
		  \ . \label{alt_ij}
	\end{equation}
	\item \label{main_LS_estimate} Likewise, if $\mathcal{S}^\alpha$ satisfies:
	\begin{equation}
		  \mathcal{S}^i   \ \in \
		\tau_x^{ -1}\cdot \mathcal{Z}^{-\frac{1}{2}}  \ , \qquad
		 \mathcal{S}^u   \ \in \
		\tau_x^{-1}\cdot \mathcal{Z}^\frac{1}{2} \ , \label{S_sym_bnds}
	\end{equation}
	then $\mathcal{L}_X\mathcal{S}=S_X$ 
	satisfies:
	\begin{equation}
		  \mathcal{S}^i_X   \ \in \
		 \tau_x^{a-1}\tau_+^{b}\tau_-^{c}\cdot \mathcal{Z}^{-\frac{1}{2}}  \ , \qquad
		 \mathcal{S}^u_X   \ \in \
		\tau_x^{a-1}\tau_+^{b}\tau_-^{c}\cdot \mathcal{Z}^\frac{1}{2} \ , \label{S_X_sym_bnds}
	\end{equation}
	 when $X$ satisfies  the symbol
	bounds \eqref{vect_sym_bnds} (in this case we do not need the extra conditions
	\eqref{good_X_conds}).
	\item \label{X0_part} Finally, let $X\in \mathbb{L}_0=\{{\partial_u^b} , \partial_i^b-\omega^i{\partial_u^b}\}$,
	and let $\mathcal{R}$ and $\mathcal{S}$ satisfy \eqref{R_sym_bnds} and \eqref{S_sym_bnds}
	resp. Then $ \mathcal{L}_X\mathcal{R}$ and $ \mathcal{L}_X\mathcal{S}$
	satisfy \eqref{R_X_sym_bnds} and \eqref{S_X_sym_bnds} with $a=c=-1$ and $b=1$.
\end{enumerate}
\end{lem}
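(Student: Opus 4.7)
The plan is to expand the Lie derivatives in Bondi coordinates via the standard formulas $(\mathcal{L}_X\mathcal{R})^{\alpha\beta} = X^\gamma\partial^b_\gamma\mathcal{R}^{\alpha\beta} - \mathcal{R}^{\gamma\beta}\partial^b_\gamma X^\alpha - \mathcal{R}^{\alpha\gamma}\partial^b_\gamma X^\beta$ and $(\mathcal{L}_X\mathcal{S})^\alpha = X^\gamma\partial^b_\gamma\mathcal{S}^\alpha - \mathcal{S}^\gamma\partial^b_\gamma X^\alpha$, and then to track, component by component, which symbol class $\mathcal{Z}^k$ each summand occupies. Because the classes $\mathcal{Z}^k$ are defined by seminorms in $\tau_-\partial^b_u$ and $\tau_x\partial^b_x$ and are closed under multiplication by smooth weights in $(\tau_-,\tau_+,\tau_x)$ with bounded symbol derivatives, the overall factor $\tau_x^a\tau_+^b\tau_-^c$ comes out automatically from the hypothesis \eqref{vect_sym_bnds}. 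The substance of the lemma is that the peeling exponents are preserved, and this is where the structural conditions \eqref{good_X_conds} enter.

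For part \ref{main_LR_estimate} I would work component by component. In the $ij$ entry the dangerous deformation term $\mathcal{R}^{uj}\partial^b_u X^i$ is killed outright by ${\partial_u^b}(X^i)=0$, while the transport term and the remaining deformation terms all sit in $\mathcal{Z}^0$ by direct counting --- the extra factor of $\tau_-$ that \eqref{vect_sym_bnds} assigns to $X^u$ exactly compensates the $\tau_-^{-1}$ cost of $\partial^b_u\mathcal{R}^{ij}$. The $ui$ case is handled similarly, now using the radial/angular split $\mathcal{R}^{uj} = \omega^j(\omega_k\mathcal{R}^{uk}) + (\mathcal{R}^{uj}-\omega^j\omega_k\mathcal{R}^{uk})$ together with $\omega^j\partial^b_j X^u = \partial^b_r X^u = 0$ to trade a $\mathcal{Z}^{1/2}$ contribution for a $\mathcal{Z}^1$ angular remainder. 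The critical $uu$ case concerns $-2\mathcal{R}^{iu}\partial^b_i X^u$: a blunt estimate yields only $\mathcal{Z}^{3/2}$ (since $\mathcal{R}^{iu}\in\mathcal{Z}^{1/2}$ and the $(\tau_x\partial^b_x)^J$ portion of \eqref{vect_sym_bnds} supplies only one $\tau_x/\tau_+$ improvement), but the same radial/angular decomposition of $\mathcal{R}^{iu}$ combined with $\partial^b_r X^u = 0$ eliminates the $\mathcal{Z}^{1/2}$ piece and leaves the $\mathcal{Z}^1$ angular remainder times the $\tau_x/\tau_+$ factor, collecting to exactly $\mathcal{Z}^2$. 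The third condition in \eqref{good_X_conds} enters via the decomposition $X^i = X^r\omega^i + \overline{X}^i$, forcing the angular deformation $\partial^b_j X^i$ to behave as if $\overline{X}^i$ scaled linearly in $r$, which is needed to close the bookkeeping on the deformation terms carrying spatial indices.

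Parts \ref{R_alt1} and \ref{R_alt2} then fall out by tracking precisely which cancellation is sacrificed. Dropping $\partial^b_r X^u = 0$ affects only the $uu$ step, leaving $\mathcal{Z}^{3/2}$ as the best class available and yielding \eqref{Ruu_alt}; dropping ${\partial_u^b}(X^r) = 0$ while keeping the angular assumption breaks only the radial-radial contribution to the $ij$ case, which is what the split \eqref{alt_ij} records after separating $\omega^i\omega^j\omega_k\omega_l\mathcal{R}^{kl}$ from its complement. Part \ref{main_LS_estimate} is strictly simpler: $\mathcal{L}_X\mathcal{S}$ is first order in $X$ and no cancellations are needed, so \eqref{S_sym_bnds} and \eqref{vect_sym_bnds} combine directly by the same weight-counting to give \eqref{S_X_sym_bnds}. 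Finally, part \ref{X0_part} is handled by direct inspection: for $X = {\partial_u^b}$ the deformation terms vanish outright and one only pays a $\tau_-^{-1}$ on the transport; for $X = \partial^b_i - \omega^i{\partial_u^b}$ the derivatives $\partial^b_\gamma X^\alpha$ reduce to $\partial^b_\gamma\omega^i$, which is of size $\tau_x^{-1}$, purely angular, and $u$-independent, so a short direct computation confirms \eqref{R_X_sym_bnds} and \eqref{S_X_sym_bnds} with $a = c = -1$, $b = 1$ without needing to reduce to part \ref{main_LR_estimate}.

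The main obstacle I anticipate is the bookkeeping in the $uu$ component of part \ref{main_LR_estimate}: the $\mathcal{Z}^2$ conclusion is tight and hinges on the precise interplay between $\partial^b_r X^u = 0$ and the $\mathcal{Z}^1$ bound on the angular part of $\mathcal{R}^{iu}$, so any miscounting anywhere drops the class back to $\mathcal{Z}^{3/2}$. A secondary concern is preserving the $\ell^1$ summabilities in $r$ (and in $u$ inside the wave zone $\tfrac{1}{2}t<r<2t$) that are built into the definition of $\mathcal{Z}^k$; this should be routine provided each weight factor is checked to have bounded $(\tau_-\partial^b_u)$ and $(\tau_x\partial^b_x)$ derivatives, an observation I would formalize once at the outset and use silently thereafter.
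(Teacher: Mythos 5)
Your proposal is correct and follows essentially the same route as the paper's proof: a componentwise expansion of $\mathcal{L}_X\mathcal{R}$ and $\mathcal{L}_X\mathcal{S}$ in Bondi coordinates, reduction to $a=b=c=0$ via the stability of $\mathcal{Z}^k$ under admissible weights, and the radial/angular decomposition of $\mathcal{R}^{ui}$ (and of $X^i$) paired with the conditions \eqref{good_X_conds} to recover the sharp classes — in particular your accounting of the $uu$ component, where $\partial_r^b X^u=0$ trades the $\mathcal{Z}^{1/2}$ radial piece of $\mathcal{R}^{iu}$ for the $\mathcal{Z}^1$ angular remainder times the $\tau_x/\tau_+$ gain, is exactly the paper's Case 1a. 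The only cosmetic difference is in part \ref{X0_part}, where the paper reduces to part \ref{main_LR_estimate} in $r>\tfrac12 t$ and then patches the two failing conditions for $X=\partial_i^b-\omega^i{\partial_u^b}$, whereas you compute directly; the underlying calculation (that $\partial_r^b\omega^i=O(\tau_x^{-3})$ is negligible and the angular part again pairs with the traceless piece of $\mathcal{R}$) is the same.
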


\begin{rem}\label{zero_diff_rem}
As will become apparent in the proof, if one is only interested in the norms \eqref{Zk_defn} 
at level $N=0$ for
$\mathcal{R}_X^{\alpha\beta}$ and $\mathcal{S}_X^\alpha$ (i.e.~no derivatives) ,
then one can replace the full symbol bounds \eqref{vect_sym_bnds}
with  first order conditions:
\begin{equation}
		\sum_{l+|J|\leq 1}\big| (\tau_-{\partial_u^b})^l (\tau_+ \partial_x^b)^J X^u\big|
		\ \lesssim \  \tau_x ^{a}\tau_+^{b}\tau_-^{c+1}
		 \ , \qquad\qquad
		\sum_{l+|J|\leq 1} \big| (\tau_-{\partial_u^b})^l (\tau_x \partial_x^b)^J X^i\big|
		\ \lesssim \ \tau_x^{a+1}\tau_+^{b}\tau_-^{c}  \ . \notag
\end{equation}
In this case the various implications above are true with the inclusion $\mathcal{R}\in 
\tau_x^{a}\tau_+^{b}\tau_-^{c}\cdot \mathcal{Z}^k$ replaced by the bound
$\LLp{\tau_x^{-a}\tau_+^{-b}\tau_-^{-c}\mathcal{R}}{k,0}<\infty$.
\end{rem}

\begin{proof}[Proof of Lemma \ref{basic_lie_lem}]
We prove the various portions separately. First note that conditions \eqref{R_sym_bnds}
are invariant with respect to dyadic cutoffs in the $r$, $u$, and $t+r$ variables. Therefore by utilizing
such cutoffs and the Leibniz rule we may assume $a=b=c=0$. As a second preliminary note the
identity:
\begin{equation}
		\hat{x}^i-\omega^i \ = \ \omega^i r^{-1}(r+\tau_x)^{-1} \ , \qquad
		\hbox{where\ \ } \hat{x}^i \ = \ r^{-1}x^i \ . \label{hat_omega_iden}
\end{equation}
This allows us to trade $\hat{x}^i$ for $\omega^i $ in the region $r>1$ as long as errors
on the order of $r^{-2}$ are acceptable.

\Part{1}{The  $\mathcal{R}$ bounds involving condition \eqref{vect_sym_bnds}}
We begin with the proof of estimates \eqref{R_sym_bnds} for $\mathcal{L}_X\mathcal{R}$
assuming  conditions \eqref{vect_sym_bnds} and \eqref{good_X_conds} or one of the
alternatives listed in items \ref{R_alt1}) and \ref{R_alt2}) above.
The formula for the Lie derivative is $\mathcal{L}_X\mathcal{R}^{\alpha\beta} 
= X(\mathcal{R}^{\alpha\beta})-\partial_\gamma (X^\alpha)\mathcal{R}^{\gamma\beta}
-\partial_\gamma (X^\beta)\mathcal{R}^{\alpha \gamma}$. We check each component
separately:

\case{1a}{The $uu$ component assuming $\partial_r^b X^u=0$}
By   assumption we have $\omega^i\partial_i^b X^u=0$, thus:
\begin{equation}
		\mathcal{L}_X\mathcal{R}^{uu} \ = \ X(\mathcal{R}^{uu})-2{\partial_u^b}(X^u)\mathcal{R}^{uu}
		-\partial_i^b (X^u)(\mathcal{R}^{ui}-\omega^i\omega_j \mathcal{R}^{uj})
		-\partial_i^b (X^u)(\mathcal{R}^{iu}-\omega^i\omega_j \mathcal{R}^{ju}) \ . \notag
\end{equation}
Then the   estimate on line \eqref{R_X_sym_bnds} for $ \mathcal{R}_X^{uu}$
is immediate from the estimates \eqref{vect_sym_bnds} and \eqref{R_sym_bnds}.

\case{1b}{The $uu$ component when $\partial_r^b X^u\neq 0$}
By the previous case we have the desired estimate modulo 
the additional expression $r^2\tau_x^{-2}\partial_r^b(X^u)(\mathcal{R}^{ur}+\mathcal{R}^{ru})$,
which adds a $\mathcal{Z}^\frac{3}{2}$ term.

\case{2}{The $ui$ and $iu$ components}
By symmetry 
it suffices to treat the $ui$ case. We have:
\begin{equation}
		\mathcal{L}_X\mathcal{R}^{ui} \ = \ X(\mathcal{R}^{ui})
		- {\partial_u^b}(X^u)\mathcal{R}^{ui} - {\partial_u^b}(X^i)\mathcal{R}^{uu} 
		-\partial_j^b (X^u) \mathcal{R}^{ji}
		-\partial_j^b (X^i) \mathcal{R}^{uj}  \ . \notag
\end{equation}
Using estimates \eqref{vect_sym_bnds} and \eqref{R_sym_bnds} we get a $\mathcal{Z}^\frac{1}{2}$
symbol bound  for this term. In addition one sees that for all 
parts of the above formula 
save for the expression $\mathcal{B}^i=X(\omega^i \omega_j \mathcal{R}^{uj})- 
{\partial_u^b}(X^u)\omega^i \omega_j\mathcal{R}^{uj}-
\omega^k \partial_k^b (X^i) \omega_j \mathcal{R}^{uj}$ the
bound is on the order of $\mathcal{Z}^1$. To show improved bounds we only need to consider
the region  $r>1$. Using \eqref{hat_omega_iden}
we see that $\mathcal{B}\equiv \td{\mathcal{B}} \mod 
r^{-2}\cdot \mathcal{Z}^\frac{1}{2}$ where 
$\td{\mathcal{B}}^i=X(\hat x^i \mathcal{R}^{ur})- {\partial_u^b}(X^u)\hat x^i  \mathcal{R}^{ur}-
\partial_r^b (X^i)  \mathcal{R}^{ur}$. Again using \eqref{hat_omega_iden}, we see that in order
to show $  \mathcal{B}^i - \omega^i\omega_j  \mathcal{B}^j \in \mathcal{Z}^1$
it suffices to prove 
$\chi_{r>1}( \td{\mathcal{B}}^i - \hat{x}^i \td{\mathcal{B}}^r) \in \mathcal{Z}^1$.
This would follow immediately if   $\td{\mathcal{B}}$ is a radially directed vector field.
Using $X(\hat x^i)=r^{-1} (X^i-\hat x^i X^r)$ we compute:
\begin{equation}
		\td{\mathcal{B}}^i \ = \ \hat x^i \big( X(\mathcal{R}^{ur})-{\partial_u^b} (X^u)\mathcal{R}^{ur} - \partial_r^b (X^r)
		\mathcal{R}^{ur}\big) - r \partial_r^b \big[ r^{-1} (X^i-\hat x^i  X^r)\big] \mathcal{R}^{ur}  \ , \notag
\end{equation}
which is manifestly radial thanks to the last condition on line \eqref{good_X_conds}.

\case{3a}{The $ij$ components assuming ${\partial_u^b}(X^i)=0$}
Here we have:
\begin{equation}
		\mathcal{L}_X\mathcal{R}^{ij} 
		\ = \ X(\mathcal{R}^{ij})-\partial_k^b (X^i)\mathcal{R}^{k j}
		-\partial_k^b (X^j)\mathcal{R}^{i k} \ . \notag
\end{equation}
Then the   estimate on line \eqref{R_X_sym_bnds} for $ \mathcal{R}_X^{ij}$
is immediate from the estimates \eqref{vect_sym_bnds} and \eqref{R_sym_bnds}.

\case{3b}{The $ij$ components assuming ${\partial_u^b}(X^r)\neq 0$} 
In this case we are still assuming ${\partial_u^b}(X^i-\hat{x}^i X^r)=0$.
Therefore we have:
\begin{equation}
		\mathcal{L}_X\mathcal{R}^{ij}_X 
		\ = \ X(\mathcal{R}^{ij})-\hat{x}^i {\partial_u^b} (X^r)\mathcal{R}^{u j}
		-\hat{x}^i {\partial_u^b} (X^r)\mathcal{R}^{i u}
		-\partial_k^b (X^i)\mathcal{R}^{k j}
		-\partial_k^b (X^j)\mathcal{R}^{i k} \ . \notag
\end{equation}
A $\mathcal{Z}^{-\frac12}$ symbol bound   for this expression in $r>1$
is again immediate from \eqref{vect_sym_bnds} and \eqref{R_sym_bnds}. On the other hand
all but the second and third terms above yield an improved  $\mathcal{Z}^0$ bound.
Thus, using \eqref{hat_omega_iden} we have for $r>1$:
\begin{equation}
		 \mathcal{R}^{ij}_X -\omega^i\omega^j \omega_k\omega_l  \mathcal{R}^{kl}_X
		\ \equiv \ -\omega^i {\partial_u^b} (X^r)( \mathcal{R}^{u j} - \omega^j \omega_k \mathcal{R}^{u k})
		-\omega^j {\partial_u^b} (X^r)( \mathcal{R}^{i u} - \omega^i \omega_k \mathcal{R}^{k u} )
		\mod r^{-2}\cdot \mathcal{Z}^{-\frac{1}{2}} + \mathcal{Z}^0 \ . \notag
\end{equation}
By  \eqref{R_sym_bnds} and \eqref{vect_sym_bnds} 
we have a $\mathcal{Z}^0$  bound for this last term as well.

\Part{2}{The $\mathcal{S}$ bounds involving condition \eqref{vect_sym_bnds}}
Again we can reduce to $a=b=c=0$. Componentwise we have 
$\mathcal{L}_X\mathcal{S}^\alpha = X(\mathcal{S}^\alpha)-
\partial_\beta(X^\alpha)\mathcal{S}^\beta$.

\case{1}{The $u$ component}
Here we have:
\begin{equation}
		\mathcal{L}_X\mathcal{S}^u \ = \ X(\mathcal{S}^u)-
		{\partial_u^b}(X^u)\mathcal{S}^u-
		\partial_i^b(X^u)\mathcal{S}^i \ , \notag
\end{equation}
so the second estimate on line \eqref{S_sym_bnds}  
follows directly by multiplying together the bounds on line \eqref{vect_sym_bnds}  and
\eqref{S_sym_bnds}.

\case{2}{The $i$ components}
Here we have:
\begin{equation}
		\mathcal{L}_X\mathcal{S}^i \ = \ X(\mathcal{S}^i)-
		{\partial_u^b}(X^i)\mathcal{S}^u-
		\partial_j^b(X^i)\mathcal{S}^j \ , \notag
\end{equation}
and so the first estimate on line \eqref{S_sym_bnds}  
follows by directly from 
\eqref{vect_sym_bnds}   and
\eqref{S_sym_bnds}.

\Part{3}{Estimates involving $\mathbb{L}_0$}
This is largely a corollary of \textbf{Part 1} and \textbf{Part 2} above.

First, notice  that   $X\in\mathbb{L}_0$ does not directly satisfy the first condition on line
\eqref{vect_sym_bnds}  due to terms containing the factor
$(\tau_+\partial_x)^J \omega^i$. However, the claim of Part \ref{X0_part})
 is easy to verify in $r<\frac{1}{2}t$ using the fact that for any
 $X\in \mathbb{L}_0$ one has $|(\tau_-{\partial_u^b})^l(\tau_x\partial_x^b)^J X^\alpha|\lesssim_{l,J} 1$.

Next, for any $X\in\mathbb{L}_0$ one has 
both conditions  on line \eqref{vect_sym_bnds} with $a=c=-1$ and $b=1$ when restricting
to the region $r>\frac{1}{2} t$.
In this case one also has to deal with the fact that $\partial_r^b X^u\neq 0$ and
$\partial_r^b r^{-1}(X^i-\hat x^i\hat x_jX^j) \neq 0$ save for  when $X={\partial_u^b}$. 
Recall that these two special conditions were only used in \textbf{Case 1b} and \textbf{Case 2} of \textbf{Part 1} above.
So we review those cases here when $X=\partial_i^b-\omega^i {\partial_u^b} $.

\case{1}{The $uu$ component of $\mathcal{R}_X$}
Recall from  \textbf{Case 1b}   of \textbf{Part 1} above we only need to handle
an expression of the form $\partial_r^b(X^u)(\mathcal{R}^{ur}+\mathcal{R}^{ru})$ in the region $r>1$ where
 $X^u=\omega^i$. Using
 $\partial_r^b (\omega^i)=r^{-1}\tau_x^{-2}\omega^i$ we get  an 
$\tau_x^{-3}\cdot \mathcal{Z}^{\frac{1}{2}}\subseteq \tau_x^{-1}\tau_-^{-1}\tau_+\mathcal{Z}^2$
bound for  this expression which suffices.

\case{2}{The $ui$ component of $\mathcal{R}_X$}
Recall that the condition $\partial_r^b r^{-1}(X^i-\hat x^i\hat x_jX^j) = 0$
is only used to establish the improved bound 
$\mathcal{R}_X^{iu}-\omega^i\omega_j \mathcal{R}_X^{uj}\in\mathcal{Z}^1$.
Recall further that this improved bound automatically holds modulo an expression of the form  
$     r\partial_r^b[ r^{-1} (X^i-\omega^i\omega_j X^j) ] \mathcal{R}^{ur} $.  When
$X^i=0,1$ we see  this expression has symbol bounds
on the order of $\tau_x^{-1}\cdot \mathcal{Z}^\frac{1}{2}\subseteq \tau_x^{-1}\tau_-^{-1}\tau_+\
\mathcal{Z}^1$ 
which suffices.
\end{proof}

%-----------------------------------------------------------------------

\subsection{A General Exterior Multiplier Estimate}

Next, we prove some general   multiplier
bounds which will be used a number of times in the sequel. To state them we first
define the form of an acceptable error. 

\begin{defn}[General form of multiplier estimate errors]
For a pair of parameters $0< a<1$ and $R>0$, and a  quantity 
$o_R(1)\to 0$ as $R\to\infty$, we set:
\begin{multline}
		 \mathcal{E}(a,R) \ = \ 
		\lp{\tau_x^a\partial\phi(0)}{L^2_x(r>\frac{1}{2}R)}^2 + o_R(1)\cdot \big(
		\sup_{0\leq t\leq T}\lp{\phi(t)}{E^a(r>\frac{1}{2}R)}^2 + \lp{\phi}{S^a(r>\frac{1}{2}R)[0,T]}^2
		\big)\\ 
		+ \lp{\phi}{S^a(r>\frac{1}{2}R)[0,T]} \cdot  \big( R^{-\frac{1}{2}}
		\lp{(\tau_-^a{\partial_u^b}\phi,\tau_x^a\partial_x^b\phi, \tau_x^{a-1}\phi)}
		{L^2(\frac{1}{2}R<r<R)[0,T]}
		+\lp{\Box_g\phi}{N^a(r>\frac{1}{2}R)[0,T]} \big) \ . \label{aR_error}
\end{multline}
Corresponding to the  cases  $a=0,1$, for parameter $R>0$, quantity $o_R(1)$,
and vector field $X$, we set:
 \begin{multline}
		 \mathcal{E}(0,R,X) \ = \ 
		\lp{ \partial\phi(0)}{L^2_x(r>\frac{1}{2}R)}^2 + o_R(1)\cdot \big(
		\sup_{0\leq t\leq T}\lp{\partial \phi(t)}{L^2_x(r>\frac{1}{2}R)}^2 + \lp{\phi}{ \LE^0(r>\frac{1}{2}R)[0,T]}^2
		\big)\\ 
		+   R^{-1} \lp{(\partial \phi,\tau_x^{-1}\phi) }{L^2(\frac{1}{2}R<r<R)[0,T]}^2 +  \Big|\dint 
		 \Box_g\phi \cdot \tau_x^{-1} X(\tau_x \phi) dV_g \Big| \ , \label{0R_error}
\end{multline}
and:
 \begin{multline}
		 \mathcal{E}(1,R,X) \ = \ 
		\lp{\tau_x \partial\phi(0)}{L^2_x(r>\frac{1}{2}R)}^2 + o_R(1)\cdot \big(
		\sup_{0\leq t\leq T}\lp{\phi(t)}{E^1(r>\frac{1}{2}R)}^2 + 
		\lp{\phi}{ S^{1,\infty}(r>\frac{1}{2}R)[0,T]}^2
		\big)\\ 
		+ \lp{\phi}{ S^{1,\infty}(r>\frac{1}{2}R)[0,T]} \cdot  R^{-\frac{1}{2}}
		\lp{(\tau_-{\partial_u^b}\phi,\tau_x \partial_x^b\phi,  \phi)}
		{\ell^1_t L^2(\frac{1}{2}R<r<R)[0,T]} +
		  \Big|\dint  
		 \Box_g\phi \cdot \tau_x^{-1} X(\tau_x\phi) dV_g \Big| \ . \label{1R_error}
\end{multline}
In the above notation the rate of  $o_R(1)$ may change from line to line, but is fixed for any line
on which an error of the form $\mathcal{E}$ appears. Also we denote $dV_g=\sqrt{|g|} dxdt$
where $|g|=|\det{g}|$ is computed in $(t,x)$ coordinates.
 \end{defn}

With this notation in mind we have:

\begin{prop}[Abstract Multiplier Estimate]\label{abs_mult_prop}
Fix $R>0$ sufficiently large so that $\mathcal{K}\subset \{r<\frac{1}{2}R\}$, and
let $Y$ be a  vector field such that  $Y=Y^u{\partial_u^b} + Y^r\partial_r^b$,
with both $Y^u\geq 0$ and $Y^r\geq 0$   depending  only on the $(u,r)$ variables. 
Then the following hold:
\begin{enumerate}[I)]
\item \label{a_mult_case}
Assume  for some $0< a <1$ there holds the symbol type bounds:
\begin{equation}
		\sum_{ i+|J|\leq 1}\big| (\tau_-{\partial_u^b})^i (\tau_+ \partial_r^b)^J  Y^u\big| \ \lesssim \ 
		\tau_+^{2a}\tau_0^{\max\{1,2a\}} \ , \qquad
		\sum_{ i+|J|\leq 1}\big| (\tau_-{\partial_u^b})^i (\tau_x \partial_r^b)^J Y^r)\big|
		\ \lesssim \ \tau_x^{2a}+\tau_+^{2a-1}\tau_x  \ . \label{mult_sym_bnds}
\end{equation}
%and in addition that:
%\begin{equation}
%		|{\partial_u^b} Y^r|\lesssim \ (r^{2a}+\tau_+^{2a-1}r)\tau_-^{-1}\chi_{\la u\ra \approx 2^k} + 
%		r^{2a-1} \hbox{\ \ for some \ \ } k\in\mathbb{N} \ . \label{spec_Y^r_cond}
%\end{equation}
Then one has the following multiplier estimate:
\begin{multline}
		\dint   \chi_{>R}\big(
		\mathcal{A}^u  ({\partial_u^b}\phi)^2 + \mathcal{A}^{ur}  {\partial_u^b}\phi\cdot  \tau_x^{-1}\partial_r^b(\tau_x\phi)
		+\mathcal{A}^{r}(\tau_x^{-1}\partial_r^b(\tau_x\phi))^2 + 
		\slash\!\!\!\! \mathcal{A} |\snabla^b \phi|^2
		\big)dxdt\\
		+ \lp{\tau_x ^{-1}\big(\sqrt{Y^u} \partial(\tau_x \phi),
		\sqrt{Y^r} \partial_x^b(\tau_x \phi)\big)(T) }{L^2_x(r>R)}^2 
		\ \lesssim \ \mathcal{E}(a,R) \ , \label{abs_mult_est}
\end{multline}
where $|\snabla^b \phi|^2$ denotes the (Euclidean) angular gradient of 
$\phi$ with respect to the spheres $u=const$ and $r=const$,
and there $A_0$ is given by:
\begin{subequations}\label{A0_coeff}
\begin{align}
		\mathcal{A}^u \ &= \ 
		-   \partial_r^b Y^u 
		\ , 
		&\mathcal{A}^{r} \ &= \ 
		\frac{1}{2}\partial_r^b Y^r -\frac{1}{2}{\partial_u^b} Y^u-  {\partial_u^b} Y^r
		\ , \label{A0_coeff1}\\
		\mathcal{A}^{ur} \ &= \  \partial_r^b Y^u \ ,
		&\slash\!\!\!\! \mathcal{A} \ &= \ 
		r^{-1}Y^r-\frac{1}{2}{\partial_u^b} Y^u-\frac{1}{2}\partial_r^b Y^r
		\ . \label{A0_coeff2}
\end{align}
\end{subequations}
Here $\chi_{>R}=\chi_{>1}(R^{-1}\cdot)$ 
is a  radial bump function with $\chi_{>R}\equiv 1$
on $r>R$ and $\chi_{>R}\equiv 0$
on $r<\frac{1}{2}R$.
The implicit constant on line \eqref{abs_mult_est}
depends only on the bounds from line \eqref{mult_sym_bnds}, and the metric $g$.

\item \label{1_mult_case}
In the case $a=1$, still assuming \eqref{mult_sym_bnds}, we have
\eqref{abs_mult_est}--\eqref{A0_coeff} with  RHS\eqref{abs_mult_est} replaced by
$\mathcal{E}(1,R)=\mathcal{E}(1,R,\chi_{r>R}Y)$
where $\chi_{>R}$ is as above.

\item \label{0_mult_case}
Alternatively, in  the case $a=0$ replace assumption \eqref{mult_sym_bnds} with:
\begin{equation}
		\sum_{ i+|J|\leq 1}\big| (\tau_-{\partial_u^b})^i (\tau_+ \partial_r^b)^J  Y^u \big| \ \lesssim \ 
		1\  , \qquad
		\sum_{ i+|J|\leq 1}\big| (\tau_-{\partial_u^b})^i (\tau_x \partial_r^b)^J Y^r\big| \ \lesssim \ 
		1 \ , \qquad
		{\partial_u^b} Y^r \ = \ 0 \ .
		 \label{mult_sym_bnds'}
\end{equation}
Then \eqref{abs_mult_est}--\eqref{A0_coeff} hold with RHS\eqref{abs_mult_est} replaced by 
$\mathcal{E}(0,R)=\mathcal{E}(0,R,\chi_{r>R}Y)$ where $\chi_{r>R}$ is as above.
\end{enumerate}
\end{prop}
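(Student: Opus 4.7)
The plan is to apply the conformal divergence identity of Lemma \ref{conf_dividen_lem} with multiplier $X = \chi_{>R}\, Y$, conformal weight $\Omega = \tau_x$, and scalar function $\chi \equiv 1$ (so that the coefficient $C^\chi$ in \eqref{AB_formulas} vanishes). This produces an equality of the form
\begin{equation}
\int\!\!\!\int A^{\alpha\beta}\partial_\alpha^b\psi\,\partial_\beta^b\psi\,\tau_x^{-2}\,dV_g + (\text{source and zeroth order}) \ = \ (\text{boundary at } t=T) - (\text{boundary at } t=0),\notag
\end{equation}
with $\psi = \tau_x\phi$, and the strategy is to identify the main bulk coefficients with the $\mathcal{A}$-tensor of \eqref{A0_coeff1}--\eqref{A0_coeff2}, then absorb all remaining terms into $\mathcal{E}(a,R)$.

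The first main step is the algebraic matching of bulk coefficients. Since $Y = Y^u\partial_u^b + Y^r\partial_r^b$ is radial with vanishing angular part $\overline{Y}^i = 0$ and depends only on $(u,r)$, the divergence factor in \eqref{polar_bondi_defiden1} collapses to $\partial_u^b Y^u + \partial_r^b Y^r$, and a direct computation of $\mathcal{L}_Y h$ using \eqref{h_tensor} yields the tensors
\begin{equation}
(\mathcal{L}_Y h)^{uu}=2\partial_r^b Y^u,\quad (\mathcal{L}_Y h)^{ui}=\omega^i(\partial_u^b Y^u - \partial_r^b Y^u + \partial_r^b Y^r),\quad (\mathcal{L}_Y h)^{ij}=2\omega^i\omega^j(\partial_u^b Y^r - \partial_r^b Y^r) - 2r^{-1}Y^r(\delta^{ij}-\omega^i\omega^j).\notag
\end{equation}
Substituting these into $A = -\tfrac12 d^{-1/2}(\mathcal{L}_Y h + (\partial_u^b Y^u + \partial_r^b Y^r)h) + \tfrac12 \mathcal{R}$, inserting into the bulk density, rewriting $\partial_r^b\psi = \tau_x\cdot \tau_x^{-1}\partial_r^b(\tau_x\phi)$ and the angular gradient $\slash\!\!\!\nabla^b\psi = \tau_x\slash\!\!\!\nabla^b\phi + (\slash\!\!\!\nabla^b\tau_x)\phi$ (the second piece being lower order on $r>R$), then using $d^{1/2} = 1 + o_r(1)$ to replace $d^{-1/2}$ by $1$ to leading order, produces precisely the combination on the LHS of \eqref{abs_mult_est} with the stated coefficients $\mathcal{A}^u, \mathcal{A}^{ur}, \mathcal{A}^r, \slash\!\!\!\!\mathcal{A}$.

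The second main step is to absorb all the error contributions into $\mathcal{E}(a,R)$. The remainder $\mathcal{R}$ in \eqref{polar_bondi_defiden2} is controlled through Lemma \ref{basic_lie_lem}: under the hypothesis \eqref{mult_sym_bnds} (or \eqref{mult_sym_bnds'} for $a=0$), the vector field $Y$ satisfies \eqref{vect_sym_bnds} with $(a',b,c)=(2a-1,0,0)$ and conditions \eqref{good_X_conds} for $a=0$, or only the weaker variants for $a>0$. The corresponding Lie derivatives $\mathcal{R}_Y^{\alpha\beta}$ inherit symbol bounds of the form $\tau_x^{2a-1}\cdot \mathcal{Z}^k$ with the peeling structure \eqref{R_X_sym_bnds} (or the alternatives \eqref{Ruu_alt}, \eqref{alt_ij}), and the spacetime pairing of these tensors with $\partial_\alpha^b\psi\,\partial_\beta^b\psi/\tau_x^2$ produces terms of the form $o_R(1)\cdot(\sup_t\|\phi\|_{E^a}^2 + \|\phi\|_{S^a}^2)$ after invoking the Hardy estimates of Appendix 3 to absorb the $\tau_x^{-1}\phi$ factor when needed. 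The cutoff derivatives $\partial\chi_{>R}$, supported in $\tfrac12 R<r<R$, yield spacetime integrals of $|Y||\partial \psi|^2/\tau_x^2$ there, which Cauchy--Schwarz converts into the $R^{-1/2}$ trace term on \eqref{aR_error}. The potential term $V = \tau_x^3\Box_g \tau_x^{-1}$ is small: in flat space with $\tau_x = r$ it would vanish in $3+1$ dimensions, so under the asymptotic flatness \eqref{mod_coords} it lies in $\tau_x^{-2}\mathcal{Z}^0$, producing only lower-order $B^1\phi^2$ contributions absorbed via Hardy.

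The third step handles the boundary and source terms. The boundary piece $P$ from \eqref{P_chi_iden}, evaluated on the uniformly spacelike slice $\{t=T\}$ with $Y^u,Y^r\geq 0$, yields a positive quadratic form comparable to $\tau_x^{-2}\big(Y^u|\partial\psi|^2 + Y^r|\partial_x^b\psi|^2\big)$ modulo $o_r(1)$ corrections from the change of frame \eqref{bondi_to_rec}, delivering the energy term on LHS \eqref{abs_mult_est}; the $t=0$ boundary produces the initial data term in $\mathcal{E}$. The source integral $\int\Box_g\phi\cdot\tau_x^{-1}X(\tau_x\phi)\,dV_g$ is for $0<a<1$ estimated by the duality $S^a \leftrightarrow N^a$ and absorbed into the second line of \eqref{aR_error}, while for $a = 0, 1$ it is retained explicitly as in \eqref{0R_error}, \eqref{1R_error}. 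The main obstacle throughout is the $\mathcal{R}^{ij}\partial_i\psi\,\partial_j\psi$ contraction, where $\mathcal{R}^{ij}_Y$ only lies in $\mathcal{Z}^0$; this is resolved by decomposing $\mathcal{R}^{ij}_Y = \omega^i\omega^j\omega_k\omega_l\mathcal{R}^{kl}_Y + (\mathcal{R}^{ij}_Y-\omega^i\omega^j\omega_k\omega_l\mathcal{R}^{kl}_Y)$, where the tangential piece enjoys the stronger $\mathcal{Z}^{1/2}$ bound from \eqref{alt_ij} (pairing with $\slash\!\!\!\nabla^b\psi$ controlled by $S^a$), while the radial--radial remainder pairs with $(\tau_x^{-1}\partial_r^b(\tau_x\phi))^2$, a component already carried by the $S^a$ norm on \eqref{Sa_def}.
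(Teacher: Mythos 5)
Your overall architecture matches the paper's: apply Lemma \ref{conf_dividen_lem} with $X=\chi_{>R}Y$ and $\Omega=\tau_x$, match the leading bulk coefficients to \eqref{A0_coeff}, treat the cutoff commutator $\mathcal{R}_0$ via Cauchy--Schwarz to produce the $R^{-1/2}$ trace term, and control $\mathcal{L}_Y(d^{1/2}g^{-1}-h)$ through Lemma \ref{basic_lie_lem}. However, there is a genuine gap in your choice of the auxiliary function $\chi\equiv 1$. With $\chi=1$ the boundary density \eqref{P_chi_iden} contains the term $-\frac12\Omega^{-2}X^0\,V\phi^2$ on the slice $t=T$, with $X^0\approx Y^u+Y^r\sim\tau_+^{2a}$ and $V\in\mathcal{Z}^{-\frac12}$, i.e.\ $|V|$ is only $\tau_0^{-\frac12}$ times an $\ell^1_r$-summable quantity. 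This is exactly the error term $o_r(1)\cdot\chi(X^u+X^r)\tau_0^{-\frac12}\tau_x^{-2}\phi^2$ in \eqref{boundary}: near the light cone it exceeds the quantity $\tau_+^{2a}\tau_x^{-2}\phi^2$ controlled by $\CE^a$ by the unbounded factor $\tau_0^{-\frac12}$, which no $o_R(1)$ prefactor can absorb. Likewise, your observation that $\chi=1$ kills $C^\chi$ merely trades it for the surviving bulk term $B^1\phi^2=\frac12\Omega^{-2}\big(X(V)-\hbox{trace}(A)V\big)\phi^2$, which carries the same $\tau_0^{-\frac12}$ loss near $r\approx t$. The paper avoids both problems by taking $\chi=0$ for $0\leq a<1$ (so $B^\chi$ and the $\chi V\phi^2$ boundary contribution vanish identically, leaving only $C^\chi\phi\cdot\Omega^{-1}X(\Omega\phi)$, whose extra factor of $X\phi$ supplies the needed $\tau_0$ weights), and by taking $\chi=\chi_{<\frac12}(r/t)$ for $a=1$ so that the $B^\chi$ and $\chi V$ terms are supported where $\tau_0\approx 1$.

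Two smaller points. First, your treatment of the case $a=1$ is essentially missing: since $S^{1,\infty}$ and $N^{1,1}$ only carry $\ell^\infty_t$ and $\ell^1_t$ summability, the remainder terms cannot be closed by plain H\"older in spacetime; the paper's proof relies on Lemma \ref{L2L00_lem} to convert $\ell^1_r$ (resp.\ $\ell^1_u$) symbol decay into $o_R(1)$ gains against $\ell^\infty_t$ (resp.\ $\ell^\infty_u$) norms, and this step does not appear in your argument. Second, the improved bound in \eqref{alt_ij} for the part of $\mathcal{R}^{ij}_Y$ orthogonal to the pure radial--radial component is $\mathcal{Z}^0$, not $\mathcal{Z}^{\frac12}$ as you state, although your use of the decomposition is otherwise in the right spirit.
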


In order to prove this proposition we need a few additional supporting lemmas.

\begin{lem}[Asymptotics of the conformal potential]
Let $\Omega=\tau_x$  and define the quantity
$V=\Omega^3 \Box_g(\Omega^{-1})$. Then in Bondi coordinates
$(u,x^i)$ one has the symbol bounds:
\begin{equation}
		 V \ \in \   \mathcal{Z}^{-\frac{1}{2}}  \ . \label{V_bound}
\end{equation}
\end{lem}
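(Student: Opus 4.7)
The strategy is a direct computation in Bondi coordinates, exploiting that $\Omega^{-1}=\tau_x^{-1}$ depends only on the spatial variables. Consequently $\partial_u^b \tau_x^{-1}=0$, so $g^{uu}$ makes no contribution. Starting from $\Box_g f = |g|^{-1/2}\partial_\alpha^b(|g|^{1/2}g^{\alpha\beta}\partial_\beta^b f)$ with $f=\tau_x^{-1}$, substituting $\partial_i^b\tau_x^{-1}=-x_i\tau_x^{-3}$, expanding by Leibniz and multiplying through by $\tau_x^3$, one arrives at
\[
V \ = \ |g|^{-1/2}\bigl[-(\partial_\alpha^b G^{\alpha i})x_i - G^{ii} + 3\,G^{ij}x_ix_j\tau_x^{-2}\bigr], \qquad G^{\alpha\beta}:=|g|^{1/2}g^{\alpha\beta}.
\]

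Next, split $G = h+\mathcal{R}$ with $\mathcal{R}$ obeying the bounds \eqref{R_sym_bnds}. Since $\partial_\alpha^b h^{\alpha i}=0$ (as $\partial_u^b\omega^i=0$ and $\partial_j^b\delta^{ji}=0$), $h^{ii}=3$, and $h^{ij}x_ix_j=r^2$, the $h$-piece collapses to $-3|g|^{-1/2}\tau_x^{-2}$, which by \eqref{det_bnds} lies in $\tau_x^{-2}(1+\mathcal{Z}^0)\subset \mathcal{Z}^{-1/2}$ (in fact in a better class). For the perturbation, the terms $\mathcal{R}^{ii}$, $\mathcal{R}^{ij}x_ix_j\tau_x^{-2}$, and the spatial divergence $(\partial_j^b\mathcal{R}^{ji})x_i$ are controlled directly by $\mathcal{R}^{ij}\in \mathcal{Z}^0$ and the Leibniz rule, producing $\mathcal{Z}^0\subset \mathcal{Z}^{-1/2}$ contributions.

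The delicate term is $(\partial_u^b\mathcal{R}^{ui})x_i$. Decompose
\[
\mathcal{R}^{ui}x_i \ = \ (\mathcal{R}^{ui}-\omega^i\omega_j\mathcal{R}^{uj})x_i \ + \ (r^2/\tau_x)\,\omega_j\mathcal{R}^{uj},
\]
estimating the first summand with the improved bound $\mathcal{R}^{ui}-\omega^i\omega_j\mathcal{R}^{uj}\in\mathcal{Z}^1$ and the second with $\omega_j\mathcal{R}^{uj}\in\mathcal{Z}^{1/2}$. Applying $\partial_u^b$ costs one factor of $\tau_-^{-1}$. A weight count shows the first summand actually sits inside a class better than $\mathcal{Z}^{-1/2}$; the second, however, carries a radial factor $r^2/\tau_x\sim \tau_x$ in the wave zone and only barely falls into $\mathcal{Z}^{-1/2}$. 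This marginal contribution is what dictates the exponent $-\tfrac12$ in \eqref{V_bound}. The higher-order symbol bounds for $(\tau_-\partial_u^b)^i(\tau_x\partial_x^b)^J V$ follow from the same structural analysis after full Leibniz expansion, using that the classes $\mathcal{Z}^k$ are preserved under the vector fields $\tau_-\partial_u^b$ and $\tau_x\partial_x^b$.

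The main obstacle lies precisely in this last, borderline step: one must carefully track how products of the weights $\tau_{\pm}$, $\tau_x$, $\omega^i$, and $x_i$ combine against the $\ell^1_r L^\infty$ and wave-zone $\ell^1_u\ell^1_r L^\infty$ structure of the $\mathcal{Z}^k$ norms, in order to verify that the radial component of $\partial_u^b\mathcal{R}^{ui}$ saturates, rather than exceeds, the $\mathcal{Z}^{-1/2}$ threshold.
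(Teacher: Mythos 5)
Your proposal is correct and follows essentially the same route as the paper: write $\Box_g=d^{-1/2}\Box_h+d^{-1/2}\partial^b_\alpha\mathcal{R}^{\alpha\beta}\partial^b_\beta$ with $\mathcal{R}=d^{1/2}g^{-1}-h$, compute the $h$-piece explicitly to get $-3\tau_x^{-2}$, and observe that the only borderline contribution is $r\,\partial_u^b\mathcal{R}^{ur}\in \tau_x\tau_-^{-1}\cdot\mathcal{Z}^{1/2}\subseteq\mathcal{Z}^{-1/2}$. Your extra splitting of $\mathcal{R}^{ui}x_i$ into trace-free and radial parts is harmless but unnecessary, since the crude bound $\mathcal{R}^{ui}\in\mathcal{Z}^{1/2}$ already yields the stated exponent.
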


\begin{proof}
First write the wave operator in Bondi coodinates as
$\Box_g = d^{-\frac{1}{2}} \Box_h  + d^{-\frac{1}{2}} \partial_\alpha^b 
\mathcal{R}^{\alpha\beta}\partial_\beta^b $, where $g=|g|$ is the Bondi
coordinate metric determinant, $\Box_h=\partial_\alpha^b h^{\alpha\beta}\partial_\beta^b$ 
where $h$ is given on line \eqref{h_tensor},
and where $\mathcal{R}=d^\frac{1}{2}g-h$
satisfies the estimate \eqref{R_sym_bnds}.
A quick calculation shows $\Box_h (\tau_x^{-1})=-3\tau_x^{-5}$, and a little further work reveals:
\begin{equation}
		  V  \ = \ - d^{-\frac{1}{2}}\big(   r \partial_\alpha^b  \mathcal{R}^{\alpha r}
		+\tau_x^{-2}(1-3r^2)    \mathcal{R}^{r r} +3\tau_x^{-2} \big) \ . \notag
\end{equation}
By \eqref{det_bnds} we have $  d^{-\frac{1}{2}}-1\in\mathcal{Z}^0$,
so estimate \eqref{V_bound} follows from \eqref{R_sym_bnds}.
\end{proof}

\begin{lem}[Formulas for boundary terms]
Let $X^r, X^u$ be non-negative   and set $X=X^u{\partial_u^b} +X^r\partial_r^b$.
Then if $\Omega=\tau_x$ one has the following pointwise
estimate involving the quantity $P(X,\chi,\Omega,\phi)$ defined on line 
\eqref{P_chi_iden}:
\begin{multline}
		X^u |\tau_x^{-1}\partial (\tau_x \phi)|^2+
		X^r | \tau_x^{-1} \partial_x^b 
		(\tau_x \phi)|^2\ \lesssim \ -    P(X,\chi,\Omega,\phi)\\
		+ o_r(1)\cdot \big( 
		( X^u +\tau_0^2 X^r)  |\tau_x^{-1}\partial(\tau_x\phi)|^2
%		+ X^r  |\tau_x^{-1} \partial_x^b(\tau_x\phi)|^2 
		+  \chi (X^u+ X^r  )
		\tau_0^{-\frac{1}{2}} \tau_x^{-2} \phi^2\big)  \ . \label{boundary} 
\end{multline}
\end{lem}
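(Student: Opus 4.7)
The proof is a direct algebraic expansion of $P(X,\chi,\Omega,\phi)$ from \eqref{P_chi_iden} using $\Omega = \tau_x$ and $\psi = \tau_x\phi$. By symmetrizing the term $-g^{0\alpha}\partial_\alpha\psi\cdot X\psi$ in $\alpha,\beta$, one first rewrites
\begin{equation*}
-P\cdot \Omega^2 \ = \ \tfrac{1}{2}M^{\alpha\beta}\partial_\alpha\psi\partial_\beta\psi - \tfrac{1}{2}X^0\chi V \phi^2 \ , \qquad M^{\alpha\beta} \ := \ X^0 g^{\alpha\beta} - X^\alpha g^{0\beta} - X^\beta g^{0\alpha} \ ,
\end{equation*}
which reduces the claim to a pointwise lower bound for $M^{\alpha\beta}\partial_\alpha\psi\partial_\beta\psi$ and a direct bound on the potential piece.

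For the principal part I would use \eqref{u_sym_bnd} to write $dt = \alpha_u du + \beta_i dx^i$ with $\alpha_u - 1, \beta_i - \omega^i$ small, so that $X^0_{\text{rect}} \approx X^u + X^r$ and $X^i_{\text{rect}} = X^r\omega^i$. Next, decompose $g^{\alpha\beta} = d^{-\frac{1}{2}}h^{\alpha\beta} + d^{-\frac{1}{2}}\mathcal{R}^{\alpha\beta}$ with $\mathcal{R}$ obeying \eqref{R_sym_bnds} and $d^{\frac{1}{2}}\approx 1$, and substitute the reference tensor $h$ (with $h^{uu}=0$, $h^{ui}=-\omega^i$, $h^{ij}=\delta^{ij}$) for $g^{-1}$. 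Completing the square in the $X^u$ contribution and using the identity $(\partial_t\psi + \partial_r\psi)^2 + |\snabla\psi|^2 = |\partial_x^b\psi|^2$ in the $X^r$ contribution yields
\begin{equation*}
\tfrac{1}{2}M^{\alpha\beta}_{\text{princ}}\partial_\alpha\psi\partial_\beta\psi \ = \ \tfrac{1}{2}X^u|\partial\psi|^2 + \tfrac{1}{2}X^r|\partial_x^b\psi|^2 \ ,
\end{equation*}
which after dividing by $\Omega^2$ is exactly $\tfrac{1}{2}$ times the LHS of \eqref{boundary} and suffices once the absolute constant is absorbed into $\lesssim$.

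The remaining step is to estimate the error $\delta M$, which collects the contributions from $\mathcal{R}$ and from the Jacobian/angular corrections $\alpha_u-1, \beta_i-\omega^i, |\omega|^2 - 1 = \tau_x^{-2}$. Each component of $\delta M$ carries a definite $\mathcal{Z}^k$ decay, and after contracting into $\partial\psi\otimes \partial\psi$ one bounds term by term. The $X^u$-coefficient of $\delta M \cdot \partial\psi \partial\psi$ is controlled trivially by $o_r(1)X^u|\tau_x^{-1}\partial(\tau_x\phi)|^2$, since $\tau_0\le 1$ kills any positive $\tau_0$-weight. The $X^r$-coefficient requires more care: in the $-X\psi\cdot g(dt,d\psi)$ piece, $X^r$ enters only as $X^r\partial_r^b\psi$, a purely spatial Bondi derivative, so any $\psi_u^2$-type contribution multiplying $X^r$ must come through the $\psi_u$-slot of the metric error itself; there the strongest decay available is $g^{uu}\in \mathcal{Z}^2$ (paired with $\psi_u^2$ via $X^0$) together with $g^{ui}+\omega^i\in \mathcal{Z}^{\frac{1}{2}}$ whose tangential part lies in $\mathcal{Z}^1$. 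A Cauchy--Schwarz split with weights tuned to these decay classes yields precisely the advertised $o_r(1)\tau_0^2 X^r |\tau_x^{-1}\partial(\tau_x\phi)|^2$ error, while purely spatial-spatial cross terms are absorbed into the LHS $X^r|\tau_x^{-1}\partial_x^b(\tau_x\phi)|^2$ by the $o_r(1)$ prefactor. Finally, the potential term is handled directly using \eqref{V_bound}: $|V|\lesssim o_r(1)\tau_0^{-\frac{1}{2}}$ immediately gives $\frac{1}{2}\Omega^{-2}X^0\chi V\phi^2 \lesssim o_r(1)\chi(X^u+X^r)\tau_0^{-\frac{1}{2}}\tau_x^{-2}\phi^2$, which is the last error term in \eqref{boundary}.

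The main obstacle is the bookkeeping in the $X^r$ error analysis, namely squeezing out the $\tau_0^2$ factor on the $X^r \psi_u^2$ contributions. This rests on carefully balancing the Cauchy--Schwarz weights against the precise $\mathcal{Z}^k$ decay rates from \eqref{R_sym_bnds}, together with the asymmetric structural observation that $X^r$ acts purely as a Bondi-spatial derivative on $\psi$, so that bad $\psi_u$ contributions can arise only through the metric error's own $\psi_u$-slot.
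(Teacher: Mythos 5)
Your route is genuinely different from the paper's. You expand $-P$ directly as $\tfrac{1}{2}\Omega^{-2}M^{\alpha\beta}\partial_\alpha\psi\,\partial_\beta\psi$ plus the potential piece, with $M^{\alpha\beta}=X^0g^{\alpha\beta}-X^\alpha g^{0\beta}-X^\beta g^{0\alpha}$, and then compare componentwise against the flat model $h$. The paper never touches the components of $M$: it first shows via Cramer's rule that the \emph{covariant} quantities satisfy $\la \partial_r^b,\partial_r^b\ra_g=o_r(1)\tau_0^2$ and $\la \partial_r^b,Y_A\ra_g=o_r(1)\tau_0$, uses the approximate null frame lemma to write $\partial_r^b=L+o_r(1)\tau_0\,\slash\!\!\!\partial_x^b+o_r(1)\tau_0^2\,\partial$ with $L$ exactly null, and then invokes positivity of $T(L,-\nabla t)$ and $T(\partial_u^b,-\nabla t)$. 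Your principal-part identity $\tfrac12X^u|\partial\psi|^2+\tfrac12X^r|\partial_x^b\psi|^2$ is correct, as is the treatment of the potential term via \eqref{V_bound}.

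The one step that does not go through as written is the $X^r$ cross term $M^{ur}\,\psi_u\,\partial_r^b\psi$, whose coefficient is $X^r\big[(\partial_r^bt)\,g^{ur}-(\nabla t)^u\big]$. The raw data from \eqref{mod_coords} only gives the \emph{radial} component of $g^{ui}+\omega^i$ in $\mathcal{Z}^{\frac12}$, and a cross coefficient of size $o_r(1)\tau_0^{\frac12}$ cannot be split by any choice of Cauchy--Schwarz weights into $o_r(1)\tau_0^{2}\psi_u^2+O(1)|\partial_x^b\psi|^2$: the weight needed on the $\psi_u^2$ side is $\tau_0^{\frac32}$, which leaves an unbounded $\tau_0^{-1}$ on the other side. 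So ``weights tuned to these decay classes'' is not sufficient; you must exhibit the exact cancellation
\begin{equation}
		(\partial_r^bt)\,g^{ur}-(\nabla t)^u \ = \ -\,g^{uu}\,\partial_u^bt \ - \ \big(g^{iu}-\hat x^i\hat x_jg^{ju}\big)\partial_i^bt \ , \qquad \hat x^i=x^i/r \ , \notag
\end{equation}
i.e.\ the radial part of $g^{iu}\partial_i^bt$ is identically $(\partial_r^bt)g^{ur}$, so only $g^{uu}\in\mathcal{Z}^2$ and the tangential part of $g^{ui}$ (which is $\mathcal{Z}^1$) survive, after which your Cauchy--Schwarz step closes. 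You clearly suspect this (``whose tangential part lies in $\mathcal{Z}^1$'') but never establish that the radial $\mathcal{Z}^{\frac12}$ piece drops out; this cancellation is exactly what the paper's passage to the covariant components $g_{rr},g_{rA}$ --- whose cofactor expansions never see $g^{ur}$ --- buys for free. With that one identity supplied, your computation is complete.
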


To prove estimate \eqref{boundary} we  need the following elementary result:

\begin{lem}[Approximate null frame]
Let $X$ and $Y_A$, $A=1,2$, be approximately unit length vectors in the Minkowski space 
in the sense that $\sup_\alpha |X^\alpha|\approx 1$ and $\sup_\alpha |Y_A^\alpha|\approx 1$. 
Suppose that there exists $\epsilon>0$
such that $\la X,X\ra =O(\epsilon^2)$, $\la X,Y_A\ra=O(\epsilon)$, and 
in addition $| \la Y_A,Y_B\ra -\delta_{AB}|\ll 1$. 
Then there exists an exact null frame $\{L,\bL,e_A\}$ with $\la L,L\ra=\la L,e_A\ra=\la \bL, L\ra=0$,
$\la L,\bL\ra=-1$,  and 
$\la e_A,e_B\ra=\delta_{AB}$, and coefficients $\gamma$, $c_X^A$ and $c_A^B$ for $A,B=1,2$, 
such that:
\begin{equation}
		X \ = \ L +   c^A_X e_A + \gamma \bL \ , \qquad
		Y_A \ = \   c_A^B e_B \ , \qquad\quad
		\hbox{where\ \ } \gamma=O(\epsilon^2)
		\ ,
		\ \ \hbox{and\ \ }
		c_X^A = O(\epsilon) \ ,
		\ \ \hbox{and\ \ } |c^A_B - \delta^A_B|\ll 1 \ . \notag 
\end{equation}
\end{lem}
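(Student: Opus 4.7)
The plan is to construct the null frame in three stages: first the spacelike part $e_A$, then the null directions $L, \bL$ in the orthogonal 2-plane, with the decomposition of $X$ falling out by bilinear algebra.

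First I would build $e_A$ by symmetric orthonormalization of $Y_A$. Since the Gram matrix $G_{AB}=\la Y_A, Y_B\ra$ satisfies $|G-I|\ll 1$, it is positive definite with a well-defined square root $G^{1/2}$ also close to the identity; setting $e_A = (G^{-1/2})_A^B Y_B$ then gives an orthonormal pair spanning the same spacelike 2-plane as $Y_A$, and writing $Y_A = c_A^B e_B$ yields $c_A^B = (G^{1/2})_A^B$, which is close to $\delta_A^B$ as required. The Minkowski orthogonal complement $\Pi$ of $\mathrm{span}(e_1,e_2)$ is then a 2-plane of signature $(-,+)$.

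Next I would split $X = X_{||} + X_\perp$ with $X_{||} \in \mathrm{span}(e_A)$ and $X_\perp \in \Pi$. Since $e_A$ is orthonormal we have $X_{||} = \la X, e_A\ra e_A$, and the bound $\la X, Y_A\ra = O(\epsilon)$ combined with $e_A = (G^{-1/2})_A^B Y_B$ gives $\la X, e_A\ra = O(\epsilon)$, so the coefficient $c_X^A := \la X, e_A\ra$ is $O(\epsilon)$. In particular $\la X_{||}, X_{||}\ra = \sum_A (c_X^A)^2 = O(\epsilon^2)$, and combining with $\la X, X\ra = O(\epsilon^2)$ yields $\la X_\perp, X_\perp\ra = O(\epsilon^2)$.

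For the null part I would pick any reference null frame $L_0, \bL_0$ in $\Pi$ with $\la L_0, \bL_0\ra = -1$ (these exist since $\Pi$ has Lorentzian signature), and expand $X_\perp = a L_0 + b \bL_0$. Then $\la X_\perp, X_\perp\ra = -2ab$, so $ab = O(\epsilon^2)$. The key observation is that $a, b$ cannot both be small: in rectangular coordinates we have $\sup_\alpha |X^\alpha|\approx 1$ while $|X_{||}|$ is $O(\epsilon)$ componentwise, so $|a|+|b|\gtrsim 1$ (after normalizing $L_0, \bL_0$ to have unit Euclidean size). WLOG $|a|\gtrsim 1$; then the rescaled frame $L := aL_0$, $\bL := a^{-1}\bL_0$ still satisfies $\la L, \bL\ra = -1$ and the null/orthogonality conditions, and we get
\begin{equation}
X \ = \ L + c_X^A e_A + \gamma\bL, \qquad \gamma = b/a = O(\epsilon^2),
\notag
\end{equation}
which is the desired decomposition.

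The main obstacle is step three, namely ensuring the rescaling is legitimate, i.e.\ that one of $a, b$ is bounded away from zero. This requires reconciling the Lorentzian smallness $ab = O(\epsilon^2)$ with the (Euclidean) unit-length hypothesis on $X$; the argument above uses $\sup_\alpha |X^\alpha|\approx 1$ in an essential way, and one should note that the implicit constants in $O(\epsilon^2)$ for $\gamma$ will depend on the lower bound on $|a|$, hence ultimately on the implicit constants in $\sup_\alpha|X^\alpha|\approx 1$ and $\sup_\alpha|Y_A^\alpha|\approx 1$. Everything else is routine bilinear algebra.
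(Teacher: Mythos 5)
Your proof is correct and follows essentially the same route as the paper's: orthonormalize the $Y_A$, split $X$ into its component along $\mathrm{span}(e_A)$ and its component in the Lorentzian orthocomplement, and normalize the null basis there so that the $L$-coefficient equals $1$; you are in fact more careful than the paper on the one nontrivial point, namely using $\sup_\alpha|X^\alpha|\approx 1$ to guarantee that the coefficient being normalized away is bounded below. One small slip: with $L=aL_0$ and $\bL=a^{-1}\bL_0$ one has $b\bL_0=ab\,\bL$, so $\gamma=ab$ rather than $b/a$; this is harmless since $|a|\approx 1$, and note that $-2ab=\la X,X\ra-\sum_A(c_X^A)^2=O(\epsilon^2)$ directly, so the implicit constant in $\gamma=O(\epsilon^2)$ does not in fact depend on the lower bound for $|a|$.
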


\begin{proof}
Let $e_A$ form an orthonormal basis for the space-like two plane spanned by $Y_A$, with the
first $e_A$ in the direction of one of the $Y_A$.
Let $c^B_A$ be the corresponding change of basis. Then $|c^A_B - \delta^A_B|\ll 1$. Let $L,\bL$
generate the two null directions over the span of $e_A$ and $Y_A$, chosen so that $\la L,\bL\ra=-1$ and
$X=L + c^A_X e_A + \gamma \bL$ for some set of coefficients $c_X^A,\gamma$.
From $\la X,Y_A\ra=O(\epsilon)$ we have $\la X,e_A\ra=O(\epsilon)$ and 
so $c^A_X=O(\epsilon)$. Then $\la X,X\ra = -2\gamma + O(\epsilon^2)$,  
so $\gamma=O(\epsilon^2)$ follows from $\la X,X\ra=O(\epsilon^2)$.
\end{proof}

\begin{proof}[Proof of \eqref{boundary}]
Note that it suffices to prove this bound in the region $r\gg 1$. 
Consider the vector fields $\{\partial_r^b,Y_A\}$ where $Y_A$ is a (local) Euclidean 
ONB on the spheres $r=const,u=const$. Because the metric $g$ is asymptotically 
Minkowskian  $|\la Y_A, Y_B\ra -\delta_{AB}|\ll 1$. On the other hand
a quick application of the asymptotic formulas \eqref{mod_coords} and Cramer's 
rule shows that  $\langle \partial_r^b , \partial_r^b \rangle=o_r(1)\cdot \tau_0^2$ and
$\langle \partial_r^b,Y_A \ra = o_r(1)\cdot \tau_0$. Thus, an application of the previous
lemma shows that:
\begin{equation}
		\partial_r^b \ = \ L + o_r(1)\cdot \tau_0 \slash\!\!\!\partial_x^b +o_r(1)\cdot \tau_0^2 \partial
		  \ , \notag
\end{equation}
where $L$ is null, $\slash\!\!\!\partial_x^b$ denotes derivatives  tangent to $u=const,r=const$ which
are also orthogonal to $L$, and $\partial$ is arbitrary. 

Next, let $T=T[\psi]$ denote  the energy momentum tensor of
$\psi$ with respect to the metric $g$. Because $t=const$ are uniformly spacelike when $r\gg 1$ we have:
\begin{equation}
		{T}(L,-\nabla t) \ \approx  \ |L\psi |^2 + |\snabla_x^b\psi|^2 \ , \quad
		| {T}(\slash\!\!\!\partial_x^b,-\nabla t)| \ \lesssim \ |\snabla_x^b\psi| \cdot |\partial \psi| + 
		|g^{\alpha\beta}\partial_\alpha\psi \partial_\beta\psi| \ , \quad
		| {T}(\partial,-\nabla t)| \ \lesssim \ |\partial \psi|^2 \ .
		 \notag
\end{equation}
A quick application of  \eqref{mod_coords} and the middle bound above shows that uniformly for $C>0$:
\begin{equation}
		 \tau_0 | {T}(\slash\!\!\!\partial_x^b,-\nabla t)|  
		 \ \lesssim \ 
		 C^{-1}|\partial_x^b \psi|^2 + 
		 C\tau_0^2|\partial \psi|^2 \ . \notag
\end{equation}
Combining the last three lines gives the pointwise estimate:
\begin{equation}
		|\partial_x^b\psi|^2 \ \lesssim \ T(\partial_r^b,-\nabla t)   +o_r(1)\cdot  \tau_0^2
		|\partial \psi|^2  \ . \notag
\end{equation}

In addition to this we also have by the asymptotic flatness of $g$ and standard properties of $T$:
\begin{equation}
		|\partial \psi|^2 \ \lesssim \ T({\partial_u^b}, -\nabla t) + o_r(1)\cdot |\partial \psi|^2 \ . \notag
\end{equation}

Finally, let $P(X,\chi,\Omega,\phi)$ denote the quantity defines on line \eqref{P_chi_iden}. Then:
\begin{equation}
		- P(X,\chi,\Omega,\phi) \ = \ \Omega^{-2} 
		T[\Omega^{-1}\phi](X,-\nabla t) - \frac{1}{2}\Omega^{-2}X^0 \chi V \phi^2
		\ , \qquad \hbox{where \ \ } V \ = \ \Omega^3 \Box_g (\Omega^{-1}) \ , \notag
\end{equation}
and where $X^0$ denotes the time component of $X$ in $(t,x)$ coordinates. By \eqref{u_sym_bnd} we have
$|X^0|\lesssim X^u + X^r$.
Therefore estimate \eqref{boundary} follows from the last three lines above and
estimate \eqref{V_bound}.
\end{proof}

We now return to the proof of the main result of this subsection. Because of the split form of the error terms \eqref{aR_error} and \eqref{1R_error}
there are essentially two cases.

\begin{proof}[Proof of Proposition \ref{abs_mult_prop} for $0< a<1$]
We use formalism of Section \ref{conf_mult_sect}, in particular
Lemma \ref{conf_dividen_lem}. Let $X=X_R=\chi_{>R}Y$ where $\chi_{>R}$ is
as in the statement of the proposition.
We choose $\Omega=\tau_x$ and set the auxiliary
cutoff to $\chi=0$.  
Using the divergence identity \eqref{dividen1} we need to estimate each spacetime
term given by formulas \eqref{conf_div_chi} and \eqref{AB_formulas}, as well as
the  boundary term on RHS\eqref{dividen1}. 

\step{1}{Output of the $A^{\alpha\beta}$ contraction}
We'll do this calculation by switching over to  polar Bondi coordinates $(u,r,x^A)$, where locally
we can choose $x^A$ to be two members of $\hat x^i=r^{-1}x^i$.
From lines
\eqref{polar_bondi_defiden1} and \eqref{polar_bondi_defiden2}, and expansion of
$\mathcal{L}_X h$, we may write $A^{\alpha\beta}=\chi_{>R}d^{-\frac{1}{2}}A^{\alpha\beta}_0
+d^{-\frac{1}{2}}\mathcal{R}^{\alpha\beta}$ where:
\begin{equation}
		2A^{\alpha\beta}_0 \ = \  
		\partial_\gamma^b(Y^\alpha) h^{\gamma\beta}
		+\partial_\gamma^b(Y^\beta) h^{\alpha\gamma}
		-Y( h^{\alpha\beta})
		-({\partial_u^b} Y^u + \partial_r^b Y^r  )h^{\alpha\beta} \ , \label{A0_form}
\end{equation}
and $\mathcal{R}=\mathcal{R}_0+\chi_{>R}\mathcal{R}_1$. Here 
$2\mathcal{R}_0=  {}^{(X_R)} \widehat{\pi}- \chi_{>R}{}^{(Y)} \widehat{\pi} $
which according to formula \eqref{comm_pi_form} is:
\begin{equation}
		2\mathcal{R}_0^{\alpha\beta} \ = \  
		\tau_x^{ -1}\chi_{R} 
		\big( g^{\alpha r} Y^\beta
		+  g^{\beta r} Y^\alpha -Y^r g^{\alpha\beta} \big) 
		\ , \label{R0_form}
\end{equation}
and where $\chi_R=\tau_x \partial_r \chi_{>R}$ is a smooth bump function adapted to $r\approx R$.
The second remainder term is:
\begin{equation}
		2\mathcal{R}_1 \ = \ -\mathcal{L}_{Y}(d^\frac{1}{2} g^{-1}-h) 
		- ({\partial_u^b} Y^u+\partial_r^b Y^r)(d^\frac{1}{2}g^{-1}-h)  
		- 2r^{-1}\tau_x^{-2}d^\frac{1}{2}X^r g^{-1}
		 \ . \label{R1_form}
\end{equation}

A little further computation 
shows  the coefficients of the quadratic form $A^{\alpha\beta}_0$ from line \eqref{A0_form} 
are:
\begin{subequations}\label{calA_form}
\begin{align}
		A^{uu}_0 \ &= \ \mathcal{A}^u + \tau_x^{-1}(r+\tau_x)^{-1}\partial_r^b Y^u \ ,
		  &A^{rr}_0 \ &=\   \mathcal{A}^r + \tau_x^{-1}(r+\tau_x)^{-1}{\partial_u^b} Y^r \ , \label{calA_form1}\\
		2A^{ur}_0 \ &= \ \mathcal{A}^{ur} + \tau_x^{-3}Y^r \ , 
		&A^{AB} \ &= \ r^{-2}\delta^{AB} \slash\!\!\!\! \mathcal{A} \ , \label{calA_form2}
\end{align}
\end{subequations}
while $A^{rA}_0=A^{uA}_0=0$. Here the terms $\mathcal{A}$ are given on line  \eqref{A0_coeff}. 
Recalling the definitions of the norms \eqref{Sa_def} and using conditions \eqref{mult_sym_bnds}
we have:
\begin{multline}
		\dint   \chi_{>R}\big(
		\mathcal{A}^u  ({\partial_u^b}\phi)^2 + \mathcal{A}^{ur}  {\partial_u^b}\phi\cdot  \tau_x^{-1}\partial_r^b(\tau_x\phi)
		+\mathcal{A}^{r}(\tau_x^{-1}\partial_r^b(\tau_x\phi))^2 + 
		\slash\!\!\!\! \mathcal{A} |\snabla^b \phi|^2
		\big)dxdt \\
		 \leq \ \dint   \chi_{>R} \tau_x^{-2} d^{-\frac{1}{2}}
		A_0^{\alpha\beta}\partial_\alpha^b (\tau_x \phi)\partial_\beta^b (\tau_x \phi)dV_g + 
		o_R(1)\lp{\phi}{S^a(r>\frac{1}{2}R)[0,T]}^2
		 \ . \label{A_to_calA}
\end{multline}

To estimate the  remainder terms  from line \eqref{R0_form}, note that a straight forward
calculation involving the conditions \eqref{mult_sym_bnds} and \eqref{mod_coords} gives
in rectangular Bondi coordinates:
\begin{equation}
		|\mathcal{R}^{ij}_0| \ \lesssim \ (\tau_x ^{2a-1}+\tau_+^{2a-1})\chi_R \ , \qquad
		 |\mathcal{R}_0^{ui}|\ \lesssim \ \tau_x^{-1}\tau_+^{2a}\tau_0\chi_R\ , \qquad
		 |\mathcal{R}_0^{uu}| \ \lesssim \ \tau_x^{ -1}\tau_+^{2a}  \tau_0^{\max \{1,2a\}}\chi_R
		  \ , \label{R0_sym_bnds}
\end{equation}
where  $\chi_R$ is supported in $\frac{1}{2}R<r<R$.
This yields the estimate:
\begin{equation}
		\int_0^T\!\!\!\!\!\int 
		\big|\tau_x^{-2}\mathcal{R}_0^{\alpha\beta} \partial_\alpha(\tau_x\phi)
		\partial_\beta(\tau_x\phi)\big| dxdt  \lesssim  R^{-\frac{1}{2}} \slp{\phi}{S^a(\frac{1}{2}R<r<R)[0,T]} 
		\slp{(\tau_-^a{\partial_u^b}\phi,\tau_x^a\partial \phi, \tau_x^{a-1}\phi)}
		{L^2(\frac{1}{2}R<r<R)[0,T]} . \label{R0_est}
\end{equation}

To estimate the  remainder terms  from line \eqref{R1_form} we split the range into $0<a<\frac{1}{2}$
and $\frac{1}{2}\leq a<1$. When  $0<a<\frac{1}{2}$ the conditions \eqref{mult_sym_bnds} imply:
\begin{equation}
		\sum_{i+|J|\leq 1}\big| (\tau_-{\partial_u^b})^i (\tau_+ \partial_x^b)^J 
		Y^u\big| \ \lesssim \ \tau_x^{2a-1}\cdot  \tau_- \ , \qquad
		\sum_{i+|J|\leq 1}\big| (\tau_-{\partial_u^b})^i (\tau_x\partial_x^b)^J  Y^r\big|
		\ \lesssim \  \tau_x^{2a-1}\cdot \tau_x \ . \notag
\end{equation}
In addition we must assume $\partial_r^b Y^u\neq 0$ and ${\partial_u^b} Y^r\neq 0$,
although we do have ${\partial_u^b} (Y^i-\hat x^i Y^r)= 0$.  
Therefore by simultaneously combining  cases \ref{R_alt1}) and \ref{R_alt2}) of Lemma 
\ref{basic_lie_lem} and Remark \ref{zero_diff_rem},
using $|{\partial_u^b} Y^u+\partial_r^b Y^r|\lesssim \tau_x^{2a-1}$ (again for $0<a<\frac{1}{2}$) 
and \eqref{R_sym_bnds}, 
and directly using \eqref{mod_coords} for the last term on RHS\eqref{R1_form} 
we have for $0<a<\frac{1}{2}$:
\begin{subequations}\label{R1_sym_bnds}
\begin{align}
		\LLp{w_a^{-1} \mathcal{R}^{rr}_1}{-\frac{1}{2},0} \ &<\ \infty  \ , 
		&\LLp{w_a^{-1} \mathcal{R}^{uu}_1}{\max\{1,2a\},0}  \ &<\  
		\infty \ , 
		&\LLp{w_a^{-1}\mathcal{R}^{ru}_1}{\frac{1}{2},0} \ &<\   \infty  \ , \label{R1_sym_bnds1}\\
		\LLp{w_a^{-1} r\mathcal{R}^{uA}_1}{1,0}  \ &<\  
		\infty \ , \ \
		&\LLp{w_a^{-1} (r\mathcal{R}^{rA}_1,r^{2}\mathcal{R}^{AB}_1)}{0,0}
		 \ &<\  \infty \ , 
		   &\hbox{where \ \ } w_a&=(\tau_x^{2a-1}+\tau_+^{2a-1})
		  \ . \label{R1_sym_bnds2}
\end{align}
\end{subequations}
On the other hand in the range $\frac{1}{2}\leq a\leq1$ the conditions \eqref{mult_sym_bnds} imply:
\begin{equation}
		\sum_{i+|J|\leq 1}\big| (\tau_-{\partial_u^b})^i (\tau_+ \partial_x^b)^J 
		Y^u\big| \ \lesssim \ \tau_-^{2a-1}\cdot  \tau_- \ , \qquad
		\sum_{i+|J|\leq 1}\big| (\tau_-{\partial_u^b})^i (\tau_x\partial_x^b)^J  Y^r\big|
		\ \lesssim \  \tau_+^{2a-1}\cdot \tau_x \ . \notag
\end{equation}
Therefore by separately 
applying cases \ref{R_alt1}) and \ref{R_alt2}) of Lemma \ref{basic_lie_lem} and Remark \ref{zero_diff_rem}
to the vector fields $Y^u{\partial_u^b}$ and $Y^r\partial_r^b$ (resp), and this time using
$|{\partial_u^b} Y^u|+|\partial_r^b Y^r|\lesssim \tau_+^{2a-1}$
we again have \eqref{R1_sym_bnds}. Finally, 
after several rounds of H\"older's inequality and a straightforward check
of definitions \eqref{Zk_defn} and \eqref{Sa_def}, the error bounds \eqref{R1_sym_bnds}
 yields the following asymptotic estimate:
\begin{equation}
		\dint \chi_{>R}\big|\tau_x^{-2}\mathcal{R}_1^{\alpha\beta} \partial_\alpha(\tau_x\phi)
		\partial_\beta(\tau_x\phi)\big| dxdt \ \lesssim \ o_R(1)\lp{\phi}{S^a(r>\frac{1}{2}R)[0,T]}^2 \ . \label{R1_est}
\end{equation}
 
As a last step  we combine  estimates \eqref{A_to_calA}, \eqref{R0_est}, and \eqref{R1_est},  while recalling that
$d^\frac{1}{2} A^{\alpha\beta}=\chi_{>R} A^{\alpha\beta}_0
+\mathcal{R}_0+\chi_{>R}\mathcal{R}_1$. This gives us:
\begin{multline}
		\dint   \chi_{>R}\big(
		\mathcal{A}^u  ({\partial_u^b}\phi)^2 + \mathcal{A}^{ur}  {\partial_u^b}\phi\cdot  \tau_x^{-1}\partial_r^b(\tau_x\phi)
		+\mathcal{A}^{r}(\tau_x^{-1}\partial_r^b(\tau_x\phi))^2 + 
		\slash\!\!\!\! \mathcal{A} |\snabla^b \phi|^2
		\big)dxdt \ \leq \\
		\dint 
		\tau_x^{-2}A^{\alpha\beta}\partial_\alpha(\tau_x\phi)\partial_\beta(\tau_x\phi)
		dV_g +
		\mathcal{E}(a,R) \ , \label{A_output}
\end{multline}
where the coefficients $\mathcal{A}$ are given by lines \eqref{A0_coeff}.

\step{2}{Estimating the $C^\chi$ term}
Using \eqref{V_bound} we have $C^\chi \in \tau_x^{-2}\mathcal{Z}^{-\frac{1}{2}}$, while \eqref{mult_sym_bnds}
give the pointwise estimate  $|\tau_x^{-1}X_R(\tau_x\phi)|\lesssim \chi_{>R}\tau_+^{2a}
( \tau_0^{\max\{1,2a\}}|{\partial_u^b}\phi |
+|\tau_x^{-1}\partial_r^b(\tau_x\phi)|)$. Thus H\"older's inequality and \eqref{Sa_def} gives:
\begin{equation}
		 \big| \dint  
		 C^\chi \phi \tau_x^{-1}X_R (\tau_x \phi)
		 dV_g \big|  \ \lesssim \ o_R(1)\lp{\phi}{S^a[0,T](r>\frac{1}{2}R)}^2
		 \ . \label{C_output}
\end{equation}

\step{3}{Output of the boundary terms}
Using the bound from line \eqref{boundary} we 
directly have:
\begin{multline}
		\lp{\tau_x^{-1}\big(\sqrt{Y^u} \partial(\tau_x\phi),
		\sqrt{Y^r} \partial_x^b(\tau_x\phi)\big)(T) }{L^2_x(r>R)}^2 \ \lesssim\
		\lp{\tau_x^a\partial \phi(0)}{L^2_x}^2\\
		 +o_R(1)\sup_{0\leq t\leq T}\lp{\phi(t)}{E^a(r>\frac{1}{2}R)}^2-
		  \int_{\mathbb{R}^3}  P(X,\chi,\Omega,\phi)\sqrt{|g|}dx \big|_{t=0}^{t=T} \ . \label{boundary_output}
\end{multline}
 
\step{4}{Output of the source term}
Finally, another application of  H\"older's inequality  shows:
\begin{equation}
		  \big| \dint 
		  \Box_g\phi \cdot \tau_x^{-1}X_R(\tau_x\phi) 
		 dV_g\big|  \ \lesssim \ 
		 \lp{\Box_g}{N^a[0,T]}\lp{\phi}{S^a(r>\frac{1}{2}R)[0,T]}
		  \ . \label{source_output}
\end{equation}

Adding together formulas \eqref{A_output}--\eqref{source_output} and using  
\eqref{dividen1}, \eqref{conf_div_chi}, and \eqref{AB_formulas} gives
\eqref{abs_mult_est}.
\end{proof}

%%%

For the proof of Proposition \ref{abs_mult_prop} with  $a=1$ it will help to have the
following Lemma:

\begin{lem}\label{L2L00_lem}
Let $\Phi$ be a bounded function supported on $[0,T]$,
and let $\LLp{p^2}{0,0}<\infty$. Then for $R>0$ one has the estimate:
\begin{equation}
		\lp{\sqrt{\tau_x/\tau_+}p\cdot\Phi}{L^2(r>R)} \ 
		\lesssim \ o_R(1)\big( \lp{(1- \chi_{\frac{1}{2}t<r<2t})\Phi}{\ell^\infty_t\ell^\infty_r L^2}
		+ \lp{ \chi_{\frac{1}{2}t<r<2t} \Phi}{\ell^\infty_t\ell^\infty_u\ell^\infty_r L^2} +
		\lp{\tau_x^\frac{1}{2}\Phi }{L^\infty_t L^2_x}\big) \ . \label{Phi_L2_to_Linfty}
\end{equation}
\end{lem}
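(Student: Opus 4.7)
The plan is to perform a spacetime dyadic decomposition in $r$, $t$, and $u$, and split the integration region $\{r > R\}$ into three regimes matching the three terms on the RHS: the interior $r < \tfrac{1}{2}t$, the wave zone $\tfrac{1}{2}t < r < 2t$, and the exterior $r > 2t$. Throughout, let $\chi^x_j, \chi^t_k, \chi^u_l$ denote standard dyadic cutoffs to $r \approx 2^j$, $t \approx 2^k$, $|u| \approx 2^l$, and set $a_j = \lp{\chi^x_j p^2}{L^\infty}$, $b_{j,l} = \lp{\chi^x_j \chi^u_l p^2}{L^\infty(\frac{1}{2}t<r<2t)}$. The assumption $\LLp{p^2}{0,0} < \infty$ gives $\sum_j a_j + \sum_{j,l} b_{j,l} < \infty$, so the tails restricted to $2^j > R/2$ tend to zero as $R \to \infty$, which is the mechanism producing $o_R(1)$ in each term.

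In the interior regime $r < \tfrac{1}{2}t$ we have $\tau_x/\tau_+ \approx 2^{j-k}$ on the support of $\chi^x_j \chi^t_k$ (with $j \leq k-1$), so placing $p^2$ in $L^\infty$ yields
\[
\int_{r\approx 2^j,\, t\approx 2^k} \frac{\tau_x}{\tau_+} p^2 \Phi^2\, dxdt \ \lesssim \ 2^{j-k} a_j \, \lp{(1-\chi_{\frac{1}{2}t<r<2t})\Phi}{\ell^\infty_t\ell^\infty_r L^2}^2 ,
\]
and summing the geometric series in $k > j$ followed by the tail over $2^j > R/2$ produces the first term on the RHS. In the wave zone $|j-k|\leq 1$ the ratio $\tau_x/\tau_+$ is of order one, but the $\ell^1_u\ell^1_r$ portion of the $\mathcal{Z}^0$ norm supplies summability in both $r$ and $u$. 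Further decomposing by $\chi^u_l$ and placing each localized $\chi^x_j \chi^t_k \chi^u_l \chi_{\frac{1}{2}t<r<2t}\Phi$ in $L^2_{t,x}$ bounds the wave-zone contribution by $\sum_{|j-k|\leq 1,\, l} b_{j,l}\, \lp{\chi_{\frac{1}{2}t<r<2t}\Phi}{\ell^\infty_t\ell^\infty_u\ell^\infty_r L^2}^2$, and the tail over $2^j > R/2$ delivers the required $o_R(1)$.

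The main obstacle is the exterior regime $r > 2t$, where $\tau_x \approx \tau_+ \approx r$ so neither ratio-smallness nor additional summability beyond $\ell^1_r$ is available. The decisive observation is that on this region $t < r/2 \lesssim 2^{j-1}$, so the time integration is automatically capped. Using $\chi^x_j \leq 2^{1-j}\chi^x_j \tau_x$ and the definition of the $L^\infty_t L^2_x$ norm one has $\int \chi^x_j \Phi^2\, dx \lesssim 2^{-j}\lp{\tau_x^{1/2}\Phi(t)}{L^2_x}^2$, and therefore
\[
\int_0^{2^{j-1}} \!\! \int_{r\approx 2^j,\, r > 2t} p^2 \Phi^2\, dxdt \ \lesssim \ a_j \cdot 2^{-j} \cdot 2^{j-1} \cdot \lp{\tau_x^{1/2}\Phi}{L^\infty_t L^2_x}^2 .
\]
Summing over $2^j > R/2$ via the $\ell^1_r$ tail of $a_j$ contributes $o_R(1)\lp{\tau_x^{1/2}\Phi}{L^\infty_t L^2_x}^2$, and after collecting the three contributions and taking square roots one obtains \eqref{Phi_L2_to_Linfty}.
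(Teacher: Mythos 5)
Your proof is correct and follows essentially the same route as the paper: the same three-region split ($r<\tfrac12 t$, the wave zone, $r>2t$), with the interior and wave-zone contributions controlled by the $\ell^1_r$ and $\ell^1_u\ell^1_r$ summability of $p^2$ and the exterior contribution by capping the time integration at $t\lesssim r$ against the $L^\infty_t L^2_x$ norm. The only cosmetic difference is that you carry out the dyadic block summations explicitly where the paper invokes H\"older in mixed $\ell^p$ norms plus Young's convolution inequality.
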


\begin{proof}[Proof of \eqref{Phi_L2_to_Linfty}]
We
split  the RHS into regions $r<\frac{1}{2}t$,  $ \frac{1}{2}t<r<2t$, and $r>2t$, 
all restricted to $r>R$ 
(we will largely suppress this last condition in the following notation).

In the first region we use:
\begin{equation}
		\lp{\sqrt{\tau_x /\tau_+}p\cdot \Phi}{L^2( r<\frac{1}{2}t)} \ \lesssim \ \lp{ 
		\sqrt{\tau_x /\tau_+}p}{\ell^2_t\ell^2_r L^\infty(r<\frac{1}{2} t)}
		\lp{\Phi}{\ell^\infty_t\ell^\infty_r L^2( r<\frac{1}{2}t)} \ ,  \notag
\end{equation}
followed by Young's inequality which gives $\lp{\sqrt{\tau_x /\tau_+}p}{\ell^2_t\ell^2_r L^\infty(r<\frac{1}{2}t)}\lesssim 
\lp{ p}{\ell^2_r L^\infty(r>R)}=o_R(1)$. 

In the region $ \frac{1}{2}t<r<2t$ we  use:
\begin{equation}
		\lp{\sqrt{\tau_x /\tau_+}p\cdot \Phi}{L^2( \frac{1}{2}t<r<2t)} \ \lesssim \ \lp{ 
		  p}{\ell^2_u \ell^2_r L^\infty ( \frac{1}{2}t<r<2t)}
		\lp{\Phi}{\ell^\infty_t\ell^\infty_u \ell^\infty_r L^2( \frac{1}{2}t<r<2t)} \ , \notag
\end{equation}
followed by $\lp{ p}{\ell^2_u\ell^2_r L^\infty(\frac{1}{2}t<r<2t)(r>R)}=o_R(1)$.

Finally, in the region $r>2t$ we use:
\begin{equation}
		\lp{\sqrt{\tau_x /\tau_+} p\cdot \Phi}{L^2( r>2t)} \ \lesssim \ \lp{ \tau_x^{-\frac{1}{2}}p}{L^2_t L^\infty_x(r>2t)}
		\lp{\tau_x^\frac{1}{2} \Phi}{L^\infty_t L^2_x} \ , \notag
\end{equation}
followed by 
$\lp{ \tau_x^{-\frac{1}{2}}p}{L^2_t L^\infty_x(r>2t)}\lesssim  
\lp{\chi_{t<\frac{1}{2}r } \tau_x^{-\frac{1}{2}}p}{\ell^2_r L^2_t L^\infty_x } \lesssim
\lp{p}{\ell^2_r L^\infty(r>R)}=o_R(1)$.
\end{proof}

%%%

\begin{proof}[Proof of Proposition \ref{abs_mult_prop} for $a=1$]
The demonstration is largely similar to the previous proof, with a few key
differences. We again choose $\Omega=\tau_x$, but this time set the auxiliary
cutoff in Lemma \ref{conf_dividen_lem} to be $\chi=\chi_{<\frac{1}{2}}(r/t)$
which vanishes in $r>\frac{3}{4}t$ with $\chi\equiv 1$
when $r<\frac{1}{2}t$.

\step{1}{Output of the $A^{\alpha\beta}$ contraction}
We again have formulas \eqref{A0_form}--\eqref{calA_form}.

An inspection of the remainder terms on RHS\eqref{calA_form}
using condition \eqref{mult_sym_bnds} shows that we also have
estimate \eqref{A_to_calA} with the last RHS term replaced by
$o_R(1)\lp{\phi}{S^{1,\infty}(r>\frac{1}{2}R)[0,T]}^2$.

Next, the estimates \eqref{R0_sym_bnds} are again valid except this
time we use H\"older's inequality to replace \eqref{R0_est} with:
\begin{equation}
		\int_0^T\!\!\!\!\!\int  \big|\tau_x^{-2}\mathcal{R}_0^{\alpha\beta} \partial_\alpha(\tau_x\phi)
		\partial_\beta(\tau_x\phi)\big| dxdt \ \lesssim \ R^\frac{1}{2}
		\slp{\phi}{ S^{1,\infty}(\frac{1}{2}R<r<R)[0,T]} 
		\slp{(\tau_- {\partial_u^b}\phi,\tau_x \partial\phi, \phi)}
		{\ell^1_t L^2(\frac{1}{2}R<r<R)[0,T]}  . \label{R0_est1}
\end{equation}

To estimate the  remainder term involving $\mathcal{R}_1^{\alpha\beta}$ note that 
\eqref{R1_sym_bnds} is still valid with $a=1$. This allows us to replace \eqref{R1_est}
in the case $a=1$ with the slight improvement:
 \begin{multline}
		\dint  \chi_{>R}\big|\tau_x^{-2}
		\mathcal{R}_1^{\alpha\beta} \partial_\alpha(\tau_x \phi)
		\partial_\beta(\tau_x\phi)\big| dxdt \lesssim \\
		\slp{\sqrt{\tau_x/\tau_+}p\cdot  \tau_x^{-\frac{1}{2}}\tau_+(\tau_0 \partial \phi ,  \partial_x^b \phi,
		\tau_0^{-\frac{1}{2}} \tau_x^{-1}\partial_r^b(\tau_x\phi),\tau_x^{-1}\phi)}{L^2(r>\frac{1}{2}R)[0,T]}^2 \ , \label{R1_est1'}
\end{multline}
where  $\LLp{p^2}{0,0}<\infty$. Then an application of Lemma \ref{L2L00_lem} produces:
\begin{equation}
		LHS\eqref{R1_est1'} \ \lesssim \ o_R(1)\cdot \big(
		\sup_{0\leq t\leq T}\lp{\phi(t)}{E^1(r>\frac{1}{2}R)}^2 + \lp{\phi}{ S^{1,\infty}
		(r>\frac{1}{2}R)[0,T]}^2
		\big) \ . \ \label{R1_est1}
\end{equation}

Combining \eqref{R0_est1}, \eqref{R1_est1}, and the analog of \eqref{A_to_calA},
we have estimate \eqref{A_output}   for the case $a=1$.

\step{2}{Estimating the $B^\chi$ term}
Inspection of  \eqref{AB_formulas} and  estimate \eqref{V_bound} shows
 the main thing is to compute $trace(A)$.
By definition $trace {}^{(X_R)} \widehat{\pi}=-2\nabla_\alpha X^\alpha_R$, so   $trace(A)=4r\tau_x^{-2}X^r_R-\nabla_\alpha X^\alpha_R$.
Using the conditions \eqref{mult_sym_bnds}  and the support of $\chi$,
we conclude $\LLp{\tau_x^{2}\tau_+^{-1}B^\chi}{0,0}<\infty$. Thus:
\begin{equation}
		 \big| \dint 
		 B^\chi \phi^2
		 dV_g \big|  \ \lesssim \ 
		 \lp{\sqrt{\tau_x /\tau_+}p\cdot \tau_x^{-\frac{1}{2}}  \tau_+( \tau_x^{-1} \phi)}{L^2(r>\frac{1}{2}R)[0,T]}^2
		 \ , \label{B_output}
\end{equation}
where $\LLp{p^2}{0,0}<\infty$. From this Lemma \ref{L2L00_lem} produces:
\begin{equation}
		LHS\eqref{B_output}\ \lesssim\  o_R(1)(  \lp{\phi}{E^{1}(r>\frac{1}{2}R)[0,T]}^2+
		\lp{\phi}{S^{1,\infty}(r>\frac{1}{2}R)[0,T]}^2) \ . \notag
\end{equation}

\step{3}{Estimating the $C^\chi$ term}
Using \eqref{V_bound} and the support property of 
$\chi$ we have $\tau_+^2 C^\chi \in \mathcal{Z}^{-\frac{1}{2}}$, while \eqref{mult_sym_bnds}
give the pointwise bound $|\tau_x^{-1}X_R(\tau_x\phi)|\lesssim \chi_{>R}\tau_+^{2}( \tau_0^2 |{\partial_u^b}\phi |
+|\tau_x^{-1}\partial_r^b(\tau_x\phi)|)$. Thus:
\begin{equation}
		 \big| \dint 
		 C^\chi \phi \tau_x^{-1}X_R (\tau_x\phi)
		 dV_g \big|  \ \lesssim \ 
		 \slp{\sqrt{\tau_x/\tau_+}p\cdot \tau_x^{-\frac{1}{2}} \tau_+ (\tau_0 \partial \phi,  
		\tau_0^{-\frac{1}{2}} \tau_x^{-1}\partial_r^b(\tau_x\phi),\tau_x^{-1}\phi)}{L^2(r>\frac{1}{2}R)[0,T]}^2
		 \ , \notag
\end{equation}
where again $p^2\in \mathcal{Z}^0$ so we  conclude via Lemma \ref{L2L00_lem}.

\step{4}{Output of the boundary terms}
Here we simply note that \eqref{boundary_output} is also valid for $a=1$.
 
\step{5}{Output of the source term}
This term is included directly in  the definition of $\mathcal{E}(1,R,X)$.

\end{proof}

%%%

\begin{proof}[Proof of Proposition \ref{abs_mult_prop} for $a=0$]
This follows the pattern of the previous two proofs. We set $\Omega=\tau_x$ and choose $\chi=0$.

\step{1}{Output of the $A^{\alpha\beta}$ contraction}
We again have formulas \eqref{A0_form}--\eqref{calA_form}.
This time one replaces 
\eqref{R0_sym_bnds} with $|\mathcal{R}_0^{\alpha\beta}|\lesssim \tau_x^{-1}\chi_R$,
in which case RHS\eqref{R0_est} becomes  $R^{-\frac{1}{2}}
\lp{(\partial \phi,\tau_x^{-1}\phi) }{L^2(\frac{1}{2}R<r<R)[0,T]}^2$.
The analog of \eqref{A_to_calA} is also valid with the second RHS term replaced by 
$o_R(1)\lp{\phi}{ \LE^0(r>\frac{1}{2}R)[0,T]}^2$.

The main difference is that this time the conditions \eqref{mult_sym_bnds'} give  
$|{\partial_u^b} X^u+\partial_r^b X^r|\lesssim \tau_x^{-1}\tau_0^{-1}$. Together with the condition ${\partial_u^b} Y^r=0$
and an application of Lemma \eqref{basic_lie_lem},
this means we need to replace \eqref{R1_sym_bnds} with:
\begin{equation}
		\LLp{\tau_x \mathcal{R}^{ij}_1}{-1,0} \ < \ \infty  \ , \qquad
		\LLp{\tau_x \mathcal{R}^{ui}_1}{-\frac{1}{2},0} \ < \ \infty  \ , \qquad
		\LLp{\tau_x \mathcal{R}^{uu}_1}{\frac{1}{2},0} \ < \ \infty  \ . \notag
\end{equation}
This is enough to show the analog of \eqref{R1_est} with RHS replaced by 
$o_R(1)\lp{\phi}{ \LE^0(r>\frac{1}{2}R)[0,T]}^2$.

\step{2}{Estimating the $C^\chi$ term}
Using \eqref{V_bound} gives  $C^\chi \in \tau_x^{-2}\mathcal{Z}^{-\frac{1}{2}}$, while \eqref{mult_sym_bnds'}
gives $|\tau_x^{-1}X_R(\tau_x\phi)|\lesssim \chi_{r>R}|(\partial\phi,\tau_x^{-1}\phi)|$.
Thus \eqref{C_output} is valid with RHS replaced by $o_R(1)\lp{\phi}{ \LE^0(r>\frac{1}{2}R)[0,T]}^2$.

\step{4}{Output of the boundary terms}
Here we  note that \eqref{boundary_output} is also valid for $a=0$.
 
\step{5}{Output of the source term}
This term is included directly in  the definition of $\mathcal{E}(0,R,X)$.
\end{proof}

%%
 
%----------------------------------------------------------------------------------------------

\subsection{Abstract Bounds for Commutators}

We now turn to some further consequences of Lemma \ref{basic_lie_lem}.

\begin{lem}[Abstract bounds for  commutators]\label{abs_comm_prop}
Let $\mathcal{R}$ and $\mathcal{S}$ be a contravariant 2-tensor and vector field (resp). From
them define the operator   $\mathcal{Q}=\nabla_\alpha \mathcal{R}^{\alpha\beta} \nabla_\beta + 
\mathcal{S}^\alpha\nabla_\alpha$. Then the following results hold:
\begin{enumerate}[I)]
	\item \label{Q_X_part} Suppose that $\mathcal{R}$ and $\mathcal{S}$ satisfy \eqref{R_sym_bnds} 
	and \eqref{S_sym_bnds} (resp). Then if $X$ is any vector field which
	satisfies \eqref{vect_sym_bnds} and \eqref{good_X_conds} with $a=b=c=0$,  one has
	$[X,\mathcal{Q}]=\nabla_\alpha \td{\mathcal{R}}^{\alpha\beta} \nabla_\beta + 
	\td{\mathcal{S}}^\alpha\nabla_\alpha$ where $\td{\mathcal{R}}$ and 
	$\td{\mathcal{S}}$ satisfy \eqref{R_sym_bnds} 
	and \eqref{S_sym_bnds} (resp) as well. 
	\item \label{Q_X_part'}  Alternately suppose $X\in \mathbb{L}_0=\{{\partial_u^b} , \partial_i^b-\omega^i{\partial_u^b}\}$
	with the same conditions on  $\mathcal{R}$ and $\mathcal{S}$ as above.
	Then the previous result holds with  bound  \eqref{R_X_sym_bnds}  for 
	$  \td{\mathcal{R}}$ and bound \eqref{S_X_sym_bnds} for $  \td{\mathcal{S}}$ with $a=c=-1$ and $b=1$.
	\item For any  $\mathcal{Q}$ 
	as defined above with  $\mathcal{R}$ and $\mathcal{S}$ satisfying \eqref{R_sym_bnds} 
	and \eqref{S_sym_bnds}
	we have the  pointwise bound:
	\begin{equation}
		|\mathcal{Q}\phi| \ \lesssim \ q \cdot \tau_x^{-2} 
		\sum_{1\leq l+|J|\leq 2 }
		| (\tau_x \tau_0{\partial_u^b})^l(\tau_x \partial_x^b)^J  \phi| \ ,
		\qquad \hbox{where\ \ } q\in \mathcal{Z}^{-\frac{1}{2}} \ . 
		 \label{Q_bondi_est}
	\end{equation}
	Alternately, suppose $\mathcal{R}$ and $\mathcal{S}$ satisfying \eqref{R_X_sym_bnds} 
	and \eqref{S_X_sym_bnds} with $a=c=-1$ and $b=1$. Then:
	\begin{equation}
		|\mathcal{Q}\phi| \ \lesssim q\cdot \tau_x^{-1} 
		\sum_{|I|\leq 1}( \tau_0^\frac{1}{2}  | {\partial_u^b} \Gamma^I  \phi|
		+ | \partial_x^b \Gamma^I  \phi| ) \ , 
		\qquad \hbox{where\ \ } q\in  \mathcal{Z}^{-1} \ ,
		\label{Q_bondi_est'}
	\end{equation}
	and where $\Gamma\in  \mathbb{L}_0=\{{\partial_u^b} , \partial_i^b-\omega^i{\partial_u^b}\}$.
	\item \label{Q_alg_prop} 
%	$\mathcal{Q}=\nabla_\alpha \mathcal{R}^{\alpha\beta} \nabla_\beta + 
%	\mathcal{S}^\alpha\nabla_\alpha$ 
	Finally, let $ \mathcal{R}$ satisfy any combination of  conditions \eqref{R_X_sym_bnds}, 
	\eqref{Ruu_alt}, and \eqref{alt_ij}, and let $\mathcal{S}$ satisfy 
	\eqref{S_X_sym_bnds}.
	Let $w$ be a smooth weight function with:
	\begin{equation}
		|(\tau_- {\partial_u^b})^l(\tau_x\partial_x^b)^J w| \ 
		\lesssim_{l,J} \ \tau_x^{a'}\tau_+^{b'}\tau_-^{c'} \ .
		\label{w_sym_bnds}
	\end{equation}
	Then  with $\mathcal{Q}$ as defined above 
	we have $w\mathcal{Q}=\nabla_\alpha \td{\mathcal{R}}^{\alpha\beta} \nabla_\beta + 
	\td{\mathcal{S}}^\alpha\nabla_\alpha$, where $ \td{\mathcal{R}}^{\alpha\beta}$
	satisfies the appropriate combination of conditions \eqref{R_X_sym_bnds},
	\eqref{Ruu_alt}, and \eqref{alt_ij}, and $\td{\mathcal{S}}^\alpha$ satisfies
	\eqref{S_X_sym_bnds}, in each case with coefficients
	$a+a'$, $b+b'$, and $c+c'$.
\end{enumerate}
\end{lem}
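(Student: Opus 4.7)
My plan is to reduce each of the four claims to the Lie derivative symbol bounds from Lemma \ref{basic_lie_lem}, combined with the commutator identity \eqref{R_comm_form}. For Part I, formula \eqref{R_comm_form} immediately gives
\[
[X, \nabla_\alpha \mathcal{R}^{\alpha\beta}\nabla_\beta] \ = \ \nabla_\alpha (\mathcal{L}_X\mathcal{R})^{\alpha\beta}\nabla_\beta + \mathcal{R}^{\alpha\beta}\nabla_\alpha(\nabla_\gamma X^\gamma)\nabla_\beta \ ,
\]
while on scalars $[X, \mathcal{S}^\alpha\nabla_\alpha] = (\mathcal{L}_X\mathcal{S})^\alpha\nabla_\alpha$. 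Part \ref{main_LR_estimate}) of Lemma \ref{basic_lie_lem} then supplies the symbol bounds on $\td{\mathcal{R}} = \mathcal{L}_X\mathcal{R}$ and part \ref{main_LS_estimate}) supplies the symbol bounds on $\mathcal{L}_X\mathcal{S}$. The lower order correction $\mathcal{R}^{\alpha\beta}\nabla_\alpha(\nabla_\gamma X^\gamma)$ is absorbed into the $\mathcal{S}$-type term by writing $\nabla_\gamma X^\gamma = \partial_\gamma^b X^\gamma + X\ln\sqrt{|g|}$ and using \eqref{vect_sym_bnds}, \eqref{good_X_conds}, and \eqref{det_bnds} to check the resulting components against \eqref{R_sym_bnds}. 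Part II is identical, this time invoking part \ref{X0_part}) of Lemma \ref{basic_lie_lem} to land in the weighted classes with $a=c=-1$ and $b=1$.

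For Part III I expand $\mathcal{Q}\phi$ in Bondi coordinates via $\nabla_\alpha Y^\alpha = |g|^{-1/2}\partial_\alpha^b(|g|^{1/2}Y^\alpha)$, producing the principal term $\mathcal{R}^{\alpha\beta}\partial_\alpha^b\partial_\beta^b\phi$ plus first order corrections involving $\partial^b\mathcal{R}$, $\partial^b\ln\sqrt{|g|}$, and $\mathcal{S}$. I then rewrite $\partial_u^b = (\tau_x\tau_0)^{-1}(\tau_x\tau_0\partial_u^b)$ and $\partial_x^b = \tau_x^{-1}(\tau_x\partial_x^b)$ and weigh each principal component separately. Using the nesting $\mathcal{Z}^k\subset\mathcal{Z}^{k'}$ for $k\geq k'$, the contributions from $\mathcal{R}^{uu}\in\mathcal{Z}^2$, $\mathcal{R}^{ui}\in\mathcal{Z}^{1/2}$, and $\mathcal{R}^{ij}\in\mathcal{Z}^0$ yield respectively $\mathcal{Z}^0$, $\mathcal{Z}^{-1/2}$, and $\mathcal{Z}^0$ contributions to the coefficient $q$, and the binding middle case fixes the final exponent $q\in\mathcal{Z}^{-1/2}$. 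The first order corrections and the contribution from $\mathcal{S}$ are bounded by the same scheme with one fewer derivative on $\phi$ and similar or better weights. The $\mathbb{L}_0$-variant \eqref{Q_bondi_est'} follows from an identical expansion using the hypotheses with $a=c=-1$ and $b=1$, after regrouping the resulting pointwise bound against $\tau_x^{-1}(\tau_0^{1/2}|\partial_u^b\Gamma^I\phi| + |\partial_x^b\Gamma^I\phi|)$ for $\Gamma\in\mathbb{L}_0$.

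For Part IV I integrate by parts on the principal symbol,
\[
w\nabla_\alpha\mathcal{R}^{\alpha\beta}\nabla_\beta \ = \ \nabla_\alpha(w\mathcal{R}^{\alpha\beta})\nabla_\beta - (\partial_\alpha^b w)\mathcal{R}^{\alpha\beta}\nabla_\beta \ ,
\]
producing $\td{\mathcal{R}}^{\alpha\beta} = w\mathcal{R}^{\alpha\beta}$ and $\td{\mathcal{S}}^\beta = w\mathcal{S}^\beta - (\partial_\alpha^b w)\mathcal{R}^{\alpha\beta}$. Multiplication by $w$ preserves each of the structural classes \eqref{R_X_sym_bnds}, \eqref{Ruu_alt}, and \eqref{alt_ij} under the Leibniz rule applied to the seminorms \eqref{Zk_defn}, simply shifting the weights by $a', b', c'$. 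For $\td{\mathcal{S}}$ the main input is that $|\partial_u^b w|\lesssim \tau_-^{-1}\tau_x^{a'}\tau_+^{b'}\tau_-^{c'}$ and $|\partial_x^b w|\lesssim \tau_x^{-1}\tau_x^{a'}\tau_+^{b'}\tau_-^{c'}$, so contraction with each component of $\mathcal{R}$ and use of $\tau_0\tau_+ = \tau_-$ yields precisely the shifted form of \eqref{S_X_sym_bnds}. The main obstacle in the proof will be the Bondi coordinate bookkeeping in Part III: the exponent $q\in\mathcal{Z}^{-1/2}$ is sharp and arises only by extracting the correct decay from all three structurally distinct components of $\mathcal{R}$ simultaneously, since a uniform bound on any single off-diagonal component would yield only $q\in\mathcal{Z}^0$.
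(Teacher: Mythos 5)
Your proposal is correct and follows essentially the same route as the paper: formula \eqref{R_comm_form} plus the Lie derivative bounds of Lemma \ref{basic_lie_lem} for Parts I and II (with the correction $\mathcal{R}^{\alpha\beta}\nabla_\alpha(\nabla_\gamma X^\gamma)$ absorbed as an $\mathcal{S}$-type term exactly as the paper does via the componentwise contraction $\mathcal{R}^{\alpha\beta}\nabla_\alpha w$), the Bondi-coordinate expansion and component-by-component weight accounting for Part III, and the integration by parts $w\nabla_\alpha\mathcal{R}^{\alpha\beta}\nabla_\beta=\nabla_\alpha(w\mathcal{R}^{\alpha\beta})\nabla_\beta-\mathcal{R}^{\alpha\beta}\nabla_\alpha(w)\nabla_\beta$ for Part IV. Your identification of the $ui$ contraction as the binding case forcing $q\in\mathcal{Z}^{-1/2}$ matches the paper's pointwise estimate for $\mathcal{R}^{\alpha\beta}\partial_\alpha^b\partial_\beta^b\phi$.
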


\begin{proof}
We'll show each  part separately.

\Part{1}{The commutator property for $X$ satisfying \eqref{vect_sym_bnds} and \eqref{good_X_conds}}
By formula \eqref{R_comm_form} and parts \ref{main_LR_estimate}) and \ref{main_LS_estimate})
of Lemma \ref{basic_lie_lem},
it suffices to show that   $\mathcal{R}^{\alpha\beta}\nabla_\alpha(\nabla_\gamma X^\gamma)$
satisfies   \eqref{S_sym_bnds}. We'll do this in a bit more generality here for use in the sequel. 

Let $w$ be any weight function which satisfies \eqref{w_sym_bnds}, and let $\mathcal{R}$ be any 
contravariant two-tensor which satisfies the weaker conditions \eqref{Ruu_alt} and \eqref{alt_ij},
and the remaining conditions on line \eqref{R_X_sym_bnds}.
Then we claim    \eqref{S_X_sym_bnds} holds for 
$\mathcal{S}^\alpha = \mathcal{R}^{\alpha\beta}\nabla_\alpha(w)$ with weights $a+a',b+b',c+c'$. 
To see this note:
\begin{equation}
		\mathcal{S}^i \ = \ \mathcal{R}^{u i}{\partial_u^b} w +
		\mathcal{R}^{j i}\partial_j^b (w) \ , 
		\qquad \mathcal{S}^u \ = \ \mathcal{R}^{u u}{\partial_u^b} w +
		\mathcal{R}^{j u}\partial_j^b (w) \ , \notag
\end{equation}
so the desired estimates follow easily by multiplying together  bounds
\eqref{w_sym_bnds} and the appropriate combination of \eqref{R_X_sym_bnds},
\eqref{Ruu_alt}, and \eqref{alt_ij}. 

To show Part \ref{Q_X_part}) of the Lemma
note that if $X$ is as stated, then  \eqref{det_bnds} shows $w=\nabla_\gamma X^\gamma$
satisfies \eqref{w_sym_bnds} with $a'=b'=c'=0$. The desired result now 
follows from the discussion of the previous paragraph.

\Part{2}{The commutator property for $X\in\mathbb{L}_0$}
This follows at once from part \ref{X0_part}) of Lemma \ref{basic_lie_lem}
and  the main  calculation of \textbf{Part 1} above. Note that if $X\in\mathbb{L}_0$
then  $w= \nabla_\gamma X^\gamma$ satisfies \eqref{w_sym_bnds} with $a'=c'=-1$ and $b'=1$.

\Part{3}{The pointwise estimates \eqref{Q_bondi_est} and \eqref{Q_bondi_est'}}
The bound for the $\mathcal{S}$ potion of $\mathcal{Q}\phi$ 
follows at once from \eqref{S_X_sym_bnds}. For the $\mathcal{R}$ contraction we write
in Bondi coordinates:
\begin{equation}
		\nabla_\alpha \mathcal{R}^{\alpha\beta}\nabla_\beta\phi \ = \ 
		\mathcal{R}^{\alpha\beta}\partial_\alpha^b ( \ln\sqrt{|g|}) \partial_\beta^b \phi
		+ (\partial_\alpha^b \mathcal{R}^{\alpha\beta})\partial_\beta^b \phi +
		\mathcal{R}^{\alpha\beta}\partial_\alpha^b \partial_\beta^b \phi \ = \ 
		\td{S}^\alpha\partial_\alpha^b u + \mathcal{R}^{\alpha\beta}\partial_\alpha^b \partial_\beta^b \phi
		\ . \notag
\end{equation}
We only need to show that $\td{S}$ satisfies \eqref{S_X_sym_bnds} with $a=b=c=0$ in case of 
estimate \eqref{Q_bondi_est}, and $a=c=-1$, $b=1$ in case of estimate \eqref{Q_bondi_est};  then
study $\mathcal{R}^{\alpha\beta}\partial_\alpha^b \partial_\beta^b \phi$.

For the first term of $\td{S}$
we use the fact that $w=\ln\sqrt{|g|}$ satisfies \eqref{w_sym_bnds} with $a'=b'=c'=0$, which follows from
\eqref{det_bnds}. Then by the 
main calculation of \textbf{Part 1}  above we have \eqref{S_X_sym_bnds} for
$\mathcal{R}^{\alpha\beta}\partial_\alpha^b ( \ln\sqrt{|g|})$.

For the expression $ \partial_\alpha^b\mathcal{R}^{\alpha\beta} $ the appropriate version of
\eqref{S_X_sym_bnds} follows at once from \eqref{R_X_sym_bnds}.

For the final term note that if $\mathcal{R}$ satisfies \eqref{R_X_sym_bnds} then
one has the pointwise estimate:
\begin{equation}
		|\mathcal{R}^{\alpha\beta}\partial_\alpha^b\partial_\beta^b \phi| \ \lesssim \
		q\cdot \tau_x^{a}\tau_+^b\tau_-^c \big( \tau_0^\frac{5}{2} |(\partial_u^b)^2 \phi|
		+ \tau_0 |{\partial_u^b}\partial_x^b\phi|
		+ \tau_0^\frac{1}{2} |(\partial_x^b)^2\phi|
		\big) \ , \qquad \hbox{where \ \ } q\in \mathcal{Z}^{-\frac{1}{2}} \ . \notag
\end{equation}
This is bounded by RHS\eqref{Q_bondi_est} when $a=b=c=0$, and 
 RHS\eqref{Q_bondi_est'} when $a=c=-1$ and $b=1$.

\Part{4}{Proof of the algebra property \ref{Q_alg_prop})}
% Fixing dyadic scales and multiplying by the appropriate 
% constants we may assume $a=a'=b=b'=c=c'=0$.
It is immediate that bound \eqref{S_X_sym_bnds}
for $\mathcal{S}$ is stable under   multiplication by $w$ satisfying \eqref{w_sym_bnds} with
the appropriate change of weights. For the quadratic term
of $\mathcal{Q}$ we write $w\nabla_\alpha \mathcal{R}^{\alpha\beta}\nabla_\beta=
 \nabla_\alpha w\mathcal{R}^{\alpha\beta}\nabla_\beta-  \mathcal{R}^{\alpha\beta}\nabla_\alpha(w)
 \nabla_\beta$. For the first  term we use  bounds \eqref{R_X_sym_bnds} which are also stable
 under multiplication by $w$. For the second  term we use the main calculation  of \textbf{Part 1} 
above.

\end{proof}

Parts \ref{Q_X_part}) and \ref{Q_X_part'}) of the last lemma imply the following:

\begin{cor}[Estimates for multicommutators]\label{multicom_coro}
Let $g$ be a metric which satisfies \eqref{mod_coords}, and as usual set
$ \mathbb{L}_0=\{{\partial_u^b} , \partial_i^b-\omega^i{\partial_u^b}\}$ and 
$ \mathbb{L}=\{S,\Omega_{ij} \}\cup \mathbb{L}_0$.
Then the following hold:
\begin{enumerate}[I)]
		\item \label{L_multicom_part} If $I$ is any multiindex then for products of vector fields
		in  $ \mathbb{L}$ one has the identity:
		\begin{equation}
				[\Box_g,\Gamma^I] \ = \ \sum_{I'\subsetneq I}\big( \big[ \nabla_\alpha
				\mathcal{R}_{I'}^{\alpha\beta}\nabla_\beta
				 + \mathcal{S}_{I'}^\alpha\nabla_\alpha\big] \Gamma^{I'}
				+w_{I'}\Gamma^{I'} \Box_g \big) \ , \label{mult_comm_iden1}
		\end{equation}
		where the sum is taken over the collection of all multiindices
		$I'$ strictly contained in $I$; in particular each $|I'|\leq |I|-1$. Here
		$\mathcal{R}_{I'}$ and $ \mathcal{S}_{I'}$ satisfy \eqref{R_sym_bnds} and 
		\eqref{S_sym_bnds} (resp), while there exists constants $w^0_{I'}\in\mathbb{R}$ such that:
		\begin{equation}
			  w_{I'}- w^0_{I'} \ \in \ \mathcal{Z}^0 \ . \label{w_sym_bnds'}
		\end{equation}
		\item  \label{L0_multicom_part}
		If the product in the previous part is restricted to vector fields in $\mathbb{L}_0$,
		then one has identity \eqref{mult_comm_iden1}  with estimates 
		 \eqref{R_X_sym_bnds} and \eqref{S_X_sym_bnds} for 
		 $ \mathcal{R}_{I'}$ and  $ \mathcal{S}_{I'}$ with $a=c=-1$ and $b=1$.
		 In addition \eqref{w_sym_bnds'} in this case is replaced by
		 $w_{I'}  \in  \tau_x^{-1} \cdot \mathcal{Z}^{-1}$.
		  \item  \label{nabla_L_multicom_part} Let $\Gamma^I$ be a product of vector fields
		 in $\mathbb{L}$ and $\td{\Gamma}^J$ a product of vector fields in $\mathbb{L}_0$.
		 Then one has the identity:
		 \begin{equation}
				\td{\Gamma}^J[\Box_g,\Gamma^I] 
				\ = \ \sum_{I'\subsetneq I , \ J'\subseteq J} \Big( \big[ \nabla_\alpha
				\mathcal{R}_{I',J'}^{\alpha\beta}\nabla_\beta 
				+ \mathcal{S}_{I',J'}^\alpha\nabla_\alpha\big]\td{\Gamma}^{J'}\Gamma^{I'}
				+w_{I',J'}\td{\Gamma}^{J'}\Gamma^{I'}
				\Box_g \Big)  \ , \label{mult_comm_iden2}
		\end{equation}
		where $\mathcal{R}_{I',J'}$, $\mathcal{S}_{I',J'}$, and $w_{I',J'}$ satisfy respectively
		\eqref{R_sym_bnds}, \eqref{S_sym_bnds}, and \eqref{w_sym_bnds'}.
\end{enumerate}
\end{cor}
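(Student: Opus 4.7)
My plan is to prove all three parts by induction, using the single-vector-field commutator formulas \eqref{L_comm_form} and \eqref{S_comm_form} as base cases and Lemma \ref{abs_comm_prop} to propagate the structure at each step. The basic engine in every induction step is the Leibniz identity
\begin{equation}
		[\Box_g, \Gamma \Gamma^{I''}] \ = \ [\Box_g,\Gamma]\Gamma^{I''} + \Gamma [\Box_g,\Gamma^{I''}] \ , \notag
\end{equation}
applied with $|I''|=|I|-1$. For part \ref{L_multicom_part}), the base case $|I|=1$ is exactly \eqref{L_comm_form} or \eqref{S_comm_form}; note that Lemma \ref{basic_lie_lem} part \ref{main_LR_estimate}) ensures $\mathcal{L}_X(d^{1/2}g^{-1}-h)$ lies in a symbol class compatible with \eqref{R_sym_bnds}, while the auxiliary tensors $\mathcal{R}_1$ are written out explicitly in the desired form, and the weights $\tfrac{1}{2}X\ln(d)$ (resp.~$\tfrac{1}{2}(4+S\ln d)$) obey \eqref{w_sym_bnds'} thanks to \eqref{det_bnds}.

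For the inductive step in part \ref{L_multicom_part}), the piece $[\Box_g,\Gamma]\Gamma^{I''}$ contributes terms indexed by $I'=I''$. To treat $\Gamma[\Box_g,\Gamma^{I''}]$ I expand by the inductive hypothesis and commute the outer $\Gamma\in\mathbb{L}$ past each operator $\nabla_\alpha \mathcal{R}_{I'}^{\alpha\beta}\nabla_\beta + \mathcal{S}_{I'}^\alpha\nabla_\alpha$: Lemma \ref{abs_comm_prop} part \ref{Q_X_part}) guarantees that the commutator is again of the same form, and the main term $(\nabla_\alpha \mathcal{R}_{I'}^{\alpha\beta}\nabla_\beta + \mathcal{S}_{I'}^\alpha\nabla_\alpha)\Gamma\Gamma^{I'}$ re-expresses the product $\Gamma\Gamma^{I'}$ as $\Gamma^{I'''}$ with $|I'''|\leq |I|-1$ by invoking the fact that $\mathbb{L}$ commutes modulo $\mathbb{L}$; the one exceptional commutator \eqref{comm_fail} produces only a $\tau_x^{-2}\omega^i{\partial_u^b}$ correction which is of strictly lower order and can be absorbed. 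The weight factor transforms as $\Gamma(w_{I'}\Gamma^{I'}\Box_g) = w_{I'}\Gamma\Gamma^{I'}\Box_g + (\Gamma w_{I'})\Gamma^{I'}\Box_g$, and since $\Gamma$ annihilates constants and preserves $\mathcal{Z}^0$ up to shifts in the seminorm index, $\Gamma w_{I'}\in \mathcal{Z}^0$ so \eqref{w_sym_bnds'} is maintained.

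Part \ref{L0_multicom_part}) runs through the identical induction, but restricting to $\Gamma\in\mathbb{L}_0$ allows me to invoke Lemma \ref{abs_comm_prop} part \ref{Q_X_part'}) in place of part \ref{Q_X_part}); this yields the improved asymptotic weights $a=c=-1$, $b=1$ throughout \eqref{R_X_sym_bnds} and \eqref{S_X_sym_bnds}, and the upgraded statement $w_{I'}\in\tau_x^{-1}\mathcal{Z}^{-1}$ follows because any element of $\mathbb{L}_0$ applied to a function in $\mathcal{Z}^0$ produces an element of $\tau_x^{-1}\mathcal{Z}^{-1}$ (and no constant part survives, since $\mathbb{L}_0$ kills constants). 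Part \ref{nabla_L_multicom_part}) is then obtained by a secondary induction on $|J|$ with $I$ fixed: the base case $|J|=0$ is part \ref{L_multicom_part}), and the step pulls one $\td\Gamma\in\mathbb{L}_0$ from the left through each operator on RHS\eqref{mult_comm_iden1} using Lemma \ref{abs_comm_prop} part \ref{Q_X_part'}), while $\td\Gamma\td\Gamma^{J'}$ re-indexes directly because $\mathbb{L}_0$ commutes modulo $\mathbb{L}_0$.

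The main obstacle will be the careful bookkeeping of the symbol classes $\mathcal{Z}^k$ under repeated commutation, in particular verifying that the weights $w_{I'}$ and $w_{I',J'}$ retain their prescribed form (constant plus $\mathcal{Z}^0$, or $\tau_x^{-1}\mathcal{Z}^{-1}$) rather than degrading under successive applications of $\Gamma$; this reduces to the stability statements already packaged into Lemma \ref{abs_comm_prop} part \ref{Q_alg_prop}) and Lemma \ref{basic_lie_lem}. A secondary source of care is handling the exceptional commutator \eqref{comm_fail}, which must be shown to produce correction terms strictly lower in the hierarchy so that they can be reabsorbed into the sum over $I'\subsetneq I$.
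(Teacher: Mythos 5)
Your proposal is correct and follows essentially the same route as the paper: induction on the word length via the Leibniz rule, with Lemma \ref{abs_comm_prop} parts \ref{Q_X_part}), \ref{Q_X_part'}), and \ref{Q_alg_prop}) propagating the structure of $\mathcal{R}_{I'}$, $\mathcal{S}_{I'}$, and $w_{I'}$ at each step, and part \ref{nabla_L_multicom_part}) obtained by pulling $\td{\Gamma}^J$ through \eqref{mult_comm_iden1} using part \ref{Q_X_part'}). The only cosmetic differences are that the paper first reduces \eqref{mult_comm_iden1} to the form with $\Box_g$ written to the left of $\Gamma^{I'}$ (via part \ref{Q_alg_prop}) of Lemma \ref{abs_comm_prop}) before inducting, whereas you induct on the stated form directly, and that your re-ordering of $\Gamma\Gamma^{I'}$ via \eqref{comm_fail} is unnecessary since the sum already ranges over arbitrary subproducts $I'\subsetneq I$.
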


\begin{proof}
We'll show the different parts separately.

\Part{1}{Proof of \eqref{mult_comm_iden1} for $\mathbb{L}$ and $\mathbb{L}_0$}
Here we will focus only on the case of products of vector fields in $\mathbb{L}$,
as the case of   $\mathbb{L}_0$ is completely analogous. 
Using the algebra property of Part \ref{Q_alg_prop} ) of Lemma \ref{abs_comm_prop} and an induction,
it suffices to  show:
\begin{equation}
		[\Box_g,\Gamma^I] \ = \ \sum_{I'\subsetneq I} \big[ \nabla_\alpha
		\mathcal{R}_{I'}^{\alpha\beta}\nabla_\beta + \mathcal{S}_{I'}^\alpha\nabla_\alpha
		+w_{I'}\Box_g \big] \Gamma^{I'} \ , \qquad \hbox{where\ \ }
		\Gamma^0 \ = \ Id \ . \label{mult_comm_iden1'}
\end{equation}
We shall  prove this last bound itself by induction
on the length $|I|$ of the product $\Gamma^I$.

\case{1}{$|I|=1$}
When $\Gamma^I$ consists of a single vector field in $\mathbb{L}$,
formula \eqref{mult_comm_iden1'} follows from a combination  of 
formulas \eqref{L_comm_form} and \eqref{S_comm_form}, followed by estimates \eqref{det_bnds},
\eqref{R_sym_bnds}, and part \ref{main_LR_estimate}) of Lemma \ref{basic_lie_lem}.

\case{2}{$|I|\geq 2$}
Assume formula \eqref{mult_comm_iden1'} holds for all multiindices $|I|<k$ and 
choose some $|I|=k$, and write $\Gamma^I=\Gamma^{I_0}\Gamma^{I_1}$ for some $|I_0|=1$. 
By the Leibniz rule  we have 
$[\Box_g,\Gamma^I]= [\Box_g, \Gamma^{I_0}]\Gamma^{I_1}+ \Gamma^{I_0}[\Box_g,\Gamma^{I_1}]$.
By the same calculations as in the previous step we have:
\begin{equation}
		[\Box_g, \Gamma^{I_0}]\Gamma^{I_1} \ = \ \nabla_\alpha\mathcal{R}^{\alpha\beta}
		\nabla_\beta \Gamma^{I_1} + w\Box_g \Gamma^{I_1} \ , \notag
\end{equation}
where $\mathcal{R},w$ are of the desired form. On the other hand by induction we have:
\begin{equation}
		\Gamma^{I_0}[\Box_g,\Gamma^{I_1}] \ = \ \sum_{I'\subsetneq I_1} \big( \nabla_\alpha
		\mathcal{R}_{I'}^{\alpha\beta}\nabla_\beta + \mathcal{S}_{I'}^\alpha\nabla_\alpha
		+w_{I'}\Box_g \big) \Gamma^{I_0}\Gamma^{I'} + 
		 \sum_{I'\subsetneq I_1} \big[ \Gamma^{I_0}, \big( \nabla_\alpha
		\mathcal{R}_{I'}^{\alpha\beta}\nabla_\beta + \mathcal{S}_{I'}^\alpha\nabla_\alpha
		+w_{I'}\Box_g \big) \big]\Gamma^{I'} \ . \notag
\end{equation}
By formula \eqref{R_comm_form} and
Part \ref{Q_X_part}) of Lemma \ref{abs_comm_prop} the commutator 
$\big[ \Gamma^{I_0}, \big( \nabla_\alpha
\mathcal{R}_{I'}^{\alpha\beta}\nabla_\beta + \mathcal{S}_{I'}^\alpha\nabla_\alpha\big) \big]$
again yields an operator of the form $\mathcal{Q}_{I'}=
\nabla_\alpha \td{\mathcal{R}}_{I'}^{\alpha\beta}\nabla_\beta 
+ \td{\mathcal{S}}_{I'}^\alpha\nabla_\alpha$. Using \eqref{det_bnds},  the same
calculations of the previous step, and part \ref{Q_alg_prop}) of Lemma \ref{abs_comm_prop}, 
we see the commutator $[\Gamma^{I_0},w_{I'}\Box_g]$
yields  another such operator. Combining all this 
yields \eqref{mult_comm_iden1'}.

\Part{2}{Proof of \eqref{mult_comm_iden2}}
Applying $\td{\Gamma}^J$ to formula \eqref{mult_comm_iden1} and then computing 
  $\big[ \td{\Gamma}^{J}, \big( \nabla_\alpha
\mathcal{R}_{I'}^{\alpha\beta}\nabla_\beta + \mathcal{S}_{I'}^\alpha\nabla_\alpha\big) \big]$
through a repeated use of Part \ref{Q_X_part'}) of Lemma \ref{abs_comm_prop}
yields the desired result.
\end{proof}

%-------------------------------------------------------------

\subsection{Klainerman-Sideris Inequalities}

We now prove an analog of the  Klainerman-Sideris identity (see \cite{KS}).

\begin{lem}[Klainerman-Sideris type identity]\label{KS_lem}
One has the following  pointwise estimates: 
\begin{align}
		\sum_{1\leq l+|J|= 2} | (\tau_x \tau_0 {\partial_u^b})^l(\tau_x\partial_x^b)^J \phi|
		\ &\lesssim \ \sum_{\substack{ l+|J|=1\\ |I|\leq 1}}
		| (\tau_x \tau_0{\partial_u^b})^l(\tau_x\partial_x^b)^J \Gamma^I \phi| + \tau_x^2\tau_0 |\Box_g\phi |
		\ ,  \label{KS_iden}\\
		\sum_{1\leq l+|J|\leq k} | (\tau_-{\partial_u^b})^l(\tau_x\partial_x^b)^J \phi|
		\ &\lesssim \ \sum_{\substack{ l+|J|=1\\ |I|\leq k-1}}
		| (\tau_-{\partial_u^b})^l(\tau_x\partial_x^b)^J \Gamma^I \phi| +  \!\!\!\!
		\sum_{l+|J|\leq k-2}\tau_x^2 \tau_0|
		(\tau_-{\partial_u^b})^l(\tau_x\partial_x^b)^J\Box_g\phi |
		\ ,  \label{KS_iden_higher}
\end{align}
where in the second bound the implicit constant depends on $k\geq 2$. Here all $\Gamma\in 
\mathbb{L} $.
\end{lem}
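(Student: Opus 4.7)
The plan is to derive both estimates from a single master pointwise identity, obtained by exploiting the null structure of $\Box_g$ in Bondi coordinates together with the scaling vector field $S\in\mathbb{L}$. At leading order the Bondi d'Alembertian is
\[
\Box_h\phi \ = \ -2\partial_u^b\partial_r^b\phi + (\partial_r^b)^2\phi + \tfrac{2}{r}\partial_r^b\phi + r^{-2}\sDelta\phi \ ,
\]
which notably has no $(\partial_u^b)^2$ term, so the ``bad'' second derivative must be recovered indirectly. Combining this with $r\partial_r^b\phi = S\phi - u\partial_u^b\phi$ and the product rule yields the master identity
\[
2u(\partial_u^b)^2\phi \ = \ r\Box_h\phi + 2\partial_u^b S\phi - 2\partial_u^b\phi - r(\partial_r^b)^2\phi - 2\partial_r^b\phi - r^{-1}\sDelta\phi \ .
\]
Replacing $\Box_h$ by $\Box_g$ picks up an error of the form $(g-h)^{\alpha\beta}\partial_\alpha^b\partial_\beta^b\phi$ plus connection terms; only the piece $g^{uu}(\partial_u^b)^2\phi$ can feed back to the ``bad'' derivative, but $g^{uu}\in\mathcal{Z}^2$ from \eqref{mod_coords} supplies the $\tau_0^2$-decay required for absorption.

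To prove \eqref{KS_iden} I would handle the three LHS subcases simultaneously. The spatial case $(l,|J|)=(0,2)$ requires an extra factor of $\tau_x$ beyond what $\tau_x\partial_x^b$ applied to $\Gamma\phi$ for $\Gamma\in\mathbb{L}_0$ supplies, and that factor comes from the Klainerman fields: $(\tau_x\partial_i^b)S\phi$ contains $\tau_x r\partial_i^b\partial_r^b\phi$ with $r\lesssim \tau_x$, while $(\tau_x\partial_k^b)\Omega_{ij}\phi$ contains $\tau_x x^i\partial_k^b\partial_j^b\phi$, so together they deliver $\tau_x^2|(\partial_x^b)^2\phi|$ modulo mixed and bad pieces. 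The mixed case $(1,1)$ is analogous: the angular part of $\tau_x^2\tau_0\partial_u^b\partial_x^b\phi$ is controlled by $(\tau_x\tau_0\partial_u^b)\Omega_{ij}\phi$ and the radial part $\tau_x^2\tau_0\partial_u^b\partial_r^b\phi$ is extracted from $(\tau_x\tau_0\partial_u^b)S\phi$. For the critical $(2,0)$ case, in the region $|u|\geq 1$ I multiply the master identity by $(\tau_x\tau_0)^2/|u|\lesssim\tau_x\tau_0\cdot\tau_x/\tau_+\leq\tau_x\tau_0$: the $\partial_u^b S\phi$ and $\partial_u^b\phi$ terms land as RHS terms, the $r\Box_h\phi$ piece becomes $\tau_x^2\tau_0|\Box_g\phi|$ via $r\lesssim\tau_x$, and the $r(\partial_r^b)^2\phi$ and angular terms reduce to the already-treated spatial subcase. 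In the complementary region $|u|\leq 1$ one has $\tau_x\tau_0\lesssim\tau_x/\tau_+\leq 1$, hence $(\tau_x\tau_0)^2|(\partial_u^b)^2\phi|\lesssim |(\tau_x\tau_0\partial_u^b)\partial_u^b\phi|$, an RHS term with $\Gamma=\partial_u^b\in\mathbb{L}$.

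Estimate \eqref{KS_iden_higher} will be obtained by induction on $k$. The case $k=1$ is trivial, and $k=2$ runs analogously to the above with the slightly stronger LHS weight $\tau_-\geq\tau_x\tau_0$, using dyadic region analysis: the master identity is applied in the far field $r\approx\tau_+$ (where $\tau_x^2\tau_0$ and $\tau_-\tau_x$ are comparable) and, in the interior $r\ll\tau_+$, the desired bound follows directly from commuting $\tau_x\partial_x^b$ and $\tau_-\partial_u^b$ with elements of $\mathbb{L}_0$ without invoking the Klainerman fields at all. For the inductive step $k\geq 3$ one applies $(\tau_-\partial_u^b)^i(\tau_x\partial_x^b)^J$ with $i+|J|\leq k-2$ to the master identity and uses Lemma~\ref{abs_comm_prop}.\ref{Q_X_part'} and Corollary~\ref{multicom_coro} to commute the weighted operators past $\Box_g$ and past the Klainerman fields; the resulting commutator errors are of strictly lower order and absorbed by the inductive hypothesis.

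The main obstacle will be managing the error contributions from $\Box_g-\Box_h$: only $g^{uu}(\partial_u^b)^2\phi$ can feed back to the ``bad'' LHS derivative, but the symbol bound $g^{uu}\in\mathcal{Z}^2$ yields precisely the $\tau_0^2$-decay so that after multiplication by the prefactor $r/|u|\lesssim\tau_x/\tau_-$ the resulting coefficient is bounded by a fixed small constant (in a suitable dyadic annular sense) and absorbable into the LHS. A secondary delicate point is the degeneracy of the identity at $u=0$, which is resolved by noting that the relevant weight ($\tau_x\tau_0$ or $\tau_-$) is itself of order one there, so the identity's division by $u$ is avoided entirely and the LHS is controlled by a first-order Lie derivative from $\mathbb{L}$.
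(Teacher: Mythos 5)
Your raw ingredients are the right ones --- the three second--order relations coming from $\partial_r^b S$, $\partial_u^b S$, and $\Box_\eta$, rotations for the angular derivatives, the $\mathcal{Z}$--symbol absorption of $\Box_g-\Box_\eta$ in $r>R$, the observation that the $u=0$ degeneracy is harmless because the weights degenerate there too, and induction via commutators for $k>2$ --- and all of these match the paper. The gap is in how you close the system of three mutually contaminated second--order bounds. Writing $A=\tau_x^2|(\partial_r^b)^2\phi|$, $B=\tau_x^2\tau_0|\partial_u^b\partial_r^b\phi|$, $C=(\tau_x\tau_0)^2|(\partial_u^b)^2\phi|$, your extraction of $A$ from $(\tau_x\partial_i^b)S\phi$ leaves the contaminant $\tau_x u\,\partial_i^b\partial_u^b\phi$, and $\tau_x|u|\approx(\tau_+/\tau_x)\cdot\tau_x^2\tau_0$; likewise the extraction of $B$ from $(\tau_x\tau_0\partial_u^b)S\phi$ leaves $\tau_x\tau_0\, u\,(\partial_u^b)^2\phi$ with $\tau_x\tau_0|u|\approx(\tau_+/\tau_x)\cdot(\tau_x\tau_0)^2$. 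So your scheme reads $A\lesssim \mathrm{RHS}+(\tau_+/\tau_x)B$, $B\lesssim\mathrm{RHS}+(\tau_+/\tau_x)C$, $C\lesssim\mathrm{RHS}+\tau_0 A$, and in the region $R<r\ll t$ the factor $\tau_+/\tau_x$ is unbounded, so no ``simultaneous'' absorption can close (the loop constant is $\tau_+\tau_-/\tau_x^2\approx t^2/r^2$). Your fallback for the interior in the $k=2$ case of \eqref{KS_iden_higher} --- that commuting with $\mathbb{L}_0$ alone suffices for $r\ll\tau_+$ --- is also false: there both $\tau_x$ and $\tau_-$ are large, and one application of $\tau_x\partial_x^b$ to $\Gamma\phi$, $\Gamma\in\mathbb{L}_0$, supplies only one of the two required weight factors.

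The fix is to eliminate the mixed derivative \emph{first}, not last. The combination $r^2\tau_+^{-1}\partial_r^b S+\tfrac12 r^2u\tau_+^{-1}\Box_\eta$ has exactly cancelling $\partial_u^b\partial_r^b$ coefficients and produces $(\partial_r^b)^2\phi$ with coefficient $\tfrac12 r^2\tau_+^{-1}(2r+u)\approx r^2$; crucially it also attaches the weight $r^2|u|\tau_+^{-1}\lesssim\tau_x^2\tau_0$ to $\Box_\eta\phi$, which is what lets the source enter as $\tau_x^2\tau_0|\Box_g\phi|$ (solving instead $A_r=\Box_\eta\phi+2\partial_u^b\partial_r^b\phi+\dots$, as your ordering would require, puts the too--large weight $\tau_x^2$ on the source term). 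Once $(\tau_x\partial_r^b)^2\phi$ is controlled cleanly, a single further application of $S$ and of $\partial_u^b$ (i.e.\ $u\partial_u^b\psi=S\psi-r\partial_r^b\psi+O(\psi)$ with $\psi=\partial_r^b\phi$ and $\psi=\partial_u^b\phi$) recovers $B$ and $C$ with no circularity. This is precisely Step~2 of the paper's proof; your master identity is the unfavorable row--reduction of the same $3\times3$ linear system, whose determinant $u(2r+u)/r$ is what both arguments are implicitly inverting.
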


\begin{proof}[Proof of estimates \eqref{KS_iden} and \eqref{KS_iden_higher}]
Both estimates follow from essentially the same  computation. 

\step{1}{A preliminary reduction}
First, note that in 
either case it suffices to restrict to  $r>R\gg 1$, as  estimates
\eqref{KS_iden} and \eqref{KS_iden_higher}
 with some implicit constant $C=C(R,k)$ is automatic in $r\leq R$ 
by choosing all $\Gamma\in \mathbb{L}_0\cup \{ S\}$.

Next, let $\Box_\eta$ be the Minkowski wave operator in Bondi coordinates which satisfies: 
\begin{equation}
		\Box_\eta  =   -2{\partial_u^b}\partial_r 
		+ (\partial_r^b)^2 - 2r^{-1}{\partial_u^b} + 2r^{-1}\partial_r^b 
		+ r^{-2}\sum_{i<j}(\Omega_{ij})^2 \ , \quad
		\Box_\eta \ = \ d^\frac{1}{2} \Box_g -
		  \partial_\alpha^b 
		\mathcal{R}^{\alpha\beta}\partial_\beta^b + O( r^{-2})\partial_x^b{\partial_u^b} + O(r^{-3}){\partial_u^b} \ , \notag
		%\label{eta_idens}
\end{equation}
where $d=|g|$ is the metric determinant in Bondi coordinates, and $\mathcal{R}=d^\frac{1}{2}g-h$
where $h$ is given on line \eqref{h_tensor} and $\mathcal{R}$ satisfies \eqref{R_sym_bnds}.
Thanks to  \eqref{Q_bondi_est}
one has:
\begin{equation}
		\tau_x^2\tau_0 |\Box_\eta \phi| \ \lesssim \ \tau_x^2\tau_0 |\Box_g \phi| 
		+  o_{R}(1)\cdot 
		\sum_{1\leq l+|J|\leq 2} | (\tau_x\tau_0{\partial_u^b})^l(\tau_x\partial_x^b)^J \phi|
		\ , \qquad \hbox{in\ \ } r>R \ . \notag
\end{equation}
Therefore it suffices to replace $\Box_g$ by $\Box_\eta$ in \eqref{KS_iden},
and also in \eqref{KS_iden_higher}  when proving it for $k=2$.

\step{2}{Proof of \eqref{KS_iden} and \eqref{KS_iden_higher} for $k=2$ and
$\Box_g$ replaced by $\Box_\eta$} Start with the two identities:
\begin{align}
		r^2\tau_+^{-1} \partial_r^b S \ &= \ r^2 u \tau_+^{-1}{\partial_u^b}\partial_r^b + r^3\tau_+^{-1}
		(\partial_r^b )^2 +r^2\tau_+^{-1}\partial_r^b \ , \notag\\
		\frac{1}{2} r^2u\tau_+^{-1}\Box_\eta \ &= \ -r^2u\tau_+^{-1}{\partial_u^b}\partial_r^b
		+ \frac{1}{2} r^2u\tau_+^{-1}(\partial_r^b)^2 -ru\tau_+^{-1}{\partial_u^b} + ru\tau_+^{-1}\partial_r^b
		+ \frac{1}{2}u\tau_+^{-1} \sum_{i<j}(\Omega_{ij})^2
		\ . \notag
\end{align}
Adding the two operators on the LHS above applied to $\phi$ yields:
\begin{equation}
		|(\tau_ x \partial_r^b)^2\phi| \ \lesssim \ \sum_{ l+|J|=1, \ |I|\leq 1}
		| (\tau_x\tau_0{\partial_u^b})^l(\tau_x\partial_x^b)^J \Gamma^I \phi| 
		+  \tau_x^2 \tau_0|\Box_\eta \phi | \ . \notag
\end{equation}
Note that by Remark \ref{coord_rem} we can assume $u+2r\approx \tau_+$ in $r>R$.

Next,   the   vector fields $S$ and ${\partial_u^b}$ alone give the pair of inequalities:
\begin{align}
		|(\tau_x\tau_0 {\partial_u^b})^2\phi| + |(\tau_x\tau_0 {\partial_u^b})(\tau_x\partial_r^b)\phi| \ &\lesssim \
		|(\tau_x \partial_r^b)^2\phi| +
		\sum_{ l+|J| = 1, \ |I|\leq 1 }
		| (\tau_x\tau_0{\partial_u^b})^l(\tau_x\partial_x^b)^J \Gamma^I\phi  |
		 \ , \notag\\
		|(\tau_-{\partial_u^b})^2\phi| + |(\tau_-{\partial_u^b})(\tau_x\partial_r^b)\phi| \ &\lesssim \
		|(\tau_x\partial_r^b)^2\phi| +
		\sum_{ l+|J|=1, \ |I|\leq 1 }
		| (\tau_-{\partial_u^b})^l(\tau_x\partial_x^b)^J \Gamma^I\phi  | 
		\ . \notag
\end{align}

Finally, note that all other combinations of derivatives on LHS \eqref{KS_iden} (resp
\eqref{KS_iden_higher} when $k=2$)
are  automatically controlled
by the first sums on RHS \eqref{KS_iden} (resp \eqref{KS_iden_higher} when $k=2$) thanks to the rotation
vector fields.

\step{3}{Estimate \eqref{KS_iden_higher} when  $k>2$}
It suffices to show \eqref{KS_iden_higher}  assuming its true for $k-1$.
Applying \eqref{KS_iden_higher} with $k-1$ to $X\phi$, where 
$X \in\{\tau_- {\partial_u^b}, \tau_x\partial_i^b\}$, and then feeding the results back
into \eqref{KS_iden_higher}  with $k=2$ applied to $\Gamma^I\phi$ where 
$\Gamma\in\mathbb{L}$ and $|I|\leq k-2$,
we need to show the  commutator estimates:
\begin{equation}
		\sum_{\substack{ |I'|,|I''|=1\\ |I| \leq k-2}} \!\!\!\!\!
		| X^{I'} [\Gamma^I,  X^{I''}] \phi|  \lesssim 
		\!\!\!\!\!\!\!\!
		\sum_{1\leq |I| \leq k-1} 
		\!\!\!\!\! \!\!\! | X^I \phi| \ , 
		\sum_{ |I|\leq k-2} \!\!\!\!\! \tau_x^2 \tau_0|
		[\Box_g,\Gamma^{I}]\phi |  \lesssim \!\!\!\!\!\!\!\!
		\sum_{1\leq |I| \leq k-1} \!\!\!\!\! \!\!\! | X^I \phi| \ , 
		\sum_{\substack{ |I| \leq k-3\\ |I'|=1 }}\!\!\!\!\! \tau_x^2 \tau_0|
		X^I [\Box_g,X^{I'}]\phi |  \lesssim \!\!\!\!\!\!\!\!
		\sum_{1\leq |I| \leq k-1} 
		\!\!\!\!\! \!\!\! | X^I \phi| \ , \notag
\end{equation}
where each $X^I$, $X^{I'}$, and $X^{I''}$ is a product of members of $\{\tau_- {\partial_u^b}, \tau_x\partial_i^b\}$.
The validity of these last three bounds is easily checked 
by using \eqref{mult_comm_iden1}  and \eqref{Q_bondi_est}
to evaluate $[\Box_g,\Gamma^{I}]\phi$, and
by referring to the following lemma.
\end{proof}

\begin{lem}[Products of ``standard'' vector fields]
Let $X^I$ and $Y^J$ products of  vector fields whose Bondi coordinate 
coefficients satisfy:
\begin{equation}
		\big| (\tau_-{\partial_u^b})^l (\tau_x \partial_x^b)^J X^u\big|
		\ \lesssim_{l,J} \   \tau_-  
		\ , \qquad\qquad
		\big| (\tau_-{\partial_u^b})^l (\tau_x \partial_x^b)^J X^i\big|
		\ \lesssim_{l,J} \ \tau_x  \ . \notag
\end{equation}
with the convention a product of length zero  
is a scalar satisfying bounds of the form \eqref{w_sym_bnds} with $a=b=c=0$.
Then the following hold:
\begin{enumerate}[I)]
	\item The vector field $[X^I,Y^J]$ is a sum of products of similar vector fields
		each with word length $|I|+|J|-1$.
	\item For any nonzero multiindex $I$ we have:
		\begin{equation}
				|X^I\phi| \ \lesssim \ \sum_{1\leq l+|J|\leq |I|} |(\tau_-{\partial_u^b})^l (\tau_x\partial_x^b)^J\phi|
				\ . \notag
		\end{equation}
	\item If $X^I$ is any product of such vector fields then:
		\begin{equation}
			 \tau_x^2\tau_0 X^I\tau_x^{-2}\tau_0^{-1} - X^I \ = \ \sum_{|J|\leq |I|-1} Y^J \ . \notag
		\end{equation}
		for some other collection of products of similar vector fields $Y^I$.
	\item One has $\tau_x^2\tau_0\Box_g=\sum_{1\leq |I|\leq 2} X^I$ for some collection of such 
		vector fields $X^I$.
\end{enumerate}
\end{lem}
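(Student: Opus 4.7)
The plan is to treat the four assertions in order, since each builds on the previous ones.

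For Part I, I would verify the bounds directly in Bondi components using $[X,Y]^\alpha = X(Y^\alpha) - Y(X^\alpha) = X^u \partial_u^b Y^\alpha + X^i \partial_i^b Y^\alpha - (X \leftrightarrow Y)$. The claimed weight bounds for $[X,Y]^u$ and $[X,Y]^i$ then follow by matching: $|X^u \partial_u^b Y^u| \lesssim \tau_- \cdot \tau_-^{-1}\tau_- = \tau_-$, $|X^i \partial_i^b Y^u| \lesssim \tau_x \cdot \tau_x^{-1}\tau_- = \tau_-$, and similarly for the spatial component (with $\tau_-$ replaced by $\tau_x$). Higher derivative bounds follow from Leibniz. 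For products $X^I$, $Y^J$ with $|I|+|J|\geq 2$, one inducts by writing $X^I = X_1 X^{I'}$ and expanding $[X_1 X^{I'}, Y^J] = X_1[X^{I'}, Y^J] + [X_1, Y^J] X^{I'}$; each resulting term is a product of length $|I|+|J|-1$ by the single-word case and the inductive hypothesis.

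For Part II, I would induct on $|I|$, expanding the outermost factor as $X_1 = X_1^u \partial_u^b + X_1^i \partial_i^b$ and then writing $X_1^u \partial_u^b = (X_1^u/\tau_-)(\tau_- \partial_u^b)$ and $X_1^i \partial_i^b = (X_1^i/\tau_x)(\tau_x \partial_i^b)$, where the scalar factors are pointwise bounded. Since $\tau_- \partial_u^b$ and $\tau_x \partial_i^b$ themselves belong to the standard class, one applies Part I to commute them inside past $X^{I'}$, then invokes the inductive bound on the resulting length-$(|I|-1)$ words acting on $\tau_- \partial_u^b \phi$ or $\tau_x \partial_i^b \phi$.

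For Part III, the central observation is the operator identity $f A B f^{-1} = (fAf^{-1})(fBf^{-1})$ with $f = \tau_x^2 \tau_0$, which reduces everything to the case $|I|=1$. For a single $X$ in the class we have $fXf^{-1} = X + f \cdot X(f^{-1})$, so the correction is a multiplication operator by the scalar $\tau_x^2 \tau_0 \cdot X(\tau_x^{-2}\tau_0^{-1})$. Expanding $X(\tau_x^{-2}\tau_0^{-1}) = X^u \partial_u^b(\tau_x^{-2} \tau_+ \tau_-^{-1}) + X^i \partial_i^b(\tau_x^{-2}\tau_+\tau_-^{-1})$ and using the elementary facts $\partial_u^b \tau_x = 0$, $|\tau_-\partial_u^b \tau_-|, |\tau_x \partial_i^b \tau_x|, |\tau_- \partial_u^b \tau_+|, |\tau_x \partial_i^b \tau_+| \lesssim \tau_+$, one checks the correction obeys the length-zero bounds $|(\tau_-\partial_u^b)^l(\tau_x\partial_x^b)^J (\cdot)| \lesssim 1$. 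Expanding the product $\prod_j (fX_j f^{-1}) = \prod_j(X_j + c_j)$ over $j=1,\ldots,|I|$ then yields $X^I$ plus a sum of terms each of the form (bounded scalar)$\cdot$ (standard product of length $\leq |I|-1$), as required.

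Part IV will be the main obstacle, as it requires matching the peeling weights in \eqref{mod_coords} against the factor $\tau_x^2 \tau_0$. I would use $\Box_g = d^{-1/2}\partial_\alpha^b(d^{1/2} g^{\alpha\beta}\partial_\beta^b \cdot)$ and split into principal and subprincipal pieces. For each component of the principal part $\tau_x^2\tau_0 \, g^{\alpha\beta} \partial_\alpha^b \partial_\beta^b$, the strategy is to rewrite in terms of $\tau_-\partial_u^b$ and $\tau_x \partial_i^b$ and collect the prefactor:
\begin{itemize}
\item $\tau_x^2\tau_0 \, g^{uu}(\partial_u^b)^2 = (\tau_x^2 \tau_-^{-1}\tau_+^{-1} g^{uu}) (\tau_-\partial_u^b)(\tau_-\partial_u^b) + \text{length-}1$, with the prefactor bounded since $|g^{uu}|\lesssim \tau_0^2$ gives $|\tau_x^2 \tau_-^{-1}\tau_+^{-1} g^{uu}|\lesssim \tau_x^2\tau_-\tau_+^{-3}\lesssim 1$;
\item $\tau_x^2\tau_0 \, g^{ui}\partial_u^b\partial_i^b$: split $g^{ui} = (g^{ui}-\omega^i\omega_j g^{uj}) + \omega^i\omega_j g^{uj}$ and use $|g^{ui}-\omega^i\omega_j g^{uj}|\lesssim \tau_0$ and $|\omega^i \omega_j g^{uj}|\lesssim \tau_0^{1/2}$, absorbing each into length-2 standard products;
\item $\tau_x^2\tau_0 \, g^{ij}\partial_i^b\partial_j^b = (\tau_0 \, g^{ij})(\tau_x\partial_i^b)(\tau_x\partial_j^b) + \text{length-}1$ with $|\tau_0 \, g^{ij}| \lesssim 1$ from $g^{ij}-\delta^{ij}\in\mathcal Z^0$.
\end{itemize}
The lower-order errors arising from reordering derivatives and from the $d^{-1/2}\partial_\alpha^b(d^{1/2} g^{\alpha\beta})$ contribution are length-1 pieces, whose coefficient bounds are checked directly from \eqref{mod_coords} and \eqref{det_bnds}. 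Summing gives the decomposition $\tau_x^2\tau_0 \Box_g = \sum_{1\leq |I|\leq 2} X^I$.
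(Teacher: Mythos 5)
Your proof is correct and follows exactly the route the paper intends: the paper dismisses Parts I)--III) as ``elementary calculations'' and Part IV) as a direct check against \eqref{mod_coords} and \eqref{det_bnds}, and your expansions (Leibniz plus induction for I) and II), conjugation identity $fABf^{-1}=(fAf^{-1})(fBf^{-1})$ for III), and componentwise weight-matching for IV)) are the natural way to fill that in. One small slip in Part IV): the claimed bound $|\omega^i\omega_j g^{uj}|\lesssim \tau_0^{1/2}$ is false --- from $\sqrt{|g|}\,g^{ju}+\omega^j\in\mathcal{Z}^{1/2}$ one only gets $\omega_j g^{uj}=O(1)$ --- but $O(1)$ already suffices for that term, since rewriting $\partial_u^b\partial_i^b$ in terms of $(\tau_-{\partial_u^b})(\tau_x\partial_i^b)$ leaves the prefactor $\tau_x^2\tau_0\,(\tau_-\tau_x)^{-1}=\tau_x\tau_+^{-1}\lesssim 1$, so the conclusion is unaffected.
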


\begin{proof}
The proof of the first three parts boils down to more or less elementary calculations.
The last part follows from a direct calculation involving the conditions \eqref{mod_coords} and \eqref{det_bnds}
is also left to the reader.
\end{proof}

\subsection{A pointwise bound for multi-commutators}

To conclude this Section, we record  a combined consequence of Lemma \ref{abs_comm_prop}, Corollary
\ref{multicom_coro}, and Lemma \ref{KS_lem}. This will be our main tool for controlling commutators in the sequel.

\begin{lem}[Pointwise bound for commutators]
For pair of  multiindices $I,J$ with $|I|\geq 1$ one has:
\begin{equation}
		\big|\td{\Gamma}^J [\Box_g,\Gamma^I]\phi\big|  \ \lesssim \
		 \sum_{\substack{ I'\subsetneq I\\ |J'| \leq |J|}}
		 \big(\sum_{\substack{ l+|K|=1\\ |I''|\leq 1}}
		q
		\cdot \tau_x^{-1}  | (\tau_0{\partial_u^b})^l( \partial_x^b)^K  
		\td{\Gamma}^{J'} \Gamma^{I'+I''} \phi| 
		+   |\td{\Gamma}^{J'}  \Gamma^{I'} \Box_g\phi | \big) \ , 
		\quad \hbox{where \ \ } q\in \mathcal{Z}^{-\frac{1}{2}} \ , 
		\label{main_ptws_comm}
\end{equation}
where $\Gamma^I$ (etc)
denotes a product of vector fields in 
$ \mathbb{L}=\{S,\Omega_{ij},{\partial_u^b} , \partial_i^b-\omega^i{\partial_u^b}\}$, and
$\td{\Gamma}^{J}$ (etc) denotes a product of vector fields in 
$ \mathbb{L}_0=\{ {\partial_u^b} , \partial_i^b-\omega^i{\partial_u^b}\}$.
\end{lem}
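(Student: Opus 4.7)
The plan is to combine the three main ingredients developed earlier in this section — the multicommutator identity \eqref{mult_comm_iden2}, the pointwise bound \eqref{Q_bondi_est} for operators of the form $\mathcal{Q}=\nabla_\alpha\mathcal{R}^{\alpha\beta}\nabla_\beta+\mathcal{S}^\alpha\nabla_\alpha$, and the Klainerman--Sideris identity \eqref{KS_iden} — and close by induction on $|I|$.

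First I would apply \eqref{mult_comm_iden2} to write
\begin{equation*}
	\td{\Gamma}^J [\Box_g,\Gamma^I]\phi \ = \ \sum_{I'\subsetneq I,\, J'\subseteq J} \mathcal{Q}_{I',J'} \psi_{I',J'} + w_{I',J'} \td{\Gamma}^{J'}\Gamma^{I'}\Box_g \phi \ , \qquad \psi_{I',J'}:=\td{\Gamma}^{J'}\Gamma^{I'}\phi \ ,
\end{equation*}
with $\mathcal{Q}_{I',J'}$ of the indicated form satisfying \eqref{R_sym_bnds}--\eqref{S_sym_bnds} and $|w_{I',J'}|\lesssim 1$ by \eqref{w_sym_bnds'}. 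The $w$-terms are directly of the desired form. Next I would apply \eqref{Q_bondi_est} to each $\mathcal{Q}_{I',J'}\psi_{I',J'}$; the $l+|K|=1$ contribution is already of the form required on RHS \eqref{main_ptws_comm}, while for $l+|K|=2$ I would invoke the Klainerman--Sideris identity \eqref{KS_iden} applied to $\psi_{I',J'}$. This trades second derivatives for first derivatives of $\Gamma^{I''}\psi_{I',J'}$ with $|I''|\leq 1$, plus a residual $\tau_x^2\tau_0|\Box_g\psi_{I',J'}|$. For the first-derivative terms I would commute $\Gamma^{I''}$ past $\td{\Gamma}^{J'}\Gamma^{I'}$ in order to produce $\td{\Gamma}^{J'}\Gamma^{I'+I''}\phi$; all such commutators lie in $\mathbb{L}$ except for \eqref{comm_fail}, which gains a $\tau_x^{-2}$ factor and hence yields only strictly lower-order corrections that reabsorb into the same sum.

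The main obstacle is the residual term $q\tau_0\,|\Box_g\psi_{I',J'}|$. Noting that $q\tau_0$ is still a symbol in $\mathcal{Z}^{-1/2}$, one reduces to bounding $|\Box_g\td{\Gamma}^{J'}\Gamma^{I'}\phi|$, which I would decompose as
\begin{equation*}
	\Box_g \td{\Gamma}^{J'}\Gamma^{I'}\phi \ = \ \td{\Gamma}^{J'}\Gamma^{I'}\Box_g\phi + \td{\Gamma}^{J'}[\Box_g,\Gamma^{I'}]\phi + [\Box_g,\td{\Gamma}^{J'}]\Gamma^{I'}\phi \ .
\end{equation*}
The first piece matches the second sum on RHS \eqref{main_ptws_comm} directly. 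The second piece falls under the inductive hypothesis since $|I'|<|I|$. The third piece (which is non-trivial only when $J'\neq\emptyset$) I would expand via part \ref{L0_multicom_part}) of Corollary \ref{multicom_coro}, which supplies the improved symbol estimates with $a=c=-1$, $b=1$; applying the sharper pointwise bound \eqref{Q_bondi_est'} then provides enough additional decay to compensate for the $q\tau_0$ prefactor and to fit the output into the desired sum. The base case $|I|=1$, $|J|=0$ follows directly from \eqref{L_comm_form}--\eqref{S_comm_form} together with \eqref{Q_bondi_est} and \eqref{KS_iden}, with only the empty multiindex $I'=\emptyset$ appearing on the RHS.
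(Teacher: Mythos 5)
Your argument tracks the paper's proof almost line for line: the opening via \eqref{mult_comm_iden2}, \eqref{Q_bondi_est}, and \eqref{KS_iden}, the three-way splitting of the residual $\Box_g(\td{\Gamma}^{J'}\Gamma^{I'}\phi)$, and the appeal to Part \ref{L0_multicom_part}) of Corollary \ref{multicom_coro} for $[\Box_g,\td{\Gamma}^{J'}]\Gamma^{I'}\phi$ are all exactly what the paper does. The one place you diverge is the very last step, and there is a quantitative gap there. When you estimate the operators produced by Corollary \ref{multicom_coro} Part \ref{L0_multicom_part}) with the bound \eqref{Q_bondi_est'}, the coefficient of the ${\partial_u^b}$-derivative term you get is $q'\tau_0^{\frac{1}{2}}$ with $q'\in\mathcal{Z}^{-1}$, i.e.\ a symbol in $\mathcal{Z}^{-\frac{1}{2}}$; but RHS\eqref{main_ptws_comm} requires the coefficient of $\tau_x^{-1}|{\partial_u^b}(\cdots)|$ to be $q\tau_0$ with $q\in\mathcal{Z}^{-\frac{1}{2}}$, i.e.\ a symbol in $\mathcal{Z}^{\frac{1}{2}}$. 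Even after you multiply back in the prefactor $q\tau_0\in\mathcal{Z}^{\frac{1}{2}}$ inherited from the Klainerman--Sideris residual, the product only lands in $\mathcal{Z}^{0}$, so you are short by a factor of $\tau_0^{\frac{1}{2}}$. Note that \eqref{Q_bondi_est'} is not sharper in the $\tau_0$ aspect: it trades $\tau_0$ decay for the improved $\tau_x$, $\tau_-$ weights of the $\mathbb{L}_0$ tensors and for the absence of second-order derivatives. The lost $\tau_0^{\frac{1}{2}}$ is load-bearing, since it is precisely what feeds the $\tau_0^{\frac{1}{2}}$ weights of the $N^a$ norms through \eqref{Phi_comm}.

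The repair is what the paper does: the improved tensors from Part \ref{L0_multicom_part}) of Corollary \ref{multicom_coro}, carrying the weight $\tau_x^{-1}\tau_+\tau_-^{-1}=\tau_x^{-1}\tau_0^{-1}$, in particular still satisfy the unimproved bounds \eqref{R_sym_bnds} and \eqref{S_sym_bnds}, so one may re-apply \eqref{Q_bondi_est} --- which reproduces exactly the $(\tau_0{\partial_u^b})^l(\partial_x^b)^K$ structure with $q\in\mathcal{Z}^{-\frac{1}{2}}$ demanded by RHS\eqref{main_ptws_comm} --- followed by \eqref{KS_iden}. This generates new residuals $|\Box_g(\td{\Gamma}^{J''}\Gamma^{I'}\phi)|$ with $J''\subsetneq J'$, which is why the paper closes with a second induction, this time on $|J|$, in addition to the induction on $|I|$ that you already have. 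The rest of your write-up (the absorption of the $[\Gamma^{I''},\td{\Gamma}^{J'}]$ commutators using \eqref{comm_fail}, and the base case) is fine and in fact spells out details the paper leaves implicit.
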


\begin{proof}
Using \eqref{mult_comm_iden2} followed by \eqref{Q_bondi_est} and then \eqref{KS_iden} we have:
\begin{equation}
		LHS\eqref{main_ptws_comm} \ \lesssim \ 
		RHS\eqref{main_ptws_comm} + \sum_{\substack{ I'\subsetneq I\\ J'\subseteq J}}
		|\Box_g(\td{\Gamma}^{J'}  \Gamma^{I'}\phi)| \ . \notag
\end{equation}
Thus, modulo induction on $|I|$ we have reduced matters to estimating the commutator
$|[ \Box_g,\td{\Gamma}^{J'}] ( \Gamma^{I'}\phi)|$. Applying Part \ref{L0_multicom_part})
of Corollary \ref{multicom_coro}, again  followed by \eqref{Q_bondi_est} and then \eqref{KS_iden}, and inducting
on $|I|$  we have:
\begin{equation}
		|[ \Box_g,\td{\Gamma}^{J'}] ( \Gamma^{I'}\phi)| \ \lesssim \
		RHS\eqref{main_ptws_comm} + \sum_{ J''\subsetneq J'}
		|\Box_g(\td{\Gamma}^{J''}  \Gamma^{I'}\phi)| \ , \notag
\end{equation}
so the proof concludes with an additional round of induction, this time with respect to $|J|$.
\end{proof}
  
%-------------------------------------------------------------------------
%%%%%%%%%%%%%%%%%%%%%%%%%%%%%
%-------------------------------------------------------------------------

\section{Proof of the Weighted  $L^2$ Estimates for $k=0$}\label{L2_est_sect}

\begin{thm}[Generalized Local Energy Decay Estimates]\label{NLE_thm}
Assume estimates \eqref{basic_LE} and \eqref{basic_stat_LE} for $s=0$, then the following are true:
\begin{enumerate}[I)]
\item For $R$ sufficiently 
large there exists  $C_R>0$ and vector fields $X_j$
such that one has the uniform bound:
\begin{equation}
		\slp{ \phi }{\WLE^0 [0,T]} 
		  \lesssim   \sup_{0\leq t\leq T}\slp{ \partial  \phi(t) }{L^2_{x}} 
		  + C_R\slp{\phi}{\WLE^0_{class}[0,T]} 
		 + \sup_j \Big|\int_0^T\!\!\!\!\!\int_{\mathbb{R}^3\setminus \mathcal{K}} 
		 \Box_g\phi \cdot  X_j \phi dV_g \Big|^\frac{1}{2}
		 \ , \label{LE_null_est}
\end{equation}
where $X_j=\chi_{>R}q_j {\partial_u^b}$,
with $q_j=q_j(u)$ obeying the uniform bounds $| (\tau_-{\partial_u^b})^k q_j|
\lesssim_{k} 1$.
In addition $\chi_{>R}=\chi_{>1}(R^{-1}\cdot)$ 
is a  radial bump function with $\chi_{>R}\equiv 1$
on $r>R$ and $\chi_{>R}\equiv 0$
on $r<\frac{1}{2}R$ for some sufficiently large that $\mathcal{K}\subseteq \{r<\frac{1}{2}R\}$.

\item 
For each $ 0 < a < 1$ there exists an $R_a$ sufficiently large so that:
\begin{equation}
		\sup_{0\leq t\leq T}\lp{\phi(t)}{\CE^a}+
		\lp{\phi}{S^a[0,T]} 
		\ \lesssim_a \ \lp{\tau_x^a\partial\phi(0)}{L^2_x}
		+ \lp{\tau_+^{a-1} \phi }{ H^1_1(r<R_a)[0,T]} + 
		\lp{\Box_g \phi}{N^a [0,T]}
		\ , \label{NLE_est}
\end{equation}
where the  implicit constant depending continuously on $a\in (0,1)$.

\item
For $R$ sufficiently 
large there exists  $C_R>0$ and vector fields $X_j$
such that  one has uniformly:
\begin{multline}
		\sup_{0\leq t\leq T}\lp{\phi(t)}{\CE^1}
		+ \lp{ \phi }{ S^{1,\infty}[0,T]}
		 \ \lesssim  \  
		\lp{\tau_x \partial\phi(0)}{L^2} +
		C_R\lp{ \phi }{ \ell^1_t H^1_1(r<R)[0,T]} \\
		   \ +\ \lp{ \Box_g\phi}{\ell^\infty_t N^1[0,T]}  
		 \ +\  \sup_j \Big|\dint  
		 \Box_g\phi \cdot \tau_x^{-1} X_j(\tau_x \phi) dV_g \Big|^\frac{1}{2}
		\ ,  \label{CE_est} 
\end{multline}
where 
 $X_j=\chi_{>{R_1}}q_j K_0$,
with $K_0$  given by the formula  $K_0=(1+u^2) {\partial_u^b} + 2(u+r)r\partial_r^b $,   where
 $q_j=q_j(u)$ has the uniform bounds $| (\tau_-{\partial_u^b})^k q_j|
\lesssim_{k} 1$,
and where $\chi_{>{R}}$ is the same as in \eqref{LE_null_est} above.
\end{enumerate}
\end{thm}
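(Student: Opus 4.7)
The three estimates all follow from Proposition \ref{abs_mult_prop} combined with the local energy decay assumptions \eqref{basic_LE}--\eqref{basic_stat_LE}. In each case the strategy is to choose a vector field (or family) $Y$ satisfying the hypotheses of the appropriate part of Proposition \ref{abs_mult_prop} so that the spacetime coefficients $\mathcal{A}$ from \eqref{A0_coeff} together with the boundary term from \eqref{boundary} reproduce the LHS norms, then absorb the interior errors from the cutoff $\chi_{>R}$ via the LE assumptions. The source terms $\int \Box_g \phi\cdot X_j\phi \,dV_g$ are retained explicitly in \eqref{LE_null_est} and \eqref{CE_est} so that they can later be integrated by parts when commuting with vector fields in Section \ref{comm_sect}.

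For part (I) with $a=0$ I would apply Proposition \ref{abs_mult_prop}(\ref{0_mult_case}) to the family $Y_j = q_j(u){\partial_u^b}$, where each $q_j$ is a smooth monotone decreasing dyadic cutoff with $\tau_- \partial_u q_j$ supported near $\tau_-\approx 2^j$. These satisfy \eqref{mult_sym_bnds'} uniformly, and \eqref{A0_coeff} gives $\mathcal{A}^u=\mathcal{A}^{ur}=0$ together with $\mathcal{A}^r=\slash\!\!\!\! \mathcal{A}=-\tfrac{1}{2} q_j'(u)\geq 0$, which delivers weighted spacetime integrals of $(\tau_x^{-1}\partial_r^b(\tau_x\phi))^2$ and $|\snabla^b\phi|^2$ localized to $\tau_-\approx 2^j$. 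Taking $\sup_j$ and applying a Hardy inequality (Appendix 3) on dyadic $r$-annuli recovers the $\tau_x^{-1}\phi$ component of the $\NLE$ seminorm, and combined with the classical bulk bound and the $\sup_t \lp{\partial\phi}{L^2_x}$ supplied by \eqref{basic_LE} yields the full $\WLE^0$ norm; the $C_R \lp{\phi}{\WLE^0_{class}}$ term on RHS \eqref{LE_null_est} absorbs the transition region $\frac{1}{2}R<r<R$ coming from $\mathcal{E}(0,R,X_j)$.

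For part (II) with $0<a<1$ I would apply Proposition \ref{abs_mult_prop}(\ref{a_mult_case}) to a single Morawetz-type vector field $Y = f_a(r)\partial_r^b + g_a(r){\partial_u^b}$ with profiles chosen so that $Y^u\lesssim \tau_+^{2a}\tau_0^{\max\{1,2a\}}$, $Y^r\lesssim \tau_x^{2a}+\tau_+^{2a-1}\tau_x$, and simultaneously $\mathcal{A}^u = -g_a'(r)\gtrsim \tau_+^{2a}\tau_0^{\max\{1,2a\}}$, $\mathcal{A}^r = \tfrac{1}{2} f_a'(r)\gtrsim \tau_+^{2a}$, and $\slash\!\!\!\! \mathcal{A} = r^{-1}f_a(r) - \tfrac{1}{2} f_a'(r)\gtrsim \tau_+^{2a}$; explicit subhomogeneous profiles like $f_a \sim r^{2a}$ (with cutoffs near $r=R_a$) together with $g_a$ a suitably decreasing antiderivative achieve all these simultaneously. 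A Hardy inequality converts the $\slash\!\!\!\! \mathcal{A}$ angular weight into the $\tau_+^a\tau_x^{-1}\phi$ component of $S^a$, the boundary term of \eqref{abs_mult_est} unpacks via \eqref{boundary} to $\lp{\phi(T)}{\CE^a}^2$, and the interior region $r<R_a$ (with $R_a\to\infty$ as $a\to 0,1$, matching Remark \ref{R0_rem}) is handled by \eqref{basic_stat_LE}, producing the $\lp{\tau_+^{a-1}\phi}{H^1_1(r<R_a)}$ term on the RHS.

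For part (III) with $a=1$ I would apply Proposition \ref{abs_mult_prop}(\ref{1_mult_case}) with the conformal Morawetz field $K_0 = (1+u^2){\partial_u^b}+2(u+r)r\partial_r^b$, which is (a regularization of) the conformal Killing field of Minkowski in Bondi coordinates; the symbol bounds \eqref{mult_sym_bnds} hold with $|Y^u|\lesssim \tau_-^2=\tau_+^2\tau_0^2$ and $|Y^r|\lesssim \tau_+\tau_x$, and a direct computation from \eqref{A0_coeff} gives $\mathcal{A}^u=\mathcal{A}^{ur}=\mathcal{A}^r=\slash\!\!\!\! \mathcal{A}\equiv 0$. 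Hence the only bulk contribution comes from the curvature remainder $\mathcal{R}_1$ in \eqref{R1_form}, already absorbed as $o_R(1)\lp{\phi}{S^{1,\infty}}^2$ via Lemma \ref{L2L00_lem} in the proof of Proposition \ref{abs_mult_prop}(\ref{1_mult_case}); the boundary term unpacks to $\lp{\phi(T)}{\CE^1}^2$, and the $\ell^\infty_t$ structure of $S^{1,\infty}$ follows by running the estimate with arbitrary intermediate $T$. The remaining null-tangential $\NLE$-piece $\tau_+\tau_x^{-1}\partial_r^b(\tau_x\phi)$ of $S^{1,\infty}$ is recovered by combining $K_0$ with the dyadic family $q_j$ of part (I): the multipliers $X_j=\chi_{>R_1}q_j K_0$ produce a positive coefficient $\sim -q_j'(u)\cdot(u+r)r$ on $(\tau_x^{-1}\partial_r^b(\tau_x\phi))^2$ localized to $\tau_-\approx 2^j$, yielding the $\sup_j$ source term in \eqref{CE_est}, while the interior error is handled by $\ell^1_t$-summed applications of \eqref{basic_stat_LE} producing the $\ell^1_t H^1_1(r<R)$ term. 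The main obstacle is this endpoint case: with all bulk coefficients of $K_0$ vanishing on the flat reference $h$, the entire argument hinges on the boundary term plus tail smallness of the curvature corrections, so both the combination with $q_j$ (required for the tangential $\NLE$ piece) and the $\ell^\infty_t\to L^2_x$ trade via Lemma \ref{L2L00_lem} must be carefully bookkept to maintain positivity of all multiplier coefficients.
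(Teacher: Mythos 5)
Your treatment of the exterior/near-cone region is essentially the paper's: part (I) via the dyadic family $q_j(u){\partial_u^b}$ in Proposition \ref{abs_mult_prop} is exactly Lemma \ref{core_mult_lem}, and your identification of $K_0$ (with all bulk coefficients \eqref{A0_coeff} vanishing) modulated by $q_j$ to recover the tangential $\NLE$ piece is also correct. But there is a genuine gap in parts (II) and (III): you treat ``the interior'' as the bounded set $r<R_a$, handled by \eqref{basic_stat_LE}, whereas the norms $S^a$, $\CE^a$ and $S^{1,\infty}$ carry the weight $\tau_+^a$ throughout the unbounded timelike region $R_a<r<\tfrac{1}{2}t$. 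No multiplier admissible for Proposition \ref{abs_mult_prop} can produce that weight there: the symbol bounds \eqref{mult_sym_bnds} force $Y^r\lesssim \tau_x^{2a}+\tau_+^{2a-1}\tau_x$, which is smaller than the required $\tau_+^{2a}$ by the unbounded factor $(\tau_+/\tau_x)^{\min\{2a,1\}}$ when $r\ll t$; likewise the $K_0$ boundary term only yields $\sqrt{\tau_+\tau_x}\,|\partial_x^b(\tau_x\phi)|$ against the needed $\tau_+|\partial_x^b\phi|$. The paper closes this with a separate ingredient your proposal omits entirely: Proposition \ref{timelike_LE_prop}, a weighted-in-time local energy estimate obtained by applying \eqref{concat_LE} to $2^{ak}\chi_0(2^{-k}t)\chi_{<1}(r/t)\phi$ on dyadic time slabs (where $\tau_+\approx 2^k$ is effectively constant and the weight can be pulled out) and summing in $\ell^p_t$ with $p=2$ for $0<a<1$ and $p=\infty$ for $a=1$. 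Without this the error $\mathcal{E}(a,R)$ cannot be absorbed, since it contains $o_R(1)\lp{\phi}{S^a(r>\frac{1}{2}R)}^2$ over the full exterior including the cone interior.

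A secondary problem is your part (II) multiplier $Y=f_a(r)\partial_r^b+g_a(r){\partial_u^b}$ with $r$-only profiles. You cannot simultaneously have $Y^u\lesssim \tau_+^{2a}\tau_0^{\max\{1,2a\}}$ (which is small where $\tau_0$ is small) and $\mathcal{A}^u=-\partial_r^b g_a\gtrsim \tau_+^{2a-1}\tau_0^{\max\{1,2a\}}$ pointwise, because the right-hand sides depend on $u$ while $g_a$ does not; nor does the $S^a$ norm ask for such a pointwise bulk bound. The paper instead takes $Y^u$ depending on $u$ and $\bu=u+2r$ through dyadic cutoffs $\chi_{<k}(\bu)$, $\chi_{<j}(u)$ plus a degenerate $\tau_+^{2a}\tau_0^4$ piece, obtains lower bounds only on dyadic shells, and recovers the $\ell^\infty_r/\ell^\infty_u/\ell^\infty_{\bu}$ structure of $S^a$ by taking suprema over $j,k$ (with separate constructions for $0<a\le\frac12$ and $\frac12\le a<1$, and a Hardy inequality \eqref{hardy3} to produce the $\tau_x^{a-1}\phi$ boundary component). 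You should rework part (II) along these lines and insert the interior proposition before attempting the absorption argument.
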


%-------------------------------------------------------------------------

\subsection{Splitting Into Interior and Exterior Estimates}

To control the solution in the interior we use:

\begin{prop}[Weighted $\LE$ bounds in time-like regions]\label{timelike_LE_prop}
Let $R_0\geq 1$ be as in Definition \ref{class_LE_defn}.
Then for any $a\geq 0$ and $1\leq p\leq \infty$ one has the uniform bound:
\begin{multline}
		\sup_{0\leq t\leq T}\lp{\tau_+^{a}  (\partial\phi,\tau_x^{-1}\phi) (t)}{L^2_x(r<\frac{1}{2}t)}+
		\lp{
		\tau_+^a  (\partial\phi,\tau_x^{-1}\phi)}{\ell^p_t \LE(r<\frac{1}{2}t)[0,T]} \   \lesssim \\ 
		\lp{\tau_x^{a}\tau_+^{-\frac{1}{2}}   (\partial\phi, \tau_+^{-1}\phi) }{\ell^p_t L^2( r<\frac{3}{4}t)[0,T]}
		  + \lp{ \tau_+^{a-1} \phi }{ \ell^p_t  H^1_1(r<R_0)[0,T]}
		  +  \lp{\tau_+^a \tau_0 \Box_g\phi}{\ell^p_t  \WLE^{*,0}[0,T]}  \ . \label{timelike_LE}
\end{multline}
Here the $\ell^p_t$ sum is taken over a collection of dyadic regions 
$\la t\ra \approx 2^j\geq 1$.
\end{prop}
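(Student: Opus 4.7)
The plan is to reduce the weighted estimate to the baseline local energy decay bound \eqref{concat_LE} by combining a spatial cutoff to the timelike region with a dyadic-in-time localization. The key observation is that on the set $r < 3t/4$ one has $\tau_+ \approx t$, $\tau_- \gtrsim t$, and $\tau_0 \approx 1$, so on each dyadic shell $t \approx 2^j$ the weight $\tau_+^a$ is essentially the constant $2^{ja}$. This turns the weighted estimate into an unweighted estimate on each shell, with the $\ell^p_t$ norm simply being the $\ell^p_j$ assembly of the resulting dyadic estimates.

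First I would fix a smooth spatial cutoff $\eta=\eta(r/t)$ with $\eta \equiv 1$ on $\{r < t/2\}$ and $\mathrm{supp}\,\eta \subset \{r < 3t/4\}$, together with a smooth partition of unity $\{\chi_j\}_{j \geq 0}$ in time adapted to $t \approx 2^j$. Set $\psi_j = \chi_j(t)\,\eta(r/t)\,\phi$, which is compactly supported in $\{t \approx 2^j,\; r < 3t/4\}$. Applying \eqref{concat_LE} with $s=0$ to $\psi_j$ on the slab $[0,T]$ (the initial-data term vanishing for $j$ large; the low-$j$ contribution is absorbed into a standard energy estimate), one obtains
\[ \sup_{0 \leq t \leq T}\|\partial\psi_j(t)\|_{L^2_x} + \|\psi_j\|_{\LE^0_{class}[0,T]} \ \lesssim \ \|\partial_t\psi_j\|_{L^2(r<R_0)[0,T]} + \|\Box_g\psi_j\|_{\WLE^{*,0}[0,T]} \ . \]
Because $\psi_j = \phi$ on the subregion $\{r < t/2\} \cap \{t \approx 2^j\}$ (more precisely on a fixed fraction of this shell where $\chi_j \equiv 1$), multiplying this inequality by $2^{ja} \approx \tau_+^a$ and taking $\ell^p$ in $j$ produces the two terms on the left-hand side of the proposition.

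The next step is to identify the source contributions. Writing $\Box_g\psi_j = \chi_j\eta \Box_g\phi + [\Box_g, \chi_j\eta]\phi$, the main term yields $2^{ja}\|\chi_j\eta\Box_g\phi\|_{\WLE^{*,0}}$, which after $\ell^p_j$ assembly matches the third RHS term $\|\tau_+^a\tau_0\Box_g\phi\|_{\ell^p_t\WLE^{*,0}}$ (using $\tau_0 \approx 1$ on $\mathrm{supp}\,\eta$). The commutator $[\Box_g,\chi_j\eta]$ is first order in $\chi_j\eta$ with coefficient of size $t^{-1}$; its output naturally splits into a piece supported in the narrow transition strip $r \approx t \approx 2^j$ coming from $\partial\eta(r/t)$, and a piece coming from $\chi_j'(t)$. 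For the first piece, since $\tau_x \approx t \approx 2^j$ on its support, weighting by $2^{ja}$ produces a contribution of the form $\|\tau_x^a \tau_+^{-1/2}(\partial\phi, \tau_+^{-1}\phi)\|_{L^2(r\approx t, t\approx 2^j)}$, matching the first RHS error term. The interior term $\|\partial_t\psi_j\|_{L^2(r<R_0)}$, combined with the portion of the $\chi_j'$ commutator lying in $r < R_0$, can be written (since $\eta \equiv 1$ there) as $\|\chi_j \partial_t\phi + \chi_j' \phi\|_{L^2(r<R_0)}$ and weighted by $2^{ja}$ is controlled by $\|\tau_+^{a-1}\phi\|_{H^1_1(r<R_0)}$, using that the vector field $S = u{\partial_u^b} + r\partial_r^b \in \mathbb{L}$ is comparable to $t\partial_t$ on $\{r < R_0\}$ and so the $H^1_1$ norm captures the required factor of $\tau_+$ on $\partial_t\phi$.

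The main technical obstacle is the $\chi_j'$ commutator piece at intermediate radii $R_0 < r < 3t/4$, which is spread across the full spatial range rather than concentrated in $r\approx t$. Although each such piece carries a factor $t^{-1}$, one must verify that after weighting by $\tau_+^a$ and summing dyadically in $r$ it is controlled by the first right-hand side error $\|\tau_x^a\tau_+^{-1/2}(\partial\phi,\tau_+^{-1}\phi)\|_{L^2(r<3t/4)}$; this is the point where one needs to exploit that the cutoff $\chi_j$ may be chosen with $\chi_j'$ overlapping only with neighboring $\chi_{j\pm 1}$, allowing a telescoping in the $\ell^p_j$ assembly, together with a Cauchy--Schwarz in the dyadic spatial variable. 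Once these commutator errors are shown to fit into one of the three RHS terms, the proposition follows by taking $\ell^p_j$ of the shell-by-shell estimates.
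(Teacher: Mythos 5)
Your overall architecture is the same as the paper's: cut off to $r<\tfrac34 t$, localize dyadically in time, apply \eqref{concat_LE} to the localized function, use the scaling vector field to convert the interior $\partial_t$ term into $\lp{\tau_+^{a-1}\phi}{H^1_1(r<R_0)}$, and reassemble in $\ell^p_j$. The gap is in the step you yourself flag as the main obstacle, and the mechanism you propose there does not close it. The $\chi_j'$ commutator contributes a term of size $\tau_+^{-1}(\partial\phi,\tau_x^{-1}\phi)$ spread over all of $r<\tfrac34 t$; it enters through the $\WLE^{*,0}$ norm, i.e.\ $\lp{\tau_x^{1/2}\,\cdot\,}{\ell^1_r L^2}$, and Cauchy--Schwarz in the dyadic $r$-sum (which is exactly what the paper does) converts this into $\lp{\tau_+^{a-\frac12}(\partial\phi,\tau_x^{-1}\phi)}{L^2(r<\frac34 t)}$. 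For $a>0$ this is genuinely \emph{larger} than the stated right-hand side error $\lp{\tau_x^a\tau_+^{-\frac12}(\partial\phi,\tau_+^{-1}\phi)}{L^2(r<\frac34 t)}$ in the region $r\ll t$, since there $\tau_x^a\ll\tau_+^a$. Neither the finite overlap of the $\chi_j$ nor any "telescoping" in $j$ touches this weight mismatch: it is a pointwise discrepancy in the spatial weight, not an issue of how the time blocks are summed.

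What is actually needed is an absorption argument. For any $0<\gamma<\tfrac12$ one splits
\begin{equation}
\lp{\tau_+^{a-\frac12}(\partial\phi,\tau_x^{-1}\phi)}{L^2(r<\frac34 t)}
\ \lesssim\ \gamma^{\frac12}\,\lp{\tau_+^{a}(\partial\phi,\tau_x^{-1}\phi)}{\LE(r<\frac12 t)}
+\gamma^{-a-1}\,\lp{\tau_x^{a}\tau_+^{-\frac12}(\partial\phi,\tau_+^{-1}\phi)}{L^2(r<\frac34 t)} \ , \notag
\end{equation}
where the first term comes from $r<\gamma t$ (a dyadic Cauchy--Schwarz against the $\ell^\infty_r$ structure of $\LE$ produces the small factor $\gamma^{1/2}$) and the second from $\gamma t<r<\tfrac34 t$ where $\tau_x\approx_\gamma\tau_+$. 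The first term is then absorbed into the left-hand side by taking $\gamma$ small, after summing in $\ell^p_j$. Without this step — or some substitute for it — your estimate does not close, so as written the proof has a genuine hole at its central point.
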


In the exterior we use multipliers to show that:

\begin{prop}[Weighted exterior $\LE$ bounds ]\label{main_ext_prop}
One has the following estimates uniformly for $R$ sufficiently large that $\mathcal{K}\subseteq \{r\leq \frac{1}{2}R\}$:
\begin{enumerate}[I)]
\item
The null energy bounds:
\begin{equation}
		\lp{(\partial_x^b\phi ,\tau_x^{-1}\phi)}{\NLE(r>R)[0,T]} \ \lesssim \ R^\frac{1}{2}\lp{\phi}{\WLE^0_{class}[0,T]}
		+\sqrt{\mathcal{E}(0,R)} \ , \label{0_ext_est}
\end{equation}
where $\mathcal{E}(0,R)=\sup_j\mathcal{E}(0,R,X_j)$ is given by formula \eqref{0R_error}
with $X_j$ as in Theorem \ref{NLE_thm}.

\item
For fixed $0<a<1$ there holds:
\begin{multline}
		\sup_{0\leq t\leq T}\lp{\phi(t)}{\CE^a(r>\max\{R,\frac{1}{2}t\})}+
		\lp{\phi}{S^a(r>\max\{R,\frac{1}{2}t\})[0,T]} + \lp{\tau_x^{a}\tau_+^{-\frac{1}{2}}  
		(\partial\phi, \tau_x^{-1}\phi) }{ L^2(R<r<\frac{3}{4}t)[0,T]}\\
		 \lesssim_a  \sqrt{\mathcal{E}(a,R)} \ , \label{a_ext_est} 
\end{multline}
where $\mathcal{E}(a,R)$ is given by formula \eqref{aR_error}. 

\item
Corresponding
to $a=1$ there holds the estimate:
\begin{multline}
		\sup_{0\leq t\leq T}\lp{\phi(t)}{\CE^1(r>\max\{R,\frac{1}{2}t\})}+
		\lp{\phi}{  S^{1,\infty}(r>\max\{R,\frac{1}{2}t\})[0,T]} +   
		\lp{\tau_x \tau_+^{- \frac{1}{2}}   (\partial\phi, \tau_+^{-1}\phi) }{\ell^\infty_t L^2( R<r<\frac{3}{4}t)[0,T]}\\
		 \lesssim \
		 \sup_{0\leq t\leq T}
		 R^\frac{1}{2}\lp{\tau_-^\frac{1}{2} (\partial\phi,\tau_x^{-1}\phi)(t)}{L^2_x(\frac{1}{2}R<r<R)}+
		  \sqrt{\mathcal{E}(1,R)} \ , \label{1_ext_est} 
\end{multline}
where $\mathcal{E}(1,R)=\sup_j\mathcal{E}(1,R,X_j)$ is given in terms of formula \eqref{1R_error}
with $X_j$ as in Theorem \ref{NLE_thm}.

\end{enumerate}
\end{prop}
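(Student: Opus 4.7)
The strategy is to apply the abstract multiplier estimate Proposition \ref{abs_mult_prop} to carefully selected radial vector fields $Y = Y^u(u,r)\partial_u^b + Y^r(u,r)\partial_r^b$. In each case $Y$ will be chosen so that the coefficients $(\mathcal{A}^u,\mathcal{A}^{ur},\mathcal{A}^r,\slash\!\!\!\!\mathcal{A})$ from \eqref{A0_coeff} are nonnegative and pointwise dominate the integrands on the LHS of the desired estimates, while the boundary output $\tau_x^{-1}\bigl(\sqrt{Y^u}\,\partial + \sqrt{Y^r}\,\partial_x^b\bigr)(\tau_x\phi)$ at $t=T$ reproduces the $\CE^a$ energy in the relevant region.

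For part (I), the family $X_j = \chi_{>R}\, q_j(u)\partial_u^b$ is specified in the statement. Take $q_j$ bounded and nonincreasing with $-\partial_u q_j \approx \tau_-^{-1}$ on $\tau_- \approx 2^j$ and vanishing elsewhere (using two sub-families to handle $u>0$ and $u<0$ separately). Then $Y^r \equiv 0$ so \eqref{mult_sym_bnds'} is trivial, and the only nonzero coefficients are $\mathcal{A}^r = \slash\!\!\!\!\mathcal{A} = -\tfrac12\partial_u q_j \geq 0$. Case \ref{0_mult_case}) of Proposition \ref{abs_mult_prop} then controls $\int \tau_-^{-1}\bigl(|\tau_x^{-1}\partial_r^b(\tau_x\phi)|^2 + |\snabla^b\phi|^2\bigr)$ on the dyadic slab by $\mathcal{E}(0,R,X_j)$. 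Rewriting $\tau_x^{-1}\partial_r^b(\tau_x\phi) = \partial_r^b\phi + \tau_x^{-1}\phi$ modulo a lower-order term absorbed by $\|\phi\|_{\WLE^0_{class}}$, and taking the supremum over $j$, recovers \eqref{0_ext_est}; the factor $R^{1/2}$ absorbs both the $r\approx R$ cutoff error and the $L^\infty_t L^2_x$ piece inside $\mathcal{E}(0,R,X_j)$.

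For part (II) with $0<a<1$, combine two families. A radial Morawetz multiplier $Y_{\mathrm{rad}} = f_a(u,r)\partial_u^b + g_a(r)\partial_r^b$ with $g_a(r) \approx r^{2a}$ and $f_a$ chosen (with a case split at $a=1/2$ to account for $\max\{1,2a\}$) so that $\mathcal{A}^r,\slash\!\!\!\!\mathcal{A} \gtrsim r^{2a-1}$ will handle the weighted $\LE$ pieces of $S^a$, while a rescaled null-slab family $Y_j^{(a)} = \tau_-^{2a} q_j(u)\partial_u^b$ supplies $\mathcal{A}^u \gtrsim \tau_+^{2a}\tau_0^{\max\{1,2a\}}\tau_-^{-1}$ on $\tau_-\approx 2^j$, yielding the weighted $\NLE$ norm after dyadic summation. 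Both satisfy \eqref{mult_sym_bnds}, and case \ref{a_mult_case}) of Proposition \ref{abs_mult_prop} gives \eqref{a_ext_est} via $\sqrt{\mathcal{E}(a,R)}$. The bridge term $\|\tau_x^a\tau_+^{-1/2}(\partial\phi,\tau_x^{-1}\phi)\|_{L^2(R<r<3t/4)}$ is extracted from the $\mathcal{A}^r$ output on the interior subregion $r<t/2$, where $\tau_x^{2a-1} \approx \tau_+^{-1}\tau_x^{2a}$; the boundary reproduces $\CE^a$ since $Y^u(T) \approx \tau_+^{2a}\tau_0^{\max\{1,2a\}}$ and $Y^r(T) \approx \tau_+^{2a}$.

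For part (III), take $Y = \chi_{>R_1}\, q_j(u) K_0$ with $K_0 = (1+u^2)\partial_u^b + 2(u+r)r\partial_r^b$ and $q_j$ as in part (I). The vector field satisfies \eqref{mult_sym_bnds} with $a=1$ (since $K_0^u \approx \tau_-^2$ and $K_0^r \approx r\tau_+$) and produces $\mathcal{A}^r,\slash\!\!\!\!\mathcal{A} \gtrsim r$ on the support of $\chi_{>R_1}$ together with the full $\CE^1$-strength boundary term. Case \ref{1_mult_case}) of Proposition \ref{abs_mult_prop} closes the estimate, the auxiliary term $R^{1/2}\|\tau_-^{1/2}(\partial\phi,\tau_x^{-1}\phi)(T)\|_{L^2_x(R/2<r<R)}$ on the RHS of \eqref{1_ext_est} arising from the cutoff $\chi_{>R_1}$ at $t=T$. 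The main technical obstacle across all three parts will be securing the sharp $a$-dependent positivity of $\mathcal{A}^r$ and $\slash\!\!\!\!\mathcal{A}$: the $\partial_u^b Y^r$ and $\partial_u^b Y^u$ contributions in \eqref{A0_coeff1}--\eqref{A0_coeff2} can partially cancel $\partial_r^b Y^r$, and tracking these cancellations cleanly as $a \to 0^+$ or $a \to 1^-$ forces $R_a$ to be taken large so that the $o_R(1)$ errors inside $\mathcal{E}(a,R)$ are absorbed, consistent with Remark \ref{R0_rem}.
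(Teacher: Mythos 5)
Your overall strategy---apply Proposition \ref{abs_mult_prop} to hand-picked radial multipliers and read off the bulk and boundary terms---is exactly the paper's route (Lemma \ref{core_mult_lem} plus Hardy inequalities), and your part (I) is essentially correct. However, two of your claimed coefficient computations are wrong in a way that breaks the argument.

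First, in part (II) you assert that the family $Y_j^{(a)}=\tau_-^{2a}q_j(u){\partial_u^b}$ ``supplies $\mathcal{A}^u\gtrsim \tau_+^{2a}\tau_0^{\max\{1,2a\}}\tau_-^{-1}$.'' But from \eqref{A0_coeff1} one has $\mathcal{A}^u=-\partial_r^b Y^u$, which vanishes identically for any $Y^u$ depending only on $u$. The positive $(\,{\partial_u^b}\phi)^2$ bulk term --- which is what controls the $\tau_+^{a}\tau_0^{\max\{a,\frac12\}}\partial\phi$ component of $S^a$ and the $\ell^\infty_{\bu}$ seminorms in \eqref{0-1/2_mult_est}--\eqref{1/2-1_mult_est} --- can only be produced by giving $Y^u$ genuine $r$-dependence. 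The paper does this with cutoffs $\chi_{<k}(\bu)$ in the \emph{outgoing} variable $\bu=u+2r$ (e.g.\ $Y^u=a(2^{(2a-1)k}\tau_-\chi_{<k}(\bu)+\tau_+^{2a}\tau_0^4)$ for $a\le\frac12$), so that $-\partial_r^b Y^u\gtrsim \tau_+^{2a-1}\tau_0\chi_k(\bu)+\tau_+^{2a-1}\tau_0^4$. A $u$-slab family contributes only through $-\frac12{\partial_u^b} Y^u$ inside $\mathcal{A}^r$, which is a different (and for $\tau_-^{2a}$ with $u>0$, wrong-signed) term. As written, your scheme leaves the ${\partial_u^b}\phi$ portion of $\lp{\phi}{S^a}$ uncontrolled.

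Second, in part (III) you claim $\mathcal{A}^r,\ \slash\!\!\!\!\mathcal{A}\gtrsim r$ for $Y=K_0$. Direct computation from \eqref{A0_coeff} with $Y^u=\tau_-^2$, $Y^r=2(u+r)r$ gives $\mathcal{A}^r=(u+2r)-u-2r=0$ and $\slash\!\!\!\!\mathcal{A}=2(u+r)-u-(u+2r)=0$: the conformal multiplier has identically vanishing bulk, and its entire content is the boundary energy $\CE^1$ plus the $u$-localized term $\tau_-^{-1}\tau_+^2\chi_j(u)(\tau_x^{-1}\partial_r^b(\tau_x\phi))^2$ generated by the modulation $q_j(u)$. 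Your conclusion survives because it only uses the boundary term and that localized piece, but the stated positivity is false. Finally, in all three parts the passage from the quantity the multiplier actually controls, namely $\tau_x^{-1}\partial_r^b(\tau_x\phi)$ and $\snabla^b\phi$, to the separate control of $\partial_x^b\phi$ and $\tau_x^{-1}\phi$ (and, for $a=1$, the $\tau_x\tau_+^{-1/2}$-weighted bridge term) is not ``a lower-order term absorbed by $\WLE^0_{class}$'': it requires the weighted Hardy inequalities \eqref{hardy0}, \eqref{hardy2}, \eqref{hardy3} applied slab-by-slab in $u$, whose boundary contributions at $r\approx R$ are precisely the source of the $R^{1/2}$-weighted error terms on the right-hand sides of \eqref{0_ext_est} and \eqref{1_ext_est}.
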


\begin{proof}[Proof that Proposition \ref{timelike_LE_prop} and Proposition \ref{main_ext_prop} 
imply Theorem \ref{NLE_thm}]
We do this separately for each estimate.

\case{1}{$a=0$} 
Here we need to show \eqref{LE_null_est} follows directly from \eqref{0_ext_est} and the assumed bounds 
\eqref{class_LE_assm}. From inspection of $\sqrt{\mathcal{E}(0,R)}$ and taking $R$ sufficiently large,
we only need to bound $\slp{(\partial^b_x\phi,\tau_x^{-1}\phi) }{\NLE(R_0<r<R)[0,T]}$ in terms
of $C_R$ times $\lp{\phi}{\WLE^0_{class}[0,T]}$. This follows by taking $C_R\approx R^\frac{1}{2}$.

\case{2}{ $0<a<1$ } Adding together a suitable linear combination of 
estimates \eqref{timelike_LE} with $p=2$ and  \eqref{a_ext_est},
and using the inclusions $\ell^2_t\LE\subseteq \LE$ and $\LE^*\subseteq \ell^2_t\LE^*$
(from Minkowski's inequality), we have uniformly:
\begin{multline}
		\sup_{0\leq t\leq T}\lp{\phi(t)}{\CE^a}+
		\lp{\phi}{S^a[0,T]} 
		\ \lesssim_a  \sup_{0\leq t\leq 2R }\lp{\phi(t)}{\CE^a(r<R)}+
		 \lp{\phi}{S^a(r<R)[0,2R]}\\ 
		 + \lp{\tau_x^{a}\tau_+^{-\frac{1}{2}}   (\partial\phi, \tau_x^{-1}\phi) }
		 {  L^2( r<\min\{R,\frac{3}{4}t\})[0,T]}
		  + \lp{ \tau_+^{a-1} \phi }{ H^1_1(r<R_0)[0,T]}
		  +  \lp{ \Box_g\phi}{N^a[0,T]}
		 + \sqrt{\mathcal{E}(a,R)} \ , \notag
\end{multline}
Next,   uniformly for  $T_0\geq 2R\geq 1$ there holds the pair of bounds:
\begin{equation}
		\lp{\tau_x^{a}\tau_+^{-\frac{1}{2}}   (\partial\phi, \tau_x^{-1}\phi) }{  L^2( r<\min\{R,\frac{3}{4}t\})[0,T]}
		\ \lesssim \  \ln(R) \lp{\phi}{S^a(r<R)[0,T_0]} + (R/T_0)^\frac{1}{2}\lp{\phi}{S^a[T_0,T]} \ , \notag
\end{equation}
and:
\begin{multline}
		\mathcal{E}(a,R) \ \lesssim \ \lp{\tau_x^a\partial\phi(0)}{L^2_x}^2 + ( o_{R}(1)+ (R/T_0)^{a}) \cdot \big(
		\sup_{0\leq t\leq T}\lp{\phi(t)}{E^a}^2 + \lp{\phi}{S^a[0,T]}^2
		\big)\\ 
		+ \lp{\phi}{S^a[0,T]} \cdot  \big(
		\lp{\phi}{S^a(r<R)[0,T_0]}
		+R^\frac{3}{2} \lp{\tau_+^{a-1} \phi }{H^0_1(r<R)[0,T]}
		+\lp{\Box_g\phi}{N^a[0,T]} \big) \ . \notag
\end{multline}
%Estimate \eqref{hardy0} of Appendix 2 shows 
%$\lp{\phi}{S^a[0,T_0]}\lesssim_{T_0} \sup_{0\leq t\leq T_0 }\lp{\tau_x^a\partial \phi(t)}{L^2_x}$.
In addition for $T_0\geq 2R$ 
there is the simple bound:
\begin{equation}
		\sup_{0\leq t\leq 2R }\lp{\phi(t)}{\CE^a(r<R)}+
		\ln(R) \lp{\phi}{S^a(r<R)[0,T_0]} \ \lesssim_{R,T_0} \
		\lp{ \tau_+^{a-1} \phi }{ H^1_1(r<R)[0,T_0]} \ . \notag
\end{equation}
Therefore combining the last four inequalities   
with $R=R_a\geq R_0$  sufficiently large depending on $a$, and 
for some $T_0=T_a\geq R_a$ which depends on both the size of $R_a$ and $a$, we have
estimte \eqref{NLE_est}.

%\begin{equation}
%		\sup_{0\leq t\leq T}\lp{\phi(t)}{\CE^a}+
%		\lp{\phi}{S^a[0,T]} 
%		\ \lesssim_{a} \   \sup_{0\leq t\leq T_a }\lp{\tau_x^a\partial \phi(t)}{L^2_x}
%		+\lp{\tau_+^{a-1} \phi }{H^1_1 (r<R_a)[0,T]}+
%		  \lp{ \Box_g\phi}{N^a[0,T]} \ , \notag
%\end{equation}

\case{3}{$a=1$}
Here we apply \eqref{timelike_LE} with $a=1$ and $p=\infty$, and add to this a suitable linear combination of
\eqref{1_ext_est}.
Note that an application of
the weighted trace estimate \eqref{w_trace3} followed by \eqref{hardy0} with $a=1$ gives:
\begin{equation}
		\sup_{0\leq t\leq T}
		 R^\frac{1}{2}\lp{\tau_-^\frac{1}{2} (\partial\phi,\tau_x^{-1}\phi)(t)}{L^2_x(\frac{1}{2}R<r<R)}
		 \ \lesssim \ \lp{\tau_x \partial\phi(0)}{L^2} + R^\frac{3}{2}\lp{ \phi }{ \ell^1_t H^1_1(r<R)[0,T]} 
		 \ . \notag
\end{equation}
The rest of the proof follows a similar pattern to \textbf{Case 2} above. \end{proof}

%%%

\subsection{Proof of the interior estimate}

Before moving on to the proof of the exterior estimates which are more involved, we 
first demonstrate Proposition \ref{timelike_LE_prop}.

\begin{proof}[Proof of estimate \eqref{timelike_LE}]
Without loss of generality we work with the time interval $[1,T]$.
We apply estimate \eqref{concat_LE} to $2^{ak}\chi_0(2^{-k}t)\chi_{<1}(r/t)\phi$ for $k\geq 0$,
where $\chi_0(s)$ is a smooth bump function adapted to $1\leq s\leq 2$, and
$\chi_{<1}(s)$ is a smooth function $=1$ for $s\leq \frac{1}{2}$ and $=0$ for $s>\frac{3}{4}$.
Using the Hardy estimate \eqref{hardy0} this yields:
\begin{align}
		\sup_{ 2^k\leq t\leq  2^{k+1} }\!\!\!\! \slp{\tau_+^a(\partial\phi,\tau_x^{-1}\phi)(t)}{L^2_x(r<\frac{1}{2}t)}
		+ \slp{\tau_+^a(\partial\phi,\tau_x^{-1}\phi)}{\LE(r<\frac{1}{2}t)[2^k,2^{k+1}] }  \lesssim 
		\slp{\tau_+^a ({\partial_u^b}\phi ,\tau_+^{-1}\phi )}{H^1(r<R_0)[2^{k-1}, 2^{k+2}]}\notag \\ +
		\slp{\tau_+^{a-1}(\partial\phi,\tau_x^{-1}\phi)}{\WLE^{*,0}(r<\frac{3}{4}t)[ 2^{k-1} ,  2^{k+2}]} 
		+ \slp{\tau_+^a\tau_0\Box_g\phi}{\WLE^{*,0}[ 2^{k-1} , 2^{k+2}]} 
		\ . \notag
\end{align}
For a fixed value of $k$, dyadic summation in $r$ gives the
uniform estimate:
\begin{equation}
		\slp{\tau_+^{a-1}(\partial\phi,\tau_x^{-1}\phi)}{\WLE^{*,0}(r<\frac{3}{4}t)[ 2^{k-1} , 2^{k+2}]} 
		  \lesssim 
		 \slp{\tau_+^{a-\frac{1}{2}}(\partial\phi,\tau_x^{-1}\phi)}{L^2(r<\frac{3}{4}t)[ 2^{k-1} ,  2^{k+2}]} 
		 + \slp{\tau_+^{a-1}\phi }{ H^2(r<R_0)[2^{k-1} ,  2^{k+2}]}
		  . \notag
\end{equation}
By splitting   into into regions $r<\gamma t$ and $r>\gamma t$
we have the following uniform estimate for $0<\gamma<\frac{1}{2}$:
\begin{multline}
		\lp{\tau_+^{a-\frac{1}{2}} (\partial\phi,\tau_x^{-1}\phi)}{L^2(r<\frac{3}{4}t)[ 2^{k-1} ,  2^{k+2}]}
		\ \lesssim  \ \gamma^\frac{1}{2} 
		\lp{\tau_+^{a} (\partial\phi,\tau_x^{-1}\phi)}{\LE(r<\frac{1}{2}t)[ 2^{k-1} ,  2^{k+2}]}\\
		+ \gamma^{- a-1 }
		\lp{\tau_x^{a}\tau_+^{-\frac{1}{2}} (\partial\phi,\tau_+^{-1}\phi) }{L^2(r<\frac{3}{4}t)[ 2^{k-1} ,  2^{k+2}]} \ . \notag
\end{multline}
On the one hand with the help of the scaling vector field we have:
\begin{equation}
		 \lp{\tau_+^a ({\partial_u^b}\phi ,\tau_+^{-1}\phi )}{H^1(r<R_0)[ 2^{k-1},   2^{k+2}]}
		\ \lesssim \ \lp{\tau_+^{a-1}\phi  }{H^1_1(r<R_0)[ 2^{k-1},  2^{k+2}]} \ . \notag
\end{equation}
Thus \eqref{timelike_LE} follows by summing the last four lines in $\ell^p_t$ and taking $\gamma \ll 1$.
\end{proof}

%-------------------------------------------------------------------------

\subsection{Proof of the Exterior Estimates}

At the level of multipliers there are essentially four cases here: $a=0$, $0<a\leq \frac{1}{2}$, 
$\frac{1}{2}\leq a<1$, and $a=1$. Collectively these are stated in the
following lemma.

\begin{lem}[Core multiplier bounds]\label{core_mult_lem}
One has the  following collection of estimates uniformly for $R$ large
enough that $\mathcal{K}\subseteq \{r\leq \frac{1}{2}R\}$.
\begin{enumerate}[I)]
\item 
Corresponding to the case $a=0$ one has:
\begin{equation}
		  \lp{  \tau_-^{-\frac{1}{2}} \tau_x^{-1}\partial_x^b(\tau_x\phi)
		}{\ell^\infty_u L^2(r>R)[0,T]}^2 \ \lesssim \  \mathcal{E}(0,R) \ . \label{0_mult_est}
\end{equation}

\item In the range $0 <   a\leq \frac{1}{2}$ one has:
\begin{multline}
		\sup_{0\leq t\leq T}\slp{\tau_+^{a} (\tau_0^\frac{1}{2}
		{\partial_u^b}\phi, \tau_x^{-1}\partial_x^b (\tau_x \phi) ) (t)}{L^2_x(r>R)}^2
		+ \slp{\tau_x ^{a-\frac{3}{2}}\partial_x^b(\tau_x\phi) \big)}{L^2(r>R)[0,T]}^2
		+    \slp{\tau_+^{a-\frac{1}{2}}\tau_0^2  {\partial_u^b}\phi}{L^2(r>R)[0,T]}^2 \\
		+ \slp{\tau_-^{-\frac{1}{2}} \tau_x^{a-1} \partial_r^b(\tau_x \phi)}{\ell^\infty_u L^2(r>R)[0,T]}^2
		+  \slp{ \tau_+^{a-\frac{1}{2}}
		\tau_0^\frac{1}{2} {\partial_u^b}  \phi }{\ell^\infty_{\bu} L^2(r>R)[0,T]}^2
		\ \lesssim a^{-1}
		\mathcal{E}(a,R)\  . \label{0-1/2_mult_est}
\end{multline}
Here we have set $\bu=u+2r$.

\item 
When $\frac{1}{2}\leq a<1$ one has:
\begin{multline}
		\sup_{0\leq t\leq T}\!\! \slp{\tau_+^{a} (\tau_0^a 
		{\partial_u^b}\phi, \tau_x^{-1}\partial_x^b (\tau_x \phi) ) (t)}{L^2_x(r>R)}^2 
		\!+\! \slp{ \tau_x^{a-\frac{3}{2}}\partial_x^b(\tau_x \phi) \big)}{L^2(r>R)[0,T]}^2
		\!+\!    \slp{\tau_+^{a-\frac{1}{2}}\tau_0^2  {\partial_u^b}\phi}{L^2(r>R)[0,T]}^2\\
		+  \slp{\tau_-^{-\frac{1}{2}} \tau_x^{a-1} \partial_r^b(\tau_x \phi)}{\ell^\infty_u L^2(r>R)[0,T]}^2
		+ \slp{\tau_+^{-\frac{1}{2}} \tau_-^a {\partial_u^b}  \phi }{\ell^\infty_{\bu}L^2(r>R)[0,T]}^2
		\ \lesssim (1-a)^{-1} 
		\mathcal{E}(a,R)\  . \label{1/2-1_mult_est}
\end{multline}
Here again we have set $\bu=u+2r$.

\item 
Finally, corresponding to the case $a=1$ we have:
\begin{equation}
		\sup_{0\leq t\leq T}\lp{\tau_+ \big(\tau_0{\partial_u^b} \phi, \tau_x^{-1}
		 \partial_x^b(\tau_x \phi)\big)(t)}{ L^2_x(r>R)}^2
		+  \lp{ \tau_-^{-\frac{1}{2}} \tau_+ \tau_x^{-1} 
		\partial_r^b(\tau_x \phi)}{\ell^\infty_u L^2(r>R)[0,T]}^2
		\ \lesssim \   \mathcal{E}(1,R) \ . \label{K0_est}
\end{equation}
\end{enumerate}
\end{lem}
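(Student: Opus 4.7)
The four estimates are proved by applying Proposition \ref{abs_mult_prop} to an explicit choice (or family) of vector field $Y = Y^u \partial_u^b + Y^r \partial_r^b$ in each case. Three structural facts simplify the bookkeeping: $\tau_-$ depends only on $u$ so $\partial_r^b \tau_- = 0$; in the exterior $r > R$ one has $\tau_x \approx r$ and $\tau_+ \approx \tau_x$ on the wave cone $r \gtrsim t$; and a multiplier of the form $Y = q(u)\partial_u^b$ produces $\mathcal{A}^u = \mathcal{A}^{ur} = 0$ and $\mathcal{A}^r = \slash\!\!\!\!\mathcal{A} = -\tfrac{1}{2}q'(u)$ by direct substitution into \eqref{A0_coeff}, which can be localized in $u$ by taking $q$ to be a smoothed decreasing step function.

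For $a = 0$ and $a = 1$, the $\ell^\infty_u$ norms are recovered via $u$-localization. Fix a family $\{q_j(u)\}_{j\ge 0}$ of smooth, nonnegative, monotone-decreasing functions with $|(\tau_-\partial_u^b)^k q_j|\lesssim_k 1$ and $-\partial_u^b q_j \gtrsim \tau_-^{-1}\chi_{\la u\ra\approx 2^j}$. For $a = 0$, take $Y_j = \chi_{>R}\, q_j\, \partial_u^b$: the observation above immediately gives $\mathcal{A}^r = \slash\!\!\!\!\mathcal{A} \gtrsim \tau_-^{-1}\chi_{\la u\ra \approx 2^j}$, and Part \ref{0_mult_case}) of Proposition \ref{abs_mult_prop} applied to $Y_j$ followed by $\sup_j$ yields \eqref{0_mult_est}. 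For $a = 1$ take $Y_j = \chi_{>R}\, q_j\, K_0$ with $K_0 = (1 + u^2)\partial_u^b + 2r(u + r)\partial_r^b$ the Minkowski conformal Killing field; a direct computation using $\bu = u + 2r$ and the Minkowski identities $\bu - u - 2r = 0$ and $2(u+r) - u - \bu = 0$ gives $\mathcal{A}^u[K_0] = \mathcal{A}^{ur}[K_0] = 0$ trivially and also $\mathcal{A}^r[K_0] = \slash\!\!\!\!\mathcal{A}[K_0] = 0$, so only the $q_j'$ contribution survives, producing $\mathcal{A}^r = \slash\!\!\!\!\mathcal{A} = -\tfrac{1}{2}(\partial_u^b q_j)(1 + \bu^2) \gtrsim \tau_-^{-1}\tau_+^2 \chi_{\la u\ra \approx 2^j}$. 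Part \ref{1_mult_case}) combined with $\sup_j$ then yields the spacetime term of \eqref{K0_est}; the boundary data at $t = T$, with $\sqrt{Y^u_j}\approx \tau_-$ and $\sqrt{Y^r_j} \approx \tau_+$ on the support of $q_j$ (after summing in $j$), reproduces the $\CE^1(r > R)$ piece.

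For the intermediate range $0 < a < 1$ I combine a Dafermos--Rodnianski $r^{2a}$-multiplier with a compatible timelike part. Taking $Y^r = r^{2a}$ alone gives $\mathcal{A}^r = a\, r^{2a-1}$ and $\slash\!\!\!\!\mathcal{A} = (1-a)\, r^{2a-1}$, both positive and carrying the characteristic $a^{-1}$, $(1-a)^{-1}$ constants appearing in \eqref{0-1/2_mult_est}--\eqref{1/2-1_mult_est}; this produces the radial spacetime norm $\slp{\tau_x^{a-3/2}\partial_x^b(\tau_x\phi)}{L^2(r>R)}^2$. Adding a component $Y^u \approx \tau_+^{2a}\tau_0 = \tau_+^{2a-1}\tau_-$ (admissible under \eqref{mult_sym_bnds} since $\partial_r^b\tau_- = 0$) yields $\mathcal{A}^u \approx 2(1-2a)\tau_-\tau_+^{2a-2}$ in the wave zone, positive for $a < \tfrac{1}{2}$, which gives the $\slp{\tau_+^{a-1/2}\tau_0^{1/2}\partial_u^b\phi}{L^2}^2$ bound (the weaker $\tau_0^2$-weighted version stated in \eqref{0-1/2_mult_est} follows, since $\tau_0 \lesssim 1$); the boundary data at $t = T$ with $\sqrt{Y^u} \approx \tau_+^{a-1/2}\tau_-^{1/2}$ and $\sqrt{Y^r} \approx r^a$ matches the weight of $\CE^a(r > R)$. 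The $\ell^\infty_u$ and $\ell^\infty_{\bu}$ endpoint norms are obtained by repeating with the cutoff family $q_j(u)\, Y$, resp.\ $q_j(\bu)\, Y$, localizing the relevant coefficient to a dyadic shell and taking $\sup_j$. The range $\tfrac{1}{2} \le a < 1$ is handled analogously, with the dominant coercivity now on the angular component $\slash\!\!\!\!\mathcal{A}$ and $Y^u$ chosen to give $\mathcal{A}^u$ the correct sign with a different $\tau_0$-power.

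The main obstacle is the intermediate-range verification: one must choose $Y^u, Y^r$ to simultaneously satisfy \eqref{mult_sym_bnds} uniformly in $a \in (0,1)$, produce positive $\mathcal{A}^u, \mathcal{A}^r, \slash\!\!\!\!\mathcal{A}$ with the correct $\tau_0$-weights, and ensure the indefinite-sign $\mathcal{A}^{ur}$ crossterm can be absorbed by Cauchy--Schwarz into the diagonal coefficients without destroying their positivity. The $o_R(1)$ remainders appearing in $\mathcal{E}(a, R)$ (cf.\ \eqref{aR_error}) must also be tracked carefully as $a \to 0, 1$ so that they can be absorbed on the left after choosing $R = R(a)$ sufficiently large, which is the source of the $a^{-1}$ and $(1-a)^{-1}$ blowup on the right-hand side.
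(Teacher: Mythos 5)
Your architecture is exactly the paper's: each estimate is obtained by feeding an explicit family of vector fields $Y=Y^u\partial_u^b+Y^r\partial_r^b$ into Proposition \ref{abs_mult_prop}, with dyadic localization in $u$ (resp.\ $\bu$) and a final $\sup_j$ producing the $\ell^\infty_u$ (resp.\ $\ell^\infty_{\bu}$) norms. Cases I and IV are essentially complete as you have written them; your identity $\mathcal{A}^r[q_jK_0]=-\tfrac12 q_j'(1+\bu^2)$ is correct (indeed $(1+u^2)+4(u+r)r=1+\bu^2$), though your claim that $\slash\!\!\!\!\mathcal{A}[q_jK_0]$ equals the same quantity is a slip --- it is $-\tfrac12 q_j'(1+u^2)$ --- which is harmless since only its nonnegativity is used and no angular term appears in \eqref{K0_est}. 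The paper's own choices are $Y^u=\chi_{<j}(u)$, $Y^r=0$ for $a=0$ and $Y=(1+\chi_{<j}(u))K_0$ for $a=1$, i.e.\ the same multipliers up to normalization of the cutoff.

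The genuine gap is in the intermediate range, and it is precisely where you flag the difficulty. Your $Y^u\approx\tau_+^{2a}\tau_0=\tau_+^{2a-1}\tau_-$ gives $\mathcal{A}^u=-\partial_r^bY^u=2(1-2a)\tau_+^{2a-1}\tau_0$, which vanishes at $a=\tfrac12$ (an endpoint included in \eqref{0-1/2_mult_est}) and is negative for $a>\tfrac12$; your statement that the $\tau_0^2$-weighted bound ``follows since $\tau_0\lesssim1$'' presupposes the $\tau_0^{1/2}$-weighted bound, which this multiplier does not deliver at or above $a=\tfrac12$, and the fix for $[\tfrac12,1)$ is left as ``a different $\tau_0$-power'' without verification. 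Moreover \eqref{0-1/2_mult_est} asserts two distinct $\partial_u^b\phi$ bounds --- a global spacetime $L^2$ with weight $\tau_0^2$ and an $\ell^\infty_{\bu}$-localized one with weight $\tau_0^{1/2}$ --- which cannot both come from a single unlocalized $Y^u$. The paper resolves both points with a two-piece choice $Y^u=a\big(2^{(2a-1)k}\tau_-\chi_{<k}(\bu)+\tau_+^{2a}\tau_0^4\big)$ for $a\le\tfrac12$ (and $(1-a)(\tau_-^{2a}\chi_{<k}(\bu)+\tau_+^{2a}\tau_0^4)$ for $a\ge\tfrac12$, together with $Y^r=C^2r^{2a}(1+\chi_{<j}(u))+C(1-a)\tau_+^{2a-1}r$): the $\bu$-localized piece yields the $\ell^\infty_{\bu}$ norm with weight $\tau_0^{1/2}$, while the exponent $4$ in $\tau_0^4=\tau_+^{-4}\tau_-^4$ makes $-\partial_r^b(\tau_+^{2a-4}\tau_-^4)=2(4-2a)\tau_+^{2a-1}\tau_0^4>0$ uniformly for all $a<1$, producing the global $\tau_0^2$-weighted bound; the prefactors $a$, $(1-a)$ and the large constant $C$ are what make $|\mathcal{A}^{ur}|=|\partial_r^bY^u|\ll\sqrt{\mathcal{A}^u\mathcal{A}^r}$ absorbable and are the source of the $a^{-1}$, $(1-a)^{-1}$ on the right of \eqref{0-1/2_mult_est}--\eqref{1/2-1_mult_est}. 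Without some such device your Cauchy--Schwarz absorption of $\mathcal{A}^{ur}$ fails as $a\to0$ and the positivity of $\mathcal{A}^u$ fails as $a\to\tfrac12^-$.
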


We'll prove each of these estimates separately. In each case we invoke Proposition \ref{abs_mult_prop}.

\begin{proof}[Proof of estimate \eqref{0_mult_est}]
Fix a $j\in\mathbb{N}$ and define  the multiplier:
\begin{equation}
		Y^u \ = \  \chi_{<j}(u) \ , \qquad
		Y^r \ = \ 0 \ , \notag
\end{equation}
were $\partial_s \chi_{<j}(s) = -2^{-j}\chi_j(s)$, with $\chi_j(s)\approx 1$ when $\la s\ra\approx 2^j$
and vanishing away from this region. It can also be arranged  that $\chi_{<j}(s)= 0$ for $s\gtrsim 2^j$
by addition of a suitable constant.
It is immediate these coefficients satisfy
 \eqref{mult_sym_bnds'}, so we may use Case \ref{0_mult_case}) of 
Proposition \ref{abs_mult_prop}. A short calculation then shows: 
\begin{equation}
		\mathcal{A}^u \ = \ \mathcal{A}^{ur} \ =\  0 \ , \qquad
		\mathcal{A}^r \ = \ \slash\!\!\!\! \mathcal{A} \ =\ 2^{-j-1}\chi_j(u) \ . \notag
\end{equation}
for the coefficients
on lines \eqref{A0_coeff}. Repeatedly applying estimate \eqref{abs_mult_est} for different $j\in\mathbb{N}$,
then taking $\sup_j$ of the result concludes the proof.
\end{proof}

%%%

\begin{proof}[Proof of estimate \eqref{0-1/2_mult_est}]
First freeze $j,k\in \mathbb{N}$ and define the multiplier: 
\begin{equation}
		Y^u \ = \ a( 2^{(2a-1)k}\tau_-\chi_{<k}(\bu) + \tau_+^{2a}\tau_0^4  )\ , \qquad
		Y^r \ = \ Cr^{2a}(1+ \chi_{<j}(u) ) \ , \qquad \bu = u+2r \ , \quad \tau_+ = 1+\bu \ , \notag
\end{equation}
where $\chi_{<k}$, $\chi_{<j}$ is as above and $C>0$ will be chosen shortly. It is easy to check that these coefficients 
satisfy the conditions on  \eqref{mult_sym_bnds} (in $r>R$). Computing the formulas from 
lines \eqref{A0_coeff} and choosing  
$C\gg1$ we have the following uniform estimates for $0<a\leq \frac{1}{2}$ in the region $r>1$: 
\begin{align}
		\mathcal{A}^u \ &\gtrsim \ a \big( \tau_+^{2a-1}\tau_0 \chi_k(\bu)+ \tau_+^{2a-1}\tau_0^4\big)
		\ , 
		&\mathcal{A}^r \ &\gtrsim \    \tau_-^{-1} \tau_x^{2a}\chi_j(u)+a \tau_x^{2a-1}   
		\ , \notag\\
		|\mathcal{A}^{ur}| \ &\ll \ \sqrt{\mathcal{A}^u \mathcal{A}^{r}} \ ,
		& \slash\!\!\!\! \mathcal{A} \ &\gtrsim \ \tau_x^{2a-1}
		\ . \notag
\end{align}
 Repeatedly applying estimate \eqref{abs_mult_est} for different $j,k\in\mathbb{N}$ and on time intervals $[0,t]\subseteq [0,T]$
 and then taking $\sup_{j,k,t}$   concludes the proof.
\end{proof}

%%%

\begin{proof}[Proof of estimate \eqref{1/2-1_mult_est}]
In this case we set:
\begin{equation}
		Y^u \ = \ (1-a)( \tau_-^{2a} \chi_{<k}(\bu) + \tau_+^{2a}\tau_0^4 ) \ , \qquad
		Y^r \ = \ C^2r^{2a}(1+ \chi_{<j}(u) ) + C(1-a)\tau_+^{2a-1}r \ , \notag
\end{equation}
These satisfy the conditions on line \eqref{mult_sym_bnds}, and
the formulas from lines \eqref{A0_coeff} yield when
$C\gg1$   the following estimates   uniformly in $\frac{1}{2} \leq a <1$: 
\begin{align}
		\mathcal{A}^u \ &\gtrsim \  (1-a)( \tau_+^{-1}\tau_-^{2a}\chi_k(\bu)+ \tau_+^{2a-1}\tau_0^4)
		\ , 
		&\mathcal{A}^r \ &\gtrsim \    \tau_-^{-1}\tau_x^{2a}\chi_j(u)+ (1-a)\tau_x^{2a-1}   
		\ , \notag\\
		|\mathcal{A}^{ur}| \ &\ll \ \sqrt{\mathcal{A}^u \mathcal{A}^{r}} \ ,
		&\slash\!\!\!\! \mathcal{A} \ &\gtrsim \ (1-a)\tau_x^{2a-1}
		\ . \notag
\end{align}
These suffice to give \eqref{1/2-1_mult_est} through an application of  \eqref{abs_mult_est}.
\end{proof}

%%%

\begin{proof}[Proof of estimate \eqref{K0_est}]
For this estimate we use the multiplier:
\begin{equation}
		Y^u \ = \ (1+\chi_{<j}(u))\tau_-^2  \ , \qquad
		Y^r \ = \  (1+\chi_{<j}(u))2(u+r)r \ . \notag
\end{equation}
This satisfies the conditions on line \eqref{mult_sym_bnds} with $a=1$. 
The  formulas from line \eqref{A0_coeff}   yield:
\begin{align}
		\mathcal{A}^u \ &= \  0
		\ , 
		&\mathcal{A}^r \ &\gtrsim \    \tau_-^{-1}\tau_+^2 \chi_j(u) 
		\ , \notag\\
		\mathcal{A}^{ur} \ &=\ 0 \ ,
		&\slash\!\!\!\! \mathcal{A} \ &\geq \ 0
		\ . \notag
\end{align}
This suffices to give \eqref{K0_est} through an application of  Case \ref{1_mult_case})
of Proposition \ref{abs_mult_prop}.
\end{proof}

%%%

It remains to close the gap between Proposition \ref{main_ext_prop} and Lemma \ref{core_mult_lem}.
 
\begin{proof}[Proof of  \eqref{0_ext_est}]
Applying the Hardy estimate \eqref{hardy3} with $a=\frac{1}{2}$ to the funciton
$2^{-\frac{1}{2}j} \chi_j(u) \phi$, followed by the Hardy estimate \eqref{hardy0} with $a=0$ for the term on
RHS\eqref{hardy3} at $t=0$, one has:
\begin{equation}
		 \lp{ \tau_-^{-\frac{1}{2}} \tau_x^{-1}\phi
		}{\ell^\infty_u L^2(r>R)[0,T]} \ \lesssim \ 
		 \lp{\tau_-^{-\frac{1}{2}} \tau_x^{-1}\partial_x^b(\tau_x \phi)
		}{\ell^\infty_u L^2(r> R)[0,T]}+ R^\frac{1}{2}\lp{\phi}{\WLE^0_{class}[0,T]}
		 +\lp{\partial \phi(0)}{L^2_x} \ , \label{0_mult_est'}
\end{equation}
where the extra factor of $R^\frac{1}{2}$ results from replacing $\tau_-^{-\frac{1}{2}}$
with $\tau_x^{-\frac{1}{2}}$ in the spacetime region $\frac{1}{2}R<r<R$. Combining this with estimate 
\eqref{0_mult_est} and using the definition \eqref{0R_error}
we have   \eqref{0_ext_est}.
\end{proof}

\begin{proof}[Proof of \eqref{a_ext_est}]
%\begin{equation}
%		 \lp{r^{a-\frac{3}{2}}\partial_x^b(r\phi) }{L^2(r>\frac{1}{2}R)[0,T]}^2
%		 \ \lesssim \ \lp{r^a(\partial_x^b\phi,r^{-1}\phi)}{\LE(r<R)[0,T]}\lp{\phi}{S^a[0,T]}+
%		  \lp{r^{a-\frac{3}{2}}\partial_x^b(r\phi) }{L^2(r>R)[0,T]}^2 \ , \label{loc_dr_L2}
%\end{equation}
Estimate \eqref{hardy3} of the Appendix and the definition of $\mathcal{E}(a,R)$ from line 
\eqref{aR_error} imply:
\begin{equation}
		\lp{\tau_x^{a-1}\phi(T)}{L^2_x(r>R) }^2+
		\lp{\tau_x^{a-\frac{3}{2}}\phi }{L^2(r>R)[0,T]}^2 \
		\lesssim_a  \ 
		\lp{\tau_x^{a-\frac{3}{2}}\partial_r^b(\tau_x \phi)}{L^2(r> R)[0,T]}^2 +
		\mathcal{E}(a,R)
		\ . \notag
\end{equation}
Adding this to \eqref{0-1/2_mult_est} and \eqref{1/2-1_mult_est}
we have \eqref{a_ext_est}. 
\end{proof}

\begin{proof}[Proof of \eqref{1_ext_est}]
Adding estimate \eqref{hardy2} to 
%It is immediate from the definition \eqref{1R_error} and a combination of 
%\eqref{hardy2} and \eqref{w_trace} that:
%\begin{equation}
%		\sup_{0\leq t\leq T}\lp{ \phi(t)}{L^2_x(r>R)} \ \lesssim  \
%		 \sup_{0\leq t\leq T} \lp{\tau_- \tau_x^{-1}
%		 \partial(\tau_x\phi)(t)}{L^2_x(r>R)}
%		 +\sup_{0\leq t\leq T}
%		 R^\frac{1}{2}\lp{\tau_-^\frac{1}{2} (\partial\phi,\tau_x^{-1}\phi)(t)}{L^2_x(\frac{1}{2}R<r<R)} \ . \label{1_L2_phi}
%\end{equation}
\eqref{K0_est} gives:
\begin{multline}
	 \sup_{0\leq t\leq T} \slp{\phi(t)}{E^1(r>\max\{R, \frac{1}{2}t\})} 
	 +\lp{\tau_x \tau_+^{- \frac{1}{2}}   (\partial\phi, \tau_+^{-1}\phi) }{\ell^\infty_t L^2( R<r<\frac{3}{4}t)[0,T]}
	 + \slp{\tau_+ \tau_x^{-1}\partial_r^b(\tau_x \phi)}{\NLE(r>R)[0,T]}\\
%	 +   \lp{\tau_+^{ \frac{1}{2}}   (\partial\phi, \tau_+^{-1}\phi) }{\ell^\infty_t L^2( R<r<\frac{3}{4}t)[0,T]}
	  \lesssim 
	  \sup_{0\leq t\leq T}
		 R^\frac{1}{2}\slp{\tau_-^\frac{1}{2} (\partial\phi,\tau_x^{-1}\phi)(t)}{L^2_x(\frac{1}{2}R<r<R)}+
	 \sqrt{\mathcal{E}(1,R)} \ . \notag
\end{multline}
On the other hand a straight forward integration of fixed time norms  in the region $r>\frac{1}{2}t$ also gives:
\begin{equation}
		\lp{\phi}{\ell^\infty_t S^{1}(r>\max\{R,\frac{1}{2}t\})[0,T]} 
		 \ \lesssim\  \sup_{0\leq t\leq T} \slp{\phi(t)}{E^1(r>\max\{R,\frac{1}{2}t\}) }
		 \ . \notag
\end{equation}
Combining the previous two lines  with the  definition of $S^{1,\infty}$ finishes the proof.
\end{proof}

 %-------------------------------------------------------------------------
%%%%%%%%%%%%%%%%%%%%%%%%%%%%%
%%%%%%%%%%%%%%%%%%%%%%%%%%%%%
%-------------------------------------------------------------------------
 
\section{Estimates for Commutators}\label{comm_sect}

In this section we complete the proof of  estimates \eqref{LE_null_est_k}, \eqref{Sa_est_k},
and \eqref{CE_est_k}.

%-------------------------------------------------------------------------

\subsection{Splitting into interior and exterior estimates}

In the interior we use:

\begin{prop}[General interior estimates for commutators]\label{int_comm_prop}
Fix a multiindex $|I|=k\geqslant 1$ and
%Let $\td{\Gamma}^J$ denote a   in 
%$\mathbb{L}_0=\{{\partial_u^b},\partial_i^b-\omega^i{\partial_u^b}\}$,
let   $\Gamma^I$ denote a product in
$\mathbb{L}=\{S,\Omega_{ij},
{\partial_u^b},\partial_i^b-\omega^i{\partial_u^b}\}$.
Then for $R$ sufficiently large and any $R'>R$ 
one has the following  estimates:
\begin{align}
%		\slp{[\Box_g, \td{\Gamma}^I] \phi}{\WLE^{*,s}(R_0<r<R)[0,T]}   &\lesssim_R  
%		\slp{\phi }{H^{s+1}(R_0<r<R)[0,T]}
%		  \ , \label{int_LE0k_est}\\
		  \slp{[\Box_g, \Gamma^I] \phi}{\WLE^{*,s}(r<R)[0,T]}  &\lesssim_R  
		  \slp{\phi }{H^{s+3}_{k-1}(r<R)[0,T]}  \ , \label{int_LEk_est}\\
		\slp{ [\Box_g, \Gamma^I] \phi}{  N^a(r<R)[0,T]}  &\lesssim_{R,R'} 
%		\sum_{|J|\leq k}  \slp{\tau_x^a (\tau_x\partial_x)^J\partial  \phi(0) }{ L^2_x} \label{int_Sk_to_Nk}\\
%		&\hspace{.5in}
		\slp{\tau_+^{a-1}\phi }{   H^1_{k+1}(r<R')[0,T]}+
		 \slp{ \phi }{  S^a_{k-1}[0,T]}+
		   \slp{\Box_g\phi }{  N^a_{k}[0,T]} \ , \label{int_Sk_to_Nk} \\
		\slp{ [\Box_g, \Gamma^I] \phi}{ \ell^\infty_t N^1(r<R)[0,T]}   &\lesssim_{R,R'}  
%		\sum_{|J|\leq k-1}  \lp{\tau_x (\tau_x \partial_x)^J\partial  \phi(0) }{ L^2_x} \label{end_int_Sk_to_Nk}\\
%		&\hspace{.5in} +
		\lp{ \phi }{ \ell^\infty_t H^1_{k+1}(r<R')[0,T]}+
		\slp{ \phi }{  S^{1,\infty}_{k-1}[0,T]}+
		 \slp{\Box_g\phi }{\ell^\infty_t  N^1_{k}[0,T]} \ . \label{end_int_Sk_to_Nk}
\end{align}
The bound \eqref{int_Sk_to_Nk} is uniform in $0\leq a\leq 1$.
\end{prop}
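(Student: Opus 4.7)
The plan is to combine the multi-commutator identity \eqref{mult_comm_iden1} with the pointwise bound \eqref{main_ptws_comm}. Together these express $[\Box_g,\Gamma^I]\phi$ as a sum of terms involving at most second-order ordinary derivatives of $\Gamma^{I'}\phi$ with $|I'|\leq k-1$, plus $\Gamma^{I'}\Box_g\phi$ contributions for $|I'|\leq k-1$, all weighted by symbol-class coefficients. The main point is that in the interior $r<R$ the weights $\tau_x$ and $\tau_0$ are bounded above and below by constants (only $\tau_+\sim\tau_-\sim\la t\ra$ grows), the symbol coefficients of class $\mathcal{Z}^k$ reduce to uniformly bounded functions, the $\mathbb{L}_0$ vector fields and $\Omega_{ij}$ are bounded first-order derivatives, and the scaling $S=u{\partial_u^b}+r\partial_r^b$ coincides with $\tau_+{\partial_u^b}$ modulo bounded lower-order terms (since $r\leq R$ while $u\sim\tau_+$ for large $t$).

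For \eqref{int_LEk_est}, the norm $\WLE^{*,s}(r<R)$ reduces in the interior to an $H^{s+1}$-type spacetime bound. I would apply \eqref{main_ptws_comm} with $\td\Gamma^J$ ranging over products of $\mathbb{L}_0$ vector fields of length $|J|\leq s+1$, commuting the extra derivatives through the bounded symbol coefficients via the algebra property of part \ref{Q_alg_prop}) of Lemma \ref{abs_comm_prop}. Each resulting term is then controlled pointwise by $\sum_{|I'|\leq k-1}|\partial^{s+3}\Gamma^{I'}\phi|$: two orders from the second-order structure of the commutator, one from the $\partial$ already present in the definition of $\WLE^{*,s}$, and $s$ from the Sobolev index. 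The $\Gamma^{I'}\Box_g\phi$ contribution is absorbed using that $\Box_g$ has bounded coefficients in $r<R$, hence $|\Gamma^{I'}\Box_g\phi|\lesssim|\partial^2\Gamma^{I'}\phi|$ modulo lower-order commutators that are themselves handled by induction on $|I|$. Summing over $|I'|\leq k-1$ gives the desired $H^{s+3}_{k-1}(r<R)$ bound.

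For \eqref{int_Sk_to_Nk} and \eqref{end_int_Sk_to_Nk}, choose $R\geq R_0$ so that $\mathcal{K}\subseteq\{r<\tfrac12 R\}$; then $N^a(r<R)$ reduces to $\slp{\tau_+^a F}{L^2(r<R)[0,T]}+\slp{\tau_+^a\partial F}{L^2(r<R_0)[0,T]}$ (since the $\NLE^*$ component is supported at $r>R_0$). Applying \eqref{main_ptws_comm} with $|J|\leq 1$ expands the LHS as three types of terms. The first-derivative contributions $\tau_+^a\partial\Gamma^{I'}\phi$ with $|I'|\leq k-1$ are absorbed directly into $\slp{\phi}{S^a_{k-1}}$, since the $\LE$-component of $S^a$ dominates $L^2(r<R)$ up to an $R$-dependent constant. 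The $\Gamma^{I'}\Box_g\phi$ terms with $|I'|\leq k-1$, together with their one-derivative enhancements arising from the $\partial F$ piece, fit directly inside $\slp{\Box_g\phi}{N^a_k}$. The second-derivative terms $\tau_+^a\partial^2\Gamma^{I'}\phi$ (and $\tau_+^a\partial^3\Gamma^{I'}\phi$ arising from the $\partial F$ component) are to be dominated by $\slp{\tau_+^{a-1}\phi}{H^1_{k+1}(r<R')}$. Estimate \eqref{end_int_Sk_to_Nk} follows identically, with $\ell^\infty_t$ inserted throughout and $S^{1,\infty}$ in place of $S^a$.

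The main obstacle lies in this last step. At top order the LHS contribution is $\tau_+^{a+k-1}|\partial^{k+1}\phi|$ in $L^2(r<R)$, arising from $I'$ consisting entirely of $S$'s with two extra ordinary derivatives. On the RHS, the corresponding top-order choice $\partial S^{k+1}(\tau_+^{a-1}\phi)$ produces in $r<R'$ a leading term $\tau_+^{a+k}|\partial^{k+2}\phi|$ (from $\partial$ hitting the vector-field product) together with a Leibniz correction $\tau_+^{a+k-1}|\partial^{k+1}\phi|$ (from $\partial$ hitting the weight). The Leibniz correction already matches the LHS, with the trivial inequality $\tau_+^{a+k-1}\leq\tau_+^{a+k}$ as a safety margin, while the leading term absorbs $\tau_+^a\partial^3\Gamma^{\leq k-1}\phi$ coming from the $\partial F$ piece of $N^a$. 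Uniformity in $0\leq a\leq 1$ in \eqref{int_Sk_to_Nk} follows from the linear or polynomial appearance of $a$ throughout these weight manipulations.
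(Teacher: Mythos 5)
Your treatment of \eqref{int_LEk_est} is fine and matches the paper (which indeed obtains it directly from \eqref{mult_comm_iden1}). The problem is in \eqref{int_Sk_to_Nk} and \eqref{end_int_Sk_to_Nk}, where your final step --- dominating the second- and third-order derivative terms $\tau_+^a\partial^2\Gamma^{I'}\phi$, $\tau_+^a\partial^3\Gamma^{I'}\phi$ by $\slp{\tau_+^{a-1}\phi}{H^1_{k+1}(r<R')}$ --- contains a genuine gap, for two reasons. First, the logic is invalid as stated: the fact that $\partial S^{k+1}(\tau_+^{a-1}\phi)$ \emph{contains} a summand matching the left-hand side does not mean the norm of the full expression dominates that summand; and the weight-counting actually fails for the terms that matter. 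In $r<R$ the tensor $\mathcal{R}^{ij}$ from \eqref{mult_comm_iden1} is merely bounded (all $\mathcal{Z}^k$ classes collapse since $\tau_0\approx 1$ there), so the commutator genuinely contains $O(1)\cdot\partial_x^2\Gamma^{I'}\phi$ with \emph{pure spatial} second derivatives. But in the interior the vector fields in $\mathbb{L}$ supply growing weights only through $S\approx\tau_+{\partial_u^b}$, i.e.~each power of $\tau_+$ comes attached to a $\partial_t$-type derivative; the rotations and $\mathbb{L}_0$ are bounded. Consequently $H^1_{k+1}$ of $\tau_+^{a-1}\phi$ controls $\tau_+^{a-1+j}$ times $j$ time derivatives plus one extra arbitrary derivative, and for the top-order term $\tau_+^{a+k-1}\partial_x^2\partial_t^{k-1}\phi$ you come up exactly one power of $\tau_+$ short ($j=k-1$ gives only $\tau_+^{a+k-2}$). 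No rearrangement of Leibniz terms fixes this.

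The missing ingredients are the two substantive steps of the paper's argument. One must use the equation to trade the pure spatial second derivatives for time derivatives (each of which carries the needed $\tau_+^{-1}$ gain) plus $\Box_g\phi$: in the annulus $\tfrac12 R_0<r<R$ this is done via a standard elliptic estimate for $P(x,D)=\Box_g-Q_0(x,D)$, where $Q_0$ collects the $g^{0\alpha}$ terms; and in the core region $r<\tfrac12 R_0$ --- where ellipticity and regularity can fail (trapping, ergoregion) --- one must invoke the stationary local energy decay assumption \eqref{basic_stat_LE} applied to a suitable cutoff of $\tau_+^a\Gamma^{I'}\phi$. Your proposal never uses \eqref{basic_stat_LE} or any elliptic regularity, which is why the induction on the number of vector fields cannot close; the paper's inductive scheme \eqref{gamma_to_nabla_bnd'} is organized precisely around these two regional estimates.
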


For the exterior we use:

\begin{prop}[General exterior bounds for commutators]
Fix multiindices $|I|=k\geqslant 1$ and $|J|=s\geqslant 0$, and
let $\Gamma^I$ denote a product of vector fields in  ${\mathbb{L}=\{S,\Omega_{ij},
{\partial_u^b},\partial_i^b-\omega^i{\partial_u^b}\}}$ while
 $\td{\Gamma}^J$ denotes a product of vector fields in 
$\mathbb{L}_0=\{{\partial_u^b},\partial_i^b-\omega^i{\partial_u^b}\}$.
Let $R_0$ be as in the definition of the norms \eqref{class_LE_norms}. 
Then for $R\geq 2R_0>0$ sufficiently large one has the following:
\begin{enumerate}[I)]

\item 
Corresponding to  $a=0$ one has:
\begin{equation}
		\slp{ [\Box_g, \Gamma^I] \phi}{(\WLE^{*,s}+L^1_t H^s_x)(r>R)[0,T]}  \lesssim 
		o_R(1)\cdot \slp{\phi }{\WLE^s_k(r>R)[0,T]} + \slp{ \Box_g\phi  }{(\WLE^{*,s}_{k-1}+L^1_t H^s_{x,k-1})(r>R)[0,T]}
		  \ . \label{ext_LEk_est}
\end{equation}
In addition let 
 $X=\chi_{>R}q {\partial_u^b}$,  where
$q=q(u)$ has the uniform bounds $| (\tau_-{\partial_u^b})^l q|
\lesssim_{l} 1$, 
where $\chi_{>R}$
is supported in $r>\frac{1}{2}R$ with the usual derivative bounds.
Then one has the integral estimate:
\begin{multline}
		\Big|\dint   
		 [\Box_g,\td{\Gamma}^J] \phi \cdot X\td{\Gamma}^J \phi  dV_g \Big| 
		 \  \lesssim  \
		  \ o_R(1)\cdot \big( \lp{\phi}{\WLE^s[0,T]}^2  +
		 \sup_{0\leq t\leq T}\lp{  \partial\phi(t)}{H^s_x }^2\big)\\
		 + \big(  \sup_{0\leq t\leq T}\lp{  \partial\phi(t)}{H^s_x }
		 +\lp{\phi}{\WLE^s[0,T]} \big) \cdot \lp{\Box_g\phi }{( \LE^{*,s}+L^1_t H^s_x)[0,T]}
		 \ . \label{dt_int_comm_est}
\end{multline}

\item For $0<a<1$ one has uniformly the collection of estimates:
\begin{equation}
		\lp{[\Box_g, \Gamma^I] \phi}{  N^a(r>R)[0,T]} \ \lesssim \
		o_R(1)\cdot \lp{\phi}{  S^a_k [0,T]} + \lp{\Box_g\phi }{  N^a_{k-1}[0,T]}
		  \ . \label{ext_Sk_to_Nk}
\end{equation}

\item Finally, corresponding to the case $a=1$ we have:
\begin{equation}
		\lp{[\Box_g, \Gamma^I] \phi}{ \ell^\infty_t N^{1}(r>R)[0,T]} \ \lesssim \
		o_R(1)\cdot \lp{\phi}{  S^{1,\infty}_k[0,T]} + \lp{\Box_g\phi }{  \ell^\infty_t N^{1}_{k-1}[0,T]}
		\ . \label{end_ext_Sk_to_Nk}
\end{equation}
In addition  let
$X=\chi_{>R}q K_0$,  where  $K_0=(1+u^2) {\partial_u^b} + 2(u+r)r\partial_r^b $,  and 
where $q$ and $\chi_{>R}$ are as previously stated.
Then one has:
\begin{multline}
		\Big|\dint  
		 [\Box_g,\Gamma^I] \phi \cdot \tau_x^{-1} X ( \tau_x \Gamma^I \phi)  dV_g \Big| 
		\  \lesssim   \ 
		  \ o_R(1)\cdot \big( \lp{\phi}{S^{1,\infty}_k[0,T]}^2  +
		 \sup_{0\leq t\leq T}\lp{\phi(t)}{E^1_k }^2\big) \\ 
		 + \lp{\phi}{S^{1,\infty}_k[0,T]}\cdot \lp{\Box_g\phi }{ N^{1,1}_{k-1} [0,T]}
		 \ . \label{K0_int_comm_est}
\end{multline}
\end{enumerate}
\end{prop}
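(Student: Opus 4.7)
The proof will rest on three ingredients established earlier: the pointwise multi-commutator bound \eqref{main_ptws_comm}, the divergence-form representation \eqref{mult_comm_iden2} of $\td{\Gamma}^J[\Box_g,\Gamma^I]$, and the $\mathcal{Z}^k$ symbol calculus of Lemma \ref{basic_lie_lem}. The unifying mechanism is that the ``error'' tensors $\mathcal{R}_{I'}$ and $\mathcal{S}_{I'}$ produced by the commutator identities all satisfy symbol bounds with leading coefficient in $\mathcal{Z}^{-1/2}$ (or better), and the dyadic definition \eqref{Zk_defn} of $\mathcal{Z}^k$ gives $\|q\|_{\ell^1_r L^\infty(r>R)}=o_R(1)$ for $q\in\mathcal{Z}^{-1/2}$. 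Hence, whenever an error factor is paired in H\"older against a norm that is $\ell^\infty_r$ summable, restriction to $r>R$ produces the desired $o_R(1)$ smallness.

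For the three operator estimates \eqref{ext_LEk_est}, \eqref{ext_Sk_to_Nk}, and \eqref{end_ext_Sk_to_Nk}, I would apply \eqref{main_ptws_comm} directly with $J=0$. The RHS of \eqref{main_ptws_comm} is $q\cdot\tau_x^{-1}$ times a sum of first-order Bondi derivatives of $\Gamma^{I'+I''}\phi$ (with $|I'+I''|\leq |I|$, $I'\subsetneq I$) plus $|\Gamma^{I'}\Box_g\phi|$. Pairing via H\"older in $r$ with the target norm ($\NLE^*+\LE^*+L^2(r<R_0)$ in each of the three cases) and using the $\ell^1_r$ smallness of $q\in\mathcal{Z}^{-1/2}$ for $r>R$ produces $o_R(1)\cdot\|\phi\|_{S^a_k}$ (or its $\WLE^s_k$, $S^{1,\infty}_k$ analog) for the first sum, and a lower-order source term absorbed by $\|\Box_g\phi\|_{N^a_{k-1}}$ for the second. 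The $a=1$ case requires extra care because $\ell^\infty_t N^1$ admits no time averaging; here the $\tau_+\tau_0^{1/2}$ weight in $\LE^*$ and the $\tau_+$ weight in $\NLE^*$ must be tracked separately against the corresponding components of $S^{1,\infty}_k$, but no new idea is needed beyond the weight bookkeeping of Lemma \ref{basic_lie_lem}.

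The two integral estimates \eqref{dt_int_comm_est} and \eqref{K0_int_comm_est} are the delicate point. If one tried to use \eqref{main_ptws_comm} pointwise and pair against the multiplier $X\td{\Gamma}^J\phi$ or $\tau_x^{-1}X(\tau_x\Gamma^I\phi)$, a naive second-derivative estimate would lose regularity. Instead, I would expand via \eqref{mult_comm_iden2} (respectively \eqref{mult_comm_iden1}) to write the commutator as $\nabla_\alpha\mathcal{R}_{I'}^{\alpha\beta}\nabla_\beta\td{\Gamma}^{J'}\Gamma^{I'}\phi+\mathcal{S}^\alpha_{I'}\nabla_\alpha\td{\Gamma}^{J'}\Gamma^{I'}\phi+w_{I'}\td{\Gamma}^{J'}\Gamma^{I'}\Box_g\phi$, and then integrate by parts in the $\nabla_\alpha\mathcal{R}^{\alpha\beta}_{I'}\nabla_\beta$ term to move one derivative onto the multiplier. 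The spatial support of $\chi_{>R}X$ is contained in $\mathcal{M}$ and independent of time, so spatial integration by parts produces no boundary terms; time integration by parts contributes $\mathcal{R}^{0\beta}_{I'}\nabla_\beta(\cdots)\cdot X(\cdots)|_0^T$, which is absorbed into the $\sup_t\|\partial\phi(t)\|_{H^s_x}^2$ (resp.\ $\sup_t\|\phi(t)\|_{E^1_k}^2$) factor on the RHS.

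The main obstacle, which occupies most of the work, is verifying that the resulting spacetime integrand $\mathcal{R}^{\alpha\beta}_{I'}\nabla_\beta\td{\Gamma}^{J'}\Gamma^{I'}\phi\cdot\nabla_\alpha(X\td{\Gamma}^J\phi)$ Hölder-decomposes into exactly the norms allowed on the RHS, with an $o_R(1)$ prefactor. For \eqref{dt_int_comm_est} this amounts to splitting by Bondi component, using \eqref{R_sym_bnds} together with $X=\chi_{>R}q{\partial_u^b}$ (so that $\nabla_\alpha X$ has the same $q\in\mathcal{Z}^0$ structure, with $q\to0$ as $R\to\infty$ when restricted to $r>R$), and matching the $\tau_x^{-1/2}$ weight of $\WLE^s$ against the $\tau_x^{1/2}$ of $\LE^*$. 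For \eqref{K0_int_comm_est} the conformal multiplier $K_0$ carries weights $\tau_-^2$ and $(u+r)r$, so the $\tau_+$ and $\tau_0$ weights must be traced very carefully; the saving is that $\tau_x^{-1}K_0(\tau_x\cdot)$ extracts exactly the combination $\tau_-^2{\partial_u^b}$ together with $\tau_+\tau_x^{-1}\partial_r^b(\tau_x\cdot)$ that $S^{1,\infty}$ controls in $\NLE$, and the $\mathcal{Z}^{-1/2}$ error provided by \eqref{R_sym_bnds} (combined with Lemma \ref{L2L00_lem} to convert the resulting $L^2$ weight into $L^\infty$ smallness as $R\to\infty$) closes the estimate against $N^{1,1}_{k-1}$.
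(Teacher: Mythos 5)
Your proposal follows essentially the same route as the paper: the three operator estimates come from the pointwise multi-commutator bound \eqref{main_ptws_comm} paired in H\"older against the $\ell^1_r$-small $\mathcal{Z}^{-\frac{1}{2}}$ symbol (the paper packages this through the auxiliary quantities $\Phi^{(s,k)}$ of Lemma \ref{gen_comm_lem}), and the two integral estimates come from the divergence-form expansions \eqref{mult_comm_iden1}--\eqref{mult_comm_iden2} followed by integration by parts, with Lemma \ref{basic_lie_lem} controlling the Lie-derivative error $\mathcal{L}_X\mathcal{R}$ and Lemma \ref{L2L00_lem} closing the $K_0$ case. The one point you elide is that ``spatial integration by parts produces no boundary terms'' is not quite accurate: the derivative can land on the cutoff $\chi_{>R}$, producing a term supported on $\frac{1}{2}R<r<R$ which, in the $K_0$ case, cannot be recovered from the naive pointwise bound (it would destroy the $\ell^1_t$ structure) and is precisely what forces the paper to split off and separately prove the auxiliary first-order estimate \eqref{SS_IBP}.
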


We will also need the following initial data bound.

\begin{lem}[Initial data bound]\label{data_lem}
Assume that the level sets $t=const$ are uniformly spacelike. Then
one has the uniform estimate for $0\leqslant a\leqslant 1$:
\begin{equation}
		  \lp{\tau_x^a \partial \phi (0)}{H^s_{x,k}}
		\ \lesssim \ 
		\sum_{|I|\leq k}\sum_{|J|\leq s}  \lp{\tau_x^a (\tau_x \partial_x)^I \partial_x^J\partial  \phi(0) }{ L^2_x}
		+ \lp{\tau_x^a  \Box_g \phi  }{L^1_t H^{s}_{x,k}[0,1]} \ . \label{data_bound}
\end{equation}
\end{lem}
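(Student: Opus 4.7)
My plan is as follows. First, I expand the $H^s_{x,k}$ norm. Each vector field $\Gamma \in \mathbb{L}$ has rectangular coefficients of size $O(\tau_+)$: this is explicit for $S = u\partial_u^b + r\partial_r^b$ and $\Omega_{ij} = x^i\partial_j^b - x^j\partial_i^b$, while the frame change \eqref{bondi_to_rec} gives it for $\partial_u^b$ and $\partial_i^b - \omega^i\partial_u^b$. Moreover $\Gamma^{I'}\tau_x^a = O(\tau_x^a)$ for any product $\Gamma^{I'}$. So Leibniz and the chain rule yield pointwise
\[
\bigl|\Gamma^I(\tau_x^a \partial\phi)\bigr| \ \lesssim \ \tau_x^a \sum_{|L|\leq |I|} \tau_+^{|L|}\,|\partial^L\partial\phi|,
\]
and evaluating at $t = 0$ (where $\tau_+ \approx \tau_x$) reduces the claim to bounding sums of the form
\[
\sum_{\substack{i+|J|\leq k+s+1}}\slp{\tau_x^{a+(i+|J|-1)}\partial_t^i\partial_x^J\partial\phi(0)}{L^2_x}
\]
by the right-hand side of \eqref{data_bound}. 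Terms with $i\leq 1$ contribute at most one time derivative to $\phi$ itself, and are absorbed into the first sum on RHS\eqref{data_bound} (after distributing the $\tau_x$ weights into $(\tau_x\partial_x)^I$).

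Second, I handle terms with $i\geq 2$ via the uniform spacelike condition $-C < g^{00} < -c < 0$. Writing the wave operator as
\[
\Box_g \ = \ g^{00}\partial_t^2 \ + \ 2g^{0i}\partial_t\partial_i \ + \ g^{ij}\partial_i\partial_j \ + \ (\text{first order})
\]
and solving for $\partial_t^2\phi$, one obtains
\[
\partial_t^2\phi \ = \ (g^{00})^{-1}\bigl[\Box_g\phi \ - \ 2g^{0i}\partial_t\partial_i\phi \ - \ g^{ij}\partial_i\partial_j\phi \ - \ (\text{first order})\bigr].
\]
Iterating this identity jointly with $\partial_x$ differentiation expresses any $\partial_t^i\partial_x^J\phi(0)$ with $i\geq 2$ as a sum of purely spatial derivatives of the initial data $\phi(0),\partial_t\phi(0)$ together with terms of the form $\partial_t^{i'}\partial_x^{J'}\Box_g\phi(0)$ with $i' \leq i-2$. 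The coefficients produced by the iteration are smooth functions of the metric whose $(\tau_x\partial_x)^I$ derivatives are bounded, by virtue of the symbol estimates \eqref{mod_coords} and Remark \ref{optical_remark}; hence the $\tau_x^{a+(i+|J|-1)}$ weights appearing on the LHS are preserved and distribute correctly into $(\tau_x\partial_x)^{\bullet}$ factors compatible with RHS\eqref{data_bound}.

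Third, I absorb the residual source contributions $\partial_t^{i'}\Box_g\phi(0)$ into $\slp{\tau_x^a\Box_g\phi}{L^1_tH^s_{x,k}[0,1]}$ by Newton-Leibniz on the slab $[0,1]$. For any Banach-valued $F$ the identity
\[
F(0) \ = \ \int_0^1 F(t)\,dt \ - \ \int_0^1 (1-s)\,\partial_s F(s)\,ds
\]
implies $\slp{F(0)}{X}\lesssim \slp{F}{L^1_t X[0,1]} + \slp{\partial_t F}{L^1_t X[0,1]}$. Applied to $F = \partial_t^{i'}\Box_g\phi$ with $X = \tau_x^a H^{s'}_x$ for appropriate $s'$, this absorbs the source term provided the total order ($i'+1$ time derivatives plus at most $s'$ spatial ones) is accommodated by $H^s_{x,k}$. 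The derivative count closes because the reduction in the previous step produces only $i'\leq k+s-1$, while $H^s_{x,k}$ carries up to $k$ time derivatives (one from each $\partial_u^b$ factor, via \eqref{bondi_to_rec}) together with $s$ further $(t,x)$ derivatives from the inner Sobolev norm.

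The main obstacle is purely bookkeeping: one must track through every Leibniz expansion and every iteration of the wave-equation reduction that the $\tau_x$-weights line up and that each produced source term falls within the allotted derivative count of $\slp{\tau_x^a\Box_g\phi}{L^1_tH^s_{x,k}[0,1]}$. Given the symbol estimates \eqref{u_sym_bnd} and \eqref{mod_coords}, this verification is routine but tedious.
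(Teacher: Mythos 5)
Your proposal follows essentially the same route as the paper's proof: reduce the vector fields at $t=0$ to weighted spatial derivatives plus time derivatives, eliminate all but one time derivative by solving $\partial_t^2=(g^{00})^{-1}\bigl(\Box_g - P(t,x;D)\bigr)$ using the uniform spacelike hypothesis, and absorb the resulting $\Box_g\phi(0)$ contributions into $\lp{\tau_x^a\Box_g\phi}{L^1_tH^s_{x,k}[0,1]}$ by a local $L^1(dt)$ trace in time. The one point to repair is your first displayed reduction, which attaches a $\tau_x$ weight to every derivative including the $s$ Sobolev derivatives: since RHS\eqref{data_bound} carries only $k$ weighted spatial derivatives $(\tau_x\partial_x)^I$ and leaves the $\partial_x^J$, $|J|\leq s$, unweighted, the weights must stay confined to the vector-field derivatives — as in the paper's equivalence $\lp{\tau_x^a\phi^{(k)}(0)}{H^s_x}\approx\sum_{|I|+j\leq k}\lp{\tau_x^a(\tau_x\partial_x)^I\partial_t^j\phi(0)}{H^s_x}$, which keeps the $H^s_x$ norm outside the weighting — after which your bookkeeping closes as described.
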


Before giving the proof of these individual components we use them to establish   Theorem \ref{main_thm}.

\begin{proof}[Proof of estimate \eqref{LE_null_est_k}]
We need to treat separately the cases $k=0$ and $k>0$.

\case{1}{$k=0$}
In light of the Assumption \eqref{basic_LE} and the data bound \eqref{data_bound} we need to show:
\begin{equation}
		\sum_{|J| \leq s} 
		\lp{   (\partial^b)^J \phi}{\WLE^0[0,T]}
		  \ \lesssim\   \lp{\partial \phi(0)}{H^{s}_x} 
		+ \lp{\Box_g \phi }{ (\WLE^{*,s}+L^1_t H^s_x) [0,T]} \ . \notag
\end{equation}
It suffices to show this bound for $(\partial^b)^J\phi$ replaced by   $\td{\Gamma}^J\phi$, where 
the product is taken over vector fields in $\mathbb{L}_0=\{{\partial_u^b},\partial_i^b-\omega^i{\partial_u^b}\}$.
Applying estimate \eqref{LE_null_est} to $\td{\Gamma}^J\phi$, then using  Assumption \eqref{basic_LE}
and \eqref{dt_int_comm_est}, we find that for $|J|\leq s$:
\begin{multline}
		\lp{   (\partial^b)^J \phi}{\WLE^0[0,T]}^2
		 \ \lesssim \ C_R\big( \lp{ \partial\phi(0)}{H^s_x}^2 + \lp{\Box_g \phi }{( \WLE^{*,s}+L^1_t H^s_x) [0,T]}^2\big) +
		 o_R(1)\cdot  \lp{\phi}{\WLE^s[0,T]}^2 \\
		  \lp{\phi}{\WLE^s[0,T]}\cdot  \lp{\Box_g\phi }{ ( \WLE^{*,s}+L^1_t H^s_x)[0,T]}) \ . \notag
\end{multline}
Summing this in $|J|\leq s$ and  taking $R$ sufficiently large finishes the proof.

\case{2}{$k>0$}
This is a straight forward application of estimate \eqref{LE_null_est_k} with $k=0$
applied to $\Gamma^I\phi$, where $\Gamma^I$ denotes a product in
$\mathbb{L}=\{S,\Omega_{ij},
{\partial_u^b},\partial_i^b-\omega^i{\partial_u^b}\}$. Using estimates
  \eqref{int_LEk_est} and \eqref{ext_LEk_est} for  sufficiently large $R$ to control the commutator, 
followed by the data bound \eqref{data_bound}, completes the proof.  Note that
the value  $R$ becomes the definition of $B_0(s,k)$ on line \eqref{LE_null_est_k}.
\end{proof}

\begin{proof}[Proof of estimate \eqref{Sa_est_k}]
This follows by combining estimate \eqref{NLE_est} with \eqref{int_Sk_to_Nk},  
\eqref{ext_Sk_to_Nk}, and \eqref{data_bound}, and then using an induction on $k$.
Note that the value  $R$ becomes the definition of $B_a(s,k)$ on line \eqref{Sa_est_k}.
%The proof is completed via an induction on $k$.
\end{proof}

\begin{proof}[Proof of estimate \eqref{CE_est_k}]
This  follows from   estimates \eqref{CE_est}, \eqref{end_int_Sk_to_Nk}, \eqref{end_ext_Sk_to_Nk},
\eqref{K0_int_comm_est}, and \eqref{data_bound} in a similar pattern as the previous proof.
\end{proof}

%-------------------------------------------------------------------------

\subsection{Proof of the initial data bound and the interior estimates}

\begin{proof}[Proof of \eqref{data_bound}]
By Remark \ref{coord_rem} we can assume $u\equiv t-\tau_x$ when $t\in [0,1]$. Therefore 
without loss of generality we may replace  $\Gamma^I$   with products of vector fields
 in $\mathbb{L}_{minkowski}=\{t\partial_t+r\partial_r,
x^i\partial_j-x^j\partial_i,\partial_\alpha\}$. 
Notice that for products in $\mathbb{L}_{minkowski}$ we have the following equivalence at $t=0$:
\begin{equation}
		\lp{\tau_x^a   \phi^{(k)}(0) }{ H^s_x} \ \approx \ \sum_{|I|+j\leq k}  \lp{\tau_x^a 
		(\tau_x\partial_x)^I  \partial_t^j \phi(0)}{ H^s_x}     \ , \label{minkowski_vf_eq}
\end{equation}
for a similarity constant that depends only on $k$. In addition to this 
from the uniform spacelike condition of $t=0$ we have the identity:
\begin{equation}
		\partial_t^2 \ = \ (g^{00})^{-1}\big(\Box_g - P(t,x;D)\big) \ , \label{d_t^2_iden}
\end{equation}
where $P(t,x;D)$ is second order with uniformly $(r\partial)^J$ homogeneous coefficients,
is at most first order in $\partial_t$, and contains no zero order term. 

First we repeatedly  use the $\lesssim$
direction of \eqref{minkowski_vf_eq} for $\phi(0)$, followed by the substitution \eqref{d_t^2_iden} for terms
containing more that one copy of $\partial_t$. We then use the $\gtrsim$ direction
of \eqref{minkowski_vf_eq} for any terms which are produced which contain
 $\Box_g\phi(0)$. From this sequence of steps  we have the bound:
\begin{equation}
		 \lp{\tau_x^a \partial  \phi (0)}{H^s_{x,k}}
		\ \lesssim \ \sum_{|I|\leq k}   \lp{\tau_x^a   (\tau_x\partial_x)^I  \partial  \phi(0) }{ H^s_x}
		+   \lp{\tau_x^a 
		(\Box_g \phi)^{(k-1)} (0) }{ H^{s}_x}  \ . \notag%\label{data_bound'}
\end{equation}
After a local $L^1(dt)$ trace estimate the second RHS term above matched RHS\eqref{data_bound}. 

For the first RHS term on the previous line
we again use identity \eqref{d_t^2_iden} to successively get rid of all additional $\partial_t$ derivatives 
from the $H^s_x$ norm,
followed by another application of the $\gtrsim$ direction 
\eqref{minkowski_vf_eq} for terms produced containing $\Box_g\phi(0)$. This gives:
\begin{equation}
		 \sum_{|I|\leq k}   \lp{\tau_x^a   (\tau_x\partial_x)^I  \partial  \phi(0) }{ H^s_x} \ \lesssim \
		 \sum_{|I|\leq k} \sum_{|J|\leq s}  \lp{\tau_x^a   (\tau_x\partial_x)^I  \partial_x^J \partial  \phi(0) }{ L^2_x} 
		 +     \lp{\tau_x^a    (\Box_g \phi)^{(k)}(0) }{ H^{s-1}_x} \ . \notag
\end{equation}
 The proof now concludes with a final local $L^1(dt)$ trace estimate the second RHS term above.
\end{proof}

We now move on to the proof of Proposition \ref{int_comm_prop}. Note that estimate 
\eqref{int_LEk_est} follows more or less immediately from \eqref{mult_comm_iden1}.
Therefore we focus attention on the last two estimates. 

\begin{proof}[Proof of estimates \eqref{int_Sk_to_Nk} and \eqref{end_int_Sk_to_Nk}]
We will only focus here in showing \eqref{int_Sk_to_Nk}. The proof of \eqref{end_int_Sk_to_Nk}
follows from identical calculations by replacing all  $L^2_t$ norms with  $\ell^\infty_tL^2_t$ norms.

\step{1}{Inductive setup}
It suffices to show that for multiindex $|I|=k$ and $R $ sufficiently large, then for 
 $R'>R$ we have:
\begin{multline}
		\lp{\tau_+^a [\Box_g, \Gamma^I] \phi}{  H^1(r<R)[0,T]} \lesssim_{R,R'}
		\lp{\tau_+^{a-1}\phi^{(k+1)}}{    H^1(r<R')[0,T]}
		 + \lp{\tau_+^a \phi^{(k-1)} }{   L^2(r<R')[0,T]}\\
		 +   \lp{\tau_+^a  ( \Box_g \phi)^{(k)}}{ H^1(r<R_0) [0,T]}
		+   \lp{\tau_+^a  ( \Box_g \phi)^{(k)}}{ L^2(r<R') [0,T]}
		\ . \label{gamma_to_nabla_bnd}
\end{multline}
This boils down to an induction. Indeed, for  fixed nontrivial
 $I$  and integer $s\geq 1$ it suffices to show that 
under the same assumptions one has:
\begin{multline}
		\lp{\tau_+^a [\Box_g, \Gamma^I] \phi}{  H^s(r<R)[0,T]} \lesssim_{R,R'}
		\lp{\tau_+^{a-1}\phi^{(|I|+s)}}{    H^1(r<R')[0,T]}
		+ \lp{\tau_+^a \phi^{(|I|-1)} }{    L^2(r<R')[0,T]}\\
		+\lp{\tau_+^a(\Box_g\phi)^{(|I|-1)} }{H^{s+1}(r<R_0)}
		+\lp{\tau_+^a(\Box_g\phi)^{(|I|-1)} }{H^{s}(r<R')}
		+ \sum_{I'\subsetneq I} \lp{\tau_+^a  [\Box_g, \Gamma^{I'}] \phi}{  H^{s+1}(r<R')[0,T]}
		\ . \label{gamma_to_nabla_bnd'}
\end{multline}
By repeatedly applying this last estimate for $|I|+s=k+1$, where
$s=1,\ldots k$, and a sequence $R<R'_{s}<R'_{s+1}<R'$
we have \eqref{gamma_to_nabla_bnd}.

\step{2}{Elliptic estimate in $\frac{1}{2}R_0<r<R$}
To prove \eqref{gamma_to_nabla_bnd'} start with \eqref{mult_comm_iden1} which implies that:
\begin{equation}
		\lp{\tau_+^a [\Box_g, \Gamma^I] \phi}{  H^s(r<R)[0,T]} \ \lesssim \ 
		\sum_{I'\subsetneq I}\lp{\tau_+^a\Gamma^{I'}\phi }{H^{s+2}(r<R)[0,T]} 
		\ . \label{local_comm_pt1}
\end{equation}
Without loss of generality we may assume   $R_0$ is chosen large enough 
 that in the region  $r>\frac{1}{2}R_0$
the operator $P(x,D)=\Box_g-Q_0(x,D)$ is uniformly elliptic, where $Q_0$ contains all terms
with a $g^{0\alpha}$ factor. Standard elliptic estimates then give:
\begin{equation}
		\lp{\tau_+^a \Gamma^{I'}\phi }{H^{s+2}(\frac{1}{2}R_0<r<R) [0,T]}
		\ \lesssim_{R,R'} \ \lp{\tau_+^a P(x,D) \Gamma^{I'}\phi }{H^{s}(r<R') [0,T]}
		+ \lp{\tau_+^a   \Gamma^{I'}\phi }{L^2(r<R') [0,T]} \ . \notag
\end{equation}
On the other hand due to the fact that any term in $Q_0(x,D)$  contains at least one
time derivative, and using the metric conditions \eqref{mod_coords},
we have for $I'\subsetneq I$:
\begin{equation}
		\lp{\tau_+^a Q_0(x,D) \Gamma^{I'}\phi }{H^{s}(r<R') [0,T]} \ \lesssim_{R'} \ 
		\lp{\tau_+^{a-1}\phi^{(|I|+s)}}{    H^1(r<R')[0,T]} \ . \notag
\end{equation}
Thus, combining the last two lines we have:
\begin{multline}
		\sum_{I'\subsetneq I}\lp{\tau_+^a\Gamma^{I'}\phi }{H^{s+2}(\frac{1}{2}R_0<r<R)[0,T]}
		\ \lesssim_{R,R'} \ \lp{\tau_+^{a-1}\phi^{(|I|+s)}}{    H^1(r<R')[0,T]}
		+ \lp{\tau_+^a \phi^{(|I|-1)} }{    L^2(r<R')[0,T]}\\
		+\lp{\tau_+^a(\Box_g\phi)^{(|I|-1)} }{H^{s}(r<R')}
		+ \sum_{I'\subsetneq I} \lp{\tau_+^a  [\Box_g, \Gamma^{I'}] \phi}{  H^{s}(r<R')[0,T]} \ . \label{elliptic_out}
\end{multline}

\step{3}{$\LE$ estimate in $r<\frac{1}{2}R_0$}
It remains to bound the portion of RHS\eqref{local_comm_pt1} which is contained 
in $r<\frac{1}{2}R_0$. Note that we only need to focus on the region $t>1$ as RHS\eqref{local_comm_pt1}
restricted to the time slab $[0,1]$ is automatically bounded by $\lp{\tau_+^{a-1}\phi^{(|I|+s)}}{    H^1(r<R')[0,T]}$.

We begin by applying the stationary $\LE$ bound \eqref{basic_stat_LE} at regularity $s+1$ 
to $\chi_{t>1}\chi_{r<\frac{1}{2}R_0}\tau_+^a\Gamma^{I'}\phi$, 
where $\chi_{r<\frac{1}{2}R_0}=1$ on $r<\frac{1}{2}R_0$ and $\chi_{r<\frac{1}{2}R_0}=0$ on $r>R_0$, and with similar 
properties for $\chi_{t>1}$.
This results in the estimate:
\begin{equation}
		\sum_{I'\subsetneq I}\lp{\tau_+^a\Gamma^{I'}\phi }{H^{s+2}(r<\frac{1}{2}R_0)[1,T]}  \ \lesssim_{R,R'} \  T_1+T_2+T_3+T_4
		\ ,  \notag
\end{equation}
where:
\begin{align}
		T_1 \ &= \ \sum_{I'\subsetneq I}  \lp{ \Gamma^{I'}\phi}{    H^{s+2}(r<R_0)[0,2]} \ ,  
		&T_2 \ &= \ \sum_{I'\subsetneq I}  \lp{( \partial_t \tau_+^a \Gamma^{I'}\phi ,
		 \tau_+^a\Gamma^{I'}\phi)}{H^{s+1}\times L^2(r<R_0)[0,T]} \ , \notag\\
		T_3 \ &=\ \sum_{I'\subsetneq I} \lp{[\Box_g,\chi_{r<\frac{1}{2}R_0}\tau_+^a]\Gamma^{I'}\phi }{H^{s+1} [0,T]} \ ,
		&T_4 \ &= \ \sum_{I'\subsetneq I} \lp{\tau_+^a\Box_g \Gamma^{I'}\phi }{H^{s+1}(r<R_0)[0,T]} \ . \notag
\end{align}
The term $T_1$ results from differentiation of $\chi_{t>1}$.
The terms $T_1$ and $T_4$ are already compatible with RHS\eqref{gamma_to_nabla_bnd'}. It is also easy to see that:
\begin{equation}
		T_2 \ \lesssim \ \lp{\tau_+^{a-1}\phi^{(|I|+s)}}{    H^1(r<R_0)[0,T]} 
		+ \lp{\tau_+^a \phi^{(|I|-1)} }{    L^2(r<R_0)[0,T]}
		\ . \notag
\end{equation}
It remains to bound $T_3$. Expanding the commutator gives:
\begin{equation}
		T_3 \ \lesssim \ \lp{\tau_+^{a-1}\phi^{(|I|+s)}}{    H^1(r<R_0)[0,T]}+
		\sum_{I'\subsetneq I} \lp{\tau_+^a \Gamma^{I'}\phi }{H^{s+2}(\frac{1}{2}R_0<r<R_0) [0,T]} \ . \notag
\end{equation}
The first term above is  of the correct form. The second term is handled by estimate \eqref{elliptic_out}.
\end{proof}

%-------------------------------------------------------------------------

\subsection{Proof of the exterior estimates}

We first list some general calculations which  take care
of a large portion of the desired estimates.

\begin{lem}\label{gen_comm_lem}
For integers $s,k\geq 0$   define:
\begin{equation}
		\Phi^{(s,k)}  \ = \ \sum_{|J|\leq s}\tau_x^{-\frac{1}{2}}  \big| (\tau_0 
		\partial (\partial^b)^{J} \phi^{(k)}, \partial^b_x(\partial^b)^{J} \phi^{(k)}
		,\tau_x^{-1}(\partial^b)^{J} \phi^{(k)})\big| \ , \qquad
		\slash\!\!\!\! \Phi^{(k)} \ = \ \tau_x^{-\frac{1}{2}}  \tau_0^{-\frac{1}{2}}\, \big| \tau_x^{-1}\partial_r^b(\tau_x\phi^{(k)})\big| 
		\ . \notag
\end{equation}
We also use the shorthand $\Phi^{(0,k)} =\Phi^{(k)} $. 
With this notation we have:
\begin{align}
		\lp{ \Phi^{(s,k)}}{ \ell^\infty_r L^2(r>R)[0,T]} 
		+ \lp{\tau_0^{-\frac{1}{2}} \Phi^{(s,k)} }{\ell^\infty_u  \ell^\infty_r
		L^2(\frac{1}{2}t<r<2t)\cap(r>R)[0,T]} \ &\lesssim \ \lp{\phi }{\LE^s_k(r>R)[0,T]}
		\ , \label{Phi_LE}\\
		\lp{\tau_+^a \Phi^{(k)}}{ \ell^\infty_r L^2[0,T]} 
		+ \lp{\tau_+^a \ \slash\!\!\!\! 
		\Phi^{(k)}  }{\ell^\infty_u L^2(\frac{1}{2}t<r<2t)[0,T]} \ &\lesssim \ \lp{\phi}{S^{a}_k[0,T]}
		\ , \label{Phi_Sa}\\
		\lp{\tau_+ \Phi^{(k)}}{\ell^\infty_t\ell^\infty_r L^2[0,T]} 
		+ \lp{\tau_+\ \slash\!\!\!\! 
		\Phi^{(k)}}{\ell^\infty_u L^2(\frac{1}{2}t<r<2t)[0,T]} \ &\lesssim \ \lp{\phi}{S^{1,\infty}_k[0,T]}
		\ , \label{Phi_S1}\\
		\lp{\tau_x^\frac{1}{2} \tau_+ \Phi^{(k)}}{L^\infty_t L^2_x[0,T]} 
		 \ &\lesssim \ \sup_{0\leq t\leq T}\lp{\phi(t)}{E^1_k}
		\ . \label{Phi_E1}
\end{align}
In addition if $\Gamma^I$ is a product of vector fields
in $ \mathbb{L}=\{S,\Omega_{ij},{\partial_u^b} , \partial_i^b-\omega^i{\partial_u^b}\}$, and $\td{\Gamma}^J$ a 
product of vector fields in $\mathbb{L}_0=\{ {\partial_u^b} , \partial_i^b-\omega^i{\partial_u^b}\}$, where $|I|=k$ and $|J|=s$,
then we have the pointwise estimates:
\begin{equation}
%		\big|[\td{\Gamma}^{J} ,\Box_g ] \phi \big| \ 
%		&\lesssim \ r^{-\frac{1}{2}} \mathcal{Z}^{-1}
%		\cdot \Phi^{(s,0)}_0
%		+ |\nabla^s  \Box_g \phi | \ . \label{Phi_s_comm}\\
		\big|\td{\Gamma}^{J} [\Box_g,\Gamma^I] \phi \big| \ 
		\lesssim \ q\cdot \tau_0^{-\frac{1}{2}}\tau_x^{-\frac{1}{2}}
		 \Phi^{(s,k)} + \sum_{|J|\leq s}|\partial^J (\Box_g \phi )^{(k-1)}| \ , \qquad
		\hbox{where\ \ \ } q\in  \mathcal{Z}^{0}
		 \label{Phi_comm}
\end{equation}
\end{lem}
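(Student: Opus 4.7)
The plan is that Lemma 6.6 is essentially a dictionary: the first four estimates \eqref{Phi_LE}--\eqref{Phi_E1} just assert that the pointwise quantities $\Phi^{(s,k)}$ and $\slash\!\!\!\!\Phi^{(k)}$ reproduce the integrands of the norms $\LE^s_k$, $S^a_k$, $S^{1,\infty}_k$, and $E^1_k$ after the frame change \eqref{bondi_to_rec}, while the pointwise commutator bound \eqref{Phi_comm} is a repackaging of the master estimate \eqref{main_ptws_comm} from the previous section.

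For the first four estimates, my approach is to split $\Phi^{(s,k)}$ into its three constituent pieces $\tau_x^{-1/2}\tau_0\partial(\partial^b)^J\phi^{(k)}$, $\tau_x^{-1/2}\partial^b_x(\partial^b)^J\phi^{(k)}$, and $\tau_x^{-3/2}(\partial^b)^J\phi^{(k)}$, and match each against the integrand defining the appropriate norm. The $\ell^\infty_r L^2$ part of $\Phi^{(s,k)}$ is, by definition, a $\LE$-type quantity once the weight $\tau_x^{-1/2}$ is absorbed; using \eqref{bondi_to_rec} we exchange $(\partial^b)^J$ with $\partial^J$ at the cost of bounded coefficients, so the first and third pieces feed into $\LE^s_{class}$ and $\NLE$ respectively. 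In the wave zone $\frac{1}{2}t<r<2t$ one has $\tau_x\sim\tau_+$, hence $\tau_x^{-1/2}\tau_0^{-1/2}=\tau_-^{-1/2}$, which is precisely the weight in $\NLE$, so the $\slash\!\!\!\!\Phi^{(k)}$ term lines up with the $\tau_+^a\tau_x^{-1}\partial_r^b(\tau_x\phi)$-in-$\NLE$ portion of $S^a$ (and similarly for $S^{1,\infty}$). For \eqref{Phi_Sa} the only subtlety is the inequality $\tau_0\leq \tau_0^{\max\{a,1/2\}}$, valid whenever $\max\{a,1/2\}\leq 1$, which lets the $\tau_0$ inside $\Phi^{(k)}$ absorb into the slightly weaker weight in the $S^a$ norm. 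The bound \eqref{Phi_E1} is simply the definition of $E^1_k$ combined with the frame equivalence and $\tau_x\lesssim \tau_+$.

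For the pointwise commutator bound \eqref{Phi_comm}, I would invoke \eqref{main_ptws_comm} directly. The source-type summand $|\td{\Gamma}^{J'}\Gamma^{I'}\Box_g\phi|$ with $I'\subsetneq I$ and $J'\subseteq J$ becomes $\sum_{|J''|\leq s}|\partial^{J''}(\Box_g\phi)^{(k-1)}|$ once one expands the $\mathbb{L}_0$-fields as $\partial^b$-combinations with coefficients in $\omega^i$ and applies \eqref{bondi_to_rec}. For the remaining term $q\cdot\tau_x^{-1}|(\tau_0{\partial_u^b})^l(\partial_x^b)^K\td{\Gamma}^{J'}\Gamma^{I'+I''}\phi|$ with $l+|K|=1$, $|I''|\leq 1$, $|I'+I''|\leq k$, and $q\in\mathcal{Z}^{-1/2}$, the idea is that $\Gamma^{I'+I''}\phi$ is controlled by $\phi^{(k)}$, that $\td{\Gamma}^{J'}$ reduces to $(\partial^b)^{J'}$ with bounded coefficients, and that the outermost $\tau_0{\partial_u^b}$ or $\partial_x^b$ is one of the derivatives appearing in $\Phi^{(s,k)}$ (via \eqref{bondi_to_rec} a rectangular $\tau_0\partial$ contains $\tau_0{\partial_u^b}$). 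Thus the expression is at most $\tau_x^{1/2}\Phi^{(s,k)}$, giving an overall bound $\tau_x^{-1/2}\Phi^{(s,k)}\leq \tau_0^{-1/2}\tau_x^{-1/2}\Phi^{(s,k)}$ with an absolutely bounded prefactor (matching $q\in\mathcal{Z}^0$ since the original $\mathcal{Z}^{-1/2}$ factor contributes exactly the $\tau_0^{-1/2}$).

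The only mild obstacle is the bookkeeping associated to reindexing the outermost versus innermost derivatives (the single extra Bondi derivative in \eqref{main_ptws_comm} must be placed on the outside to line up with the structure of $\Phi^{(s,k)}$, which requires noting $\mathbb{L}_0$-fields commute with $\partial^b_\alpha$ up to acceptable $r^{-1}$ errors) and verifying the weight exponents in the wave zone $\frac{1}{2}t<r<2t$ versus elsewhere. All such matters are routine once the dictionary above is set up.
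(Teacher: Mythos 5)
Your proposal is correct and follows the same route as the paper: the bounds \eqref{Phi_LE}--\eqref{Phi_E1} are read off from the definitions of the $\LE^s_k$, $S^a_k$, $S^{1,\infty}_k$, and $\CE^1_k$ norms after the frame exchange \eqref{bondi_to_rec} and the wave-zone identification $\tau_x^{-1/2}\tau_0^{-1/2}\approx\tau_-^{-1/2}$, while \eqref{Phi_comm} is obtained directly from \eqref{main_ptws_comm} by factoring $q=q'\tau_0^{-1/2}$ with $q'\in\mathcal{Z}^0$ and absorbing the derivative terms into $\Phi^{(s,k)}$. Your weight accounting and the reindexing of the source terms into $\sum_{|J|\leq s}|\partial^{J}(\Box_g\phi)^{(k-1)}|$ are both accurate.
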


\begin{proof}[Proof of Lemma \ref{gen_comm_lem}]
The proof of \eqref{Phi_LE}--\eqref{Phi_E1} is a straightforward application of the definition
of the various spaces involved. On the other hand 
%\eqref{Phi_s_comm} follows by direct combination of Part \ref{L0_multicom_part}) 
%of Corollary \ref{multicom_coro}
%and \eqref{Q_bondi_est'},
 \eqref{Phi_comm} is immediate from 
\eqref{main_ptws_comm}.
\end{proof}

Note that a direct combination of \eqref{Phi_comm} with either \eqref{Phi_LE}, \eqref{Phi_Sa}, or 
\eqref{Phi_S1} shows \eqref{ext_LEk_est}, \eqref{ext_Sk_to_Nk}, or  \eqref{end_ext_Sk_to_Nk}
(resp). Thus, the remainder of the subsection is devoted to showing the   integral
estimates \eqref{dt_int_comm_est} and \eqref{K0_int_comm_est}. In both cases the key step
is to integrate by parts the bilinear operator resulting from the commutator with $\Box_g$. The
relevant result here is:

\begin{lem}\label{IBP_lem}
Let $R$ be sufficiently large so that $\mathcal{K}\subseteq \{|x|<\frac{1}{2} R\}$, 
and let $\chi_{>R}$ be a cutoff
supported in $r>\frac{1}{2}R$,  constant for $r>R$,  
with the usual derivative bounds. Then one has the
following integral estimates:
\begin{enumerate}[I)]

\item Let
$q=q(u)$ be a smooth function such that $| (\tau_-{\partial_u^b})^l q|
\lesssim_{l} 1$.
Furthermore let $|J|=s\geq 1$ and $|J'|\leq s-1$ and let $\td{\Gamma}^J$ and 
$\td{\Gamma}^{J'}$ denote products of vector fields in 
$\mathbb{L}_0=\{{\partial_u^b},\partial_i^b-\omega^i{\partial_u^b}\}$.
Then if $\mathcal{R}$ obeys the conditions  
\eqref{R_X_sym_bnds}   with $a=c=-1$ and $b=1$ one has:
\begin{equation}
		 \Big| \int_0^T\!\!\!\!\int_{\mathbb{R}^3}  \chi_{>R}\nabla_\alpha \mathcal{R}^{\alpha\beta}
		 \nabla_\beta \td{\Gamma}^{J'}\phi  \cdot q {\partial_u^b} \td{\Gamma}^J\phi  dV_g \Big| \ \lesssim \ 
		 o_R(1)\cdot \big( \lp{\phi}{\LE^s(r>\frac{1}{2}R)[0,T]}^2  +
		 \sup_{0\leq t\leq T}\lp{  \partial\phi(t)}{H^s_x }^2\big) \ . \label{du_IBP}
\end{equation}

\item Alternatively, suppose that $q=q(u)$ satisfies $|(\tau_-{\partial_u^b})^l q|
\lesssim_{l} \tau_-$. Let
$\Gamma^I$ and $\Gamma^{I'}$ be products of vector fields in 
${\mathbb{L}=\{S,\Omega_{ij},{\partial_u^b},\partial_i^b-\omega^i{\partial_u^b}\}}$ for multiindices
$|I|=k\geq 1$ and $|I'|\leq k-1$. Then if 
$\mathcal{R}$ obeys the conditions  \eqref{R_sym_bnds} one has:
\begin{equation}
		 \Big| \int_0^T\!\!\!\!\int_{\mathbb{R}^3}  \chi_{>R} \nabla_\alpha \mathcal{R}^{\alpha\beta}
		 \nabla_\beta \Gamma^{I'}\phi  \cdot q S \Gamma^I\phi  dV_g \Big| \ \lesssim \ 
		 o_R(1)\cdot \big( \lp{\phi}{S^{1,\infty}_k[0,T]}^2  +
		 \sup_{0\leq t\leq T}\lp{\phi(t)}{E^1_k }^2\big)
		  \ . \label{K0_IBP}
\end{equation}

\item
Finally, with  the same setup of estimate \eqref{K0_IBP} let $\mathcal{S}$ obey the conditions \eqref{S_sym_bnds}.
Then one has:
\begin{equation}
		 \Big|\int_0^T\!\!\!\!\int_{\mathbb{R}^3}  \chi_{>R}  
		 \mathcal{S}^\alpha \partial_\alpha \Gamma^{I'}\phi  \cdot q S \Gamma^I\phi  dV_g \Big| \ \lesssim \ 
		 o_R(1)\cdot \big( \lp{\phi}{S^{1,\infty}_k[0,T]}^2  +
		 \sup_{0\leq t\leq T}\lp{\phi(t)}{E^1_k }^2\big)
		  \ . \label{SS_IBP}
\end{equation}
\end{enumerate}
\end{lem}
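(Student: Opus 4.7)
The three estimates share the same underlying strategy: integration by parts to move one derivative off of the divergence factor $\nabla_\alpha \mathcal{R}^{\alpha\beta}\nabla_\beta$ (or the transport factor $\mathcal{S}^\alpha \partial_\alpha$) onto the multiplier. After IBP, the bulk integrand becomes a pairing of $\mathcal{R}^{\alpha\beta}\nabla_\beta(\text{lower-order term})$ against $\nabla_\alpha$ of the multiplied higher-order term, plus three types of errors: (a) boundary contributions at $t=0,T$, which are bounded in fixed-time $L^2_x$ by the energy norms on RHS, (b) commutator terms from $\nabla_\alpha \chi_{>R}$, which are supported in $\frac{1}{2}R < r < R$ and absorb into the $o_R(1)$ losses via symbol decay of $\mathcal{R}$, and (c) terms where $\partial_\alpha$ falls on the weight $q$ itself, which must be controlled using the symbol bounds $|({\partial_u^b})^l q|\lesssim 1$ (case I) or $|({\partial_u^b})^l q|\lesssim \tau_-$ (cases II and III).

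For estimate \eqref{du_IBP}, I would write the bulk as $\mathcal{R}^{\alpha\beta} \partial_\beta \td{\Gamma}^{J'}\phi \cdot \partial_\alpha(q{\partial_u^b} \td{\Gamma}^J \phi)$ plus lower-order terms involving $\sqrt{|g|}$ and its derivatives. Using the symbol bounds $\mathcal{R}\in \tau_x^{-1}\tau_+\tau_-^{-1}\cdot \mathcal{Z}^\bullet$ from Lemma \ref{basic_lie_lem}\ref{X0_part}), and pairing each component of $\mathcal{R}^{\alpha\beta}$ against the appropriate pair of factors from the $\Phi^{(s,\cdot)}$ expression defined in Lemma \ref{gen_comm_lem}, one sees the integrand is bounded by $p(x,t)\cdot \Phi^{(s,0)}\cdot \tau_+ \tau_-^{-1} \Phi^{(s,1)}$ where $p\in \mathcal{Z}^0$ is supported in $r > \frac{1}{2}R$. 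By the $\ell^1_r$-structure of $\mathcal{Z}^0$, Cauchy-Schwarz against the two $\Phi^{(s,\cdot)}$ factors in the $\ell^\infty_r L^2$ norm on each dyadic shell of $r \approx 2^j > R$ yields the $o_R(1)\cdot \lp{\phi}{\LE^s}^2$ bound via \eqref{Phi_LE}. The boundary terms reduce to $\mathcal{R}^{0\beta}\partial_\beta \td\Gamma^{J'}\phi \cdot q{\partial_u^b}\td\Gamma^J\phi\big|_0^T$ which is absorbed into $\sup_t \lp{\partial \phi(t)}{H^s_x}^2$ after a similar $o_R(1)$ Cauchy-Schwarz.

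For \eqref{K0_IBP}, the same IBP scheme is used, but now the $S = u{\partial_u^b} + r\partial_r^b$ multiplier produces $\tau_+$-growth, which is compensated by (i) the larger $q\lesssim \tau_-$ decay budget, and (ii) the $S^{1,\infty}_k$ and $E^1_k$ norms which allow $\tau_+$ weights on gradients. The key rearrangement is to match the null decomposition of $\mathcal{R}$ from \eqref{R_sym_bnds} with the corresponding decomposition of $\partial_\beta \Gamma^{I'}\phi$, so that the $\mathcal{R}^{uu}\in \mathcal{Z}^2$ component (which would be fatal at $\tau_+$ strength) is always paired with ${\partial_u^b}\Gamma^{I'}\phi$ terms carrying a compensating $\tau_0$. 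Using the pointwise relation $|\tau_x^{-1}S(\tau_x\psi)|\lesssim \tau_- |{\partial_u^b}\psi| + \tau_+|\tau_x^{-1}\partial_r^b(\tau_x\psi)|$ and the $\slash\!\!\!\!\Phi$ control on the outgoing derivative from \eqref{Phi_S1}, each product of symbol factors against $\Phi^{(k)}$ or $\slash\!\!\!\!\Phi^{(k)}$ gives the desired $o_R(1)$ bound after $L^2$ Cauchy-Schwarz and the dyadic summation in $\ell^1_r$. Estimate \eqref{SS_IBP} is similar but simpler: here no IBP is strictly needed since only one derivative hits $\Gamma^{I'}\phi$, and the conclusion follows directly from the decay structure of $\mathcal{S}$.

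The main obstacle, in all three cases, is the careful matching of the null decomposition of $\mathcal{R}$ (with its asymmetric bounds $\mathcal{R}^{ij}\in \mathcal{Z}^0$, $\mathcal{R}^{ui}\in \mathcal{Z}^{1/2}$, $\mathcal{R}^{uu}\in\mathcal{Z}^2$) against the anisotropic gradient structure encoded in the $\Phi^{(s,k)}$, $\slash\!\!\!\!\Phi^{(k)}$ weights. A naive pairing loses the $\mathcal{Z}^2$ bound on $\mathcal{R}^{uu}$; the saving is that $\mathcal{R}^{uu}$ always multiplies $({\partial_u^b})^2$-type quantities which come with two powers of $\tau_0$ after one moves to $\partial_r^b(\tau_x \cdot)$ via the identity ${\partial_u^b} = \tau_x^{-1}\partial_r^b(\tau_x\cdot) - \partial_r^b + \tau_x^{-1}$. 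I will set up this decomposition once and apply it uniformly across the three cases, keeping the cutoff-commutator errors absorbed into the $R^{-1/2}\lp{\cdot}{L^2(\frac{1}{2}R<r<R)}$-type transitional bounds already built into the $\LE$, $S^a$, and $S^{1,\infty}$ norms, and then sum dyadically in $r$ using the $\ell^1_r$ definition of $\mathcal{Z}^k$.
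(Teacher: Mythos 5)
Your high-level plan (integrate by parts, collect boundary and cutoff errors, match the null decomposition of $\mathcal{R}$ against the anisotropic derivative weights) is the right framework, but the specific mechanism you describe does not close, in two places.

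First, the single integration by parts in case I leaves a term with too many derivatives. Moving $\nabla_\alpha$ onto the multiplied factor produces the bulk term $\mathcal{R}^{\alpha\beta}\partial_\beta\td{\Gamma}^{J'}\phi\cdot\partial_\alpha(q{\partial_u^b}\td{\Gamma}^J\phi)$, whose second factor contains $q\,\partial_\alpha{\partial_u^b}\td{\Gamma}^J\phi$ with $|J|=s$ — a second derivative of an order-$s$ quantity, hence $s+2$ derivatives, while $\LE^s$ only controls $\partial\partial^{J}\phi$ for $|J|\le s$. Your own bound records this as $\Phi^{(s,1)}$, but by \eqref{Phi_LE} that is controlled by $\lp{\phi}{\LE^s_1}$, not $\lp{\phi}{\LE^s}$, so you would prove a weaker statement with one extra vector field on the right; in the application to \eqref{dt_int_comm_est} and the $k=0$ case of \eqref{LE_null_est_k} this makes the absorption argument circular. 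The paper avoids this by integrating by parts \emph{twice}: the divergence current is taken to be the energy-momentum-type combination $T_1^\alpha=\mathcal{R}^{\alpha\beta}\nabla_\beta F\cdot XG-X^\alpha\mathcal{R}^{\beta\gamma}\partial_\beta F\,\partial_\gamma G$ (identity \eqref{DBP}), in which the dangerous term $\mathcal{R}^{\alpha\beta}\partial_\beta F\,\partial_\alpha(XG)$ cancels exactly, leaving only the trace term, the Lie derivative term $\mathcal{L}_X\mathcal{R}\cdot\partial F\,\partial G$ (controlled via Lemma \ref{basic_lie_lem}), and $\mathcal{R}\,\partial(XF)\,\partial G$ — all quadratic in first derivatives of order-$s$ quantities. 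Some version of this cancellation is the essential idea, and it is absent from your writeup. The same issue recurs in case II with $X=qS$.

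Second, your claim that \eqref{SS_IBP} needs no integration by parts is incorrect. The factor $qS\Gamma^I\phi$ is an order-$(k+1)$ quantity of size roughly $\tau_-\tau_+|\partial\phi^{(k)}|$, whereas after the integration by parts \eqref{S_dividen} the corresponding factor is the \emph{undifferentiated} $(\nabla_\alpha X^\alpha)\Gamma^I\phi\lesssim\tau_-\tau_x\cdot\tau_x^{-1}|\phi^{(k)}|$. Measured against the $E^1_k$ and $S^{1,\infty}_k$ norms (which weight $\tau_0\partial\phi^{(k)}$ and $\tau_x^{-1}\phi^{(k)}$ by the same power $\tau_+$), the direct estimate is therefore worse by a factor $\tau_+/\tau_x$ in the interior region $r\ll t$. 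That lost factor is exactly the $\sqrt{\tau_x/\tau_+}$ gain (squared) that Lemma \ref{L2L00_lem} requires to sum the dyadic time blocks against the $\ell^\infty_t$-structured norms; without it the sum over $t\approx 2^m$ diverges logarithmically in $T$. So the integration by parts in \eqref{S_dividen} is not a convenience but is what makes the interior time summation converge.
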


\begin{proof}[Proof of estimate \eqref{du_IBP}]
For functions $F$ and $G$, a vector field $X$, and quadratic operator 
$\nabla_\alpha \mathcal{R}^{\alpha\beta}\nabla_\beta$, we have the pointwise identity:
\begin{align}
		\nabla_\alpha \mathcal{R}^{\alpha\beta}\nabla_\beta F \cdot XG \ &= \ \nabla_\alpha \Big[ 
		\mathcal{R}^{\alpha\beta}\nabla_\beta F \cdot XG  -  X^\alpha 
		\mathcal{R}^{\beta\gamma} \partial_\beta  F \cdot \partial_\gamma G \Big]  \label{DBP}\\
		&\hspace{.3in}+ (\nabla_\gamma  X^\gamma )
		\mathcal{R}^{\alpha\beta} \partial_\alpha  F \cdot \partial_\beta G
		+ \mathcal{R}_X^{\alpha\beta} \partial_\alpha  F \cdot \partial_\beta G
		+ \mathcal{R}^{\alpha\beta} \partial_\alpha X F \cdot \partial_\beta G \ , \notag \\
		&= \ \nabla_\alpha T_1^\alpha +T_2 + T_3 +T_4 \ , \notag
\end{align}
where $\mathcal{R}_X=\mathcal{L}_X \mathcal{R}$. Setting $F=\td{\Gamma}^{J'}\phi$, 
$G=\td{\Gamma}^J\phi$, and $X=q{\partial_u^b}$
in the above formula we estimate the integral of each term separately.

\case{1}{The $T_1$ term}
Using the divergence theorem gives:
\begin{equation}
		\big| \int_0^T\!\!\!\!\int_{\mathbb{R}^3}  \chi_{>R} 
		\nabla_\alpha T_1^\alpha dV_g\big| \ \lesssim \ 
		\sup_{0\leq t\leq T} \int_{|x| >\frac{1}{2} R } |\nabla_\alpha t|\cdot  |T^\alpha_1| dx  
		+ R^{-1}  \big| \int_0^T\!\!\!\!\int_{\mathbb{R}^3} \chi'_{R}
		 T_1^r  dV_g  \big|
		 \ , \label{dividen}
\end{equation}
where $|\chi'_{R}|\lesssim 	1$ and is supported where $\frac{1}{2}R<r<R$.
Based on the fact that
all components of  $X$ are uniformly bounded, and all components of
$\mathcal{R}$ are $o_R(1)$ (in either Bondi or $(t,x)$ coordinates),
we directly have the pointwise estimate:
\begin{equation}
		|\nabla_\alpha t|\cdot  |T^\alpha_1| \ \lesssim \ \sup_\alpha  |T^\alpha_1|
		\ \lesssim \ o_R(1)\sum_{|J''|\leq s} |\partial  \partial^{J''}\phi|^2 \ , \notag
\end{equation}
which in turn produces a bound for RHS\eqref{dividen}
in terms of $o_R(1)\cdot \big( \slp{\phi}{\LE^s(r>\frac{1}{2}R)[0,T]}^2  +
\sup_{0\leq t\leq T}\slp{  \partial\phi(t)}{H^s_x }^2\big)$.

\case{2}{The $T_2$ term}
Notice that estimate \eqref{det_bnds} gives $|\nabla_\gamma X^\gamma|\lesssim 1$ for $X=q{\partial_u^b}$. On the
other hand for $\mathcal{R}$ satisfying conditions  
\eqref{R_X_sym_bnds}   with $a=c=-1$ and $b=1$ we have the pointwise estimate:
\begin{equation}
		|\mathcal{R}^{\alpha\beta}\partial_\alpha \td{\Gamma}^{J'}\phi \partial_\beta \td{\Gamma}^J \phi |
		\ \lesssim \ p\cdot \sum_{|J''|\leq s}\tau_x^{-1}( |\partial \partial^{J''}\phi|^2 
		+ \tau_0^{-1} |\partial_x^b\partial^{J''}\phi|^2) \ , 
		\qquad \hbox{where \ \ \ } p\in \mathcal{Z}^0 \ . \label{du_R_ptws}
\end{equation}
This suffices to give:
\begin{equation}
		\big| \int_0^T\!\!\!\!\int_{\mathbb{R}^3}  \chi_{>R}
		 T_2\, dV_g \big| \ \lesssim \  o_R(1)\cdot  \lp{\phi}{\LE^s(r>\frac{1}{2}R)[0,T]}^2 \ . \notag
\end{equation}

\case{3}{The $T_3$ term}
This is similar to the previous step. Notice that $X=q{\partial_u^b}$ obeys the symbol bounds
\eqref{vect_sym_bnds} with $a=b=0$ and $c=-1$, and satisfies all conditions on line \eqref{good_X_conds}.
%except that $\partial_r^b q\neq 0$. 
Therefore, thanks to Case \ref{main_LR_estimate}) of Lemma \ref{basic_lie_lem}
we have that $\mathcal{R}_X$ satisfies the bounds on line  \eqref{R_X_sym_bnds} 
with $a=c=-1$ and $b=1$.
This is enough to show \eqref{du_R_ptws} holds for $\mathcal{R}$ replaced by $\mathcal{R}_X$.

\case{4}{The $T_4$ term}
Modulo another bound similar to \eqref{du_R_ptws} it suffices to show:
\begin{equation}
		|(\partial_\alpha  q) \mathcal{R}^{\alpha\beta}  {\partial_u^b} \td{\Gamma}^{J'}\phi \partial_\beta \td{\Gamma}^J\phi |
		\ \lesssim \ RHS\eqref{du_R_ptws} \ . \notag
\end{equation}
This follows from direct inspection of various terms involved.
\end{proof}

\begin{proof}[Proof of estimate \eqref{K0_IBP}]
We again use the identity \eqref{DBP} and estimate each term separately. As a preliminary note that 
with the assumptions of  \eqref{K0_IBP} and notation of Lemma \ref{gen_comm_lem}
one has the pointwise estimate:
\begin{equation}
		\tau_- | \mathcal{R}^{\alpha\beta}
		 \partial_\alpha \Gamma^{I'}\phi  \cdot  \partial_\beta \Gamma^I\phi | \ \lesssim \ p\cdot 
		 \tau_x  \tau_+ |\Phi^{(k)}|^2  \ , \qquad \hbox{where \ \ \ } p\in\mathcal{Z}^{0} \ . \label{R_quad_est}
\end{equation}
A similar bound holds if we replace $\Gamma^{I'}\phi$ by $S \Gamma^{I'}\phi $. 

Likewise, when $X=q S$
with $q=q(u)$ and bounds $| (\tau_-{\partial_u^b})^l q|
\lesssim_{l} \tau_-  $ we have condition \eqref{vect_sym_bnds}
with $a=b=0$ and $c=1$, and also the conditions on line \eqref{good_X_conds}
save for the second identity.  
Therefore by   \ref{R_alt2}) of Lemma \ref{basic_lie_lem}
the tensor $\mathcal{R}_X=\mathcal{L}_X\mathcal{R}$ satisfies (in Bondi coordinates):
\begin{equation}
		\mathcal{R}_X^{ij} \ \in  \ \tau_+ \cdot \mathcal{Z}^0 \ , \qquad
		\mathcal{R}_X^{ui} \ \in \ \tau_+ \cdot \mathcal{Z}^1 \ , \qquad
		\mathcal{R}_X^{uu} \ \in \ \tau_+ \cdot \mathcal{Z}^2 \ . \notag
\end{equation}
In particular we have the pointwise estimate:
\begin{equation}
		| \mathcal{R}^{\alpha\beta}_X
		 \partial_\beta \Gamma^{I'}\phi  \cdot  \partial_\beta \Gamma^I\phi | \ \lesssim \ p\cdot 
		 \tau_x\tau_+  |\Phi^{(k)}|^2  \ , \qquad \hbox{where \ \ \ } p\in\mathcal{Z}^{0} \ . \label{R_quad_est'}
\end{equation}

\case{1}{The $T_1$ term}
Here we again use \eqref{dividen}. For the first term on 
RHS\eqref{dividen} estimate \eqref{Phi_E1} shows it 
suffices to prove:
\begin{equation}
		\sup_\alpha |T_1^\alpha| \ \lesssim \ o_R(1)\cdot \tau_x \tau_+^2 |\Phi^{(k)}|^2 \ . \label{T1_ptws}
\end{equation}
For the first term in $T_1^\alpha$ we use:
\begin{equation}
		\sup_\alpha \tau_- | \mathcal{R}^{\alpha\beta}
		 \partial_\beta \Gamma^{I'}\phi  \cdot S \Gamma^I\phi | \ \lesssim \ o_R(1)
		 \tau_x  \tau_+^2 |\Phi^{(k)}|^2  \ ,   \notag
\end{equation}
which follows from $|\mathcal{R}^{\alpha\beta}|\lesssim 1$ and expanding $S$ into ${\partial_u^b}$
and $\partial_x^b$ derivatives. For the second term in $T_1^\alpha$ we have \eqref{T1_ptws}
thanks to \eqref{R_quad_est} and $|X^\alpha|\lesssim \tau_+\tau_-$.

For the second term on RHS\eqref{dividen} the pointwise bound \eqref{T1_ptws} is not sufficient 
to recover the $\ell^\infty_t$ structure needed on RHS\eqref{K0_IBP}. However,
similar calculations to those  above show  $T^r_1=T^r_{11}+T^r_{12}$ where:
\begin{equation}
		R^{-1}\chi'_RT^r_{11} \ =\  \chi'_R\mathcal{S}^\alpha 
		 \partial_\alpha \Gamma^{I'}\phi  \cdot  qS \Gamma^I\phi
		\ , \qquad
		R^{-1}|\chi'_R T^r_{12}| \ \lesssim \ o_R(1)\cdot
		 \chi_R(r) \tau_x  \tau_+ |\Phi^{(k)}|^2
		 %+\chi_R(t)  \tau_+^2 |\Phi^{(k)}|^2\big)
		 \ . \notag
\end{equation}
Here $\chi_R$ is a cutoff on a dyadic region
$\approx R$, and $\mathcal{S}^\alpha =R^{-1}\chi_R(r)%(1-\chi_R(t)) 
\mathcal{R}^{r\alpha}$
is a smooth vector field satisfying the assumptions of estimate \eqref{SS_IBP} (which will 
be proved independently). Therefore we only need bound the second term on the line above.
Using \eqref{Phi_E1} in the region $t\leq R$, and \eqref{Phi_S1} 
in the region $t>R$, we have:
\begin{equation}
		R^{-1}  \int_0^T\!\!\!\!\int_{\mathbb{R}^3} |\chi'_R T^r_{12}| dxdt \ \lesssim
		\ o_R(1)\cdot \big( \lp{\phi}{S^{1,\infty}_k[0,T]}^2  +
		 \sup_{0\leq t\leq T}\lp{\phi(t)}{E^1_k }^2\big) \ . \notag
\end{equation}
 
\case{2}{The $T_2$ term}
For the remaining three terms on line \eqref{DBP} we will set things up so as to appeal to Lemma \ref{L2L00_lem}.
For $i=2,3,4$ we will show:
\begin{equation}
		   \int_0^T\!\!\!\!\int_{\mathbb{R}^3}\chi_{>R}
		    |T_i| \, dV_g   \ \lesssim \  \lp{\sqrt{ \tau_x/\tau_+}p\cdot \tau_+ \Phi^{(k)}}{L^2[0,T](r>R)}^2
		   \ , \qquad\hbox{where \ \ \ } p^2\in\mathcal{Z}^0 \ , \label{Ti_L2_bound}
\end{equation}
which by a combination of estimate \eqref{Phi_L2_to_Linfty} and estimates 
\eqref{Phi_S1}  and \eqref{Phi_E1} produces:
\begin{equation}
		 \int_0^T\!\!\!\!\int_{\mathbb{R}^3} \chi_{>R}
		 |T_i| \, dV_g   \ \lesssim \  o_R(1)\cdot \big( \lp{\phi}{S^{1,\infty}_k[0,T]}^2  +
		 \sup_{0\leq t\leq T}\lp{\phi(t)}{E^1_k }^2\big) \ . \notag
\end{equation}
For the specific case of the $T_2$ term note that \eqref{det_bnds} shows $|\nabla_\gamma X^\gamma|\lesssim \tau_-$.
Then \eqref{R_quad_est} immediately gives \eqref{Ti_L2_bound}.

\case{3}{The $T_3$ term}
In this case \eqref{Ti_L2_bound} follows at once from  \eqref{R_quad_est'}.

\case{4}{The $T_4$ term}
Modulo an application of \eqref{R_quad_est} with $\Gamma^{I'}\phi $ replaced by $S \Gamma^{I'}\phi $,
to produce \eqref{Ti_L2_bound} for this case
it suffices to prove the pointwise estimate:
\begin{equation}
		|(\partial_\alpha q ) \mathcal{R}^{\alpha\beta} S \Gamma^{I'}\phi\cdot  \partial_\beta \Gamma^{I}\phi |
		\ \lesssim \ p\cdot \tau_x\tau_+ |\Phi^{(k)}|^2 \ , \qquad
		\hbox{where\ \ \ } p \in \mathcal{Z}^0 \ , \notag%\label{T4_last}
\end{equation}
which follows from expanding $S$ to get:
\begin{equation}
		|  \mathcal{R}^{u\beta} S \Gamma^{I'}\phi\cdot  \partial_\beta \Gamma^{I}\phi |
		\ \lesssim \ p\cdot \tau_+ (\tau_0^2 |\partial \phi^{(k)}|^2 +   |\partial_x^b \phi^{(k)}|^2) 
		\ , \qquad \hbox{where\ \ \ } p \in \mathcal{Z}^0 
		\ . \notag
\end{equation}
This completes the proof of \eqref{K0_IBP}.
\end{proof}

\begin{proof}[Proof of \eqref{SS_IBP}]
For functions $F$ and $G$, a vector fields $\mathcal{S}$ and $X$,  we have the pointwise identity:
\begin{align}
		\mathcal{S}^\alpha \partial_\alpha F\cdot XG \ &= \ \nabla_\alpha( 
		X^\alpha \mathcal{S}^\beta \partial_\beta F\cdot G )
		- (\nabla_\alpha X^\alpha) \mathcal{S}^\beta \partial_\beta F\cdot G 
		- \mathcal{S}_X^\beta \partial_\beta F\cdot G - \mathcal{S}^\beta \partial_\beta X F\cdot G \ . \label{S_dividen}
\end{align}
Here $\mathcal{S}_X=\mathcal{L}_X \mathcal{S}=[X,\mathcal{S}]$. We again need to estimate each term separately.

As a general first step note if $\mathcal{S}$ satisfies \eqref{S_sym_bnds}, $|I'|\leq k-1$, and $|I|\leq k$ then:
\begin{equation}
		\tau_- | \mathcal{S}^{\alpha}
		 \partial_\alpha \Gamma^{I'}\phi  \cdot    \Gamma^I\phi | \ \lesssim \ p\cdot 
		 \tau_x  \tau_+ |\Phi^{(k)}|^2  \ , \qquad \hbox{where \ \ \ } p\in\mathcal{Z}^{0} \ . \label{S_vect_est}
\end{equation}
A similar bound holds if we replace $\Gamma^{I'}\phi$ by $S \Gamma^{I'}\phi $. 

Likewise, when $X=q S$ where $q=q(u)$
with bounds $|(\tau_-{\partial_u^b})^l q| \lesssim_{l} \tau_- $ we have from
Part \ref{main_LS_estimate}) of Lemma \ref{basic_lie_lem}
the estimates:
\begin{equation}
		\mathcal{S}_X^{i} \ \in  \ \tau_x^{-1} \tau_+ \cdot \mathcal{Z}^\frac{1}{2} \ , \qquad
		\mathcal{S}_X^{u} \ \in \ \tau_x^{-1} \tau_+ \cdot \mathcal{Z}^{\frac{3}{2}}   \ . \notag
\end{equation}
This gives the pointwise estimate:
\begin{equation}
		| \mathcal{S}^{\alpha}_X
		 \partial_\alpha \Gamma^{I'}\phi  \cdot    \Gamma^I\phi | \ \lesssim \ p\cdot 
		 \tau_x\tau_+  |\Phi^{(k)}|^2  \ , \qquad \hbox{where \ \ \ } p\in\mathcal{Z}^{0} \ . \label{S_vect_est'}
\end{equation}

Finally, 
\begin{equation}
		|(\partial_\alpha q ) \mathcal{S}^{\alpha} S \Gamma^{I'}\phi\cdot    \Gamma^{I}\phi |
		\ \lesssim \ p\cdot \tau_x\tau_+ |\Phi^{(k)}|^2 \ , \qquad
		\hbox{where\ \ \ } p \in \mathcal{Z}^0 \ . \label{S_T4_last}
\end{equation}
which follows from $(\partial_\alpha q ) \mathcal{S}^{\alpha}\in \tau_x^{-2}\tau_+\cdot \mathcal{Z}^\frac{1}{2}$
and $\tau_x^{-2}\tau_+   | S \Gamma^{I'}\phi\cdot  \Gamma^{I}\phi|  \lesssim 
\tau_x^{-2} \tau_+    |\phi^{(k)}|^2$.  
With \eqref{S_dividen}--\eqref{S_T4_last} in hand, the remainder of the proof of \eqref{SS_IBP}
is essentially identical to the proof of \eqref{K0_IBP} above.
\end{proof}

\begin{proof}[Proof of estimate \eqref{dt_int_comm_est}]
Using Part \ref{L0_multicom_part}) of Corollary \ref{multicom_coro}  we may write:
\begin{multline}
		LHS\eqref{dt_int_comm_est}\ \lesssim\  | 
		 \int_0^T\!\!\!\!\int_{\mathbb{R}^3}\chi_{>R}
		T_1 dV_g |+ \int_0^T\!\!\!\!\int_{\mathbb{R}^3}\chi_{>R}  |T_2|  dxdt\\
		 +
		  \big(  \sup_{0\leq t\leq T}\lp{  \partial\phi(t)}{H^s_x }+
		 \lp{\phi}{\LE^s(r>R)[0,T]} \big)\cdot  \lp{\Box_g\phi }{ (\LE^{*,s}+L^1_t H^s_x)(r>R)[0,T]} \ , \notag
\end{multline}
where:
\begin{equation}
		T_1 \ = \ \sum_{J'\subsetneq J}
		  \nabla_\alpha \mathcal{R}^{\alpha\beta}_{J'}
		 \nabla_\beta \td{\Gamma}^{J'}\phi  \cdot q{\partial_u^b} \td{\Gamma}^J\phi   \ , \qquad
		 T_2 \ = \ \sum_{J'\subsetneq J}
		      \mathcal{S}^{\alpha}_{J'}
		 \partial_\alpha \td{\Gamma}^{J'}\phi  \cdot q {\partial_u^b} \td{\Gamma}^J\phi     \ , \notag
\end{equation}
and where $ \mathcal{R}_{J'}$ and  $ \mathcal{S}_{J'}$ satisfy 
estimates 
\eqref{R_X_sym_bnds} and \eqref{S_X_sym_bnds} (resp) with $a=c=-1$ and $b=1$. We are assuming the weight $q=q(u)$ 
satisfies $|(\tau_-{\partial_u^b})^l q|
\lesssim_{l} 1 $.
The term $T_1$ is therefore handled by estimate \eqref{du_IBP}. On the other hand 
the conditions on $ \mathcal{S}_{I'}$ and inspection
gives the pointwise bound:
\begin{equation}
		|T_2| \ \lesssim \ p\cdot \sum_{|J''|\leq s}\tau_x^{-1}( |\partial \partial^{J''}\phi|^2 
		+ \tau_0^{-1} |\partial_x^b(\partial^b)^{J''}\phi|^2) \ , 
		\qquad \hbox{where \ \ \ } p\in \mathcal{Z}^0 \ . \notag
\end{equation}
This suffices to produce 
$\int_0^T\!\!\!\int_{\mathbb{R}^3} \chi_{>R} |T_2| \lesssim o_R(1)\cdot \lp{\phi}{\LE^s(r>R)[0,T]}^2$.
\end{proof}

\begin{proof}[Proof of estimate \eqref{K0_int_comm_est}]
Note that  the definition of $X$   and the notation of Lemma \ref{gen_comm_lem}
gives the  pointwise bound:
\begin{equation}
		\big|\tau_x^{-1} X ( \tau_x \Gamma^I \phi)\big| \ \lesssim \ \tau_x^\frac{1}{2}\tau_+^2 \tau_0^\frac{1}{2} 
		\cdot (  \Phi^{(k)} + 
		\slash \!\!\!\!\Phi^{(k)})    \ . \notag% \label{Phi_X}
\end{equation}
Next, we decompose $\tau_x^{-1} X\tau_x= uS+ X'$    where
$X'=q({\partial_u^b} + (u+2r)r\tau_x^{-1} \partial_r^b \tau_x + ur^2\tau_x^{-2})$.
This leads to the   following  improvement of the previous line:
\begin{equation}
		\big|  X'  ( \Gamma^I \phi)\big| \ \lesssim \   \tau_x^\frac{3}{2}\tau_+ \tau_0^\frac{1}{2} 
		\cdot (  \Phi^{(k)} + 
		\slash \!\!\!\!\Phi^{(k)})   \ . \notag% \label{Phi_X'}
\end{equation}
Therefore combining \eqref{mult_comm_iden1}, \eqref{Phi_comm}, the previous two lines,
and \eqref{Phi_S1} gives:
\begin{equation}
		LHS\eqref{dt_int_comm_est}  \lesssim   \sum_{i=1,2} | 
		 \int_0^T\!\!\!\!\int_{\mathbb{R}^3}\chi_{>R}
		 T_i dV_g |  +
		 \slp{\sqrt{\tau_x/\tau_+} p \cdot \tau_+(  \Phi^{(k)} + 
		\slash \!\!\!\!\Phi^{(k)})}{L^2[0,T](r>R)}^2+
		\slp{\phi}{S^{1,\infty}_k[0,T]}\cdot \slp{\Box_g\phi }{ N^{1,1}_{k-1} [0,T]} \ , \notag
\end{equation}
where $p^2\in \mathcal{Z}^0 $,  where:
\begin{equation}
		T_1 \ = \ \sum_{I'\subsetneq I}
		  \nabla_\alpha \mathcal{R}^{\alpha\beta}_{I'}
		 \nabla_\beta \Gamma^{I'}\phi  \cdot \td{q}S \Gamma^I\phi  \ , \qquad
		 T_2 \ = \ \sum_{I'\subsetneq I}
		      \mathcal{S}^{\alpha}_{I'}
		 \partial_\alpha \Gamma^{I'}\phi  \cdot \td{q}S  \Gamma^I\phi     \ , \label{Ii_line}
\end{equation}
and where $\td{q}=uq(u)$ satisfies the assumptions of Lemma \ref{IBP_lem}.
The proof of  \eqref{K0_int_comm_est} is concluded by an application of estimate \eqref{K0_IBP} to handle the
contribution of $T_1$, estimate \eqref{SS_IBP} to handle the contribution of $T_2$, and Lemma \ref{L2L00_lem} followed by
\eqref{Phi_S1} and \eqref{Phi_E1} which together show:
\begin{equation}
		\lp{\sqrt{\tau_x/\tau_+} p \cdot \tau_+(  \Phi^{(k)} + 
		\slash \!\!\!\!\Phi^{(k)})}{L^2[0,T](r>R)}^2 \ \lesssim \ o_R(1)\cdot \big( \lp{\phi}{S^{1,\infty}_k[0,T]}^2  
		+\sup_{0\leq t\leq T}\lp{\phi(t)}{E^1_k }^2\big) \ . \notag
\end{equation}
\end{proof}

%-------------------------------------------------------------------------
%%%%%%%%%%%%%%%%%%%%%%%%%%%%%
%%%%%%%%%%%%%%%%%%%%%%%%%%%%%
%------------------------------------------------------------------------- 

\section{$L^\infty$   Estimates}\label{L00_sect}
The purpose of this section is to prove Theorem \ref{glob_sob_thm}.
In fact we will prove the slightly stronger bound:

\begin{prop}[Fixed time global Sobolev inequality] \label{main_L00_prop}
Let $R_1\geq 1$ be large enough so that $\mathcal{K}\subseteq \{r<R_1\}$. Then there exists $R\geq R_1$
sufficiently large such that given any $k\geq 1$ one has the following fixed time estimate uniform in $t\geq 0$:
\begin{multline}
		\sum_{i+|J|\leq k} 
		\lp{ \tau_+^\frac{3}{2} \tau_0^\frac{1}{2}(\tau_-{\partial_u^b})^i
		(\tau_x\partial_x^b)^J\phi(t) }{L^\infty_x}  \lesssim  \lp{\tau_+^\frac{3}{2}\phi^{(k)}(t)}{H^2_{x}(r<R) }
		+ \lp{\phi (t)}{E^1_{k+1}}\\
		  +  \sum_{i+|J|\leq k}\lp{\tau_+^\frac{3}{2} \tau_x^\frac{1}{2}\tau_0  (\tau_-{\partial_u^b})^i
		(\tau_x\partial_x^b)^J\Box_g\phi (t)}{   \ell^1_r L^2_x(r<t)\cap L^2_x} \ . \label{main_L00_est'}
\end{multline}
In the above estimate the implicit constant depends on $R$.
\end{prop}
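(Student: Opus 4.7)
Fix $R\geq R_1$ large enough that $\mathcal{K}\subseteq\{r<R/2\}$ and the Bondi-coordinate estimates of Section \ref{asym_bondi_sect} apply uniformly on $\{r>R/2\}$. The approach splits $\mathbb{R}^3$ into an interior $\{r\leq R\}$ handled by standard Sobolev, and an exterior $\{r>R\}$ handled by combining the Klainerman-Sideris identity \eqref{KS_iden_higher} with a weighted Klainerman-Sobolev inequality on dyadic spherical annuli.

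\textbf{Interior and KS reduction.} On the bounded set $\{r\leq R\}$ the weight $\tau_+^{3/2}\tau_0^{1/2}$ is comparable to $\tau_+^{3/2}$, and each $(\tau_-\partial_u^b)^i(\tau_x\partial_x^b)^J$ with $i+|J|\leq k$ is pointwise dominated (with constants depending on $R$) by a product of at most $k$ vector fields in $\mathbb{L}$. The standard three-dimensional Sobolev embedding $H^2\hookrightarrow L^\infty$ then absorbs the interior contribution into $\lp{\tau_+^{3/2}\phi^{(k)}(t)}{H^2_x(r<R)}$. On $\{r>R\}$, estimate \eqref{KS_iden_higher} reduces the pointwise LHS to a low-derivative piece
\begin{equation*}
\tau_+^{3/2}\tau_0^{1/2}\,\big|(\tau_-\partial_u^b)^l(\tau_x\partial_x^b)^J\Gamma^I\phi\big|,\qquad l+|J|\leq 1,\ |I|\leq k-1,
\end{equation*}
together with a source piece
\begin{equation*}
\tau_+^{3/2}\tau_0^{1/2}\tau_x^2\tau_0\,\big|(\tau_-\partial_u^b)^l(\tau_x\partial_x^b)^J\Box_g\phi\big|,\qquad l+|J|\leq k-2.
\end{equation*}

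\textbf{Weighted Sobolev on dyadic annuli.} For each dyadic annulus $A_j=\{r\approx 2^j\}$ with $2^j\geq R$, the three-dimensional Sobolev inequality together with the fact that $\{r\partial_r^b,\Omega_{ij}\}$ spans the tangent space of $A_j$ uniformly yields
\begin{equation*}
\lp{g}{L^\infty(A_j)}^2\,\lesssim\,2^{-3j}\sum_{|\alpha_1|+|\alpha_2|\leq 2}\lp{(r\partial_r^b)^{\alpha_1}\Omega^{\alpha_2}g}{L^2(A_j)}^2.
\end{equation*}
Applied to the low-derivative output of KS and multiplied by $\tau_+^{3/2}\tau_0^{1/2}$, one obtains $L^2(A_j)$ norms of derivatives of $\phi$ of total vector-field order at most $k+1$, carrying the precise weights defining $\CE^1_{k+1}$ --- specifically $\tau_+\tau_0\partial\phi$-type and $\tau_+\partial_x^b\phi$-type expressions. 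Summation over $j\geq\log_2 R$ (using $\sup_j\leq(\sum_j)^{1/2}$) produces the desired $\CE^1_{k+1}$ bound. The analogous Sobolev-on-annulus step applied to the source piece yields the $\ell^1_rL^2_x(r<t)\cap L^2_x$ norm on RHS\eqref{main_L00_est'}: in $\{r<t\}$ where $\tau_+\approx t$ is essentially constant across annuli the dyadic sum persists as an $\ell^1_r$ sum in the radial direction, while in $\{r\geq t\}$ where $\tau_+\approx r$ the $r^{-3/2}$ Sobolev factor together with the $r^2 dr$ radial measure combine to produce the standard $L^2_x$ norm after $\ell^2_j$ summation. The no-derivative piece $\tau_+^{3/2}\tau_0^{1/2}|\phi|$ (corresponding to $i+|J|=0$ in the LHS) not covered by KS is handled separately by integrating the identity $\partial_s(\tau_+^2\tau_-\phi^2)$ inward from spatial infinity on radial rays and then applying Sobolev on spheres --- this is the classical Klainerman-Sobolev computation and produces the same $\CE^1_{k+1}$ bound.

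\textbf{Main obstacle.} The principal difficulty is the weight bookkeeping in the wave zone $r\approx t$ where $\tau_0$ is small and $\tau_-,\tau_+,\tau_x$ are genuinely distinct. One must verify that the two derivative structures in $\CE^1$ --- $\tau_+\tau_0\partial\phi$ and $\tau_+(\partial_x^b\phi,\tau_x^{-1}\phi)$ in $L^2$ --- exactly cover the outputs of the Sobolev/KS step: $\tau_-\partial_u^b\phi$ produced by KS is handled through $\tau_+\tau_0\partial\phi$ via $\tau_-\lesssim\tau_+\tau_0$, while $\tau_x\partial_x^b\phi$ uses the bondi-derivative term in $\CE^1$ without a $\tau_0$ loss. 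A secondary subtlety is the splitting $r<t$ versus $r\geq t$ for the source term, which aligns the dyadic structure with the mixed $\ell^1_rL^2_x(r<t)\cap L^2_x$ norm on the RHS: only in the interior of the light cone does one get $\ell^1_r$ summation, since there $\tau_+$ is essentially constant across dyadic radial scales and the sum over scales produces a genuine sum in $r$.
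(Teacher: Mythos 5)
Your treatment of the wave zone $r\gtrsim t$ is essentially sound (it reproduces, in an isotropic form, the anisotropic global Sobolev inequality \eqref{global_L00} of the paper, combined with the Klainerman--Sideris reduction \eqref{KS_iden_higher}), but there is a genuine gap in the deep interior of the light cone, $R<r\ll t$. There your dyadic-annulus Sobolev inequality gives a loss of $\tau_x^{-3/2}=2^{-3j/2}$, while the left-hand side carries the weight $\tau_+^{3/2}\approx t^{3/2}$ and the conformal energy $\CE^1_{k+1}$ only supplies weights $\tau_+\approx t$ on $\partial_x^b\phi$ and $\tau_+\tau_0\approx t$ on $\partial\phi$. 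Matching weights on the annulus $A_j$ forces $t^{3/2}\cdot 2^{-3j/2}\lesssim t$, i.e.\ $2^j\gtrsim t^{1/3}$; on annuli with $R\leq 2^j\ll t^{1/3}$ the estimate you write down simply does not close. This is not a bookkeeping issue confined to the wave zone (which is where you locate the "main obstacle"): with the vector fields $\mathbb{L}$ of this paper, which deliberately exclude Lorentz boosts, no fixed-time Sobolev embedding localized to an interior annulus can upgrade an energy with $\tau_+^{1}$ weights to pointwise $\tau_+^{3/2}$ decay.

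The missing ingredient is the global elliptic estimate of Lemma \ref{global_ell_lem}. The paper splits at $r=\tfrac12\langle t\rangle$ rather than at $r=R$, and in the region $r<\tfrac12\langle t\rangle$ it first applies the local Sobolev bound \eqref{loc_L00}, which costs $\lp{\tau_x^{1/2}\partial_x^2\phi^{(1)}}{\ell^\infty_r L^2_x}$, and then uses that the spatial part of $\Box_g$ is uniformly elliptic there to trade the two spatial derivatives for $\Box_g\phi$ plus terms containing at least one ${\partial_u^b}$ derivative. The decisive gain is that in the interior $u{\partial_u^b}=S+q\partial$ with $|u|\approx t$, so $|{\partial_u^b}\phi|\lesssim t^{-1}(|\phi^{(1)}|+\tau_x|\partial\phi|)$; combined with the $\ell^1_r$ summation of the $\tau_x^{1/2}$ weight over $r<t$ (costing $t^{1/2}$), this yields a net factor $t^{-1/2}$, which is exactly what converts $\lp{\tau_+\partial\phi^{(k+1)}}{L^2_x}$ into $t^{3/2}$ pointwise decay. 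The same mechanism is what produces the $\ell^1_rL^2_x(r<t)$ structure on the source term, which your "$\tau_+$ is essentially constant across annuli" argument does not actually justify. To repair your proof you would need to import Lemma \ref{global_ell_lem} (or an equivalent fixed-time elliptic estimate) for the region $R<r<\tfrac12\langle t\rangle$ and restrict the annulus Klainerman--Sobolev argument to $r\gtrsim t$.
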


We first give a quick
demonstrate how Proposition \ref{main_L00_prop}  produces Proposition \ref{glob_sob_thm}.

\begin{proof}[Proof that  \eqref{main_L00_est'} implies \eqref{main_L00_est}]
By an application of   \eqref{w_trace3} and
the identity $\tau_-{\partial_u^b} = S + q\partial$, where $|\partial^J q|\lesssim \tau_x$, we have  uniformly for $0\leq t\leq T$
the bound $ \slp{\tau_+^\frac{3}{2}\phi^{(k)}(t)}{H^2_{x}(r<R) } 
\lesssim_R \sup_{0\leq t\leq T}\slp{\phi(t)}{E^1_{k+1}}+\slp{\phi}{S^{1,\infty}_{k+2}[0,T]}$.

Likewise, by an appropriate combination of \eqref{w_trace1}--\eqref{w_trace3}
we have:
\begin{multline}
		\sup_{0\leq t\leq T}\sum_{i+|J|\leq k}\lp{\tau_+^\frac{3}{2} \tau_x^\frac{1}{2}\tau_0 (\tau_-{\partial_u^b})^i
		(\tau_x\partial_x^b)^J\Box_g\phi (t)}{   \ell^1_r L^2_x(r<t)\cap L^2_x} \ \lesssim \notag\\
%		  \lp{\phi(0)}{E^1_{k+1}} \notag\\
		 +\sum_{|J|\leq k}\lp{  \tau_x^2
		(\tau_x \partial)^J\Box_g\phi (0)}{  L^2_x}
		+\sum_{i+|J|\leq k+1}\lp{ (\tau_-{\partial_u^b})^i
		(\tau_x \partial_x^b)^J\Box_g\phi}{  N^{1,1}[0,T]} \ . \notag
\end{multline}
This completes the proof.
\end{proof}

%------------------------------------------------------------------------- 

\subsection{Reduction of Proposition \ref{main_L00_prop}}

The proof of \eqref{main_L00_est'}  will rest on previous material
and the following three lemmas. In each of these $t$ is a fixed parameter and 
$\phi=\phi(t)$ only depends on $x$. 
We also assume $R>0$ is chosen as in Proposition \ref{main_L00_prop}.

\begin{lem}[Basic $L^\infty$ estimates]\label{L00_lem}
Let  $\phi$ be a test function supported in $r<\frac{3}{4}\la t \ra$. Then one has:
\begin{equation}
		\lp{ (\tau_x\partial_x\phi,\phi^{(1)})}{L^\infty_x} \ \lesssim \  
		\lp{\tau_x^\frac{1}{2} (\partial_x^2  \phi^{(1)} , \tau_x^{-1}\partial_x
		  \phi^{(1)} , \tau_x^{-2} \phi^{(1)} )}{\ell^\infty_r L^2_x}  \ .  \label{loc_L00}
\end{equation}
On the other hand, without any support conditions imposed on $\phi$ we have:
\begin{equation}
		\lp{\tau_x^\frac{3}{2}\tau_0^\frac{1}{2}\phi }{L^\infty_x} \ \lesssim \ 
		\sum_{l + |J|\leqslant 2}\lp{(\tau_x\tau_0{\partial_u^b})^l(\tau_x\partial_x^b)^J \phi}{L^2_x}
		\ . \label{global_L00}
\end{equation}
\end{lem}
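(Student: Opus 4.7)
The plan is to prove both bounds as fixed-time weighted Sobolev embeddings. Estimate \eqref{loc_L00} will reduce to a standard dyadic-shell Sobolev embedding once the support assumption is exploited, while \eqref{global_L00} is a Klainerman--Sobolev-type bound and requires a splitting into interior, wave-zone, and exterior regions, with an appropriate anisotropic Sobolev embedding on each.

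For \eqref{loc_L00}, I would introduce a smooth dyadic partition of unity $1=\sum_{j\ge 0}\chi_j$ with $\chi_j$ supported in $r\sim 2^j$ and $|\partial^I\chi_j|\lesssim 2^{-j|I|}$; the support restriction on $\phi$ limits $j$ to $0\le j\lesssim\log\langle t\rangle$. On each shell, the scale-invariant Sobolev embedding $H^2(\mathbb{R}^3)\hookrightarrow L^\infty$ (obtained by rescaling to unit size) gives
\[
\|\chi_j\phi\|_{L^\infty}\lesssim 2^{-3j/2}\sum_{|I|\le 2}\|(2^j\partial_x)^I(\chi_j\phi)\|_{L^2}.
\]
Distributing the $\chi_j$-derivatives, decomposing $2^j\partial_x\sim 2^j\partial_r+\Omega$ on the shell (with rotations contained in $\phi^{(1)}$), multiplying by $\tau_x\approx 2^j$ to produce the $\tau_x\partial_x$ factor on the LHS, and taking $\ell^\infty$ in $j$ will give \eqref{loc_L00}.

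For \eqref{global_L00}, I would split via a smooth partition into interior $\{r\le\tfrac{1}{2}\langle t\rangle\}$, wave zone $\{\tfrac{1}{2}\langle t\rangle\le r\le 2\langle t\rangle\}$, and exterior $\{r\ge 2\langle t\rangle\}$. In the interior and exterior one has $\tau_-\approx\tau_+$, hence $\tau_0\approx 1$, $\tau_x^{3/2}\tau_0^{1/2}\approx\tau_x^{3/2}$, and $\tau_x\tau_0\approx\tau_x$; the same dyadic-shell argument as above with two scaled derivatives, using the identity $\tau_x\partial_x=\tau_x\partial_x^b-\omega^i\tau_x{\partial_u^b}$ to convert Cartesian to Bondi derivatives, yields
\[
\|\tau_x^{3/2}\phi\|_{L^\infty(r\sim 2^j)}\lesssim\sum_{l+|J|\le 2}\|(\tau_x\tau_0{\partial_u^b})^l(\tau_x\partial_x^b)^J\phi\|_{L^2(r\sim 2^j)},
\]
and taking $\sup_j$ finishes these pieces. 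In the wave zone, $\tau_x\approx\tau_+\approx\langle t\rangle$ and $\tau_x\tau_0\approx\tau_-$, so $\tau_x^{3/2}\tau_0^{1/2}|\phi|\approx\tau_+\tau_-^{1/2}|\phi|$, the standard Klainerman--Sobolev decay rate. I would parametrize the wave zone at fixed $t$ by $(u,\omega)\in\mathbb{R}\times S^2$ via $r=t-u$ (allowed by Remark \ref{coord_rem}), so that $dx=r^2\,du\,d\omega$, and denote by $\partial_u^{(t)}=-\partial_r|_t={\partial_u^b}-\partial_r^b$ the spatial derivative in the $u$-coordinate at fixed $t$. An anisotropic $H^2\hookrightarrow L^\infty$ Sobolev embedding on the cylindrical region of scale $\tau_-(u_0)$ in $u$ times a sphere $S^2_r$, rescaled to unit size, then gives
\[
\tau_-(u_0)\,\tau_+^2\,|\phi(u_0,\omega_0)|^2\lesssim\sum_{l+|J|\le 2}\|(\tau_-\partial_u^{(t)})^l\Omega^J\phi\|_{L^2(du\cdot r^2 d\omega)}^2,
\]
with the volume element on the RHS matching $dx$ in the wave zone. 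Taking square roots and substituting the wave-zone identity $\tau_-\partial_u^{(t)}=\tau_x\tau_0{\partial_u^b}-\tau_0(\tau_x\partial_r^b)$ (together with $\Omega,\tau_x\partial_r^b\in(\tau_x\partial_x^b)$) converts the RHS into the form stated in \eqref{global_L00}.

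The main obstacle will be the wave-zone case: it requires the correct anisotropic Sobolev on a cylindrical region of scale $\tau_-$ in the null direction and $\tau_+$ in the sphere directions (total volume $\tau_-\tau_+^2$), with only two total derivatives distributed among the three directions in order to match the hypothesis. The crucial structural point is the identification of the spatial $u$-direction derivative $\partial_u^{(t)}={\partial_u^b}-\partial_r^b$, whose weighted form $\tau_-\partial_u^{(t)}=\tau_x\tau_0{\partial_u^b}-\tau_0(\tau_x\partial_r^b)$ is expressed entirely in Bondi derivatives that actually appear on the RHS of \eqref{global_L00}. This identity is what makes the derivative count close cleanly and allows the embedding to produce the exact weights stated in the lemma.
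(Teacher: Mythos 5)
Your treatment of \eqref{loc_L00} is essentially the paper's: a rescaled $H^2\to L^\infty$ embedding on dyadic shells, with the rotations absorbed into $\phi^{(1)}$ (legitimate on the support $r<\frac{3}{4}\la t\ra$, where by Remark \ref{coord_rem} one may take $u=t-\tau_x$, so the Bondi rotations coincide with the Euclidean ones). Your overall architecture for \eqref{global_L00} — isotropic shell Sobolev off the wave zone, an anisotropic embedding at scale $\tau_-\times\tau_+\times\tau_+$ in the wave zone — is also the right one and matches the paper's.

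However, there is a genuine gap in your wave-zone argument. You parametrize by $r=t-u$ and cite Remark \ref{coord_rem}, but that remark only normalizes $u=t-\tau_x$ \emph{away} from the wave zone; precisely in the region $\frac{1}{2}\la t\ra\le r\le 2\la t\ra$ the optical function is a general solution of \eqref{u_sym_bnd}, and your exact identities $dx=r^2\,du\,d\omega$ and $\partial_u^{(t)}={\partial_u^b}-\partial_r^b$ become $dx=r^2|u_r|^{-1}du\,d\omega$ and $\partial_u^{(t)}={\partial_u^b}+u_r^{-1}\partial_r^b$ with a variable coefficient $u_r$. This is not cosmetic: after rescaling the $u$-direction by $\tau_-\approx 2^k$ and the angular directions by $\tau_+\approx 2^j$, condition \eqref{u_sym_bnd} only controls $(\tau_x\tau_0\partial_x)^J\partial u\approx(2^k\partial_x)^J\partial u$, \emph{not} $(2^j\partial_x)^J\partial u$ with $2^j\gg 2^k$. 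So the change-of-frame coefficients relating your rescaled coordinate derivatives to the vector fields $\{\tau_x\tau_0{\partial_u^b},\tau_x\partial_x^b\}$ are uniformly bounded but their derivatives in the rescaled variables are \emph{not} controlled. Consequently you cannot apply a two-derivative Sobolev embedding in the adapted coordinates and then "substitute the identity" to convert to Bondi derivatives: the second-order substitution differentiates these rough coefficients. The paper circumvents exactly this by never differentiating the coefficients, chaining two first-order embeddings through $L^6$:
\begin{equation}
		\lp{\phi}{L^\infty(dy)}\ \lesssim\ \sum_{|I|\le 1}\lp{\partial_y^I\phi}{L^6(dy)}
		\ \lesssim\ \sum_{|I|\le 1}\lp{e^I\phi}{L^6(dy)}
		\ \lesssim\ \sum_{|I|\le 2}\lp{e^I\phi}{L^2(dy)} \ , \notag
\end{equation}
where $e_\alpha\in\{\tau_-{\partial_u^b},\tau_x\partial_x^b\}$ and each frame change uses only the $L^\infty$ bound on the coefficients. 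You need this (or an equivalent device) to close the wave-zone case; as written, your second-order step does not go through for a general weak optical function.
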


and:

\begin{lem}[Global elliptic estimate]\label{global_ell_lem}
Let $\phi$ be a function supported in $r<\frac{3}{4}\la t \ra$. Then  for $R$ sufficiently large
one has the following fixed time estimate: 
\begin{equation}
		\lp{\tau_x^\frac{1}{2} (\partial_x^2 \phi, \tau_x^{-1}\partial_x\phi, \tau_x^{-2}\phi)}{\ell^\infty_r L^2_x }  \lesssim 
		\lp{ \phi }{H^2_{x}(r<R) } 
		+ \lp{\tau_x^\frac{1}{2} ( \partial{\partial_u^b}\phi, \tau_x^{ -1}{\partial_u^b}\phi , \tau_+^{-1} \partial\phi)}{\ell^1_r L^2_x}
		+ \lp{\tau_x^\frac{1}{2} \Box_g\phi }{\ell^1_r L^2_x}  \ , \label{global_elliptic}
\end{equation}
where the implicit constant depends on $R$.
\end{lem}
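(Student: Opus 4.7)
The plan is to treat Lemma~\ref{global_ell_lem} as a standard dyadic elliptic estimate: extract the flat spatial Laplacian from the principal part of $\Box_g$, move all $\partial_u^b$-type contributions to the right, and absorb the remaining asymptotic corrections for $R$ large. The first step is algebraic. Using the asymptotic flatness statement of Remark~\ref{optical_remark} together with the Bondi-coordinate bounds \eqref{mod_coords}, one isolates the leading part $\delta^{ij}\partial_i\partial_j$ of $\Box_g$ in rectangular coordinates, so that
\[
\delta^{ij}\partial_i\partial_j\phi \ = \ \Box_g\phi - g^{00}\partial_t^2\phi - 2g^{0i}\partial_i\partial_t\phi + \mathcal{E}\phi,
\]
where $\mathcal{E}$ is a second-order operator with coefficients in $\mathcal{Z}^0$. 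To put the $\partial_t$-pieces in a form compatible with the right-hand side of \eqref{global_elliptic}, use the change-of-frame identity $\partial_t\phi=(\partial_t u)\,\partial_u^b\phi$, which upon further differentiation gives $\partial_t^2\phi=(\partial_t u)^2\partial_u^b\partial_u^b\phi+(\partial_t^2 u)\,\partial_u^b\phi$. By \eqref{u_sym_bnd} the coefficient $\partial_t^2u$ has size $O(\tau_-^{-1}\epsilon_j)$ on dyadic shells $A_j=\{r\sim 2^j\}$ with $\sum_j\epsilon_j<\infty$, and on the support $r<\tfrac{3}{4}\la t\ra$ we have $\tau_-\approx\tau_+$. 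This produces exactly the $\tau_+^{-1}\partial\phi$ contribution on the right of \eqref{global_elliptic}, alongside the $\partial\partial_u^b\phi$ and $\tau_x^{-1}\partial_u^b\phi$ pieces.

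The second step is the dyadic elliptic estimate itself. Split into the inner region $r<R$ (controlled by $\|\phi\|_{H^2_x(r<R)}$ on the right of \eqref{global_elliptic}) and the annular shells $A_j$ with $2^j>R$. Choose a bump $\chi_j$ equal to $1$ on $A_j$ and supported on a slightly larger $\tilde A_j$; rescaling $x=2^jy$ to unit scale and applying interior Calder\'on--Zygmund for the flat Laplacian to $\chi_j\phi$ gives
\[
\|\partial_x^2\phi\|_{L^2(A_j)} + 2^{-j}\|\partial_x\phi\|_{L^2(A_j)} + 2^{-2j}\|\phi\|_{L^2(A_j)} \ \lesssim \ \|\delta^{ij}\partial_i\partial_j\phi\|_{L^2(\tilde A_j)} + 2^{-j}\|\partial_x\phi\|_{L^2(\tilde A_j)} + 2^{-2j}\|\phi\|_{L^2(\tilde A_j)},
\]
with the last two terms on the right arising from the commutator $[\Delta,\chi_j]$.

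Finally, multiply through by $\tau_x^{1/2}\approx 2^{j/2}$, substitute the identity from Step~1, and take $\sup_j$. Each of $\Box_g\phi$, $\partial\partial_u^b\phi$, $\tau_x^{-1}\partial_u^b\phi$, and $\tau_+^{-1}\partial\phi$ contributes a term of the form $\sup_j 2^{j/2}\|\,\cdot\,\|_{L^2(\tilde A_j)}\leq\|\tau_x^{1/2}\,\cdot\,\|_{\ell^1_r L^2_x}$, matching exactly the ingredients on the right of \eqref{global_elliptic}. The remaining $\mathcal{E}\phi$ error yields $o_R(1)$ times the left-hand side on each dyadic shell, by the $\ell^1_r L^\infty$ summability built into $\mathcal{Z}^0$, and is absorbed for $R$ sufficiently large.

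The main obstacle will be the careful bookkeeping of which coefficients lie in which symbol class $\mathcal{Z}^k$, and the verification that every non-absorbable correction produced by Step~1 matches exactly one of the three quantities on the right of \eqref{global_elliptic}. In particular, one must ensure that the $\partial_t^2u$- and $\partial_i\partial_t u$-type corrections, once multiplied by the bounded metric components $g^{0\alpha}$, generate only $\tau_+^{-1}\partial\phi$-weighted terms (rather than anything stronger), which is what is available on the right of the estimate.
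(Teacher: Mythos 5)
Your Step~1 --- extracting the flat spatial Laplacian from $\Box_g$ and converting the $\partial_t$-contributions into $\partial\partial_u^b\phi$, $\tau_x^{-1}\partial_u^b\phi$ and $\tau_+^{-1}\partial\phi$ terms via the frame change --- is essentially the paper's first step, and your bookkeeping of the symbol classes there is sound. The gap is in Step~2. Interior elliptic regularity on a dyadic shell necessarily reproduces the lower-order quantities $2^{-j}\lp{\partial_x\phi}{L^2(\tilde A_j)}$ and $2^{-2j}\lp{\phi}{L^2(\tilde A_j)}$ on the right-hand side at full strength: they come from the cutoff commutator, and no purely local estimate can bound $\lp{\phi}{L^2(A_j)}$ by $\lp{\Delta\phi}{L^2(\tilde A_j)}$ alone, since constants are harmonic. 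After multiplying by $2^{j/2}$ and taking $\sup_j$, these leftover terms are exactly the first- and zeroth-order parts of the left-hand side of \eqref{global_elliptic} evaluated on the slightly enlarged shells, with an $O(1)$ constant rather than $o_R(1)$, so they cannot be absorbed. Your argument therefore establishes the $\tau_x^{1/2}\partial_x^2\phi$ bound only modulo the $\tau_x^{-1/2}\partial_x\phi$ and $\tau_x^{-3/2}\phi$ bounds, which it never establishes; for the lower-order terms the scheme is circular.

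Controlling those lower-order terms is genuinely nonlocal, and this is where the paper does something your proposal is missing. It first uses the endpoint Hardy inequality \eqref{end_hardy} to reduce the zeroth-order term to an $\ell^1_r L^2_x$ bound on $\tau_x^{-1/2}\partial_x\phi$, and then proves the global weighted estimate $\lp{\tau_x^{-1/2}\partial\Delta^{-1}F}{\ell^1_r L^2_x}+\lp{\tau_x^{1/2}\partial^2\Delta^{-1}F}{\ell^1_r L^2_x}\lesssim\lp{\tau_x^{1/2}F}{\ell^1_r L^2_x}$ by decomposing \emph{both} the input and the output into dyadic shells and exploiting the off-diagonal decay $2^{j}2^{k}2^{-2\max\{j,k\}}$ of $\Delta^{-1}$ between input shell $k$ and output shell $j$, summed with Young's inequality. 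Note that the needed first-order bound is an $\ell^1_r$ (not $\ell^\infty_r$) quantity --- precisely so that it can feed the Hardy inequality --- and there is no local route to it. To repair your proof you would need to replace the shell-by-shell interior regularity with this kind of global kernel estimate for $\Delta^{-1}$ (or an equivalent weighted Calder\'on--Zygmund/Hardy argument).
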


and:

\begin{lem}[Fixed time commutator estimate]\label{fixed_comm_lem}
Let $\phi$ be a function which is supported in the region $r<\frac{3}{4}\la t \ra$,
and let $\Gamma^I$ denote a product of vector fields in
$\mathbb{L}=\{S,\Omega_{ij},
{\partial_u^b},\partial_i^b-\omega^i{\partial_u^b}\}$ with  
$|I|=k\geqslant 1$.
Then  for $R$ as in Lemma \ref{global_ell_lem} one has the   following fixed time estimate:
\begin{equation}
                \lp{ \tau_x^\frac{1}{2} [\Box_g, \Gamma^I] \phi}{\ell^1_r L^2_x}
                \ \lesssim \ \lp{   \phi^{(k-1)}}{H^{2}_{x}(r<R)} 
                + \la t\ra^{-\frac{3}{2}}\lp{\phi}{\CE^1_{k}} + \lp{  \tau_x^\frac{1}{2}
               (\Box_g\phi)^{(k-1)}}{\ell^1_r L^2_x} \ , 
                 \label{fixed_time_comm}  
\end{equation}
 where the implicit constant again depends on $R$.
\end{lem}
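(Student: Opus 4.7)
The plan is to combine the multicommutator identity \eqref{mult_comm_iden1} of Corollary \ref{multicom_coro} with the pointwise commutator bound \eqref{main_ptws_comm}, and then to distribute the resulting terms among the three quantities on the right of \eqref{fixed_time_comm} by splitting the spatial integral into the inner region $r<R$ and the outer region $R\le r\le \tfrac{3}{4}\langle t\rangle$ (which is where the support assumption becomes operative).

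First I would apply \eqref{main_ptws_comm} at $|J|=0$ (empty outer $\td\Gamma^J$), which yields the fixed-time pointwise estimate
\[
\big|[\Box_g,\Gamma^I]\phi\big|\ \lesssim\ \sum_{\substack{I'\subsetneq I,\,|I''|\le 1\\ l+|K|=1}} q\cdot \tau_x^{-1}\,\big|(\tau_0{\partial_u^b})^l(\partial_x^b)^K\Gamma^{I'+I''}\phi\big| + \big|(\Box_g\phi)^{(k-1)}\big|,
\]
with $q\in\mathcal{Z}^{-1/2}$. After multiplying by $\tau_x^{1/2}$ and taking $\ell^1_r L^2_x$, the second summand is exactly the third term on the right of \eqref{fixed_time_comm}. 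The Klainerman--Sideris reduction \eqref{KS_iden}, already used inside the derivation of \eqref{main_ptws_comm}, is what exchanges the second-order derivatives of $\Gamma^{I'}\phi$ coming from the quadratic part of $\mathcal{R}_{I'}^{\alpha\beta}\nabla_\alpha\nabla_\beta$ for a single derivative of $\Gamma^{I''}\Gamma^{I'}\phi$ plus a $\tau_x^2\tau_0\Box_g\Gamma^{I'}\phi$ term; the latter is then absorbed via downward induction on $|I|$ together with $\Box_g\Gamma^{I'}=\Gamma^{I'}\Box_g + [\Box_g,\Gamma^{I'}]$.

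Next I would split the support into $r<R$ and the outer annulus $R\le r\le \tfrac{3}{4}\langle t\rangle$. On the inner piece every vector field in $\mathbb{L}$ has uniformly bounded coefficients with bounded derivatives, so for each $|\tilde I|\le k$ we may trade one factor of $\Gamma^{\tilde I}$ for an ordinary spatial derivative and obtain $\|\partial\Gamma^{\tilde I}\phi\|_{L^2(r<R)}\lesssim_R \|\phi^{(k-1)}\|_{H^2_x(r<R)}$, yielding the first term on the right of \eqref{fixed_time_comm}. In the outer region, the support hypothesis $r<\tfrac{3}{4}\langle t\rangle$ forces $\tau_-\approx\tau_+\approx\langle t\rangle$ and $\tau_0\approx 1$, so the $\mathcal{Z}^{-1/2}$ symbol bound on $q$ degenerates to plain $\ell^1_r L^\infty$ summability with coefficients $q_j=\|\chi_j q\|_{L^\infty}$ satisfying $\sum_j q_j\lesssim 1$.

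The main obstacle is extracting the sharp $\langle t\rangle^{-3/2}$ decay in the outer contribution. The $\mathcal{CE}^1_k$ control $\|\tau_-\partial\phi^{(k)}\|_{L^2_x}+\|\tau_+\partial_x^b\phi^{(k)}\|_{L^2_x}\lesssim\|\phi\|_{\mathcal{CE}^1_k}$ gives (on the support) the naive annular bound $\|(\tau_0{\partial_u^b},\partial_x^b)\phi^{(k)}\|_{L^2(r\approx 2^j)}\lesssim \langle t\rangle^{-1}\|\phi\|_{\mathcal{CE}^1_k}$; the additional $\langle t\rangle^{-1/2}$ must be harvested from the combined effect of the $\tau_x^{-1/2}$ weight, the support restriction $2^j\lesssim \langle t\rangle$, and a Cauchy--Schwarz balance against the summable tail $\sum_j q_j 2^{-j/2}$ --- effectively trading the log-length of the dyadic $r$-sum for a power of $\langle t\rangle^{-1/2}$ using that the support has finite $r$-range relative to $\langle t\rangle$. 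Formalising this balance, ensuring that the induction on $|I|$ closes through the $\tau_x^2\tau_0\Box_g\Gamma^{I'}\phi$ terms arising from \eqref{KS_iden}, and checking that all boundary pieces fit into either the $\|\phi^{(k-1)}\|_{H^2(r<R)}$ bucket (via elliptic interior regularity as in the proof of \eqref{int_Sk_to_Nk}) or the source bucket, constitute the main technical content of the argument.
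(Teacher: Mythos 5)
Your overall architecture (multicommutator identity, interior/exterior split at $r=R$, induction on $|I|$ through the $\Box_g\Gamma^{I'}$ terms, and the placement of the first and third buckets) matches the paper's, but there is a genuine gap exactly where you flag ``the main obstacle'': the mechanism you propose for extracting $\la t\ra^{-3/2}$ does not work. After applying \eqref{main_ptws_comm} you must estimate $\lp{q\,\tau_x^{-1/2}(\tau_0\partial_u^b\phi^{(k)},\partial_x^b\phi^{(k)})}{\ell^1_r L^2_x}$ with $q\in\ell^1_rL^\infty$ on the support (where $\tau_0\approx 1$). The conformal energy $\CE^1_k$ controls $\tau_-\partial\phi^{(k)}$ and $\tau_+\partial_x^b\phi^{(k)}$ in $L^2_x$, hence supplies exactly one factor of $\la t\ra^{-1}$ on $r<\frac34\la t\ra$; and the remaining weight obeys $\tau_x^{-1/2}\geq c\,\la t\ra^{-1/2}$ there, i.e.\ the support restriction $2^j\lesssim\la t\ra$ bounds $2^{-j/2}$ from \emph{below}, not above, by $\la t\ra^{-1/2}$. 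At intermediate radii, say $r\approx\la t\ra^{1/2}$, your bound is only $\la t\ra^{-5/4}\lp{\phi}{\CE^1_k}$, and no Cauchy--Schwarz rebalancing against $\sum_j q_j2^{-j/2}$ can recover the missing half power, since the $a_j=\lp{\partial_x^b\phi^{(k)}}{L^2_x(r\approx 2^j)}$ already satisfy $\sum_j a_j^2\lesssim\la t\ra^{-2}\lp{\phi}{\CE^1_k}^2$ with no room to spare. A $\la t\ra^{-1}$ in place of $\la t\ra^{-3/2}$ is not an acceptable loss: in Proposition \ref{main_L00_prop} this term is multiplied by $\la t\ra^{3/2}$, so the final estimate would grow like $\la t\ra^{1/2}$.

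The reason the paper's proof succeeds where this fails is that it does \emph{not} pre-apply the Klainerman--Sideris reduction to bring the commutator down to first derivatives of $\phi^{(k)}$ carrying the weight $q\tau_x^{-1}$. It keeps the output of \eqref{mult_comm_iden1} in the form $\lp{\tau_x^{1/2}(\partial^2\phi^{(k-1)},\tau_x^{-1}\partial\phi^{(k-1)})}{\ell^\infty_rL^2_x}$ (the $\ell^1_r$ summability being spent entirely on $q$), and then routes the pure spatial Hessian through the global elliptic estimate \eqref{global_elliptic}. That estimate converts $\tau_x^{1/2}\partial_x^2\phi^{(k-1)}$ into the interior $H^2(r<R)$ norm plus $\ell^1_rL^2_x$ of terms each carrying at least one $\partial_u^b$ (or a $\tau_+^{-1}$), plus a $\Box_g\phi^{(k-1)}$ term. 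Each $\partial_u^b$ is then traded for $u^{-1}(S-r\partial_r^b)\approx\la t\ra^{-1}\Gamma$ as in the derivation of \eqref{S_trade_bnd}, and the decisive structural point is that the resulting error sits in $\ell^1_r$ with the \emph{positive} weight $\tau_x^{1/2}$, whose dyadic sum over $2^j\lesssim\la t\ra$ contributes a factor $\la t\ra^{+1/2}$; combined with the two factors of $\la t\ra^{-1}$ (one from $\partial_u^b\mapsto u^{-1}S$, one from $\CE^1_k$) this yields $\la t\ra^{-3/2}$. In your reduction the corresponding weight is $\tau_x^{-1/2}$, so this gain is structurally unavailable. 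To repair your argument, replace the use of \eqref{main_ptws_comm}/\eqref{KS_iden} for the spatial second derivatives by an appeal to \eqref{global_elliptic}.
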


We postpone the proofs of these in order to first establish Proposition \ref{main_L00_prop}.  

\begin{proof}[Proof of \eqref{main_L00_est'}] 
We estimate the timelike and null/spacelike regions regions separately.

\step{1}{Proof of \eqref{main_L00_est'} in $r>\frac{1}{2}\la t \ra$}
Applying estimate \eqref{global_L00}   to $\chi_{r>\frac{1}{2}\la t\ra}(\tau_-{\partial_u^b})^l
(\tau_x\partial_x^b)^J\phi $ followed
by  \eqref{KS_iden_higher} we have:
\begin{align}
		 \sum_{l+|J|\leqslant k} 
		\slp{ \tau_+^\frac{3}{2}\tau_0^\frac{1}{2}(\tau_-{\partial_u^b})^l
		(\tau_x\partial_x^b)^J\phi}{L^\infty_x(r>\frac{1}{2}\la t\ra)}  &\lesssim  
		\sum_{l + |J|\leqslant k+2} \lp{  (\tau_-{\partial_u^b})^l
		(\tau_x\partial_x^b)^J \phi}{ L^2_x } \ , \notag\\
		&\lesssim  \lp{\phi }{\CE^1_{k+1}}+
		\sum_{l+|J|\leqslant k} 
		\lp{\tau_+^\frac{3}{2}\tau_x^\frac{1}{2}\tau_0(\tau_-{\partial_u^b} )^l(\tau_x\partial_x^b)^J 
		\Box_g \phi}{ L^2_x } 
		 . \notag
\end{align}

\step{2}{Reduction of \eqref{main_L00_est'} in $r<\frac{1}{2}\la t \ra$ to truncated functions}
To prove \eqref{main_L00_est'} in the region $r<\frac{1}{2}\la t \ra$ we claim it suffices to show the fixed time estimate:
\begin{equation}
		\sum_{l+|J|\leqslant k}
		\slp{ (\tau_-{\partial_u^b})^l
		(\tau_x\partial_x^b)^J {\psi}}{L^\infty_x}  \lesssim 
		\slp{   \psi^{(k)} }{H^{2}_x(r<R)} 
                + \la t \ra^{-\frac{3}{2}}\slp{\psi}{\CE^1_{k+1}} +  \!\!\!
                \sum_{l+|J|\leqslant k} \slp{  \tau_x^\frac{1}{2}
                (\tau_-{\partial_u^b})^l
		(\tau_x\partial_x^b)^J  \Box_g {\psi} }{\ell^1_r L^2_x} \ , \label{r<1/2t_L00}
\end{equation}
for functions $\psi$  supported in the region $r<\frac{3}{4}\la t \ra$. Indeed, applying \eqref{r<1/2t_L00} to
$\psi=\chi_{r<\frac{1}{2}\la t \ra}\phi$ and multiplying the result by $\la t\ra^\frac{3}{2}$ we have shown
\eqref{main_L00_est'} in   $r<\frac{1}{2}\la t \ra$ after using
$ \lp{\chi_{r<\frac{1}{2}t}\phi}{\CE^1_{k+1}}\lesssim  \lp{\phi}{\CE^1_{k+1}}$ as well as:
\begin{align}
		 \sum_{l+|J|\leqslant k} \lp{\tau_+^\frac{3}{2} \tau_x^\frac{1}{2}
                (\tau_-{\partial_u^b})^l
		(\tau_x\partial_x^b)^J  [\Box_g,\chi_{r<\frac{1}{2}\la t \ra}] \phi }{\ell^1_r L^2_x}   
		&\lesssim 
		\sum_{l+|J|\leqslant k+1}  \lp{  
                (\tau_-{\partial_u^b})^l
		(\tau_x\partial_x^b)^J   {\phi} }{L^2_x } \ , \notag\\
		&\lesssim \ \lp{\phi}{\CE^1_{k}} + 
		\sum_{l+|J|\leqslant k-1} \lp{ \tau_+^\frac{3}{2}\tau_x^\frac{1}{2}\tau_0
                (\tau_-{\partial_u^b})^l
		(\tau_x\partial_x^b)^J  \Box_g {\phi} }{  L^2_x}
		\ . \notag
\end{align}
On the last line we have again used \eqref{KS_iden_higher}.

\step{3}{Reduction  of \eqref{r<1/2t_L00} to the case $k=1$}
Using \eqref{KS_iden_higher} we have:
\begin{equation}
		\hbox{(LHS)}\eqref{r<1/2t_L00}   \lesssim  
		\sum_{l+|J|\leqslant 1}
		\lp{ (\tau_-{\partial_u^b})^l
		(\tau_x\partial_x^b)^J \psi^{(k-1)}}{L^\infty_x}  + 
		\sum_{l+|J|\leqslant k-2} \lp{  \tau_x^2
                (\tau_-{\partial_u^b})^l
		(\tau_x\partial_x^b)^J  \Box_g \psi}{L^\infty_x} . \notag
\end{equation}
On the other hand for functions $\psi$ supported in $r<\frac{3}{4}\la t\ra$
estimate \eqref{global_L00}  gives:
\begin{equation}
		\sum_{l+|J|\leqslant k-2} \lp{ \tau_x^2
                (\tau_-{\partial_u^b})^l
		(\tau_x\partial_x^b)^J  \Box_g \psi }{L^\infty_x} \ \lesssim \ 
		\sum_{l+|J|\leqslant k} \lp{ \tau_x^\frac{1}{2}
                (\tau_-{\partial_u^b})^l
		(\tau_x\partial_x^b)^J  \Box_g \psi }{L^2_x} \ . \notag
\end{equation}
Therefore, with the help of \eqref{fixed_time_comm} we have  reduce 
\eqref{r<1/2t_L00} to showing:
\begin{equation}
		\sum_{l+|J|\leqslant 1}
		\lp{ (\tau_-{\partial_u^b})^l
		(\tau_x\partial_x^b)^J \psi }{L^\infty_x}  \ \lesssim \ 
		\lp{ \psi^{(1)} }{H^{2}_{x}(r<R) }
		+\la t\ra^{-\frac{3}{2}}\lp{\psi}{\CE^1_{2}} + \lp{ \tau_x^\frac{1}{2} 
		\Box_g  \psi^{(1)} }{\ell^1_r L^2_x}
		\ , \label{r<1/2t_L00'}
\end{equation}
for functions $\psi$ supported in $r<\frac{3}{4}\la t \ra$.

\step{4}{Proof of \eqref{r<1/2t_L00'}}
As a first step we have for $\psi$ supported in  $r<\frac{3}{4}\la t \ra$:
\begin{equation}
		\sum_{l+|J|\leqslant 1}
		\lp{ (\tau_-{\partial_u^b})^l
		(\tau_x\partial_x^b)^J \psi }{L^\infty_x}  \ \lesssim \ 
		\lp{(\tau_x\partial_x\psi,\psi^{(1)})}{L^\infty_x} \ , \notag
\end{equation}
which follows from Remark \ref{coord_rem} and by writing  $S=(u-ru_r){\partial_u^b} + r\partial_r$ 
and $\partial_i^b=\partial_i-u_i{\partial_u^b}$.  

Next, concatenating \eqref{loc_L00}
and \eqref{global_elliptic}   we have:
\begin{equation}
		\slp{ (\tau_x \partial_x\psi ,\psi^{(1)})}{L^\infty_x}
		 \lesssim  \slp{  \psi^{(1)} }{H^2_x(r<R)} 
		+ \la t\ra^{-1}\slp{ 
		\tau_x^\frac{1}{2} ( \partial u {\partial_u^b}\psi^{(1)}  , \tau_x^{ -1}
		u{\partial_u^b} \psi^{(1)}  ,  \partial \psi^{(1)} )}{\ell^1_r L^2_x}
		+ \slp{  \tau_x^\frac{1}{2} \Box_g  \psi^{(1)}  }{\ell^1_r L^2_x}  . \notag
\end{equation}
Here we have used that $| \la t\ra^{-1}  u|\approx 1$ on support of $\psi$, as well as $|[\partial,u]|\lesssim 1$.

Finally, we use the expansion $u{\partial_u^b}=S+q\partial$, where $|\partial^J q|\lesssim \tau_x^{1-|J|}$ on
$r<\frac{3}{4}\la t \ra$, as well as the estimate
$\tau_x^{-\frac{1}{2}}\times$ \eqref{KS_iden} for terms involving $\partial^2$,
which altogether gives:
\begin{equation}
		\lp{ 
		\tau_x^\frac{1}{2} ( \partial u {\partial_u^b}\psi^{(1)}  , \tau_x^{ -1}
		u{\partial_u^b} \psi^{(1)}  ,  \partial \psi^{(1)} )}{\ell^1_r L^2_x} \ \lesssim \ 
		 \la t\ra^{-\frac{1}{2}}\lp{\psi}{\CE^1_2}
		+\la t\ra  \lp{  \tau_x^\frac{1}{2}  \Box_g  \psi^{(1)}  }{L^2_x}  \ . \label{S_trade_bnd}
\end{equation}
This completes the proof of  \eqref{r<1/2t_L00'}, and hence our demonstration of \eqref{main_L00_est'}.
\end{proof}

%-----------------------------------------------------------------------------

\subsection{Proof of the supporting lemmas}

We now prove Lemmas \ref{L00_lem}, \ref{global_ell_lem}, and \ref{fixed_comm_lem}.

\begin{proof}[Proof of estimate \eqref{loc_L00}]
First note that a rescaled version of the usual $H^2\to L^\infty$ Sobolev estimates gives:
\begin{equation}
		\lp{ \phi }{L^\infty_x} \ \lesssim \  
		\lp{\tau_x^\frac{1}{2} (\partial_x^2  \phi  , \tau_x^{-1}\partial_x
		  \phi  , \tau_x^{-2} \phi  )}{\ell^\infty_r L^2_x}  \ ,  \notag
\end{equation}
which applied to $\phi^{(1)}$ proves half of \eqref{loc_L00}.

It remains to prove \eqref{loc_L00} for $\tau_x\partial_x \phi$, and this is really only an issue 
where $r>1$. In this case the  result follows from Remark 
\ref{coord_rem} and applying the following global Sobolev estimate to $\partial_x\phi$ for $R\geq 1$:
\begin{equation}
		\lp{    \phi }{L^\infty_x(\frac{1}{2}R< r< 2R)}  \lesssim   \sum_{|J|\leq 1}
		R^{-\frac{1}{2}}\lp{  (\partial_x  \Omega^J \phi  , R^{-1} \Omega^J
		  \phi    )}{  L^2_x (\frac{1}{4}R< r< 4R)}  \ , \quad
		  \hbox{where \ \ } \Omega\in \{ x^i\partial_j-x^j\partial_i \} \ . 
		   \notag
\end{equation}
%\begin{equation}
%		\slp{  \partial_x \phi }{L^\infty_x(\frac{1}{2}R< r< 2R)}  \lesssim   \sum_{|J|\leq 1}
%		R^{-\frac{1}{2}}\slp{  (\partial_x^2  \Omega^J \phi  , R^{-1}\partial_x \Omega^J
%		  \phi  , R^{-2} \Omega^J \phi  )}{  L^2_x (\frac{1}{4}R< r< 4R)}  \ , \quad
%		  \hbox{where \ \ } \Omega\in \{ x^i\partial_j-x^j\partial_i \} \ . 
%		   \notag
%\end{equation}
For $R=1$ this last bound follows by changing over to polar coordinates and using separate (fractional)
Sobolev embeddings in the radial and angular variables. For $R>1$ it follows by rescaling to $R=1$.
\end{proof}

\begin{proof}[Proof of estimate \eqref{global_L00}]
It suffices to consider the region $\frac{1}{2}\la t \ra<r<\frac{3}{2}\la t \ra$ and $r\gg 1$,
as the complementary bound  
follows by rescaling the $H^2\to L^\infty$ Sobolev embedding.
Using dyadic cutoffs we may further
assume $\phi$ is supported where $\tau_-\approx 2^k$ and $\tau_x\approx 2^j$, and by using
angular sector cutoffs in the $x$ variable we may further restrict this support 
to a $\frac{\pi}{4}$ wedge about the $x^1$ axis.  

Next, we introduce the  following variables on $t=const$, $r\gg 1$, and the
$\frac{\pi}{4}$ wedge about the $x^1$ axis:
\begin{equation}
		y^1\ =\ 2^{-k}u \ , \qquad
		y^2\ =\ 2^{-j}x^2 \ , \qquad
		y^3\ =\ 2^{-j}x^3 \ . \notag
\end{equation}
Changing variables we have the formulas on $t=const$:
\begin{equation}
		\partial_{y^1} \ = \ 2^k\big({\partial_u^b} + \frac{1}{u_{x^1}}\partial_{x^1}^b\big) \ , \qquad
		\partial_{y'} \ = \ 2^j\big(\partial^b_{x'}-\frac{u_{x'}}{u_{x^1}}\partial_{x^1}^b\big) \ , \qquad
		2^k2^{2j}dy \ = \ |u_{x^1}|dx \ , \notag
\end{equation}
where the derivatives $u_{x^i}$ are also with respect to $t=const$, and $y'=(y^2,y^3)$.
In particular by condition \eqref{u_sym_bnd} there exists coefficients $c^i_\alpha$ which are uniformly
bounded where $\tau_-\approx 2^k$, $\tau_x\approx 2^j$, and within the
$\frac{\pi}{4}$ wedge about the $x^1$ axis, such that 
$\partial_{y^i}=\sum_\alpha c^i_\alpha e_\alpha$ where
 $e_\alpha\in \{\tau_- {\partial_u^b}, \tau_x\partial_x^b\}$. 
 
 By the change of measures
 formula on the previous line we have:
 \begin{equation}
		2^{\frac{1}{2}k+j}\sum_{|I|\leqslant 2}\lp{e^I \phi }{L^2(dy)} \ \approx \ 
		\sum_{a + |\beta|\leqslant 2}\lp{(\tau_x\tau_0{\partial_u^b})^l(\tau_x\partial_x^b)^J \phi}{L^2(dx)}
		\ . \notag
\end{equation}
To finish the proof it suffices to establish $H^2\to L^\infty$
Sobolev estimates in the $y$ coordinates in terms of  the $e_\alpha$ vector fields.
Since the coefficients $c^i_\alpha$ are possibly very rough with respect to the $y$ variable 
we do this by concatenating $H^1$ Sobolev embeddings in the following way:
\begin{equation}
		\lp{\phi }{L^\infty(dy)} 
		\ \lesssim \ \sum_{|I|\leq 1} \lp{\partial_y^I \phi}{L^6(dy)} 
		\ \lesssim \ \sum_{|I|\leq 1} \lp{e^I \phi}{L^6(dy)} 
		\ \lesssim \ \sum_{|J|\leq 1} \sum_{|I|\leq 1}\lp{\partial_y^J e^I \phi}{L^2(dy)}
		 \ \lesssim \ \sum_{|I|\leq 2} \lp{e^I \phi}{L^2(dy)} \ . \notag
\end{equation}
This completes the proof of \eqref{global_L00}.
\end{proof}

\begin{proof}[Proof of estimate \eqref{global_elliptic}]
Following Remark \ref{coord_rem} we see that
the metric in $(t,x^i)$ coordinates satisfies:
\begin{equation}
		\lp{(\la t\ra\partial_t)^l(\tau_x\partial_x)^J(g^{\alpha\beta}-\eta^{\alpha\beta})}{\ell^1_r L^\infty(r<\frac{3}{4}\la t \ra)} 
		\ \lesssim_{l,J} \ 1 \ , \notag
\end{equation}
where $\eta=diag(-1,1,1,1)$ is the standard Minkowski metric.
This gives  the pointwise estimate:
\begin{equation}
		|\Delta \phi| \ \lesssim\ q \cdot|(\partial_x^2\phi,\tau_x^{-1}\partial_x \phi)|+ |\partial {\partial_u^b} \phi|
		+\tau_x^{-1}|{\partial_u^b}\phi| + \tau_+^{-1}|\partial\phi|+|\Box_g\phi| \ , \qquad
		\hbox{where\ \ \ } q\in\ell^1_r L^\infty
		\ , \notag
\end{equation}
and where $\Delta$ is the standard 3D Laplacian.
Therefore, by choosing $R$ sufficiently large we see that to prove \eqref{global_elliptic}
it suffices to show:
\begin{equation}
		\lp{\tau_x^\frac{1}{2} (\partial_x^2 \phi, \tau_x^{-1}\partial_x\phi, \tau_x^{-2}\phi)}{\ell^\infty_r L^2_x } \ \lesssim \
		\lp{ \phi }{H^2_{x}(r<R_1) } 
		+ \lp{\tau_x^\frac{1}{2} \Delta\phi }{\ell^1_r L^2_x}  \ , \label{full_delta}
\end{equation}
where $\Delta$ is the standard 3D Laplacian and $R_1\geq 1$ is chosen so that $\mathcal{K}\subseteq \{r<R_1\}$.

Next, using the endpoint Hardy estimate
\eqref{end_hardy} and truncating $\phi$ smoothly so it is 
supported away from $\mathcal{K}$, we can reduce 
\eqref{full_delta} to the following global estimate on $\mathbb{R}^3$:
\begin{equation}
		\lp{\tau_x^{-\frac{1}{2}}\partial\Delta^{-1} F}{\ell^1_rL^2_x(\mathbb{R}^3)}+
		\lp{\tau_x^\frac{1}{2}\partial^2\Delta^{-1} F}{\ell^1_rL^2_x(\mathbb{R}^3)} \ 
		\lesssim \ \lp{\tau_x^\frac{1}{2}F}{\ell^1_rL^2_x(\mathbb{R}^3)} \ . \notag
\end{equation}
This last line follows by decomposing $\partial^J\Delta^{-1} F=\sum_{k,j}\chi_j
\partial^J\Delta^{-1} \chi_k F$ where $\chi_j$ is a partition of unity adapted to
dyadic regions $r\approx 2^k,2^j\geq 1$, and  using Young's inequality
to sum over:
\begin{equation}
		\lp{\tau_x^{-\frac{1}{2}} \chi_j\partial\Delta^{-1} \chi_k F}{L^2_x(\mathbb{R}^3)}+
		\lp{\tau_x^\frac{1}{2} \chi_j\partial^2\Delta^{-1} \chi_k F}{L^2_x(\mathbb{R}^3)}\ \lesssim \ 
		2^{j}2^k 2^{-2\max\{j,k\}}\lp{\tau_x^\frac{1}{2}  \chi_k F}{L^2_x(\mathbb{R}^3)} \ ,
		 \notag
\end{equation}
for $j,k\geq 0$.
This final  estimate follows from standard $L^2$ fractional/singular integral bounds.  
\end{proof}

Finally we prove the commutator estimate:

\begin{proof}[Proof of estimate \eqref{fixed_time_comm}]
 Using \eqref{mult_comm_iden1}  in $r<\frac{3}{4}\la t \ra$, we have for 
$\phi$ supported in that region:
\begin{equation}
		 \lp{  \tau_x^\frac{1}{2} [\Box_g, \Gamma^I] \phi}{\ell^1_r L^2_x}
                \ \lesssim \ \lp{  \tau_x^\frac{1}{2}
                (\partial^2\phi^{(k-1)}, 
                \tau_x^{-1}\partial \phi^{(k-1)})}{\ell^\infty_r L^2_x} 
                + \lp{ \tau_x^\frac{1}{2} (\Box_g\phi)^{(k-1)}}{\ell^1_r L^2_x} \ . \notag%\label{first_comm_line}
\end{equation}
It remains to estimate the first  term on RHS above. 
First, note that by the same reasoning used to establish \eqref{S_trade_bnd} we have:
\begin{equation}
		\lp{ 
		\tau_x^\frac{1}{2} ( \partial  {\partial_u^b}\phi^{(k-1)}  , \tau_x^{ -1}
		{\partial_u^b} \phi^{(k-1)} ,\tau_+\partial\phi^{(k-1)}  )}{\ell^1_r L^2_x} \ \lesssim \ 
		 \la t\ra^{-\frac{3}{2}}\lp{\phi}{\CE^1_k}
		+   \lp{  \tau_x^\frac{1}{2}  \Box_g  \phi^{(k-1)} }{L^2_x}  \ , \notag%\label{S_trade_bnd'}
\end{equation}
for functions $\phi$ supported in  $r<\frac{3}{4}\la t \ra$. 
%Using this to take care of all terms on 
%on RHS of the previous line
%with at least one ${\partial_u^b}$ derivative 
%we have the intermediate estimate:
%\begin{equation}
%		 \lp{  \tau_x^\frac{1}{2} (\partial^2\phi^{(k-1)}, 
 %               \tau_x^{-1}\partial \phi^{(k-1)})}{\ell^\infty_r L^2_x}  \lesssim 
 %                \lp{  \tau_x^\frac{1}{2}
%                (\partial_x^2\phi^{(k-1)}, 
%                \tau_x^{-1}\partial_x \phi^{(k-1)})}{\ell^\infty_r L^2_x} 
%                + \la t\ra^{-\frac{3}{2}}\lp{ \phi}{\CE^1_{k}}
%		+\lp{\tau_x^\frac{1}{2}    \Box_g  \phi^{(k-1)} }{L^2_x}  \ . \notag
%\end{equation}
%The last two terms on RHS above are of the correct form. 
%To estimate the first RHS term apply $\eqref{global_elliptic}$ to it
%followed by  $\la t\ra^{-1} \times \eqref{S_trade_bnd}$ we have:
In addition we have by applying  \eqref{global_elliptic}  to $\phi^{(k-1)}$ and then using the
previous bound to handle the resulting middle term on RHS\eqref{global_elliptic} the bound:
\begin{equation}
		 \lp{  \tau_x^\frac{1}{2}
                (\partial_x^2\phi^{(k-1)}, 
                r^{-1}\partial_x \phi^{(k-1)})}{\ell^\infty_r L^2_x} \ \lesssim \ 
                \lp{   \phi^{(k-1)} }{H^2_{x}(r<R)} + \la t\ra^{-\frac{3}{2}} \lp{ \phi}{\CE^1_{k}}
		+\lp{  \tau_x^\frac{1}{2}  \Box_g  \phi^{(k-1)} }{\ell^1_r L^2_x} \ . \notag
\end{equation}
Combining the last three lines and using \eqref{u_sym_bnd} to handle $[{\partial_u^b},\partial]$
we see that estimate \eqref{fixed_time_comm} follows via induction on $k$.
\end{proof}

%-------------------------------------------------------------------------
%%%%%%%%%%%%%%%%%%%%%%%%%%%%%
%-------------------------------------------------------------------------

\section{Estimates for Nonlinear Problems}\label{nonlin_sect}

This section is devoted to the proof of Theorem \ref{main_nonlin_thm_est}.

%-------------------------------------------------------------------------

\subsection{Proof of the $N_k\to S_k$ mapping property}

This section is devoted to the first half of Theorem \ref{main_nonlin_thm_est}.
%Following Remark \ref{R0_rem}
%we may assume all radial thresholds $B_a$ in norms \eqref{} and \eqref{} have been 

\begin{proof}[Proof of \eqref{NL_NS_bnd}]
We treat each component of the norm separately.

\case{1}{$L^\infty H^s$ and $\WLE$ components}
These are handled via an induction on the index $j$. For $j=0$  use
\eqref{LE_null_est_k}. For  $j\geq 1$ we again use \eqref{LE_null_est_k}
and estimate the middle term on the RHS via:
\begin{equation}
		 \lp{\phi}{H^{16 +3(k-j)}_{j-1}(r<B_0)[0,T]} \ \lesssim \ 
		 \lp{\phi}{\WLE^{13 +3(k-j+1)}_{j-1}[0,T]} \ . \notag
\end{equation}

\case{2}{$S^\frac{1}{2}$ components}
This term is handled by \eqref{Sa_est_k} with $a=\frac{1}{2}$. The middle term on RHS 
of \eqref{Sa_est_k} is bounded by the output of the previous step at level $j=k+4$ as follows:
\begin{equation}
		\lp{\tau_+^{-\frac{1}{2}} \phi }{ H^{1}_{k+4}(r<B_\frac{1}{2})[0,T]} \ \lesssim \ 
		\lp{\phi}{\WLE^{1}_{k+4}[0,T]}  \ . \notag
\end{equation}

\case{3}{$S^{1,\infty}$ and $E^1$ components}
This follows from \eqref{CE_est_k}. The middle term on RHS 
of \eqref{CE_est_k} is bounded by the output of the previous step as follows:
\begin{equation}
		\lp{\  \phi }{ \ell^1_t H^{1}_{k+3}(r<B_\frac{1}{2})[0,T]} \ \lesssim \ 
		\lp{\phi}{S^\frac{1}{2}_{k+3}[0,T]}  \ . \notag
\end{equation}
Note also that expanding vector fields via the basis $\partial^b$ shows:
\begin{equation}
		\lp{F}{N^{1,1}_{k+2}[0,T]} \ \lesssim \ 
		\sum_{i+|J|\leq k+2}
		\lp{ (\tau_-{\partial_u^b})^i
		(\tau_x \partial_x^b)^J F}{  N^{1,1}[0,T]} \ . \notag
\end{equation}

\case{4}{$L^\infty$ components}
The bound for this term follows at once from \eqref{main_L00_est}.

\end{proof}

%-------------------------------------------------------------------------

\subsection{Proof of the null form estimate}
  
Here we prove the estimate \eqref{NL_SN_bnd}. This may be broken up into a number of 
pieces according the the constituent parts of \eqref{big_S_norm} and \eqref{big_N_norm}.

\begin{prop}[Constituent null form estimates]\label{N_space_prop}
%Define:
%\begin{equation}
%		\lp{\phi}{L^\infty_k[0,T]} \ = \ \sum_{i+|J|\leq k}
%		\lp{\tau_+^\frac{3}{2} \tau_0^\frac{1}{2}(\tau_-{\partial_u^b})^i
%		(\tau_x \partial_x^b)^J\phi}{L^\infty[0,T]} \ . \notag
%\end{equation}
Let $\mathcal{N}=\mathcal{N}^{\alpha\beta}(t,x,\phi)
\partial_\alpha\phi\partial_\beta\phi$ denote a quadratic form satisfying the conditions 
of Definition \ref{NF_defn}. Then there exists locally bounded functions $C_k$,
depending on the $c_k$ from line \eqref{NF_cond}, such that for $k\geq 18$ one has:
\begin{align}
		\sum_{j=0}^{k+4} \lp{\mathcal{N} }{(\WLE^{*,13+3(k-j)}_j+L^1_t H^{13+3(k-j)}_{x,j})[0,T]}  
		 &\lesssim     C_k(\LLp{\phi}{S_k[0,T]}) \LLp{\phi}{S_k[0,T]}^2   \ , \label{NFE1}\\
		\lp{\mathcal{N} }{ N^{\frac{1}{2}}_{k+3}[0,T]}  &\lesssim 
		C_k(\LLp{\phi}{S_k[0,T]})  \LLp{\phi}{S_k[0,T]}^2   \ , \label{NFE2}\\
		\sum_{i+|J|\leq k+2}\!\!\!\!\!\!
		\lp{ (\tau_-{\partial_u^b})^i
		(\tau_x \partial_x^b)^J\mathcal{N}}{  N^{1,1}[0,T]} &\lesssim 
		C_k(\LLp{\phi}{S_k[0,T]})  \LLp{\phi}{S_k[0,T]}\big( \LLp{\phi}{S_k[0,T]}
		\!+\!  \LLp{  \Box_g\phi }{  N_k[0,T]}\big) \ . \label{NFE3}\\
		\sum_{|J|\leq k}\!\! \lp{  \tau_x^2
		(\tau_x \partial)^J \mathcal{N} (0)}{  L^2_x}  &\lesssim  
		C_k(\LLp{\phi}{S_k[0,T]})  \LLp{\phi}{S_k[0,T]}\big( \LLp{\phi}{S_k[0,T]}
		\!+\!  \LLp{  \Box_g\phi }{  N_k[0,T]}\big)
		\ . \label{NFE4}
\end{align}
\end{prop}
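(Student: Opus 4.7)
\medskip
\noindent\textbf{Proof plan.} All four estimates \eqref{NFE1}--\eqref{NFE4} admit a common strategy. Working in Bondi coordinates we decompose
\[
\mathcal{N} \ = \ \mathcal{N}^{uu} (\partial_u^b\phi)^2 + 2\mathcal{N}^{ui} \partial_u^b\phi\cdot \partial_i^b\phi + \mathcal{N}^{ij}\partial_i^b\phi\cdot \partial_j^b\phi \ .
\]
Condition \eqref{NF_cond} gives an extra $\tau_0$ weight on the coefficient of the $(\partial_u^b\phi)^2$ pairing which is precisely what is needed to compensate the absence of $\tau_0$-decay on $\partial_u^b\phi$ in the various $S$-norms appearing in \eqref{big_S_norm}. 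The chain-rule contribution of $\mathcal{N}^{\alpha\beta}(t,x,\phi)$ produces additional factors of $\partial\phi$ and powers of $\phi$; by \eqref{NF_cond} these come with coefficients bounded pointwise by $C_k(|\phi^{(\ell)}|)$ for $\ell$ much smaller than $k$, provided the total derivative count on the coefficient stays far below $k$. This is where the hypothesis $k\geq 18$ enters: it leaves enough budget so that every chain-rule factor can be placed into the $L^\infty$ block of $S_k$, namely the term $\sum_{i+|J|\leq k}\lp{\tau_+^{3/2}\tau_0^{1/2}(\tau_-\partial_u^b)^i(\tau_x\partial_x^b)^J\phi}{L^\infty[0,T]}$.

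\medskip
\noindent\textbf{Steps.} First I would expand each derivative operator on the LHS (ordinary derivatives for \eqref{NFE1}, products from $\mathbb{L}$ for \eqref{NFE2}, Bondi derivatives $(\tau_-\partial_u^b)^i(\tau_x\partial_x^b)^J$ for \eqref{NFE3}, and rectangular spatial derivatives for \eqref{NFE4}) by the Leibniz rule, yielding a sum of products of three factors: differentiated coefficients of $\mathcal{N}^{\alpha\beta}$, and two differentiated copies of $\partial\phi$. In each product the factor carrying the largest number of derivatives (the "high factor") is placed in a suitable $L^2$-based norm, while the remaining two "low factors" go into $L^\infty$ via the $L^\infty$ block of $S_k$. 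For \eqref{NFE4} the weight $\tau_x^2$ is split as $\tau_x\cdot \tau_x$, one factor per $\partial\phi$, after which Sobolev embedding and the global Sobolev inequality of Theorem \ref{glob_sob_thm} (combined with \eqref{KS_iden_higher} to trade Bondi derivative pairs for single $\Gamma$-derivatives plus $\tau_x^2\tau_0 \Box_g\phi$ remainders) give the bound. The remainder $\tau_x^2\tau_0\Box_g\phi$ produced by \eqref{KS_iden_higher} is exactly what feeds the $\LLp{\Box_g\phi}{N_k}$ term on RHS \eqref{NFE3}--\eqref{NFE4}. For \eqref{NFE1} the same product split places the high factor into $\WLE^{\ast,s}_j+L^1_tH^s_{x,j}$ (which matches the $\WLE^s_j$ component of $S_k$ in \eqref{big_S_norm}) and the low factors into $L^\infty$; the null weight is irrelevant here because the weights on the target norm are trivial.

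\medskip
\noindent\textbf{Steps for \eqref{NFE2} and \eqref{NFE3}, and the main obstacle.} Estimate \eqref{NFE2} is a bulk $N^{1/2}_{k+3}$ bound and follows from the same Leibniz split: after distributing up to $k+3$ vector fields from $\mathbb{L}$, the high factor is placed into the $S^{1/2}_{k+3}$ block of $S_k$ via estimates \eqref{Phi_Sa}, the low factors into $L^\infty$, and the null weight is used to match the $\tau_+^{1/2}\tau_0^{1/2}$ factor in the definition \eqref{Na_def} of $N^{1/2}$ on the $(\partial_u^b\phi)^2$ pairing. Estimate \eqref{NFE3} is the hardest because the $\NLE^\ast$ block of $N^{1,1}$ carries no $\tau_0$ improvement, so on the bad $(\partial_u^b\phi)^2$ pairing \emph{both} factors of $\tau_+^{1/2}$ (one per $\partial\phi$) must be produced by the $S^{1,\infty}_{k+2}$ block via \eqref{Phi_S1}, and the necessary $\tau_0$ compensation must come entirely from $\mathcal{N}^{uu}$. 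Whenever the Leibniz expansion puts more than $k+2$ Bondi derivatives on the high factor so that it exits the reach of $S^{1,\infty}_{k+2}$, we apply \eqref{KS_iden_higher} to trade the excess for $\tau_x^2\tau_0$ times lower-order Bondi derivatives of $\Box_g\phi$; the latter is absorbed into the $N^{1,1}_{k+2}$ block of $\LLp{\Box_g\phi}{N_k}$, producing the $+\LLp{\Box_g\phi}{N_k}$ term on RHS \eqref{NFE3}. The main obstacle is the simultaneous bookkeeping of three constraints in \eqref{NFE3}: (i) the Bondi weight $\tau_+$ (no $\tau_0$) on the $\NLE^\ast$ piece of $N^{1,1}$, (ii) the $\ell^1_t$ summability on the $\LE^\ast$ and interior pieces of $N^{1,1}$ (which forces the $L^\infty_t\LE$ norms from the high factor to be paired against $\ell^1_t$ decay produced by the $S^{1,\infty}_{k+2}$ fixed-time $L^\infty$ bound on the low factor), and (iii) the Klainerman--Sideris trade-offs that shift some high-order Bondi-derivative terms onto $\LLp{\Box_g\phi}{N_k}$ — with the hypothesis $k\geq 18$ providing exactly enough room for all three.
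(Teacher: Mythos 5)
Your proposal follows essentially the same route as the paper's proof: a Leibniz rule showing the generalized null structure is preserved under the admissible vector fields, component-wise high/low splits in Bondi coordinates with the low factors absorbed by the $L^\infty$ block of $S_k$ (and the extra $\tau_0$ on $\mathcal{N}^{uu}$ compensating the $({\partial_u^b}\phi)^2$ pairing), the Klainerman--Sideris identity \eqref{KS_iden_higher} to convert excess Bondi derivatives into $\Box_g\phi$ contributions for \eqref{NFE3}--\eqref{NFE4}, and interior/exterior splitting with $k\geq 18$ supplying the derivative budget. This is precisely the paper's argument via \eqref{NF_leibniz}, the product estimates \eqref{LE_prod1}--\eqref{LE_prod4}, the pointwise bound \eqref{NF_L00}, and the combined estimate \eqref{NF_KS_iden}.
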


\begin{rem}
In the sequel we will prove Proposition \ref{N_space_prop}   assuming 
$\mathcal{N} =\mathcal{N}^{\alpha\beta}(t,x)
\partial_\alpha\phi\partial_\beta\phi$. 
In the more general case when $\mathcal{N}^{\alpha\beta}$ depends on $\phi$ as well,
repeated application of the Leibniz rule leads to higher order products  
of the form   $\Pi_{i=1}^m \phi^{(k_i)} \partial\phi^{(l_1)}  \partial\phi^{(l_2)}$. 
Such cubic and higher order expressions  are much easier to handle via the uniform norms employed in 
this paper (e.g.~they do not require a null structure), and treating them explicitly
only serves to clutter notation. 
\end{rem}

We will prove \eqref{NFE1}--\eqref{NFE4} with the help of the following lemma:

\begin{lem}[Leibniz rules]
One has:
\begin{enumerate}[I)]
		\item 
		 Let $\mathcal{N}$ be a  quadratic satisfying the conditions of Definition \ref{NF_defn}
		 but not depending on $\phi$,
		 and use the notation
		$\mathcal{N} (\phi,\psi)=\mathcal{N}^{\alpha\beta}(t,x)\partial_\alpha\phi \partial_\beta\psi$. 
		Then if $X^I$ denotes a product in $\{{\partial_u^b},\partial_i^b-\omega^i{\partial_u^b},
		S,\Omega_{ij},\tau_-{\partial_u^b}, \tau_x\partial_x^b\}$,
		we have the identity:
		\begin{equation}
				X^I \mathcal{N}(\phi,\psi) \ = \ \sum_{I' +I''\subseteq  I} \mathcal{N}_{I',I''}
				(X^{I'}\phi,X^{I''}\psi)  
				\ , \label{NF_leibniz}
		\end{equation}
		where each $\mathcal{N}_{I',I''}$  
		satisfies the conditions of Definition \ref{NF_defn} as well.
		\item Let $f,g$ be smooth functions compactly supported in both time and in the exterior region
		$ \{r>R_0\}$, where $R_0$ is given in Definition \ref{class_LE_defn}.
		Let $w_a$ be a weight satisfying $|\partial^J w_a|\lesssim_{J} \tau_0^a$. Then
		one has the balanced product estimates:
		\begin{align}
				\lp{w_0 f g}{\WLE^{*,s}} \ &\lesssim \ \lp{\tau_x^\frac{1}{2}\tau_-^\frac{1}{2}f}{\ell^1_r\ell^1_u L^\infty}
				\lp{\tau_-^{-\frac{1}{2}}g}{\ell^\infty_r\ell^\infty_u H^s} + \lp{\tau_x^{-\frac{1}{2}}f}{\ell^\infty_r H^s}
				\lp{\tau_x g}{\ell^1_r L^\infty} \ , \label{LE_prod1}\\
				\lp{w_a f g}{\WLE^{*,s}} \ &\lesssim \ \lp{\tau_x\tau_0^a f}{\ell^1_r L^\infty}
				\lp{\tau_x^{-\frac{1}{2}} g}{\ell^\infty_r H^s} 
				+ \lp{\tau_x^{-\frac{1}{2}}f}{\ell^\infty_r H^s}\lp{\tau_x\tau_0^a  g}{\ell^1_r L^\infty} \ . \label{LE_prod2}
		\end{align}
		In addition  there are also holds the following unbalanced versions:
		\begin{align}
				\lp{w_0 f g}{\WLE^{*,s}} \ &\lesssim \ \lp{\tau_x^\frac{1}{2}\tau_-^\frac{1}{2}\partial^s f}{\ell^1_r\ell^1_u L^\infty}
				\lp{\tau_-^{-\frac{1}{2}}g}{\ell^\infty_r\ell^\infty_u H^s}  \ , \label{LE_prod3}\\
				\lp{w_a f g}{\WLE^{*,s}} \ &\lesssim \ \lp{\tau_x\tau_0^a \partial^s f}{\ell^1_r L^\infty}
				\lp{\tau_x^{-\frac{1}{2}} g}{\ell^\infty_r H^s}  \ , \label{LE_prod4}
		\end{align}
		where we are using the shorthand $|\partial^s f|=\sum_{|J|\leq s}|\partial^J f|$.
\end{enumerate}
\end{lem}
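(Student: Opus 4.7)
The plan is to handle the two parts separately since they are independent in nature: Part I is purely algebraic (Leibniz with commutators), while Part II is a sequence of bilinear product estimates using H\"older and dyadic summation.

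For Part I, I would proceed by induction on $|I|$. The base case $|I|=1$ uses the ordinary Leibniz rule: for $X$ in the given set,
\begin{equation}
X\mathcal{N}(\phi,\psi) \ = \ (X\mathcal{N}^{\alpha\beta})\partial_\alpha\phi\partial_\beta\psi + \mathcal{N}^{\alpha\beta}\partial_\alpha(X\phi)\partial_\beta\psi + \mathcal{N}^{\alpha\beta}\partial_\alpha\phi\partial_\beta(X\psi) - \mathcal{N}^{\alpha\beta}[X,\partial_\alpha]\phi\partial_\beta\psi - \mathcal{N}^{\alpha\beta}\partial_\alpha\phi[X,\partial_\beta]\psi. \notag
\end{equation}
The main check is that each of the five summands can be written as $\widetilde{\mathcal{N}}(Y\phi,Z\psi)$ with $Y,Z\in\{1\}\cup X^I$ and $\widetilde{\mathcal{N}}$ again of null form type. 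For $X\in\mathbb{L}_0=\{{\partial_u^b}, \partial_i^b-\omega^i{\partial_u^b}\}$, the commutators $[X,\partial_\alpha^b]$ vanish or are lower order with symbol bounds consistent with \eqref{NF_cond}, so $X\mathcal{N}^{\alpha\beta}$ preserves condition \eqref{NF_cond} directly. For $X=S$ and $X=\Omega_{ij}$ the commutators with $\partial^b_\alpha$ produce a linear redistribution of $\partial^b$'s with constant coefficients, and $X$ applied to $\mathcal{N}^{\alpha\beta}$ still obeys \eqref{NF_cond} since $S$ and $\Omega_{ij}$ are built from $\tau_-\partial_u^b$ and $\tau_x\partial_x^b$ (modulo $\omega$-factors). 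For the scaled fields $\tau_-\partial_u^b$ and $\tau_x\partial_x^b$ this is essentially the definition of \eqref{NF_cond}. The critical check is that the $uu$-component of each $\widetilde{\mathcal{N}}$ still enjoys the extra $\tau_0$ gain; this reduces to observing that no term in the above expansion can generate a pure $uu$ contribution from an $ij$ or $ui$ source unless a commutator produces a factor of $\tau_0$ (e.g.\ from $[\partial_i^b - \omega^i\partial_u^b,\cdot]$ which at worst trades a $\partial_x^b$ for $\tau_x^{-1}\partial_u^b$). The inductive step then follows immediately by applying another $X$ and reorganizing.

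For Part II, the approach is Leibniz together with H\"older and dyadic summation in $r$ and $u$. Since $f,g$ are supported in $r>R_0$, the interior piece of $\WLE^{*,s}$ is absent and it suffices to bound $\sum_{|J|\leq s}\|\tau_x^{1/2}\partial^J(w_a fg)\|_{\ell^1_r L^2}$. Distributing $\partial^J$ across $w_a$, $f$, and $g$ yields a finite sum of terms where we place the factor with more derivatives in $L^2_{t,x}$ and the other in $L^\infty_{t,x}$ (or $L^\infty_t L^\infty(\ell^1)$). For the balanced bounds \eqref{LE_prod1}--\eqref{LE_prod2} one splits the derivatives so neither factor needs more than $s$, producing both terms on the respective right-hand sides. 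For \eqref{LE_prod1} the weight splitting $\tau_x^{1/2} = \tau_x^{1/2}\tau_-^{1/2}\cdot\tau_-^{-1/2}$ and an $\ell^1_r\ell^1_u$-to-$\ell^\infty_r\ell^\infty_u$ pairing handles the $L^2$ pairing; one estimates $\|\tau_x^{1/2}f\partial^{J_2}g\|_{\ell^1_r L^2}$ on a dyadic $(r,u)$ shell by placing $f$ in $L^\infty$ and $g$ in $L^2$, then summing in $\ell^1\times\ell^\infty$. For \eqref{LE_prod2} the factor $\tau_0^a$ is harmless since $|\tau_0|\lesssim 1$ and the weight is simply absorbed into one factor. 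The unbalanced \eqref{LE_prod3}--\eqref{LE_prod4} correspond to placing all $s$ derivatives on $f$; the same H\"older pairings then apply without needing the second summand.

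The main obstacle is bookkeeping rather than genuine analytic difficulty: in Part I one must verify that all five summand types (including the $[X,\partial_\alpha]$ commutator terms for fields like $\partial_i^b-\omega^i\partial_u^b$ whose commutator picks up an $\omega^j\tau_x^{-1}\partial_u^b$ factor) fit into the null form class $\mathcal{N}_{I',I''}$, which in particular means the sharp $uu$-bound $\lesssim \tau_0$ is preserved under the commutator algebra. The only borderline case is $[S, \partial_i^b-\omega^i\partial_u^b]$ which from \eqref{comm_fail} produces a term of the form $\tau_x^{-2}\omega^i\partial_u^b$; multiplying into $\mathcal{N}^{\alpha\beta}$ one then has to verify no loss in the $\tau_0$ bound on the resulting $uu$-component, which comes down to the $\tau_x^{-2}$ prefactor dominating. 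This check, combined with the inductive expansion, yields the stated form \eqref{NF_leibniz}.
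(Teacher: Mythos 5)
Your proposal is correct and follows essentially the same route as the paper: Part I is the same single-vector-field Leibniz computation plus induction, with your five-term expansion being algebraically identical to the paper's formula $\mathcal{N}_X^{\alpha\beta}=X(\mathcal{N}^{\alpha\beta})-\partial^b_\gamma(X^\alpha)\mathcal{N}^{\gamma\beta}-\partial^b_\gamma(X^\beta)\mathcal{N}^{\alpha\gamma}$, and the decisive check in both is that $|\partial_i^b(X^u)|\lesssim\tau_0$ for every $X$ in the list so the $uu$-gain survives; Part II is the same dyadic localization in $\tau_x$ and $\tau_-$ followed by a Moser/H\"older product estimate and $\ell^1\times\ell^\infty$ summation. (Your worry about \eqref{comm_fail} is superfluous, since the induction never requires commuting two fields of $\mathbb{L}$ past each other, but it does no harm.)
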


\begin{proof}[Proof of \eqref{NF_leibniz}]
By induction it suffices to prove this identity for a single vector field $X$. Using the notation
$\mathcal{N}_X(\phi,\psi)= X \mathcal{N}(\phi,\psi)- \mathcal{N}(X\phi,\psi) - \mathcal{N}(\phi,X\psi)$, we have
 $\mathcal{N}_X^{\alpha\beta} = X( \mathcal{N}^{\alpha\beta}) - 
\partial_\gamma^b (X^\alpha) \mathcal{N}^{\gamma\beta}- \partial_\gamma^b (X^\beta) \mathcal{N}^{\alpha\gamma}$.
The first bound on line  \eqref{NF_cond} for $\mathcal{N}_X$ follows at once from this identity and 
the assumption of  \eqref{NF_cond} for $\mathcal{N}$. To prove the second bound on line  \eqref{NF_cond} 
for $\mathcal{N}_X$, it suffices to study the
contractions $ \partial_\gamma^b (X^u) \mathcal{N}^{u\gamma}$ and  $ \partial_\gamma^b (X^u) \mathcal{N}^{\gamma u}$.
For $\gamma=u$ the bound is again immediate from  assuming \eqref{NF_cond}. On the other hand for 
$\gamma=i$ we need $|(\tau_-{\partial_u^b})^i (\tau_x \partial_x^b)^J \partial_i^b (X^u)|\lesssim_{i,J} \tau_0$
for each $X\in \{{\partial_u^b},\partial_i^b-\omega^i{\partial_u^b},
S,\Omega_{ij},\tau_-{\partial_u^b}, \tau_x\partial_x^b\}$. This follows from inspection.
\end{proof}

\begin{proof}[Proof of estimates \eqref{LE_prod1}--\eqref{LE_prod2}]
First let both $f,g$ be not only (spacetime) compactly supported in the exterior region $\mathbb{R}^4\setminus \mathbb{R}\times
\mathcal{K}$, but  also supported in
dyadic regions $\tau_x\approx 2^j$ and $\tau_-\approx 2^k$. Then one has the Moser
estimate:
\begin{equation}
		\lp{fg}{H^s} \ \lesssim \ \lp{f}{L^\infty}\lp{g}{H^s} + \lp{f}{H^s}\lp{g}{L^\infty} \ . \notag
\end{equation}
Multiplying this  through by the appropriate combination of $2^j$ and $2^k$ and then summing in  $\ell^1_r(\ell^2_u)$,
we have both \eqref{LE_prod1} and \eqref{LE_prod2}.
\end{proof}

\begin{proof}[Proof of estimates \eqref{LE_prod3}--\eqref{LE_prod4}]
This follows  from distributing the derivatives and  H\"older's inequality.
\end{proof}

\begin{proof}[Proof of estimate \eqref{NFE1}]
Let $\chi_{\mathcal{C}}$ be a smooth cutoff which is $\equiv 1$
on the cylinder $\mathcal{C}=[1,T-1]\times \{r>R_0+1\}$ and
which vanishes for $r<R_0$ and $t\in [0,\frac{1}{2}]\cup[T-\frac{1}{2},T]$. Our plan is to show:
\begin{align}
		\sum_{j=0}^{k+4} \slp{ (1-\chi_\mathcal{C})\mathcal{N} 
		}{ L^1_t H^{13+ 3(k-j)}_{x,j}[0,T]} \ &\lesssim \
		\LLp{\phi}{S_k[0,T]}^2 \ , \label{NFE11}\\
		\sum_{j=0}^{k+4} \slp{ \chi_\mathcal{C} \mathcal{N}  }{\WLE^{*,13+3(k-j)}_j}
		\ &\lesssim \  \LLp{\phi}{S_k[0,T]}^2 \ . \label{NFE12}
\end{align}
These bounds are further broken down into a number of cases.

\case{1}{$L^1_tH^s_{x,j}$ bounds in $r<R_0+1$}
Expanding all derivatives into the product and discarding $\mathcal{N}^{\alpha\beta}$ it suffices to prove
for $j\leq k+4$ that:
\begin{equation}
		\slp{\partial^{I}  \partial \phi^{(j_1)}  \partial^{I'}  \partial \phi^{(j_2)}}{L^1_tL^2_x(r<R_0+1)[0,T]}
		\ \lesssim \ \sLLp{\phi}{S_k[0,T]}^2 \ , \quad \hbox{when}\quad
		j_1+j_2= j \ , \quad |I|+|I'|\leq 13+3(k-j) \ .  
		\label{int_L1L2_bnd}
\end{equation}
There are now two subcases: 

\case{1a}{Evenly split derivatives}
If both $|I|+j_1\leq 10+ 3k -2j$ and $|I'|+j_2\leq 10+3k-2j$, then each factor can absorb 
at least two more derivatives and still go in $L^2(r<R_0+1)[0,T]$.
Thus, after an  $L^2_x\to L^\infty_x$ Sobolev embedding on one factor we may use a product of:
\begin{equation}
		\lp{\partial^{I} \partial \phi^{(j_1)}}{L^2_tL^\infty_x(r<R_0+1)[0,T]}\ \lesssim\ \LLp{\phi}{S_k[0,T]}
		\ , \qquad
		\lp{\partial^{I'} \partial \phi^{(j_2)}}{L^2 (r<R_0+1)[0,T]}\ \lesssim\ \LLp{\phi}{S_k[0,T]} \ . \notag
\end{equation}

\case{1b}{Uneven split}	
The alternative to the previous case 
is that one factor, say the first, is such that $|I|+j_1\geq 11+3k-2j$. But this forces
$|I'|+j_2\leq 2$. In particular for $k\geq 3$ we may use a product of:
\begin{align}
		\lp{\partial^{I} \partial \phi^{(j_1)}}{L^\infty_tL^2_x(r<R_0+1)}\ &\lesssim\ \LLp{\phi}{S_k[0,T]} \ , \notag\\
		\lp{ \partial^{I'} \partial \phi^{(j_2)}}{L^1_t L^\infty_x (r<R_0+1)} \ &\lesssim \ 
		\lp{\tau_+^{\frac{3}{2}}\partial^{I'}\partial \phi^{(j_2)}}{L^\infty (r<R_0+1)}\ \lesssim\
		\LLp{\phi}{S_k[0,T]} \ . \notag
\end{align}
This completes the proof of \eqref{int_L1L2_bnd}.
 
\case{2}{$L^1_tH^s_{x,j}$ bounds in $[0,1]\cup[T-1,T]$}
In this case we   show the analog of \eqref{int_L1L2_bnd}
where the integral is restricted to $([0,1]\cup[T-1,T])\times (\mathbb{R}^3\setminus \mathcal{K})$.
This follows by taking a product of two $L^\infty_t H^s_{x,j}$
bounds after an $L^2_x\to L^\infty_x$ embedding for the factor with the least number of derivatives.

\case{3}{$\WLE^{*,s}$ bounds in $\mathcal{C}$}
Using \eqref{NF_leibniz} it suffices to show for $j\leq k+4$ and $\alpha,\beta=u,1,2,3$:
\begin{equation}
		 \lp{\td{\chi}_\mathcal{C}^2 \mathcal{N}^{\alpha\beta}_{j_1,j_2} 
		\partial_\alpha^b \phi^{(j_1)}  \partial_\beta^b 
		\phi^{(j_2)} }{\WLE^{*,13+3(k-j)}} \ \lesssim \ \LLp{\phi}{S_k[0,T]}^2 \ , \qquad
		\hbox{where}\quad  j_1+j_2=j \ ,
		\label{C_LE_est}
\end{equation}
where $\td{\chi}_\mathcal{C}$ is  also supported in the exterior region $(0,T)\times \{r>R_0\}$.
This estimate is further broken down based on the values of $j_1,j_2$ and $\alpha,\beta$.

\case{3a}{$\max\{j_1,j_2\}\leq k-1 $}
In this case we plan to use \eqref{LE_prod1} and \eqref{LE_prod2} to cleanly distribute the $\partial^I$ derivatives.
To facilitate this  freeze the values of $j_1,j_2$ and define:
\begin{equation}
		f_\alpha \ = \ \td{\chi}_\mathcal{C}\partial_\alpha^b \phi^{(j_1)}  \ , \qquad
		g_\beta \ = \ \td{\chi}_\mathcal{C}\partial_\beta^b \phi^{(j_2)}  \ . \label{fg_alpha_not}
\end{equation}
From \eqref{bondi_to_rec} and the definition of $S_k$ from line \eqref{big_S_norm} and we have:
\begin{equation}
		\lp{\tau_x^\frac{1}{2}\tau_-^\frac{1}{2}f_u }{\ell^1_u\ell^1_r L^\infty}+
		\lp{\tau_x\tau_0f_u }{\ell^1_r L^\infty} +
		\lp{\tau_x f_i}{\ell^1_r L^\infty} +\lp{\tau_-^{-\frac{1}{2}}f_i}
		{\ell^\infty_u\ell^\infty_r H^{s}} 
		+ \lp{\tau_x^{-\frac{1}{2}} f_\alpha }{\ell^\infty_r H^{s}} 
		\ \lesssim \ \LLp{\phi}{S_k[0,T]} \ , \label{f_bounds}
\end{equation}
where $s=13+3(k-j)$,
with an identical set of estimates for the components of $g$.

\case{3a.1}{$uu$ components}
Using \eqref{NF_cond}   it suffices to show:
\begin{equation}
		\lp{w_1 f_u g_u}{\WLE^{*,13+3(k-j)}}\ \lesssim \ \LLp{\phi}{S_k[0,T]}^2 \ , \label{fg_uu}
\end{equation}
where $|\partial^I w_1|\lesssim \tau_0$. This follows from \eqref{LE_prod2} with $a=1$ and \eqref{f_bounds}.

\case{3a.2}{$ui$ and $iu$ components}
Here we need the analog of \eqref{fg_uu} with $w_1$ replaced by weight $w_0$ with $|\partial^I w_0|\lesssim 1$.
This follows from \eqref{LE_prod1} and \eqref{f_bounds}.

\case{3a.3}{$ij$ components}
In this case the analog of \eqref{fg_uu} follows from \eqref{LE_prod2} with $a=0$ and \eqref{f_bounds}.

\case{3b}{$\max\{j_1,j_2\}\geq k $}
In this case from the constraint $j_1+j_2=j\leq k+4$
we must have both $\min\{j_1,j_2\}\leq 4$ and $13+3(k-j)\leq 13$. Without loss of 
generality assume $\min\{j_1,j_2\}=j_1$. Then with the   notation from line \eqref{fg_alpha_not}
and setting $s=13+3(k-j)$, we have if $k\geq 18$ the bounds:
\begin{equation}
		\lp{\tau_x^\frac{1}{2}\tau_-^\frac{1}{2}\partial^s f_u}{\ell^1_u\ell^1_r L^\infty}+
		\lp{\tau_x\tau_0\partial^s f_\alpha }{\ell^1_r L^\infty} +
		\lp{\tau_x \partial^s f_i}{\ell^1_r L^\infty} +\lp{\tau_-^{-\frac{1}{2}}g_i}{\ell^\infty_u\ell^\infty_r H^s} 
		+ \lp{\tau_x^{-\frac{1}{2}} g_\alpha }{\ell^\infty_r H^s} \ \lesssim \ \LLp{\phi}{S_k[0,T]} \ . \notag
\end{equation}
In particular \eqref{C_LE_est} for this case follows from the bound on the  line above,
\eqref{LE_prod3}, and \eqref{LE_prod4}. 
\end{proof}

The proof of \eqref{NFE2} and \eqref{NFE3}  largely boils down to \eqref{NF_leibniz}
and the following lemma:

\begin{lem}
Let $\mathcal{N}$ be any quadratic form which satisfies \eqref{NF_cond}, 
and denote $\mathcal{N} (\phi,\psi)=\mathcal{N}^{\alpha\beta}\partial_\alpha\phi \partial_\beta\psi$.
Suppose $X^I$ is a product of vector fields in  $\{{\partial_u^b},\partial_i^b-\omega^i{\partial_u^b},
S,\Omega_{ij},\tau_-{\partial_u^b}, \tau_x\partial_x^b\}$. Then if
$|I|\leq k-1$ we have the pointwise bound on $[0,T]$:
\begin{equation}
		|\tau_0^\frac{1}{2} \mathcal{N}(X^I \phi,\psi)| \ \lesssim 
		\ \LLp{\phi}{S_k[0,T]}\cdot \tau_x^{-1}\tau_+^{-\frac{3}{2}}\tau_0^{-1} \sum_{i+|J|=1} 
		 |(\tau_0{\partial_u^b})^i(\partial_x^b)^J\psi|  
		\ . \label{NF_L00}
\end{equation}
A similar bound holds with the roles of $\phi$ and $\psi$ reversed.
\end{lem}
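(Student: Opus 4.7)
The strategy is to expand $\mathcal{N}(X^I\phi,\psi)=\mathcal{N}^{\alpha\beta}\partial_\alpha^b(X^I\phi)\,\partial_\beta^b\psi$ into its four Bondi components indexed by $(\alpha,\beta)\in\{u,i\}^2$, exploit the null structure $|\mathcal{N}^{uu}|\lesssim \tau_0$ from \eqref{NF_cond} on the doubly-bad piece, and estimate each factor of $\phi$ with the $L^\infty$ peeling already built into $\LLp{\phi}{S_k[0,T]}$ via the last line of \eqref{big_S_norm}.

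The central pointwise estimates to establish are, for $|I|\leq k-1$,
\begin{align*}
        |{\partial_u^b}(X^I\phi)| \ &\lesssim \ \LLp{\phi}{S_k[0,T]}\,\tau_-^{-1}\tau_+^{-3/2}\tau_0^{-1/2}, \\
        |\partial_i^b(X^I\phi)| \ &\lesssim \ \LLp{\phi}{S_k[0,T]}\,\tau_x^{-1}\tau_+^{-3/2}\tau_0^{-1/2}.
\end{align*}
These follow by writing ${\partial_u^b}=\tau_-^{-1}(\tau_-{\partial_u^b})$ (resp.\ $\partial_i^b=\tau_x^{-1}(\tau_x\partial_i^b)$) and viewing the result as a length $(|I|+1)\leq k$ product of vector fields drawn from $\mathbb{L}\cup\{\tau_-{\partial_u^b},\tau_x\partial_x^b\}$. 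Commutator identities for $\mathbb{L}$ (which closes on itself modulo the single correction \eqref{comm_fail}) together with trivial commutators involving the scalar weights $\tau_-$, $\tau_x$ allow us to rewrite this product in the normal form $\sum c_{l,J}(\tau_-{\partial_u^b})^l(\tau_x\partial_x^b)^J$ with $l+|J|\leq k$ and $|c_{l,J}|\lesssim 1$; the $L^\infty$ embedding $|(\tau_-{\partial_u^b})^l(\tau_x\partial_x^b)^J\phi|\lesssim \LLp{\phi}{S_k[0,T]}\tau_+^{-3/2}\tau_0^{-1/2}$ then closes the bound. Any unweighted vector fields ${\partial_u^b}$ or $\partial_i^b-\omega^i{\partial_u^b}$ occurring in $X^I$ only improve the decay and are handled in the same way.

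With these in hand, one verifies the four cases directly. For the $uu$-piece, combine $|\mathcal{N}^{uu}|\lesssim \tau_0$ with $|{\partial_u^b}\psi|=\tau_0^{-1}|(\tau_0{\partial_u^b})\psi|$ to get a net contribution bounded by $\tau_0^{1/2}\cdot \tau_-^{-1}\tau_+^{-3/2}\tau_0^{-1/2}\cdot|(\tau_0{\partial_u^b})\psi|\cdot \LLp{\phi}{S_k[0,T]}$, which fits the RHS of \eqref{NF_L00} using the elementary inequality $\tau_x\leq \tau_+$ (equivalently $\tau_-^{-1}\tau_0^{-1/2}\leq \tau_x^{-1}\tau_+^{1/2}\cdot \tau_-^{-1/2}\cdot\tau_x\tau_+^{-1}$, reducing to $\tau_x\leq \tau_+$). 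The $ui$-piece is identical except one uses $|\partial_i^b\psi|\leq|\partial_x^b\psi|$. The $iu$- and $ij$-pieces use $|\mathcal{N}^{\alpha\beta}|\lesssim 1$ and the $\partial_i^b X^I\phi$ bound, in which the desired $\tau_x^{-1}$ is already present, so the required inequalities reduce to $\tau_0\leq 1$. Since \eqref{NF_cond} is symmetric in the role of $\mathcal{N}^{\alpha\beta}$ (only the $uu$ component is distinguished, and symmetrically so), the analogous statement with $\phi,\psi$ exchanged follows verbatim.

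The principal technical obstacle is the pointwise bound on $\partial^b X^I\phi$, specifically the commutator bookkeeping when $X^I$ mixes weighted vector fields ($S$, $\Omega_{ij}$, $\tau_-{\partial_u^b}$, $\tau_x\partial_x^b$) with unweighted ones (${\partial_u^b}$, $\partial_i^b-\omega^i{\partial_u^b}$). However, thanks to the approximate-Lie-algebra property of $\mathbb{L}$ expressed in \eqref{comm_fail} and the absorption of each unweighted vector field as an inverse weight, this reduces to an algebraic exercise requiring no sharp estimates; the sharp structure is concentrated entirely in the weight arithmetic above and in the null condition on $\mathcal{N}^{uu}$.
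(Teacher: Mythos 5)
Your proposal is correct and follows essentially the same route as the paper: expand $\mathcal{N}$ in Bondi components using the null condition $|\mathcal{N}^{uu}|\lesssim\tau_0$ on the doubly-$u$ piece, commute the extra derivative through $X^I$ into the normal form $(\tau_-{\partial_u^b})^l(\tau_x\partial_x^b)^J$ with $l+|J|\le k$, and invoke the $L^\infty$ peeling term of $\LLp{\phi}{S_k[0,T]}$; your weight arithmetic in each of the four cases reduces to $\tau_x\le\tau_+$ and $\tau_0\le 1$ exactly as it should. The only cosmetic difference is that you track the ${\partial_u^b}$ and $\partial_x^b$ factors of $X^I\phi$ separately with weights $\tau_-^{-1}$ and $\tau_x^{-1}$, whereas the paper packages the same information as $\tau_0^{3/2}|\partial X^I\phi|+\tau_0^{1/2}|\partial_x^b X^I\phi|\lesssim\tau_x^{-1}\tau_+^{-3/2}\LLp{\phi}{S_k[0,T]}$.
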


\begin{proof}
Expanding the $\mathcal{N}$  using condition \eqref{NF_cond} we have:
\begin{equation}
		|\tau_0^\frac{1}{2} \mathcal{N}(X^I \phi,\psi)| \ \lesssim \ \tau_0^\frac{3}{2} |{\partial_u^b}X^I
		\phi|\cdot |{\partial_u^b} \psi|
		+ \tau_0^\frac{1}{2} |\partial X^I \phi|\cdot |\partial_x^b \psi| 
		+ \tau_0^\frac{1}{2} |\partial_x^b X^I \phi|\cdot |\partial \psi| \ . \notag
\end{equation}
On the other hand inspection of the $L^\infty$ term from the $S_k$ norm defined on
line  \eqref{big_S_norm} shows for $|I|\leq k-1$:
\begin{equation}
		\tau_0^\frac{3}{2} |\partial X^I\phi| + \tau_0^{\frac{1}{2} }  |\partial_x^b X^I\phi|
		\ \lesssim \ \tau_x^{-1}\tau_+^{-\frac{3}{2}}\LLp{\phi}{S_k[0,T]} \ . \notag
\end{equation}
Taking the product of the last two lines yields \eqref{NF_L00}.
\end{proof}

\begin{proof}[Proof of estimate \eqref{NFE2}]
Using \eqref{NF_leibniz} to distribute derivatives, and
splitting into interior and exterior bounds, it suffices to show that when $j_1+j_2=k+3$:
\begin{align}
		 \lp{\tau_+ 
		\mathcal{N}_{j_1,j_2}(\phi^{(j_1)},\phi^{(j_2)})}{\ell^1_tH^1(r<R_0)[0,T]} \ &\lesssim \ \LLp{\phi}{S_k[0,T]}^2
		\ , \label{S1/2_int}\\
		 \lp{ 
		\mathcal{N}_{j_1,j_2}(\phi^{(j_1)},\phi^{(j_2)})}{N^\frac{1}{2}(r>R_0)[0,T]} \ &\lesssim \ \LLp{\phi}{S_k[0,T]}^2
		\ . \label{S1/2_ext}
\end{align}
Note that \eqref{S1/2_int} is stronger than what we need here, and for this bound we can even assume all vector fields
are in the collection $\{{\partial_u^b},\partial_i^b-\omega^i{\partial_u^b},
S,\Omega_{ij},\tau_-{\partial_u^b}, \tau_x\partial_x^b\}$.  We will use this greater generality  in a moment.

\case{1}{Interior estimate}
From the conditions $k\geq 18$ and $j_1+j_2=k+3$
we have $\min\{ j_1,j_2\}\leq k-2$. Then \eqref{S1/2_int} follows by taking the product of:
\begin{equation}
		\lp{\tau_+  \phi^{(k)}}{\ell^1_t L^\infty(r<R_0)[0,T]} \ \lesssim \ 
		\LLp{\phi}{S_k[0,T]} \ , \qquad 
		\lp{ \phi^{(k+4)}}{H^1(r<R_0)[0,T]} \ \lesssim \ \LLp{\phi}{S_k[0,T]} \ . \notag
\end{equation}

\case{2}{Exterior estimate}
Using only $\min\{ j_1,j_2\}\leq k-1$ estimate \eqref{S1/2_ext} follows from \eqref{NF_L00} and:
\begin{equation}
		\sum_{i+|J|=1} \lp{\tau_x^{-\frac{1}{2}}  (\tau_0{\partial_u^b})^i(\partial_x^b)^J\phi^{(k+3)} }{\ell^1_r\ell^1_u L^2[0,T]}
		\ \lesssim \   \lp{\tau_x^{-\frac{1}{2}}  \tau_+^\frac{1}{2}
		(\tau_0{\partial_u^b} \phi^{(k+3)} , \partial_x^b \phi^{(k+3)})  }{\ell^\infty_r L^2[0,T]}
		\ \lesssim \ \LLp{\phi}{S_k[0,T]} \ . \notag
\end{equation}
\end{proof}

\begin{proof}[Proof of estimate \eqref{NFE3}]
Combining \eqref{NF_leibniz}, \eqref{NF_L00}, and the Klainerman-Sideris identity \eqref{KS_iden_higher}
we have:
\begin{multline}
		\sum_{i+|J|\leq k+2}
		\tau_0^\frac{1}{2} | (\tau_-{\partial_u^b})^i
		(\tau_x \partial_x^b)^J\mathcal{N} | \ \lesssim \ \LLp{\phi}{S_k[0,T]}\cdot  
		\Big( \sum_{i+|J|=1}  \tau_x^{-\frac{1}{2}} \tau_+^{-\frac{3}{2}}\tau_0^{-\frac{1}{2}}
		|(\tau_0{\partial_u^b})^i(\partial_x^b)^J\phi^{(k+2)}| \\
		 + \sum_{i+|J|\leq k+2}  \tau_0^\frac{1}{2} | (\tau_-{\partial_u^b})^i
		(\tau_x \partial_x^b)^J\Box_g \phi|
		\Big) \ . \label{NF_KS_iden}
\end{multline}
The proof of \eqref{NFE3} then follows by splitting into interior and exterior estimate as in the proof of \eqref{NFE2}
above. Note that \eqref{S1/2_int} already handles the interior contribution. For the exterior contribution 
we use:
\begin{equation}
		\sum_{i+|J|=1} \lp{  (\tau_0{\partial_u^b})^i(\partial_x^b)^J\phi^{(k+2)} }{\ell^1_u\ell^1_t \ell^1_r L^2[0,T]}
		\ \lesssim \   \lp{\tau_x^{-\frac{1}{2}}  \tau_+
		(\tau_0{\partial_u^b} \phi^{(k+2)} , \partial_x^b \phi^{(k+2)})  }{\ell^\infty_t \ell^\infty_r L^2[0,T]} \ \lesssim \ 
		\LLp{\phi}{S_k[0,T]} \ . \notag
\end{equation}
\end{proof}

\begin{proof}[Proof of \eqref{NFE4}]
This follows by applying \eqref{NF_KS_iden} at $t=0$ with index restriction 
$i+|J|\leq k$.
\end{proof}

%-------------------------------------------------------------------------
%%%%%%%%%%%%%%%%%%%%%%%%%%%%%
%%%%%%%%%%%%%%%%%%%%%%%%%%%%%
%-------------------------------------------------------------------------

%{\color{red} ------------------- LIMIT BREAK 8/17, at 2:00p ----------------------------}

\section*{Appendix 1: Coordinates}\label{mod_coords_sect}

In this appendix we discuss some basic consequences of Definition \ref{rad_metrics}, as well
as some simple conditions which guarantee the assumptions of Definition \ref{rad_metrics} hold.

\subsection{Bounds between $(t,x)$ and $(u,x)$ coordinates}

\begin{lem}\label{cov_lem}
Let $u(t,x)$ be a function satisfying the condition \ref{opt_cond}) of Definition \ref{rad_metrics}. 
Then for any smooth function
$q$  and integer $N\geq 0$ one has the following equivalence of symbol type bounds:
\begin{equation}
		\sum_{i+|J|\leq N}
		\lp{ (\tau_-{\partial_u^b} )^i (\tau_x \tau_0 \partial_x^b)^J 
		q }{\ell^1_rL^\infty}  \ \lesssim_{N,q} \ 1 \ \ \Longleftrightarrow \ \ 
		\sum_{i+|J|\leq N}
		\lp{ (\tau_-\partial_t )^i (\tau_x \tau_0 \partial_x)^J q }{\ell^1_rL^\infty}  \ \lesssim_{N,q} \ 1
		\ . \label{eq_sym_bnds}
\end{equation} 
In addition   the change of frame bounds \eqref{bondi_to_rec} also hold. 
\end{lem}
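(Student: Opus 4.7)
The plan is to first extract from condition \ref{opt_cond}) of Definition \ref{rad_metrics} the decomposition $u_t = 1 + e_0$ and $u_{x^i} = -\omega^i + e_i$, where each error $e_\alpha$ lies in the symbol class $\mathcal{S}_N$ consisting of functions $q$ with
$$\sum_{i+|J|\leq N} \lp{(\tau_-\partial_t)^i(\tau_x\tau_0\partial_x)^J q}{\ell^1_rL^\infty} \ < \ \infty \qquad \hbox{for every } N\geq 0.$$
Combined with the assumption $C^{-1}<u_t<C$, this gives $u_t$ bounded away from zero globally, so $(u,x)$ is a valid coordinate system and one has the elementary change-of-frame formulas
$${\partial_u^b} \ = \ u_t^{-1}\partial_t \ , \quad \partial_{x^i}^b \ = \ \partial_{x^i} - u_t^{-1}u_{x^i}\partial_t \ , \quad \partial_t \ = \ u_t{\partial_u^b} \ , \quad \partial_{x^i} \ = \ \partial_{x^i}^b + u_{x^i}{\partial_u^b} \ .$$

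Next I would verify the closure properties of $\mathcal{S}_N$ that are needed, namely that $\mathcal{S}_N$ is an algebra (thanks to $L^\infty \cdot \ell^1_r\subseteq \ell^1_r$ and Leibniz) and that it is stable under composition with smooth functions vanishing at zero (Fa\`a di Bruno). Applying this to $u_t^{-1} = 1 - e_0/u_t$ and to $u_{x^i}/u_t$ shows that the transition coefficients $u_t,\, u_t^{-1},\, u_{x^i},\, u_t^{-1}u_{x^i}$ all differ from $1$, $\pm\omega^i$, or $0$ (as appropriate) by elements of $\mathcal{S}_\infty$. Recall here that $\omega^i$ itself lies in $\mathcal{S}_\infty$ in the region $r\geq 1$.

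With the coefficient bounds in hand, the change-of-frame estimates \eqref{bondi_to_rec} reduce to a short induction: once $e^\beta_\alpha$, $f^\beta_\alpha$ have bounded $\partial$-derivatives, an application of $\partial = f \partial^b$ (and conversely) shows they also have bounded $\partial^b$-derivatives, and vice versa. For the symbol equivalence \eqref{eq_sym_bnds} the plan is induction on $N$, with $N=0$ trivial. For the inductive step one writes
$$\tau_-{\partial_u^b} \ = \ (\tau_- u_t^{-1})\partial_t \ , \qquad \tau_x\tau_0\partial_{x^i}^b \ = \ \tau_x\tau_0\partial_{x^i} - (\tau_x\tau_0 u_t^{-1} u_{x^i})\partial_t \ ,$$
and observes that $\tau_- u_t^{-1} - \tau_- \in \tau_-\cdot \mathcal{S}_\infty$ and $\tau_x\tau_0 u_t^{-1}u_{x^i} + \tau_x\tau_0 \omega^i \in \tau_x\tau_0\cdot \mathcal{S}_\infty$. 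Hitting such products with another $\tau_-\partial_t$ or $\tau_x\tau_0\partial_x$ produces linear combinations of words of one lower order with coefficients still in $\mathcal{S}_\infty$, which closes the induction for the forward direction. The reverse direction is symmetric, using $\partial_t = u_t{\partial_u^b}$ and $\partial_{x^i} = \partial_{x^i}^b + u_{x^i}{\partial_u^b}$ in place of the above.

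The main obstacle is bookkeeping rather than depth: one has to confirm that the chosen weights $\tau_-,\tau_x\tau_0$ are precisely those that make the multiplier $\tau_- u_t^{-1}$ and $\tau_x\tau_0 u_t^{-1}u_{x^i}$ fit into the symbol class $\mathcal{S}_\infty$ after subtracting off the leading terms, and to track that the $\ell^1_r$ summability is preserved through arbitrarily many commutations. Once the algebra of $\mathcal{S}_N$ is set up, the remainder of the argument is mechanical.
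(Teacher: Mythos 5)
Your argument is in essence the paper's own: both rest on the change-of-variables formulas ${\partial_u^b}=u_t^{-1}\partial_t$, $\partial_i^b=\partial_i-u_t^{-1}u_i\,\partial_t$ together with bounds on the transition coefficients extracted from condition I) of Definition \ref{rad_metrics}, followed by an iteration that commutes the weighted vector fields through those coefficients; the reverse implication is obtained by first feeding the forward one back into the coefficients themselves. One slip is worth correcting. The direction vector $\omega^i$ does \emph{not} lie in the $\ell^1_rL^\infty$-based class $\mathcal{S}_\infty$: it is of size one on every dyadic shell $r\approx 2^j$, so the $\ell^1_r$ sum diverges, and consequently neither do the full transition coefficients $u_{x^i}$ or $u_t^{-1}u_{x^i}$. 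Taken literally, your parenthetical ``$\omega^i\in\mathcal{S}_\infty$ for $r\geq 1$'' combined with ``the coefficients differ from $1,\pm\omega^i,0$ by elements of $\mathcal{S}_\infty$'' would give $u_{x^i}\in\ell^1_rL^\infty$, which is false. What is true, and all the argument needs, is the weaker statement that $\omega^i$ and the transition coefficients are bounded \emph{multipliers} on $\ell^1_rL^\infty$, i.e. they and all their $(\tau_-\partial_t)$, $(\tau_x\tau_0\partial_x)$ derivatives are uniformly bounded in $L^\infty$ --- this is precisely how the paper phrases it, via $|(\tau_-\partial_t)^i(\tau_x\tau_0\partial_x)^J(u_t,u_i)|\lesssim_N 1$. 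The $\ell^1_r$ summability is carried entirely by the factor $q$, and the inclusion $L^\infty\cdot\ell^1_rL^\infty\subseteq\ell^1_rL^\infty$ (which you invoke elsewhere) closes the induction. Keeping the two classes distinct --- $\ell^1_rL^\infty$ symbols for the errors $\partial_tu-1$, $\partial_iu+\omega^i$, and $L^\infty$ symbols for $\omega^i$ and the assembled coefficients --- your proof is correct and matches the paper's.
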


\begin{proof}
Thanks to the change of variables formula:
\begin{equation}
		{\partial_u^b} \ = \ \frac{1}{u_t}\partial_t , \qquad \partial_i^b \ = \ \partial_i - \frac{u_i}{u_t}\partial_t
		\ , \label{cov}
\end{equation}
the implication $\partial_t,\partial_x$ bounds $\Rightarrow$ ${\partial_u^b},\partial_i^b$ bounds 
follows easily  from  (assuming $i+|J|\leq N$):
\begin{equation}
	 u_t>c \ ,  \qquad 
	| (\tau_-\partial_t )^i (\tau_x \tau_0 \partial_x)^J (u_t,u_i)|\lesssim_N 1 \ , \notag
\end{equation}
where the line above itself holds thanks to part \ref{opt_cond}) of Definition \ref{rad_metrics}.
Applying this to $q=\partial u$ we have:
\begin{equation}
	| (\tau_-{\partial_u^b} )^i (\tau_x \tau_0 \partial^b_x)^J (u_t,u_i)|\lesssim_N 1 \ . \notag
\end{equation}
Finally, using this last line and formula \eqref{cov} the implication 
${\partial_u^b},\partial_i^b$  bounds $\Rightarrow$ $\partial_t,\partial_x$ bounds becomes clear.

As a last step notice that \eqref{bondi_to_rec} follows from the formulas \eqref{cov} and estimate 
\eqref{eq_sym_bnds} applied to $q=\partial u$.
\end{proof}

\begin{lem}\label{u_bnd_lem}
Let  $u(t,x)$ be a function satisfying the part \ref{opt_cond}) of Definition \ref{rad_metrics}, 
then   $\tau_+^{-1}(u+\tau_x-t)\in \ell^1_rL^\infty$.
\end{lem}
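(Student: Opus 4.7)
The plan is to argue that the function $v := u + \tau_x - t$ is Lipschitz-like in $(t,x)$: its derivatives $\partial_t v = \partial_t u - 1$ and (using $\partial_i \tau_x = \omega^i$) $\partial_i v = \partial_i u + \omega^i$ both lie in $\ell^1_r L^\infty[0,\infty)$ by the $(i,|J|)=(0,0)$ case of \eqref{u_sym_bnd}. Abbreviate $a_j := \lp{\chi_j \partial_t v}{L^\infty}$ and $b_j := \lp{\chi_j \nabla_x v}{L^\infty}$, so that $\sum_j a_j$ and $\sum_j b_j$ are both finite.

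Invoking the remark that one may assume $\mathcal{K}=\emptyset$, I would decompose
\begin{equation}
v(t,x) \ = \ v(0,0) + \bigl[v(0,x)-v(0,0)\bigr] + \bigl[v(t,x)-v(0,x)\bigr]. \notag
\end{equation}
The constant $v(0,0)$ contributes $\tau_+^{-1} v(0,0)$, which manifestly belongs to $\ell^1_r L^\infty$ since $\tau_+^{-1}\chi_j\approx 2^{-j}$. For the purely temporal difference, fix $x$ with $\la x\ra \approx 2^j$ and integrate along $s\mapsto (s,x)$: the path stays in the $j$-th spatial dyadic annulus, hence $|v(t,x)-v(0,x)|\le t\, a_j$, and $t/\tau_+\le 1$ absorbs the $t$ weight to give $\lp{\chi_j\tau_+^{-1}(v(t,x)-v(0,x))}{L^\infty}\lesssim a_j$, which is summable in $j$.

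The main step is the radial difference $v(0,x)-v(0,0)$. Here I would parametrize by the ray $r\mapsto r\omega$, $\omega=x/|x|$, split into dyadic pieces, and use $|\nabla_x v|\le b_k$ on $r\approx 2^k$ to obtain the telescoping bound
\begin{equation}
|v(0,x)-v(0,0)| \ \lesssim \ \sum_{0\le k\le j} 2^k b_k \qquad \text{when}\ \la x\ra \approx 2^j. \notag
\end{equation}
Dividing by $\tau_+\approx 2^j$ and summing in $j$ then reduces the matter to
\begin{equation}
\sum_{j\ge 0} 2^{-j} \sum_{k\le j} 2^k b_k \ = \ \sum_k b_k \sum_{j\ge k} 2^{k-j} \ \lesssim \ \sum_k b_k \ < \ \infty \notag
\end{equation}
by Fubini.

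The whole argument is a standard Lipschitz-plus-weight-comparison estimate, and the only mildly delicate point is preserving the $\ell^1_r$ summability through the Fubini exchange in the radial step. I do not foresee any genuine obstacle, since no commutations with $\Box_g$ or metric asymptotics beyond the zeroth order case of \eqref{u_sym_bnd} are required.
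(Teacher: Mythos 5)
Your argument is correct and is essentially the paper's proof: both integrate $\partial(u+\tau_x-t)$ along a path from the origin to $(t,x)$, decompose the path over dyadic annuli, and conclude with the same convolution/Fubini bound $\sum_j 2^{-j}\sum_{k\le j}2^k b_k \lesssim \sum_k b_k$. The only difference is that you take an L-shaped path (radial at $t=0$, then vertical in time at fixed $x$), whereas the paper integrates along the single straight spacetime ray from the origin to $(t,x)$ and absorbs the time variation into the arc-length factor $2^{k-j}$ before applying Young's convolution inequality; this is a cosmetic variation.
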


\begin{proof}
We have $\tau_+^{-1}(u+\tau_x-t)=\tau_+^{-1}\int_0^{(t,x)} \partial(u+\tau_x-t)\cdot ds + O(\tau_+^{-1})$
where $ds$ denotes the line integral along a straight ray from the origin to $(t,x)$. 
Bounding the integral in absolute value gives:
\begin{equation}
		\sup_{r\approx 2^j}|\tau_+^{-1}(u+\tau_x-t)| \ \lesssim \ 2^{-j} + \sum_{k\leq j}2^{k-j}\sup_{r\approx 2^k}
		|\partial(u+\tau_x-t)| \ . \notag
\end{equation}
The assumption
$\partial(u+\tau_x-t)\in \ell^1_rL^\infty$ and Young's convolution inequality finishes the proof.
\end{proof}

\begin{lem}
Let  $u(t,x)$ be a function satisfying the condition \ref{opt_cond}) of Definition \ref{rad_metrics}, 
and suppose that $g$ is a metric satisfying condition \ref{sym_bnds}). Then $g$ is weakly asymptotically
flat in $(t,x)$ coordinates in the sense that:
\begin{equation}
		\lp{(\tau_-\partial_t )^i(\tau_x  \tau_0 \partial_x)^J (g-\eta)^{\alpha\beta} }{\ell^1_r L^\infty} \ < \ \infty \ , \qquad
		\hbox{for all\ \ \ }  (i,J)\in\mathbb{N}\times \mathbb{N}^4 \ ,
		\label{weak_asym_flat1}
\end{equation}
where $\eta=diag(-1,1,1,1)$ is the Minkowski metric.
\end{lem}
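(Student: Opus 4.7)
\medskip

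\noindent\textbf{Proof proposal.} The strategy is to pass from the $(t,x)$-derivative symbol bounds that appear in \eqref{weak_asym_flat1} to equivalent Bondi-derivative bounds via Lemma \ref{cov_lem}, and then to read off the desired estimate from \eqref{mod_coords} after unraveling the coordinate change between $(u,x)$ and $(t,x)$. Concretely, Lemma \ref{cov_lem} reduces \eqref{weak_asym_flat1} to checking
\begin{equation}
\lp{(\tau_-{\partial_u^b})^i(\tau_x\tau_0\partial^b_x)^J (g-\eta)^{\alpha\beta}}{\ell^1_r L^\infty} \ <\ \infty
\notag
\end{equation}
for all $(i,J)$. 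Since $\tau_0\leq 1$, the class $\mathcal{Z}^0$ (which uses $\tau_x\partial^b_x$) is continuously included in the symbol class on the RHS above, so it suffices to show $(g-\eta)^{\alpha\beta}\in \mathcal{Z}^0$ when the components of $g^{-1}$ are viewed as scalar functions of the Bondi variables $(u,x)$.

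The first step is to compute how the contravariant components of $g$ transform. Using the Jacobian of $(u,x)\to (t,x)$ one has ${\partial_u^b}=t_u \partial_t$ and $\partial^b_i=\partial_i + t_{x^i}\partial_t$ with $t_u=1/u_t$ and $t_{x^i}=-u_{x^i}/u_t$, so
\begin{align}
		g^{00}_{(t,x)} \ &= \ g^{uu} t_u^2 + 2 g^{ui} t_u t_{x^i} + g^{ij} t_{x^i} t_{x^j} \ , \notag\\
		g^{0i}_{(t,x)} \ &= \ g^{ui} t_u + g^{ij} t_{x^j} \ , \notag\\
		g^{ij}_{(t,x)} \ &= \ g^{ij}_{(u,x)} \ . \notag
\end{align}
From \eqref{u_sym_bnd} together with $u_t>c>0$, the deviations $\delta_u:=t_u-1$ and $\delta_i:=t_{x^i}-\omega^i$ satisfy $\mathcal{Z}^0$-type bounds (stability of this class under division by a function bounded away from zero follows by Taylor expansion just as in the proof of \eqref{det_bnds}).

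The key computation is then the cancellation between the ``Minkowskian'' piece of the Jacobian and the $h$-tensor. Substituting $t_u=1+\delta_u$ and $t_{x^i}=\omega^i+\delta_i$ into the formulas above and collecting the terms corresponding to $\delta_u=\delta_i=0$ gives exactly $h^{00}=-1$, $h^{0i}=0$, $h^{ij}=\delta^{ij}$, i.e.~$\eta^{\alpha\beta}$. Hence
\begin{equation}
		(g-\eta)^{\alpha\beta}_{(t,x)} \ = \ P^{\alpha\beta}\!\!\left(\, g^{uu},\, g^{ui}+\omega^i,\, g^{ij}-\delta^{ij},\, \delta_u,\, \delta_i,\, \omega^j\, \right)  ,
		\notag
\end{equation}
where each entry $P^{\alpha\beta}$ is a polynomial whose monomials each contain at least one factor drawn from the first five arguments. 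Combined with the bounds $g^{uu}\in\mathcal{Z}^2$, $g^{ui}+\omega^i\in\mathcal{Z}^0$ (this last inclusion follows from the pair of estimates on $\sqrt{|g|}g^{iu}+\omega^i$ and $g^{ui}-\omega^i\omega_j g^{uj}$ in \eqref{mod_coords} together with \eqref{det_bnds}), $g^{ij}-\delta^{ij}\in\mathcal{Z}^0$, and the already noted $\delta_u,\delta_i\in\mathcal{Z}^0$, the algebra property of $\mathcal{Z}^0$ (closure under products, which is immediate from the Leibniz rule applied to the defining seminorm \eqref{Zk_defn}) yields $(g-\eta)^{\alpha\beta}_{(t,x)}\in\mathcal{Z}^0$, which is what was needed.

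The only nontrivial point is verifying the algebra closure of the symbol classes $\mathcal{Z}^k$ and making sure the precise structural conditions in \eqref{mod_coords} (the improvements $\mathcal{Z}^{1/2}$, $\mathcal{Z}^1$, $\mathcal{Z}^2$) are actually used only to ensure $\mathcal{Z}^0$ membership of each individual summand; this is not an obstacle but requires a careful pass through the expansion. The potential main obstacle is bookkeeping the various factors of $\omega^i$ (which are $O(1)$ at infinity but still carry angular-derivative bounds) to confirm that no spurious singularity is generated when computing $(\tau_x\partial^b_x)^J$ derivatives — this is handled once one observes that $(\tau_x\partial^b_x)^J\omega^i$ remains uniformly bounded.
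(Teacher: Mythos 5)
Your overall route (change frames via Lemma \ref{cov_lem}, expand the contravariant transformation law through the Jacobian $t_u,t_{x^i}$, cancel the leading Jacobian against $h$, and finish with \eqref{mod_coords}) is the same as the paper's, but there is a genuine gap in your reduction: the intermediate claim that it suffices to prove $(g-\eta)^{\alpha\beta}\in\mathcal{Z}^0$, and in particular that $\delta_u=t_u-1$ and $\delta_i=t_{x^i}-\omega^i$ ``satisfy $\mathcal{Z}^0$-type bounds,'' is not justified by the hypotheses and is in general false. Condition \eqref{u_sym_bnd} controls only the $(\tau_-\partial_t)^i(\tau_x\tau_0\partial_x)^J$ derivatives of $(\partial_t u-1,\partial_i u+\omega^i)$: every spatial derivative comes with the extra factor $\tau_0$, which in the wave zone is as small as $\tau_+^{-1}$. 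Membership in $\mathcal{Z}^0$ would require the unweighted $(\tau_x\partial_x^b)^J$ derivatives appearing in \eqref{Zk_defn}, together with the additional $\ell^1_u\ell^1_rL^\infty(\frac{1}{2}t<r<2t)$ summability in the wave zone, neither of which \eqref{u_sym_bnd} provides. So already at one spatial derivative your claimed $\mathcal{Z}^0$ bound for $\delta_u,\delta_i$ can fail by a factor of $\tau_0^{-1}$, and the stronger statement you reduce to need not hold.

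The repair is cheap and lands you exactly on the paper's argument: since the target \eqref{weak_asym_flat1} (equivalently its Bondi version supplied by Lemma \ref{cov_lem}) itself only asks for $(\tau_x\tau_0\partial_x^b)^J$ derivatives in $\ell^1_rL^\infty$, run the whole polynomial expansion in that weaker class rather than in $\mathcal{Z}^0$. That class is still an algebra by the Leibniz rule; it contains $\mathcal{Z}^0$, so the terms $g^{uu}$, $g^{ui}+\omega^i$, $g^{ij}-\delta^{ij}$ furnished by \eqref{mod_coords} and \eqref{det_bnds} are admissible, as is the $\tau_x^{-2}$ correction you silently dropped when identifying the $\delta=0$ part of $g^{00}$ with $-1$ (it is actually $-|\omega|^2=-1+\tau_x^{-2}$); and it does contain $\delta_u,\delta_i$ by \eqref{u_sym_bnd} and the frame equivalence of Lemma \ref{cov_lem}. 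With that single substitution your computation --- which is the same cancellation the paper performs, written in terms of $\partial^b(u+\tau_x-t)=(-\delta_u,-\delta_i)$ --- closes.
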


\begin{proof}
By Lemma \ref{cov_lem} is suffices to prove the  bound:
\begin{equation}
		\lp{(\tau_-{\partial_u^b} )^i(\tau_x  \tau_0 \partial_x^b)^J 
		(g-\eta)^{\alpha\beta} }{\ell^1_r L^\infty} \ < \ \infty \ , \qquad
		\hbox{for all\ \ \ }  (i,J)\in\mathbb{N}\times \mathbb{N}^4 \ ,
		\label{weak_asym_flat'}
\end{equation}
where $(g-\eta)^{\alpha\beta}$ still denotes the components in $(t,x)$ coordinates. Such estimates
for $(g-\eta)^{ij}$ follow at once from the first inclusion on line \eqref{mod_coords} because
$g^{ij}$ is the same in either $(t,x)$ or $(u,x)$ coordinates. 

For remaining components we compute:
\begin{align}
		g^{ti} \ &= \ (g^{u i}+\omega^i) + \omega_j(g^{ij}-\delta^{ij})
		-g^{\alpha i}\partial_\alpha^b(u+\tau_x-t) \ , \notag\\
		g^{tt}+1 \ &= \ g^{uu}+2(g^{ui}+\omega^i)\omega_i
		+\omega_i\omega_j(g^{ij}-\delta^{ij}) 
		- 2(g^{u\alpha}+\omega_ig^{i\alpha})\partial_\alpha^b(u+\tau_x-t) \notag\\
		&\hspace{.45in}+ g^{\alpha\beta}\partial_\alpha^b(u+\tau_x-t)\partial_\beta^b(u+\tau_x-t) 
		+ \tau_x^{-2}
		\ , \notag
\end{align}
where all metric components on the RHS are now computed in $(u,x)$ coordinates, and where we are using the
notation $\omega^i=\omega_i=x^i\tau_x^{-1}$. In addition to these formulas we also have the estimate:
\begin{equation}
		\lp{(\tau_-{\partial_u^b} )^i(\tau_x  \tau_0 \partial_x^b)^J 
		\partial^b (u+\tau_x-t)  }{\ell^1_r L^\infty} \ < \ \infty \ , \notag
\end{equation}
which itself is a consequence of \eqref{u_sym_bnd}, \eqref{cov}, and Lemma \ref{cov_lem}.
The remaining portion of estimate \eqref{weak_asym_flat'}
 follows from the last three lines above combined with  assumption \eqref{mod_coords}.
\end{proof}

\begin{lem}\label{splice_lem}
Fix  $\delta>0$. Let $u_1$ be an approximate optical 
function satisfying the conditions of Definition \ref{rad_metrics} in the region 
$\la t-r\ra\geq  \delta \la t+r\ra$, and let
$u_2$ be an approximate optical 
function satisfying the conditions of Definition \ref{rad_metrics} in the region 
$\la t-r\ra\leq 2 \delta \la t+r\ra$.
Then if $\chi$ is any cutoff function with $\chi\equiv 1$ on $\la t-r\ra\geq 2\delta \la t+r\ra$,  
$\chi\equiv 0$ on $\la t-r\ra\leq  \delta \la t+r\ra$, and $|(\tau_x \partial)^J\chi|\lesssim 1$,
the function $u=\chi u_1+(1-\chi )u_2$
satisfies   the conditions of Definition \ref{rad_metrics} globally. In particular, in Definition  \ref{rad_metrics}
we may always assume $u=t-\tau_x$ away from the region $\la t-r \ra\ll \la t+r\ra$.
\end{lem}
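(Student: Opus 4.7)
The approach is to verify the two conditions of Definition \ref{rad_metrics} for $u = \chi u_1 + (1-\chi) u_2$ on the overlap region $\delta\langle t+r\rangle \leq \langle t-r\rangle \leq 2\delta\langle t+r\rangle$ where neither $\chi$ nor $1-\chi$ is constant; outside this region $u$ locally equals $u_1$ or $u_2$, and the conclusion is inherited from the corresponding hypothesis. The key simplification is that on the overlap $\tau_0 \approx \delta$, so every weight $\tau_0^{\pm k}$ is bounded, and the classes $\mathcal{Z}^k$ collapse to a single class of symbols enjoying uniform $\ell^1_r L^\infty$ bounds in the rescaled derivatives, which (by Lemma \ref{cov_lem}) may be taken to be $\tau_-\partial_t$ and $\tau_x\tau_0\partial_x$.

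For part (I), I would use the Leibniz decomposition
\[
\partial_\alpha u - (\delta_\alpha^0, -\omega^i) \ = \ \chi\bigl(\partial_\alpha u_1 - (\delta_\alpha^0, -\omega^i)\bigr) + (1-\chi)\bigl(\partial_\alpha u_2 - (\delta_\alpha^0, -\omega^i)\bigr) + (\partial_\alpha\chi)(u_1 - u_2),
\]
together with the Leibniz rule applied to the weights $(\tau_-\partial_t)^i(\tau_x\tau_0\partial_x)^J$. The first two summands are controlled directly by \eqref{u_sym_bnd} for $u_1$, $u_2$ in their respective domains of validity (which contain the support of $\chi$, $1-\chi$). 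For the remaining term, Lemma \ref{u_bnd_lem} applied to $u_1$ and $u_2$ gives $u_1 - u_2 \in \tau_+ \cdot \ell^1_r L^\infty$, while each first derivative of $u_1 - u_2$ equals the difference $(\partial u_1 - \partial(t-\tau_x)) - (\partial u_2 - \partial(t-\tau_x))$, which together with all higher rescaled derivatives lies in $\ell^1_r L^\infty$ by assumption. Since $|(\tau_+\partial)^I(\partial\chi)| \lesssim \tau_+^{-1}$ on the overlap (where $\tau_- \approx \tau_+$, so that $\tau_-\partial_t$ and $\tau_x\tau_0\partial_x$ act as multiples of $\tau_+\partial$), each term in the expansion of $(\tau_-\partial_t)^i(\tau_x\tau_0\partial_x)^J[(\partial_\alpha\chi)(u_1 - u_2)]$ is bounded by an $\ell^1_r L^\infty$ function.

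For part (II), on the overlap it suffices to show $\hat g^{\alpha\beta} - h^{\alpha\beta} \in \mathcal{Z}^0$ for every component, since there all of $\mathcal{Z}^0, \mathcal{Z}^{1/2}, \mathcal{Z}^1, \mathcal{Z}^2$ coincide. Using the Jacobian relations
\[
\hat g^{uu} = u_t^2 g^{tt} + 2 u_t u_j g^{tj} + u_i u_j g^{ij}, \qquad \hat g^{ui} = u_t g^{ti} + u_j g^{ij}, \qquad \hat g^{ij} = g^{ij},
\]
together with the decompositions $u_t = 1 + (u_t - 1)$, $u_i = -\omega^i + (u_i + \omega^i)$, and $g^{\alpha\beta} = \eta^{\alpha\beta} + (g^{\alpha\beta} - \eta^{\alpha\beta})$, a direct algebraic expansion shows that the leading "Minkowski" contributions reproduce exactly $h^{uu} = 0$, $h^{ui} = -\omega^i$, $h^{ij} = \delta^{ij}$. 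Every remaining summand in $\hat g^{\alpha\beta} - h^{\alpha\beta}$ is a product in which at least one factor is either $u_t - 1$, $u_j + \omega^j$, or a component of $g - \eta$. The first two are in $\ell^1_r L^\infty$ with the required symbol regularity by part (I), while the last enjoys the weak asymptotic flatness estimate \eqref{weak_asym_flat1} (applicable since either $u_1$ or $u_2$ supplies the hypothesis of that lemma). All such products therefore lie in $\mathcal{Z}^0$ on the overlap.

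The "in particular" claim follows by applying the splicing with $u_1 := t - \tau_x$ and $u_2$ the given optical function: the choice $u_1 = t - \tau_x$ satisfies (I) trivially (both $\partial_t u_1 - 1$ and $\partial_i u_1 + \omega^i$ vanish identically), and satisfies (II) in $\langle t-r\rangle \geq \delta\langle t+r\rangle$ by the same Jacobian computation above, specialized to $u_t \equiv 1$, $u_i \equiv -\omega^i$, and using once more the weak asymptotic flatness of $g$. The main obstacle is the bookkeeping in part (II): one must verify that the pointwise cancellation against $h^{\alpha\beta}$ occurs component-by-component and that each residual product lies in a symbol class strong enough to satisfy the strongest of the four inclusions \eqref{mod_coords} (which, thanks to $\tau_0 \approx \delta$ on the overlap, reduces to a uniform problem). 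Everything else is an application of the Leibniz rule combined with Lemmas \ref{cov_lem} and \ref{u_bnd_lem}.
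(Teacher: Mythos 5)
Your proposal is correct and follows essentially the same route as the paper: part (I) is handled by controlling the cross term $(\partial_\alpha\chi)(u_1-u_2)$ via Lemma \ref{u_bnd_lem} together with the derivative bounds \eqref{u_sym_bnd}, and part (II) exploits that $\tau_0\gtrsim\delta$ on the relevant region so that the four inclusions of \eqref{mod_coords} collapse to a single $\mathcal{Z}^0$ condition, verified from weak asymptotic flatness \eqref{weak_asym_flat1} plus the symbol bounds on $\partial(u+\tau_x-t)$. The only (cosmetic) difference is that the paper adds and subtracts $h$ to reduce the check to the Bondi components $\eta^{uu}$ and $\eta^{ui}+\omega^i$ of the Minkowski metric, whereas you expand the Jacobian relations directly — the same computation organized differently.
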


\begin{proof}
Using Lemma \ref{u_bnd_lem} we have both:
\begin{align}
	\lp{(\la t\ra \partial_t)^i(\tau_x \partial_x)^J\tau_+^{-1}(u_1+\tau_x -t)}
	{\ell^1_r L^\infty(\delta\leq \la t+r\ra^{-1}\la t-r\ra \leq 2\delta)} \ &< \ \infty \ ,  \notag\\
	\lp{(\la t\ra\partial_t)^i(\tau_x \partial_x)^J\tau_+^{-1}(u_2+\tau_x -t)}
	{\ell^1_r L^\infty(\delta\leq \la t+r\ra^{-1}\la t-r\ra \leq 2\delta)} \ &< \ \infty  \ . \notag
\end{align}
Thus
$\tau_+^{-1}(u_1-u_2)$ satisfies the same bound in $\delta\leq \la t+r\ra^{-1}\la t-r\ra \leq 2\delta$, and so
$u=\chi u_1+(1-\chi)u_2$ satisfies \eqref{u_sym_bnd} globally. Note that $\tau_+^{-1} \la u_1 \ra \approx 
\tau_+^{-1} \la u \ra \approx 1$ in $\la t-r \ra \geq \delta \la t+r\ra$ and 
$\tau_+^{-1} \la u_2 \ra \approx 
\tau_+^{-1} \la u \ra \approx 1$ in $\la t-r \ra \leq  \delta \la t+r\ra$ thanks to Lemma \ref{u_bnd_lem}, so the
definition of $\tau_0$ is not affected by splicing $u_1,u_2$.  

It remains to show the bounds \eqref{mod_coords} hold in $(u,x)$ coordinates. Because
$u=u_2$ when $\la t-r \ra\leq \delta \la t+r\ra$, we concentrate on the complementary region.
Here it suffices to show that if $u$ is any function
satisfying \eqref{u_sym_bnd}, and $g$ is any metric satisfying \eqref{weak_asym_flat1},
then one has automatically has the first inclusion on line 
\eqref{mod_coords} restricted to the region $\la t-r \ra\geq  \delta \la t+r\ra$.
Notice that by combining \eqref{weak_asym_flat1} and \eqref{u_sym_bnd},
we see that \eqref{weak_asym_flat1} also holds for all Bondi coordinate components
of $(g^{\alpha\beta}-\eta^{\alpha\beta})$. Therefore, adding and subtracting the tensor $h^{\alpha\beta}$
defined on line \eqref{h_tensor}, our task boils down to showing: 
\begin{equation}
		\lp{(\la t\ra \partial_t )^i(\tau_x    \partial)^J 
		(\eta^{uu},\eta^{ui}+\omega^i)  }
		{\ell^1_r L^\infty(\la t+r\ra^{-1} \la t-r \ra\geq \delta)} \ < \ \infty \ . \notag
\end{equation}
This last line follows from a few simple calculations and the estimate:
\begin{equation}
		\lp{(\la t\ra \partial_t )^i(\tau_x    \partial)^J 
		\partial (u+\tau_x-t)  }
		{\ell^1_r L^\infty(\tau_+^{-1} \la t-r \ra\geq \delta)} \ < \ \infty \ , \notag
\end{equation}
which is an immediate consequence of  \eqref{u_sym_bnd}.
\end{proof}

\subsection{Constructions  for nearly stationary/spherically-symmetric  metrics}

In this section we discuss a simple situation where one can construct an approximate ``optical function''
$u(t,x)$ satisfying conditions \eqref{mod_coords}. This is given by the following definitions.

\begin{defn}
Let $g_{\alpha\beta}$ be a Lorentzian metric on $[0,\infty)\times (\mathbb{R}^{3}\setminus \mathcal{K})$, 
where $\mathcal{K}$ is a compact set. Then:
\begin{enumerate}[i)]
\item 
$g$  is called ``weakly asymptotically flat and quasi-stationary'' if:
\begin{equation}
		  \lp{ \ln^2(1+\tau_x)   (t\partial_t)^i (\tau_x \partial)^J (g_{\alpha\beta}
		-\eta_{\alpha\beta})}{ \ell^1_r L^\infty } \ < \ \infty \ ,
		\qquad \hbox{all \ \ } (i,J) \in \mathbb{N}\times \mathbb{N}^4 \ . \label{q_st}
\end{equation}
Here $\eta=diag(-1,1,1,1)$ is the Minkowski metric in $(t,x)\in \mathbb{R}\times\mathbb{R}^3$ coordinates.

\item
$g$ is called ``quasi-spherical'' if one can write
 $g=g_0+g_1$, where $g_0$ is a spherically symmetric  in $(t,x)$ coordinates, and the remainder $g_1$ satisfies:
\begin{equation}
		  \lp{\tau_x    (\tau_x \partial)^J (g_1)_{\alpha\beta}
		}{ \ell^1_r L^\infty } \ < \ \infty \ ,
		\qquad \hbox{all \ \ } J \in  \mathbb{N}^4 \ . \label{q_sp}
\end{equation}
\end{enumerate}
\end{defn}

% {\color{blue} WHAT FOLLOWS IS A GENERAL OUTLINE:}

\begin{prop}
Let $g_{\alpha\beta}$ be a Lorentzian metric on $[0,\infty)\times (\mathbb{R}^{3}\setminus \mathcal{K})$, 
where $\mathcal{K}$ is a compact set.  Suppose that $g$ is weakly
asymptotically flat and quasi-stationary/spherical
in the sense that \eqref{q_st} and \eqref{q_sp} both hold. Then $g$ satisfies the assumptions of Definition
\ref{rad_metrics} (after a possible redefinition of the $x^i$ coordinates).
\end{prop}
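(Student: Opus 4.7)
The plan is to construct $u$ explicitly from the spherically symmetric part $g_0$ via the eikonal equation, splice with $t-\tau_x$ away from the wave zone using Lemma \ref{splice_lem}, and then verify the structural conditions \eqref{mod_coords} in the resulting Bondi coordinates. Without loss of generality, pick spherical coordinates in $x$ adapted to $g_0$, so that $g_0$ depends only on $(t,r)$. Because $g_0$ is spherically symmetric, an outgoing eikonal function of the form $u_0=u_0(t,r)$ satisfies the algebraic identity $g_0^{tt}u_t^2+2g_0^{tr}u_tu_r+g_0^{rr}u_r^2=0$, which we normalize by $u_t\equiv 1$ and solve for the outgoing branch $u_r=\bigl(-g_0^{tr}-\sqrt{(g_0^{tr})^2-g_0^{tt}g_0^{rr}}\bigr)/g_0^{rr}$. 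For Minkowski this returns $u_r=-1$, and for the perturbed metric one writes $u_r=-1+E(t,r)$ where, by \eqref{q_st} applied to $g_0$, the remainder $E$ inherits the bounds $|(t\partial_t)^i(r\partial_r)^J E|\lesssim \ln^{-2}(2+r)$. Setting $u_0(t,r)=t-r+\int_0^r E(t,r')\,dr'$ in the wave zone $\la t-r\ra\ll \la t+r\ra$ then gives a smooth function.

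Next, I would verify that $u_0$ satisfies \eqref{u_sym_bnd} in the wave zone. The key observation is that $\ln^{-2}(2+r)$ is $\ell^1_r$-summable against dyadic decompositions, so each differentiation of $E$ preserves the $\ell^1_r L^\infty$ bound with appropriate $\tau_-, \tau_x$ weights. This yields $\partial_t u_0 - 1, \partial_i u_0 + \omega^i \in \ell^1_r L^\infty$ with all the iterated symbol bounds. Splicing with $t-\tau_x$ outside the wave zone using Lemma \ref{splice_lem} produces a globally defined $u$ satisfying part \ref{opt_cond}) of Definition \ref{rad_metrics}, and Lemma \ref{u_bnd_lem} ensures $\tau_+^{-1}(u+\tau_x-t)=o_r(1)$, so the optical function is well-calibrated.

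The main obstacle is verifying the graded decay conditions \eqref{mod_coords}, and in particular the improved decay $g^{uu}\in\mathcal Z^2$, $\sqrt{|g|}g^{iu}+\omega^i\in\mathcal Z^{1/2}$, and $g^{ui}-\omega^i\omega_jg^{uj}\in\mathcal Z^1$. Since $u_0$ was constructed as an \emph{exact} null function for $g_0$, one has $g_0^{\alpha\beta}\partial_\alpha u\,\partial_\beta u=0$ identically in the wave zone; computing this expression in $(u,x)$ coordinates gives directly $g_0^{uu}=0$, which explains the $\mathcal Z^2$ bound at leading order. The remaining contribution from $g_1$ obeys \eqref{q_sp} and thus satisfies $g_1^{\alpha\beta}\in\tau_x^{-1}\cdot\mathcal Z^0$ in rectangular Bondi coordinates after applying the change-of-frame bounds \eqref{bondi_to_rec}, which is enough for $g_1^{uu}\in\mathcal Z^2$ by inspection. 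For the mixed components, one expresses $g^{ui}+\omega^i=g^{\alpha i}\partial_\alpha^b(u+\tau_x-t)-\omega_j(g^{ij}-\delta^{ij})+(g_0-\eta)^{ti}+g_1^{ti}$ and uses that the first term vanishes algebraically from the choice $u_t=1$ together with the eikonal identity evaluated against $\partial_i^b$, while the remaining terms inherit the desired decay from \eqref{q_st}, \eqref{q_sp}, and the just-established bounds on $\partial(u+\tau_x-t)$. The weight $\ln^2(1+\tau_x)$ in \eqref{q_st} is precisely what makes the dyadic sums needed to verify the $\mathcal Z^k$ seminorms \eqref{Zk_defn} absolutely convergent; weaker decay like $\ln^{-1}$ would fail at exactly this step. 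Since the determinant factor $\sqrt{|g|}-1$ automatically enters $\mathcal Z^0$ by the same reasoning as \eqref{det_bnds}, the improved bounds for $\sqrt{|g|}g^{iu}+\omega^i$ follow from those for $g^{iu}+\omega^i$, completing the verification of \eqref{mod_coords}.
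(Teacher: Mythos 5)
Your overall plan (build $u$ from the eikonal equation for $g_0$, splice with $t-\tau_x$ via Lemma \ref{splice_lem}, then check \eqref{mod_coords}) matches the paper's, but two steps in the execution fail. First, the construction of $u_0$ is internally inconsistent: you impose $u_t\equiv 1$, solve the eikonal equation \emph{algebraically} for $u_r=-1+E(t,r)$, and then set $u_0=t-r+\int_0^r E(t,r')\,dr'$. Since $E$ depends on $t$ (the metric is only \emph{quasi}-stationary), this function has $\partial_t u_0=1+\int_0^r\partial_tE\,dr'\neq 1$, so the pair $(\partial_tu_0,\partial_ru_0)$ no longer satisfies the eikonal identity. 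By the very accumulation estimate you use to show $\partial_tu_0-1\in\ell^1_rL^\infty$, the eikonal error — which is exactly $g_0^{uu}$ — is of size $\sim\ln^{-2}(r)$ near the light cone, nowhere near the $\mathcal{Z}^2$ requirement $|g^{uu}|\lesssim\tau_0^2\,a(r)\approx r^{-2}a(r)$ at $\tau_-\approx 1$. The paper avoids this by actually solving the characteristic ODE $\partial_r^b\zeta=G(t,r,\zeta)$ for $\zeta=\partial u_0-(1,-1)$ (so nullity is exact) and only afterwards renormalizing so that $\partial_tu_0\to1$ at null infinity via the reparametrization $u_0=F(\tilde u)$ with $F'=1/f$.

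Second, you omit the correction $u_1$ for the non-spherical part entirely, and the claim that $g_1^{uu}\in\mathcal{Z}^2$ follows ``by inspection'' from $g_1\in\tau_x^{-1}\cdot\mathcal{Z}^0$ is false: $\mathcal{Z}^2$ demands decay $\tau_0^2=\tau_-^2\tau_+^{-2}$, and near the cone $\tau_0^2\approx\tau_x^{-2}\ll\tau_x^{-1}$, so the raw bound \eqref{q_sp} on $g_1(du_0,du_0)$ is off by a full power of $r$. This is precisely why the paper imposes the second equation on line \eqref{u_def_eqns}, $2g_0(du_0,du_1)+g_1(du_0,du_0)=0$, and solves the resulting linear transport equation along the $du_0$-null directions with data at infinity, obtaining $|r(r\partial)^J\partial u_1|\in\ell^1_rL^\infty$ via Young's inequality; without $u_1$ the $\mathcal{Z}^2$ condition for $g^{uu}$ cannot be met. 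A smaller but related omission: the bound $\sqrt{|g|}g^{iu}+\omega^i\in\mathcal{Z}^{1/2}$ does not follow from \eqref{q_st} (which carries no $\tau_0$ gain); it comes from first renormalizing the radial coordinate so that the angular part of $g_0$ is exactly $r^2d\sigma^2$, whence $\sqrt{|g_0|}\,g_0^{u_0i}+r^{-1}x^i=0$ identically — this is the ``redefinition of the $x^i$ coordinates'' in the statement, which your argument never actually uses. Your appeal to ``the eikonal identity evaluated against $\partial_i^b$'' is not a valid substitute, since the eikonal equation is a single scalar identity.
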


%The proof of this Proposition largely boils down to the following elementary result whose proof we omit:

%\begin{lem}\label{xi_lem}
%Let $w(r)$ be a monotone increasing weight such that $0\leq w(2^{j+1})\leq Cw(2^j)$ for $j\geq 0$.
%Then if $\zeta(r)$ is a non-negative quantity defined for $r\geq 1$ such that there exists $A,B\geq 0$ with:
%\begin{equation}
%		\xi(r) \ \leq \ \int_r^\infty A(s)\xi(s) ds + B(r) \ , \qquad \hbox{where\ \ \ }
%		\lp{  ( r  A, w B)}{\ell^1_r L^\infty} \ \leq \ C \ , \label{xi_int}
%\end{equation}
%then  one has:
%\begin{equation}
%		\lp{w \xi}{\ell^1_r L^\infty}\ \lesssim \ C \ , \label{xi_bnd}
%\end{equation}
%where the implicit constant depends only on $w$.
%\end{lem}

\begin{proof}
We'll prove this in a series of steps.

\step{1}{Preliminary reduction} 
By Lemma \ref{splice_lem} above it suffices to construct an approximate 
optical function $u(t,x)$ satisfying conditions \eqref{u_sym_bnd} and \eqref{mod_coords}
in the region $\la t-r \ra\ll \la t+r\ra$. 
Using a partition of unity we may   extend
$g$ to be the
Minkowski metric in the exterior  $\la t-r \ra\gtrsim \la t+r\ra$. This extension will still satisfy 
\eqref{q_st} and \eqref{q_sp}.

Next, after a possible radial change of variables which preserves both \eqref{q_st} and \eqref{q_sp}, 
we may assume that the area of $t=const$ and $r=const$ with respect to the restriction of $g_0$ is $4\pi r^2$.
In other words we may assume the spherically symmetric part $g_0$
can be written  in polar coordinates as:
\begin{equation}
		g_0 \ = \ (g_0)_{tt}dt^2 + 2(g_0)_{tr}dtdr + (g_0)_{rr}dr^2 + r^2 d\sigma^2 \ , \label{g_0_form}
\end{equation}
where $d\sigma^2$ is the standard round metric on $\mathbb{S}^2$. 

The goal now is to  
construct $u$ in two pieces $u=u_0+u_1$, where $u_0=u_0(t,r)$ is radially symmetric and
corresponds to $g_0$, while the remainder $u_1$ takes into account $g_1=g-g_0$. The requirements
for these two functions will be:
\begin{equation}
		\lp{  \ln(r)  (  r    \partial)^J 
		\partial (u_0+r-t)  }
		{\ell^1_r L^\infty( r\geq R)}  +
		\lp{   r (  r    \partial)^J \partial
		u_1}
		{\ell^1_r L^\infty( r\geq R)}
		\ \lesssim_J \ 1   \ , \label{u_01_est}
\end{equation}
for sufficiently large $R$, and in  addition:
\begin{equation}
		g_0(du_0,du_0) \ = \ 2g_0( du_0,du_1)+g_1(du_0,du_0) \ = \ 0 \ . \label{u_def_eqns}
\end{equation}
Notice that line \eqref{u_01_est} and formulas \eqref{cov}
allows us to freely change $(r\partial)^J$ with $(r\partial^b)^J$
in any estimate we consider.

First suppose that we have achieved both \eqref{u_01_est} and \eqref{u_def_eqns}.
By the assumption \eqref{q_st} and \eqref{u_01_est}, we have:
\begin{equation}
		\slp{ \ln(r)  (  r   \partial )^J 
		( g^{\alpha\beta}-\eta^{\alpha\beta}
		)  }{\ell^1_r L^\infty (r\geq R)}  +
		\slp{ \ln(r)  (  r   \partial )^J 
		( \eta^{uu},\eta^{ui}+\omega^{i}
		)  }{\ell^1_r L^\infty (r\geq R)} 
		\ <  \ \infty \ ,  \notag
%		 \label{u_0_met_bnds}
\end{equation}
where all components of $(g^{\alpha\beta}-\eta^{\alpha\beta})$ are computed in $(u,x)$ coordinates.
This suffices to give the first inclusion on line \eqref{mod_coords} (note we only need this for
$\la t-r \ra\ll \la t+r\ra$). We remark that the convergence factor
$\ln(r)$ is sufficient to sum in $\ell^1_u$ when $r\approx t\approx 2^j$.
% ; this is the only reason we have introduced the weight $\ln(1+\tau_x)$ on line \eqref{q_st}.

Next, from the explicit form \eqref{g_0_form} and
the identities on line \eqref{u_def_eqns}, we have both:
\begin{equation}
		 \sqrt{|g_0|}g_0^{u_0 i}+r^{-1}x^i  \ = \ 0 \ , \qquad
		 g^{uu}  \ = \ g(du_1,du_1) +2g_1(du_0,du_1)\ , \notag
%		\ . \label{u_0_met_idens}
\end{equation}
where $ \sqrt{|g_0|}$ is computed in $(u_0,x)$ coordinates. Using \eqref{q_sp} and \eqref{u_01_est}
we have:
\begin{equation}
	\lp{ r  (  r    \partial)^J ( \sqrt{|g_0|} - \sqrt{|g|})}
	{\ell^1_r L^\infty( r\geq R)}  \ < \ \infty \ , \notag
\end{equation}
where $\sqrt{|g|}$ is computed in $(u,x)$ coordinates.
Combining the last two lines 
and  \eqref{u_01_est} again gives:
\begin{equation}
		\lp{  r^2 (  r    \partial)^J 
		  g^{uu}  }
		{\ell^1_r L^\infty( r\geq R)}  +
		\lp{ r  (  r    \partial)^J ( \sqrt{|g|}g^{u i}+\omega^i, g^{ui}-\omega^i\omega_j  g^{uj})
		 }
		{\ell^1_r L^\infty( r\geq R)} \ < \ \infty \ , \notag
\end{equation}
which are sufficient to produce the remaining bounds on line \eqref{mod_coords}
(again for $\la t-r \ra\ll \la t+r\ra$).

It remains to construct $u_0$ and $u_1$ such that \eqref{u_01_est} and \eqref{u_def_eqns} hold.

%\begin{equation}
%		\lp{ \la \ln(r) \ra (\la r\ra    \partial)^J 
%		\partial (u_0+r-t)  }
%		{\ell^1_r L^\infty( r\gg 1)} +
%		\lp{ (\la r\ra    \partial)^J u_1  }
%		{\ell^1_r L^\infty(  r\gg 1 )}
%		\ < \ \infty \ , \qquad \hbox{all \ \ } J\in\mathbb{N}^4 \ .\notag
%\end{equation}

\step{2}{Construction of $u_0$} 
For $g_0$ we have the expression \eqref{g_0_form} where:
\begin{equation}
		\lp{ \ln^2(r)  ( r  \partial )^J 
	\big(g_0^{tt}+1,g_0^{rr}-1,g_0^{tr}\big)  }
		{\ell^1_r L^\infty (r\geq 1)}  
		\ <  \ \infty \ , 
		\qquad \hbox{all \ \ } J\in\mathbb{N}^2
		\ . \notag %\label{rad_linfty}
\end{equation}
For the remainder of the construction we only need to work in the $(t,r)$ coordinates.
 
Let $v=v(t,r)$ be any function which solves the radial eikonal equation $g_0^{\alpha\beta}\partial_\alpha v\partial_\beta v=0$. 
From this we define the quantity $\zeta = \partial v-(1,-1)$, where $\partial$ denotes $(t,r)$ derivatives.
As long as $|\zeta| < 1$ the coordinate change $(t,r)\mapsto (u,r)$
is well defined and we have:
\begin{equation}
		\partial_r^b \zeta \ = \ G(t,r,\zeta) \ , \qquad \partial^b = q(\zeta)\partial  , \label{zeta_sys}
\end{equation}
where both $G$ and $q$ are smooth universally defined functions depending only on the $(t,r)$ components of $g_0$
and not on $v$. Moreover  $q=q_0+q_1(\zeta)$ with 
$q_0$ a constant invertible matrix and $q_1=O(\zeta)$ when   $|\zeta|\leq \frac{1}{2}$. 
Finally we have the uniform symbol  bounds:
\begin{equation}
		\lp{    r \ln^2(r)    ( r  \partial )^J (\partial_\zeta)^k G}
		{\ell^1_r L^\infty (r\geq R)}  
		\ \lesssim_{J,k}\  o_R(1) \ , \qquad \hbox{for\ \ } |\zeta|\leq \frac{1}{2} \ . \label{G_sym_bnds}
\end{equation}

Now  let $\td{u}=v$ in the previous construction with initial normalization
$\partial_r\td{u}<0$ and $\td{u}|_{r=R}=t-R$ for sufficiently large $R$, and set
$\td{\zeta}=\partial \td{u}-(1,-1)$. Then $\td{u}$
is globally defined and smooth in $r>R$ because $(1+1)$ Lorentzian metrics have no caustics. We also have
$|\td{\zeta}|\ll 1$  at least  initially close to $r=R$. 
Commuting the equation \eqref{zeta_sys} with vector fields $\partial^b = q(\zeta)\partial$, and applying a
straightforward bootstrapping argument, we may extend this to   uniform bounds 
$|\td\zeta|\ll1$ and $|\partial^J \td\zeta|\lesssim_J 1$  for all $r>R$.

Next, define the outgoing limit:
\begin{equation}
		f(\td{u}) \ = \ \lim_{r\to \infty}  \partial_t \td u \ = \ 1+ \int_R^\infty  G_t(t(\td u, r),r,\td \zeta) dr \ , \notag
\end{equation}
where $G_t$ denotes the $t$ component of $G$.
By the previous paragraph  we have both $|f-1|\ll 1$ and $|\partial_{\td{u}}^j f|\lesssim_j  1$.
Let $F$ solve $F'=\frac{1}{f}$, and  finally set $u_0=F(\td{u})$. Again we denote by $\zeta=\partial u_0-(1,-1)$,
which we remind the reader  solves equation \eqref{zeta_sys} with conditions \eqref{G_sym_bnds}.

By construction we immediately have $g_0^{\alpha\beta}\partial_\alpha u_0 \partial_\beta u_0=0$, $|\zeta|\ll 1$, and 
$|\partial^J\zeta|\lesssim_J 1$.  In addition
to this and the fact that $\partial_t u_0=\frac{1}{f}\td u_t$, we have $\lim_{r\to \infty}\partial_t u_0=1$.
Combining this last piece of information 
with the $r\to \infty$ limits $g_0^{tt}\to -1, g_0^{rr}\to 1, g_0^{tr}\to 0$, as well as  $\partial_r u_0<0$,
we  have the normalization $\zeta\to 0$ as $r\to \infty$. In particular we may write:
\begin{equation}
		\zeta(u_0,r) \ = \   -\int_r^\infty  G (t(  u_0, s),s,\zeta) ds \ . \notag
\end{equation}
Differentiating this identity any number of time with respect to  
$\partial^b=q(\zeta)\partial$, and using  the weighted estimate \eqref{G_sym_bnds} and a straight forward
bootstrapping argument, 
 gives the first bound on line \eqref{u_01_est}.

\step{3}{Construction of $u_1$}
Using the second identity from line \eqref{u_def_eqns},
we have that  the correction $u_1$ solves the linear equation:
\begin{equation}
		g_0^{\alpha\beta} \partial_\alpha u_0 \partial_\beta u_1 \ = \ -\frac{1}{2} g_1^{\alpha\beta}
		\partial_\alpha u_0 \partial_\beta u_0 \ , \qquad
		u_1 \ \to \ 0 \hbox{\ \ \ as \ \ \ } r\to \infty \ . \notag
\end{equation}
Integrating this  in $(u_0,x)$ coordinates we find that:
\begin{equation}
		u_1(u_0,x) \ = \ \frac{1}{2}\int_{|x|}^\infty  (g_1^{u_0 u_0}/g_0^{ru_0}) (u_0,sx/|x|)ds \ . \notag
\end{equation}
By the first estimate on line \eqref{u_01_est} and assumptions \eqref{q_st}--\eqref{q_sp} we have:
\begin{equation}
		\lp{r (r\partial^b )^J (g_1^{u_0 u_0}/g_0^{ru_0})}{\ell^1_r L^\infty (r\geq R)}\ <\ \infty \ , \notag
\end{equation}
where $\partial^b$ are computed in $(u_0,x)$ coordinates.
An application of Young's convolution inequality to the above integral 
gives  line \eqref{u_01_est}
for $u_1$.
\end{proof}

%-------------------------------------------------------------------------
%%%%%%%%%%%%%%%%%%%%%%%%%%%%%
%%%%%%%%%%%%%%%%%%%%%%%%%%%%%
%-------------------------------------------------------------------------

\section*{Appendix 2: Local Energy Decay}

In this appendix we discuss how assumption \eqref{basic_stat_LE} relates to
assumption \eqref{basic_LE} when the metric $g$ enjoys structural properties
similar to the Kerr family of metrics with angular momentum in a moderate range.

Let $\mathcal{T}\subseteq \mathbb{R}^3$ be a compact region contained in the exterior
$\mathbb{R}^3\setminus \mathcal{K}$. We first redefine the norms \eqref{class_LE_norm2}
and \eqref{class_LE_norm4} so the regularity loss occurs only on $\mathcal{T}$:
\begin{align}
		\slp{\phi}{\WLE^s_{class}[0,T]} \ &= \
		\sum_{|J| \leq s} \big( 
		\slp{\tau_x^{-1} \partial^J \phi}{\LE[0,T]} + 
		\slp{ \partial \partial^{J} \phi}{\LE(\mathcal{T}^c)[0,T]} 
		\big) \ , \label{class_LE_norm2'}\\
		\lp{F}{\WLE^{*,s}[0,T]} \  &= \  \sum_{|J| \leq s} \big(
		\lp{ \partial^J F }{\LE^*[0,T]} 
		+ \lp{\partial  \partial^{J}F}{L^2(\mathcal{T})[0,T]} 
		\big)\ , \label{class_LE_norm4'}
\end{align}
With respect to these modified norms we have:

\begin{prop}\label{LE_assum_prop}
Let the norms $\WLE^s_{class}$ and $\WLE^{*,s}_{class}$ be defined as on lines \eqref{class_LE_norm2'} 
and \eqref{class_LE_norm4'}. Suppose  in addition 
%to Definition \ref{rad_metrics} 
that  $\partial_t$ is uniformly timelike on $[0,\infty)\times \mathcal{T}$.
%and  $\partial_t g \in L^1_tL^\infty_x(\mathcal{T})[0,\infty)$. 
Then estimate:
\begin{equation}
		\sup_{0\leq t\leq T}\
		\lp{\partial\phi(t)}{H^{s}_x} + \lp{\phi}{\WLE^s_{class}[0,T]}  \ \lesssim\  \lp{\partial \phi(0)}{H^{s}_x} 
		+ \lp{\Box_g \phi }{\WLE^{*,s}[0,T]} \ , \label{basic_LE'}\\
\end{equation}
implies estimate:
\begin{align}
		\sup_{0\leq t\leq T}\!\!
		\slp{\partial\phi(t)}{H^{s}_x}\!+\! \slp{\phi}{\LE^s_{class}[0,T]}   \ \lesssim\   \slp{\partial \phi(0)}{H^{s}_x} 
		+ \slp{ \phi}{H^s(\td{\mathcal{T}})[0,T]}		
		+ \slp{\partial_t\phi}{H^{s}(\td{\mathcal{T}})[0,T]}
		+ \slp{\Box_g \phi }{\LE^{*,s} [0,T]} \ , \label{basic_stat_LE'}
\end{align}
where $\td{\mathcal{T}}$ is any compact neighborhood of $\mathcal{T}$ (here the implicit constant depends 
on $\td{\mathcal{T}}$).
\end{prop}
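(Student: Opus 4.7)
The plan is to deduce \eqref{basic_stat_LE'} from \eqref{basic_LE'} by combining a local spatial elliptic estimate on $\mathcal{T}$ (available because $\partial_t$ is timelike there) with a cutoff argument that removes the regularity-loss contributions from the source norm $\WLE^{*,s}$ appearing on the right-hand side of \eqref{basic_LE'}.

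\emph{Spatial ellipticity.} The standing hypothesis $g^{00}<0$ (the slices $t=\mathrm{const}$ are uniformly spacelike) combined with the new assumption $g_{00}<0$ ($\partial_t$ timelike) is equivalent, in the ADM decomposition $g_{00}=-N^2+N^i N_i$, $g^{ij}=h^{ij}-N^{-2}N^iN^j$, to the strict inequality $N^iN_i<N^2$; one application of Cauchy--Schwarz then gives $g^{ij}\xi_i\xi_j\geq (1-N^{-2}h_{kl}N^kN^l)h^{ij}\xi_i\xi_j>0$, so that $L:=g^{ij}\partial_i\partial_j$ is uniformly spatially elliptic on a whole neighborhood of $\mathcal{T}$, which we may assume lies inside $\td{\mathcal{T}}$. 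Writing $\Box_g = L + g^{00}\partial_t^2 + 2g^{0i}\partial_t\partial_i + Q$ with $Q$ of first order and commuting each extra derivative onto $\partial_t\phi$, standard interior elliptic regularity on nested compact sets $\mathcal{T}\subset\mathcal{T}'\subset\td{\mathcal{T}}$ yields, for $s\geq 1$,
\[
\|\phi\|_{H^{s+1}(\mathcal{T}')[0,T]}\ \lesssim\ \|\Box_g\phi\|_{\LE^{*,s}[0,T]}+\|\partial_t\phi\|_{H^s(\td{\mathcal{T}})[0,T]}+\|\phi\|_{H^s(\td{\mathcal{T}})[0,T]},
\]
while for $s=0$ the analogous bound on $\|\partial_x\phi\|_{L^2(\mathcal{T}')[0,T]}$ follows from a direct multiplier argument, obtained by pairing $\Box_g\phi$ with $\chi^2\phi$ for a cutoff $\chi$, integrating by parts in both $t$ and $x$, and using positivity of $g^{ij}\partial_i\phi\partial_j\phi$ to absorb the indefinite terms.

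\emph{Cutoff step.} Fix $\chi\in C_c^\infty$ equal to $1$ on a neighborhood of $\mathcal{T}$ and supported in $\td{\mathcal{T}}$, and decompose $\phi=\phi_1+\phi_2$ with $\phi_1:=(1-\chi)\phi$ and $\phi_2:=\chi\phi$. Since $\phi_1$ vanishes on $\mathcal{T}$, one has $\|\phi_1\|_{\WLE^s_{class}}=\|\phi_1\|_{\LE^s_{class}}$. Moreover $\Box_g\phi_1=(1-\chi)\Box_g\phi-[\Box_g,\chi]\phi$ also vanishes on $\mathcal{T}$ (the first term because $1-\chi\equiv 0$, the second because $[\Box_g,\chi]$ has coefficients supported where $\chi$ varies, hence outside $\mathcal{T}$), so $\|\Box_g\phi_1\|_{\WLE^{*,s}}=\|\Box_g\phi_1\|_{\LE^{*,s}}\lesssim \|\Box_g\phi\|_{\LE^{*,s}}+\|\phi\|_{H^{s+1}(\td{\mathcal{T}})[0,T]}$, using that $[\Box_g,\chi]$ is first-order with compactly supported coefficients. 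Applying \eqref{basic_LE'} to $\phi_1$ then yields
\[
\sup_t\|\partial\phi_1(t)\|_{H^s_x}+\|\phi_1\|_{\LE^s_{class}[0,T]}\ \lesssim\ \|\partial\phi(0)\|_{H^s_x}+\|\Box_g\phi\|_{\LE^{*,s}[0,T]}+\|\phi\|_{H^{s+1}(\td{\mathcal{T}})[0,T]}.
\]

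\emph{Closing and main obstacle.} The piece $\phi_2=\chi\phi$ is spatially localized in $\td{\mathcal{T}}$, so its contribution to the left-hand side is dominated by $\|\phi\|_{H^{s+1}(\td{\mathcal{T}})[0,T]}+\|\partial\phi(0)\|_{H^s_x}$, with the fixed-time $\sup_t\|\partial\phi_2(t)\|_{H^s_x}$ extracted from the spacetime $L^2_t$ norms via the one-dimensional trace $\sup_t\|f(t)\|_{L^2}^2\lesssim \|f(0)\|_{L^2}^2+\|f\|_{L^2_tL^2}\|\partial_t f\|_{L^2_tL^2}$ applied to $f=\partial^J\phi_2$. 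It then remains to reduce $\|\phi\|_{H^{s+1}(\td{\mathcal{T}})[0,T]}$ to the right-hand side of \eqref{basic_stat_LE'}: derivative combinations of order $\leq s+1$ containing at least one $\partial_t$ are directly controlled by $\|\partial_t\phi\|_{H^s(\td{\mathcal{T}})[0,T]}$, and the remaining purely spatial order-$(s+1)$ component is handled by the elliptic estimate of the first paragraph (after possibly replacing $\td{\mathcal{T}}$ by a marginally larger compact neighborhood, which is permissible since the implicit constant in \eqref{basic_stat_LE'} is allowed to depend on $\td{\mathcal{T}}$). The chief technical difficulty is this final bookkeeping step: the elliptic trade exchanges two spatial derivatives for only one extra derivative of $\partial_t\phi$, so to fit the $H^s$ (not $H^{s+1}$) budget for $\partial_t\phi$ in \eqref{basic_stat_LE'} one must always peel an available $\partial_t$ off any mixed order-$(s+1)$ derivative before invoking ellipticity, with the endpoint $s=0$ requiring the direct multiplier argument in place of elliptic duality.
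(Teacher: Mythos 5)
Your overall strategy coincides with the paper's: split off the region near the trapped set with a cutoff, apply the hypothesis \eqref{basic_LE'} to the exterior piece (whose source and solution see no regularity loss since they vanish near $\mathcal{T}$), and recover the interior piece from the spatial ellipticity of $g^{ij}$ on a neighborhood of $\mathcal{T}$, which as you correctly observe follows from $\partial_t$ being timelike together with the uniform spacelikeness of the slices. The exterior step and the spacetime $L^2$-type interior bounds ($\LE^s_{class}$ restricted to $\td{\mathcal{T}}$, and $\lp{\phi}{H^{s+1}(\mathcal{T}')[0,T]}$ in terms of $\lp{\partial_t\phi}{H^s(\td{\mathcal{T}})[0,T]}$, $\lp{\phi}{H^s(\td{\mathcal{T}})[0,T]}$ and the source) are in line with the paper's estimates \eqref{trunc_LE} and \eqref{interior_LE}.

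The genuine gap is in your recovery of the fixed-time energy $\sup_{0\leq t\leq T}\slp{\partial\phi_2(t)}{H^s_x}$ for the localized piece $\phi_2=\chi\phi$. The trace inequality $\sup_t\slp{f(t)}{L^2}^2\lesssim \slp{f(0)}{L^2}^2+\slp{f}{L^2_tL^2}\slp{\partial_t f}{L^2_tL^2}$ applied with $f=\partial^J\partial\phi_2$, $|J|\leq s$ (which is what you need for the $H^s_x$ energy), requires control of $\partial_t\partial^J\partial\phi_2$, an order-$(s+2)$ spacetime derivative of $\phi_2$ over $[0,T]\times\td{\mathcal{T}}$. Your elliptic step only furnishes $\lp{\phi}{H^{s+1}(\td{\mathcal{T}})[0,T]}$, and upgrading it to $H^{s+2}$ would force $\slp{\partial_t\phi}{H^{s+1}(\td{\mathcal{T}})[0,T]}$ onto the right-hand side, exceeding the budget of \eqref{basic_stat_LE'}. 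So as written this step loses a full derivative and does not close. The derivative-neutral substitute, which is what the paper uses, is a localized energy identity with multiplier $\chi_{\td{\mathcal{T}}'}\partial_t$ (valid since $\partial_t$ is timelike there): one pigeonholes a good time $t_0$ near $T$ where the spacetime $H^{s+1}$ bound controls the fixed-time energy on a slightly larger set, and then propagates forward and backward on short time slabs chosen so that the relevant domains of dependence stay inside $\td{\mathcal{T}}$; near $t=0$ one propagates directly from the data. With that replacement (and a Hardy inequality, as in \eqref{hardy0} with $a=0$, to absorb the $t=0$ commutator term $\phi\,\partial\chi$ into $\slp{\partial\phi(0)}{H^s_x}$), your argument matches the paper's proof.
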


\begin{rem}
We do not need to make  other assumptions on the metric $g$ besides 
$\partial_t$ being uniformly timelike on $[0,\infty)\times \mathcal{T}$. In particular the time variation of
$g$ plays no role in establishing \eqref{basic_stat_LE'} from \eqref{basic_LE'}. 

Note that \eqref{basic_stat_LE'} implies
\eqref{basic_stat_LE} when $s=0$. For $s>0$ a simple induction allows us to reduce the second term in RHS\eqref{basic_stat_LE'}
to $\slp{ \phi}{L^2(\td{\mathcal{T}})[0,T]}$.
\end{rem}

\begin{proof}
Let $\mathcal{T}\subset\!\subset \mathcal{T}'\subset\!\subset \td{\mathcal{T}}$ where $\mathcal{T}'$
is an intermediate compact neighborgood. Without loss of generality we may assume
that $\td{\mathcal{T}}$ is a small enough   that $\partial_t$ is still uniformly timelike 
on it. In particular we have that $P(x,D)=\frac{1}{\sqrt{|g|}}\partial_i \sqrt{|g|} g^{ij}\partial_j $ 
is uniformly elliptic on $\td{\mathcal{T}}$, where the $ij$ contraction is only on the spatial variables.

Estimate \eqref{basic_stat_LE'} follows from adding together the  bounds:
\begin{equation}
		\sup_{0\leq t\leq T}\lp{\partial \phi(t)}{H^s_x((\mathcal{T}')^c)} +
		 \lp{\phi}{\LE^s_{class}((\mathcal{T}')^c)[0,T]}  \ \lesssim\  \lp{\partial \phi(0)}{H^{s}_x} +
		 \lp{\phi}{H^{s+1}(\mathcal{T}')[0,T]}
		+ \lp{\Box_g \phi }{\LE^{*,s}[0,T]} \ , \label{trunc_LE}
\end{equation}
\begin{equation}
		\sup_{0\leq t\leq T}\!\!\! \slp{\partial \phi(t)}{H^s_x(\mathcal{T'})} +
		\slp{  \phi}{H^{s+1}(\mathcal{T'})[0,T]} \lesssim 
		\slp{\partial \phi(0)}{H^{s}_x} + 
		\slp{\phi}{H^s(\td{\mathcal{T}})[0,T]} 
		+ \slp{\partial_t \phi}{H^s(\td{\mathcal{T}})[0,T]} + \slp{\Box_g \phi }{\LE^{*,s}[0,T]} \ . \label{interior_LE}
\end{equation}

The first estimate \eqref{trunc_LE} follows directly by applying \eqref{basic_LE'} to $(1-\chi_\mathcal{T})\phi$, where
$\chi_\mathcal{T}=1$ on $\mathcal{T}$
and $\chi_\mathcal{T}=0$ on $(\mathcal{T}')^c$, and then using the Hardy estimate \eqref{hardy0} with $a=0$
for the boundary term where all derivatives fall on the cutoff.

The second estimate  \eqref{interior_LE} is essentially an elliptic bound. 
Fixing a pair of intermediate regions 
$\mathcal{T}'\subset\!\subset \td{\mathcal{T}}'\subset\!\subset \td{\mathcal{T}}''\subset\!\subset \td{\mathcal{T}}$
one begins with (for any $s\geq 0$):
\begin{equation}
		\lp{  \phi}{H^{s+1}(\td{\mathcal{T}}'')[\epsilon,T-\epsilon]} \ \lesssim \ \lp{\phi}{H^s(\td{\mathcal{T}})[0,T]} 
		+ \lp{\partial_t \phi}{H^{s}(\td{\mathcal{T}})[0,T]} + \lp{\Box_g \phi }{H^{s-1}(\td{\mathcal{T}})[0,T]} \ , \notag
\end{equation}
where $\epsilon>0$ small enough that the domain of dependence of $\mathcal{T}'$ in $[0,\epsilon]\cup [T-\epsilon,T]$
is contained in $\td{\mathcal{T}}'$. One then fills in the slabs $[0,\epsilon]\times \mathcal{T}'$ and 
$[T-\epsilon,T]\times \mathcal{T}'$ with local in time energy estimates, where the second slab estimate propagates
from a localized energy estimate at time $T-\epsilon$ based on the
multiplier $X=\chi_{\td{\mathcal{T}}'}\partial_t$, where $\chi_{\td{\mathcal{T}}'}=1$ on $\td{\mathcal{T}}'$
and $\chi_{\td{\mathcal{T}}'}=0$ on $(\td{\mathcal{T}}'')^c$. Note that all of the error terms generated by doing
this are bounded by the LHS of the expression on the previous line (we may also assume $\chi_{\td{\mathcal{T}}'}=0$
when $t<\epsilon$).
\end{proof}

%-------------------------------------------------------------------------
%%%%%%%%%%%%%%%%%%%%%%%%%%%%%
%%%%%%%%%%%%%%%%%%%%%%%%%%%%%
%-------------------------------------------------------------------------

\section*{Appendix 3: Hardy and trace inequalities}
   
 \begin{lem}[Hardy  inequalities]\label{hardy_lem}
Let $\mathcal{K}\subseteq \mathbb{R}^3$ be compact with
connected complement, and for any other $Q\subseteq \mathbb{R}^3$ define 
$L^2_x(Q)$ where the domain of integration is $Q\setminus \mathcal{K}$, and $L^2(Q)[0,T]$
where the domain of integration is  $[0,T] \times(Q\setminus \mathcal{K}) \subseteq \mathbb{R}^4$.
Then for  test functions $\phi$  one has the following:  
\begin{enumerate}[I)]
	
\item 
For all $R\geq 0$ there holds uniformly:
%\begin{equation}
%		\lp{r^{a-1} \phi}{L^2_x (r>R)}  		
%		\ \lesssim_a\    \lp{r^a \nabla_x \phi }{L^2_x (r>R)}
%		\ , \qquad -\frac{1}{2}< a< \infty \ . \label{hardy0}
%\end{equation}
\begin{align}
		\lp{ \tau_x^{a-1} \phi}{L^2_x (r>R)}  
		\ &\lesssim_a \    \lp{\tau_x^a \partial_x \phi }{L^2_x (r>R)}
		\ , &\hbox{when \ \ } -\frac{1}{2}< &a< \infty \ , \label{hardy0}\\
		\lp{\tau_x^{-\frac{3}{2}}\phi }{\ell^\infty_r L^2_x(r>R)} \ &\lesssim\   
		\lp{\tau_x^{-\frac{3}{2}}\phi }{ L^2_x(\frac{1}{2}R<r<R)}+
		\lp{\tau_x^{-\frac{1}{2}} \partial_x\phi}{\ell^1_r L^2_x(r>\frac{1}{2}R)} \ . \label{end_hardy}
\end{align}

\item 
When $R\geq 1$ is large enough that $\mathcal{K}\subseteq \{r<\frac{1}{2}R\}$ there 
is the fixed time estimate:
\begin{equation}
		\lp{ \phi}{L^2_x(r>R) } 
		\ \lesssim\  \lp{\tau_- \tau_x^{-1}\partial_x(\tau_x\phi)}{L^2_x(r> R)}
		+ R^{-\frac{1}{2}}\lp{  \tau_-^\frac{1}{2}  \phi}{L^2_x(\frac{1}{2}R<r<R) }
		+ R^{\frac{1}{2}}\lp{  \tau_-^\frac{1}{2}  \partial \phi}{L^2_x(\frac{1}{2}R<r<R) }
		\ . \label{hardy2}
\end{equation}

\item
Again for $R\geq 1$  large enough that $\mathcal{K}\subseteq \{r<\frac{1}{2}R\}$ there 
is the spacetime estimate:
\begin{multline}
		\lp{\tau_x^{a-1}\phi(T)}{L^2_x(r>R) }+
		\lp{\tau_x^{a-\frac{3}{2}}\phi }{L^2(r>R)[0,T]}\
		\lesssim_a  \
		\lp{\tau_x^{a-\frac{3}{2}}\partial_r^b(\tau_x \phi)}{L^2(r>\frac{1}{2}R)[0,T]}\\ 
		+ \lp{\tau_x^{a-\frac{3}{2}} \phi}{L^2(\frac{1}{2}R<r<R)[0,T]}  +  \lp{\tau_x^a\partial \phi(0)}{L^2_x(r>\frac{1}{2}R)}
		\ , \qquad \hbox{when\ \ } a<1 \ .  \label{hardy3}
\end{multline}
%Finally,  for $a \in \mathbb{R}$ one has:
%\begin{equation}
%		\lp{\tau_+^a  r^{-1}\phi}{L^\infty_t L^2_x(\frac{1}{2}R<r<R)[0,T] }^2 \ \lesssim \  
%		\lp{\tau_+^{a-\frac{1}{2}}r^{-1}\phi}{\ell^\infty_t L^2(\frac{1}{2}R<r<R)[0,T] }^2
%		+\lp{r^{-2}\tau_+^{2a } \phi {\partial_u^b}\phi}{\ell^\infty_t L^1(\frac{1}{2}R<r<R)[0,T]}
%		\ . \label{time_hardy}
%\end{equation}
\end{enumerate}
\end{lem}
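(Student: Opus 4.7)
\smallskip

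\noindent\emph{Proof plan.} All three estimates are Hardy-type inequalities on an exterior radial region, and the general strategy is integration by parts in the radial variable $r$ (using the Euclidean spatial measure $r^2 dr d\sigma$), possibly preceded by the substitution $\psi=\tau_x\phi$ to convert the weighted radial gradient $\tau_x^{-1}\partial_r^b(\tau_x\phi)$ into a plain radial derivative $\tau_x^{-1}\partial_r\psi$. The estimates are ordered by increasing technicality: \eqref{hardy0} is a classical weighted Hardy; \eqref{end_hardy} is its endpoint variant; \eqref{hardy2} is a fixed-time Hardy in the ``null'' weight $\tau_-$; and \eqref{hardy3} couples a fixed-time and a spacetime norm.

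For \eqref{hardy0} I would use the identity $\partial_r[r^{2a+1}/(2a+1)]=r^{2a}$ in the radial measure $r^2 dr$, integrate by parts from $r=R$ out to $\infty$, and observe that the boundary term $-r^{2a+1}\phi^2/(2a+1)|_{r=R}$ has favorable sign (this is precisely where the condition $a>-1/2$ is used), so it can be dropped. A Cauchy--Schwarz then yields the bound, and the standard absorption of the LHS power $\|\tau_x^{a-1}\phi\|^2$ closes the estimate. For $r<1$ the region is compact and one reduces to a standard one-dimensional Hardy. For the endpoint \eqref{end_hardy}, the weight $r^{2a+1}=r^0$ is constant, so IBP degenerates; instead, I would apply the fundamental theorem of calculus from $r=R/2$ along each ray, $\phi(r,\omega)=\phi(R/2,\omega)+\int_{R/2}^r\partial_s\phi(s,\omega)ds$, split into dyadic shells $r\approx 2^j$, and use Minkowski's and Young's convolution inequalities together with the $\ell^1_r$ structure on the RHS to produce the $\ell^\infty_r L^2_x$ norm.

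For \eqref{hardy2}, I would first substitute $\psi=\tau_x\phi$ so that the RHS main term becomes $\|\tau_-\tau_x^{-1}\partial_r^b\psi\|_{L^2_x(r>R)}$ and the LHS becomes a weighted $L^2$-norm of $\psi$. Along each fixed-$t$ ray, this reduces to a one-dimensional inequality of the form $\int\psi^2 dr\lesssim\int\tau_-^2\psi_r^2 dr+\text{boundary at }r=R/2$, which follows from the IBP identity $\partial_r[r\psi^2]=\psi^2+2r\psi\psi_r$ (or a version of it with an $r$-and-$\tau_-$ dependent multiplier so that the growing weight $\tau_-^2$ on the RHS matches the standard-Hardy dual exponent). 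The annulus boundary term produced at $r=R/2$, of the form $R\int_{S^2}\phi(R/2)^2\,d\sigma$, is converted to the volume norms on $R/2<r<R$ on RHS\eqref{hardy2} via the standard annulus-trace estimate $R\int_{S^2}\phi(r_0)^2\lesssim\int_{R/2}^R\!\int_{S^2}\phi^2\,dr\,d\sigma+R^2\!\int_{R/2}^R\!\int_{S^2}|\phi\partial_r\phi|\,dr\,d\sigma$ followed by Cauchy--Schwarz, which accounts for the $R^{\pm 1/2}$ powers on the right.

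For \eqref{hardy3}, the hardest piece, the spacetime LHS term $\|\tau_x^{a-3/2}\phi\|_{L^2(r>R)[0,T]}$ is handled by the same kind of radial IBP as \eqref{hardy2}: using $\tau_x^{2a-3}r^2 \sim \frac{1}{2a-2}\partial_r(r^{2a-2})\cdot r^2$-type identities in $\psi=\tau_x\phi$, one trades the $\psi^2$ bulk term for $\psi\psi_r$ and the $r=R/2$ annulus boundary, where $a<1$ ensures the IBP coefficient is finite and of the correct sign (this is the analog of the $a>-1/2$ condition in \eqref{hardy0}). The coupling to the fixed-time $t=T$ LHS norm is the main obstacle; my plan here is to use the fundamental theorem of calculus in $t$, $\|\tau_x^{a-1}\phi(T)\|^2_{L^2_x(r>R)}=\|\tau_x^{a-1}\phi(0)\|^2_{L^2_x(r>R)}+2\int_0^T\!\!\int_{r>R}\tau_x^{2a-2}\phi\phi_t\,dxdt$, bound the data term by \eqref{hardy0} applied at $t=0$ to produce $\|\tau_x^a\partial_x\phi(0)\|_{L^2_x}$ on the RHS, and estimate the cross-term by Cauchy--Schwarz absorbing the $\int\tau_x^{2a-3}\phi^2$ piece into the already-controlled spacetime LHS, while estimating $\int\tau_x^{2a-1}\phi_t^2$ by rewriting $\phi_t$ in terms of $\partial_r^b(\tau_x\phi)$ and the full gradient at $t=0$ via a further time-FTC. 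The delicate step will be to make the boundary-in-time and boundary-in-$r$ terms line up with the specific RHS of \eqref{hardy3} without introducing any new weighted norms, and this is where I expect the main calculational effort.
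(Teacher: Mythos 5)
Your treatments of \eqref{hardy0}, \eqref{end_hardy}, and \eqref{hardy2} are essentially sound. For \eqref{hardy0} you reproduce the paper's argument (integrate $\partial_r(r^{2a+1}\phi^2)$ over $r>R$, drop the signed boundary term, Cauchy--Schwarz and absorb; a compactness argument handles $R<R_1$). For \eqref{end_hardy} the paper instead integrates $\partial_r(h_k\phi^2)$ for a multiplier $h_k$ with $h_k'=2^{-k}\chi_k$ and closes by an $\ell^\infty\cdot\ell^1$ absorption; your fundamental-theorem-of-calculus route with dyadic shells and Young's inequality reaches the same place and is, if anything, more elementary. For \eqref{hardy2} your plan is the paper's, provided you commit to the right multiplier: the paper integrates $\partial_r\big(\chi_{>R}\,u\,(\tau_x\phi)^2\big)$ and uses $-C\le\partial_r u\le -C^{-1}$ (available after the normalization of Remark \ref{coord_rem}), so the multiplier is $u$ itself rather than a generic ``$r$-and-$\tau_-$ dependent'' weight; note that $t-r$ need not satisfy $|t-r|\lesssim\tau_-$ pointwise near the wave zone, so the choice matters. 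Your annulus-trace accounting of the $R^{\pm\frac12}$ factors is correct.

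There is, however, a genuine gap in your plan for \eqref{hardy3}, in both of its halves. First, a radial integration by parts performed \emph{slice by slice at fixed $t$} produces the cross term $\int r^{2a-2}\psi\,\partial_r\psi$ with $\partial_r$ taken at fixed $t$, whereas RHS\eqref{hardy3} controls only $\partial_r^b(\tau_x\phi)$, the radial derivative at fixed $u$. The discrepancy $\partial_r^b-\partial_r\approx -\frac{u_r}{u_t}\partial_t$ is a full time derivative, and $\lp{\tau_x^{a-\frac12}\partial_t\phi}{L^2[0,T]}$ appears nowhere on the right-hand side; so the fixed-$t$ IBP proves a different inequality than the one stated. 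Second, and for the same reason, your treatment of the $t=T$ term cannot close: after the time-FTC the cross term $\int\!\!\int\tau_x^{2a-2}\phi\,\phi_t$ forces you to estimate $\lp{\tau_x^{a-\frac12}\phi_t}{L^2(r>R)[0,T]}$, and $\phi_t$ is not recoverable from $\partial_r^b(\tau_x\phi)$ alone (it is one fixed linear combination of $\partial_t$ and $\partial_r$, not a basis). The paper's resolution is a single spacetime divergence identity: integrate $\partial_r^b\big(\chi_{>R}\,r^{2a-2}(\tau_x\phi)^2\big)$ over the slab $0\le t\le T$ in $(u,r,\omega)$ coordinates. Since $C^{-1}\le\partial_r^b t\le C$, the integral curves of $\partial_r^b$ enter the slab through $t=0$ and exit through $t=T$, so the identity \eqref{coor_div_iden} produces the $t=T$ boundary term with a favorable sign \emph{and} the bulk term with coefficient $2-2a>0$ simultaneously, while the only derivative ever generated is $\partial_r^b(\tau_x\phi)$, exactly matching RHS\eqref{hardy3}; the $t=0$ boundary term is then converted to $\lp{\tau_x^a\partial\phi(0)}{L^2_x}$ by \eqref{hardy0}. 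You should replace the two-step (fixed-$t$ IBP plus time-FTC) scheme by this one coupled identity.
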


\begin{lem}[Weighted trace inequalities]
For $T\geq 0$ and any $R\geq 1$ 
one has the uniform bound:
\begin{subequations}\label{w_trace}
\begin{align}
		\lp{\tau_-^\frac{1}{2}\phi(T)}{L^2_x( \frac{1}{2}R<r<R)} \ &\lesssim \ 
		\lp{(\tau_-{\partial_u^b}\phi, \phi)}{  L^2(\frac{1}{2}R< r<R)[\frac{1}{2}T,T]}
		\ , &R\lesssim T \ , \label{w_trace1}\\
%		\lp{\tau_-^\frac{1}{2}\phi(T)}{L^2_x(\frac{1}{2}R<r<R)} \ &\lesssim \ 
%		\lp{(\tau_-{\partial_u^b}\phi, \phi)}{  L^2(\frac{1}{2}R<r<R)[\frac{1}{2}T,T]}
%		\ , &R\leq 4T
%		\ , \label{w_trace1}\\
		\lp{\tau_-^\frac{1}{2}\phi(T)}{L^2_x(\frac{1}{2}R<r<R)} \ &\lesssim \ 
		\lp{(\tau_-{\partial_u^b}\phi, \phi)}{  L^2(\frac{1}{2}R<r<R)[0,T]}
		+  \lp{\tau_x^\frac{1}{2} \phi(0)}{  L^2_x(\frac{1}{2}R<r<R) } \ , 
		&R \gg T
		\ , \label{w_trace2}\\
		\lp{\tau_-^\frac{1}{2}\phi(T)}{L^2_x( r<R)} \ &\lesssim \ 
		\lp{(\tau_-{\partial_u^b}\phi, \phi)}{  L^2( r<R)[0,T]}
		+  \lp{\tau_x^\frac{1}{2} \phi(0)}{  L^2_x( r<R) } \ . \label{w_trace3}
\end{align}
\end{subequations}
We note that estimate \eqref{w_trace1} also holds with the restriction $\frac{1}{2}R<r<R$ replaced by
$r<R$.
\end{lem}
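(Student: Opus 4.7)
My plan is to prove all three inequalities via the fundamental theorem of calculus applied to $F(t)=\int \tau_-(t,x)\phi^2(t,x)\chi(x)dx$, where $\chi$ is a smooth cutoff to the relevant spatial region. Condition (I) of Definition~\ref{rad_metrics} gives $|\partial_t u - 1|=o_r(1)$ and $C^{-1}<u_t<C$, hence $|\partial_t \tau_-|=|(u/\la u\ra)\partial_t u|\lesssim 1$ uniformly. At fixed $x$ one has $\partial_t = u_t\,{\partial_u^b}$ with $u_t$ bounded, so the pointwise identity $\partial_t(\tau_-\phi^2)=(\partial_t\tau_-)\phi^2 + 2u_t\tau_-\phi\cdot{\partial_u^b}\phi$ yields $|\partial_t(\tau_-\phi^2)|\lesssim \phi^2 + \tau_-|\phi|\cdot|{\partial_u^b}\phi|$. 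Integrating in $x$ over $\Omega=\mathrm{supp}(\chi)$ and in $t\in[t_0,T]$, and applying Cauchy--Schwarz and AM--GM on the cross term, produces the master inequality
\begin{equation}
F(T) \ \lesssim \ F(t_0) + \lp{\phi}{L^2([t_0,T]\times\Omega)}^2 + \lp{\tau_-{\partial_u^b}\phi}{L^2([t_0,T]\times\Omega)}^2 \ . \notag
\end{equation}

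For \eqref{w_trace1} with $R\lesssim T$, no initial-data term is available, so I would apply the mean value principle to pick $t_0\in[T/2,3T/4]$ with $F(t_0)\lesssim T^{-1}\int_{T/2}^{3T/4}F(t)dt$. On this slab one has $r\lesssim R\lesssim T$ and $t\lesssim T$, so by Remark~\ref{optical_remark} $|u|\leq |u+\tau_x-t|+|t-r|+r \lesssim \tau_+ \lesssim T$, whence $\tau_-(t_0,\cdot)\lesssim T$ uniformly on $\Omega$. This yields $F(t_0)\lesssim \lp{\phi}{L^2([T/2,T]\times\Omega)}^2$ after canceling the $T$ factors, and combining with the master inequality on $[t_0,T]\subseteq[T/2,T]$ gives \eqref{w_trace1}. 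The same argument passes through with the annular region $\frac{1}{2}R<r<R$ replaced by $r<R$, since only $r\lesssim R\lesssim T$ and $t\approx T$ are used, confirming the remark at the end.

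For \eqref{w_trace2} and \eqref{w_trace3} I would take $t_0=0$. By Remark~\ref{coord_rem} we may assume $u=t-\tau_x$ away from the wave zone $\la t-r\ra\ll\la t+r\ra$ (which is automatic at $t=0$), so $\tau_-(0,x)=\la\tau_x\ra\approx\tau_x$ uniformly on $\Omega$, and hence $F(0)\lesssim\lp{\tau_x^{1/2}\phi(0)}{L^2_x(\Omega)}^2$. Combined with the master inequality on $[0,T]$ this produces \eqref{w_trace2} and \eqref{w_trace3} directly. (The restriction $R\gg T$ in \eqref{w_trace2} plays no role in the argument; it is stated merely to indicate the regime where the inequality is sharp.)

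The main obstacle is minor and purely bookkeeping: verifying the envelope bound $\tau_-\lesssim\tau_+$ on the relevant spacetime slabs (via Remark~\ref{optical_remark}) and the initial-time asymptotic $\tau_-(0)\approx\tau_x$ (via Remark~\ref{coord_rem}). No structural property of $g$ beyond condition (I) of Definition~\ref{rad_metrics} is used.
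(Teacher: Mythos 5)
Your argument is correct and is essentially the paper's proof: both integrate a time derivative of $\tau_-\phi^2$ over the cylinder, using $|\partial_t u|\approx 1$, the bound $\tau_-\lesssim T$ on the slab $[\tfrac{1}{2}T,T]\times\{r<R\}$ for \eqref{w_trace1}, and $\tau_-(0,\cdot)\approx\tau_x$ for \eqref{w_trace2}--\eqref{w_trace3}. The only cosmetic difference is that the paper averages the lower boundary term with a temporal cutoff $\chi_T$ supported in $t>\tfrac{1}{2}T$ rather than your mean-value choice of $t_0$, which amounts to the same estimate.
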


\begin{proof}[Proof of \eqref{hardy0}]
Let $R_1\geq 1$ be chosen so that $\mathcal{K}\subseteq \{r<R_1\}$. First we prove the bound
assuming $R\geq R_1$. We have 
$I=\int_{R}^\infty\!\int_{\mathbb{S}^2}\partial_r (r^{2a+1} \phi^2) d\omega dr\leq 0$, and also:
\begin{equation}
		(2a+1)\lp{r^{a-1}\phi}{L^2_x(r>R)}^2 \ \leq \ I + 2\lp{r^{a-1}\phi}{L^2_x(r>R)}
		\lp{r^a \partial \phi}{L^2_x(r>R)} \ , \notag 
\end{equation}
which concludes the proof in this case because $2a+1>0$ and $r^\alpha\approx \tau_x^\alpha$ in $r>1$.

Now suppose $0\leq R<R_1$, where $R_1$ is as above. A standard compactness argument shows:
\begin{equation}
		\lp{\phi}{L^2_x(R<r<R_1)} \ \lesssim  \ \lp{\phi}{L^2_x(R_1<r<2R_1)} 
		+ \lp{\partial\phi}{L^2(R<r<2R_1)} \ , \label{local_hardy}
\end{equation}
where the implicit constant is uniform in $R$. Combining this bound with estimate \eqref{hardy0}
in $r>R_1$ completes the proof.
\end{proof}

\begin{proof}[Proof of \eqref{end_hardy}]
Using   estimates of the form \eqref{local_hardy} it suffices to show  for $R\geq R_1$,
where $R_1$ is as above, that:
\begin{equation}
		\lp{\tau_x^{-\frac{3}{2}}\phi }{\ell^\infty L^2_x(r>2R)}^2 \ \lesssim \ 
		\lp{\tau_x^{-\frac{3}{2}}\phi }{\ell^\infty L^2_x(r>R)}
		\lp{\tau_x^{-\frac{1}{2}}\partial \phi }{\ell^1 L^2_x(r>R)} \ . \notag
\end{equation}
To prove it choose $h_k(r)$ so that $h_k(R)=0$ and $h'_k=2^{-k}\chi_k$, where
 $\chi_k=1$ when $R2^k\leq r\leq R2^{k+1}$ for $k\geq 1$, and  $\chi_k=0$
 when either $r\leq R2^{k-1}$ or $r\geq R2^{k+2}$. Then 
computing the integral 
$I=\int_{R}^\infty\!\int_{\mathbb{S}^2}\partial_r (h_k \phi^2) d\omega dr = 0$ 
and taking $\sup_{k\geq 1}$ of the result yields
the estimate on the line above.
\end{proof}

\begin{proof}[Proof of \eqref{hardy2}]
Thanks to Remark \ref{optical_remark} and the conditions \eqref{u_sym_bnd}
we can assume the coordinate $u$ is chosen so that $-C\leq \partial_r u\leq -\frac{1}{C}$ for some 
fixed $C>0$. Then  \eqref{hardy2} follows from integration of
$\partial_r (\chi_{>R}   u (\tau_x \phi)^2)$ with respect to $drd\omega$. 
%\begin{equation}
%		\lp{ \phi}{L^2_x(r>R) } 
%		\ \lesssim\  \lp{\tau_- \tau_x ^{-1}\partial_x(\tau_x \phi)}{L^2_x(r>\frac{1}{2}R)}
%		+ \lp{  \tau_-^\frac{1}{2} \tau_x^{-\frac{1}{2}}\phi}{L^2_x(\frac{1}{2}R<r<R) }
%		\ . \label{hardy2'}
%\end{equation}
%This covers \eqref{hardy2} everywhere except  $r\leq \frac{1}{2}t$. 
%To handle this note that the proof of   \eqref{hardy0}   shows
%it  is true for any $a$ as long as one adds to the RHS the expression  
%$\lp{\tau_x^{a-1}\phi}{L^2_x(\frac{1}{2} R<r<R)}$. In particular, applying such a bound with 
%$a=-1$ to the function  
%$ \chi_{r<\frac{1}{2}t} \tau_+ \tau_x \phi$, where $|\partial  \chi_{r<\frac{1}{2}t}|\lesssim 
%\tau_+^{-1}\chi_{\frac{1}{4}t \leq r\leq \frac{3}{4}t}$, gives:
%\begin{equation}
%		\lp{ \tau_+ \tau_x^{-1} \phi}{L^2_x(R<r<\frac{1}{2}t) } 
%		\ \lesssim\  \lp{ \phi}{L^2_x(r>R) } + 
%		\lp{\tau_- \tau_x^{-1}\partial_x(\tau_x \phi)}{L^2_x(R>r)}
%		+ \lp{  \tau_-^{-1} \tau_x^{-1}\phi}{L^2_x(\frac{1}{2}R<r<R) }
%		\ . \notag
%\end{equation}
%Plugging this into \eqref{hardy2'} yields the desired inequality.
\end{proof}

\begin{proof}[Proof of \eqref{hardy3}]
This boils down to  integration of the quantity $\partial_r^b(\chi_{r>R}r^{2a-2} 
(\tau_x\phi)^2)$ with respect to the measure $dudrd\omega$
over the slab $0\leq t\leq T$. By Remark \ref{optical_remark} and the conditions \eqref{u_sym_bnd} we have
the pair of bounds $\frac{1}{C}\leq \partial_r^b t , \partial_t u \leq C$. In particular:
\begin{equation}
		\int_0^T\!\!\!\int_{0}^\infty\!\!\!\int_{\mathbb{S}^2} \partial_r^b(\chi_{r>R}r^{2a-2} (\tau_x\phi)^2) \alpha d\omega dr dt
		= \int_{0}^\infty\!\!\!\int_{\mathbb{S}^2}  \chi_{r>R}r^{2a-2} (\tau_x\phi)^2 \beta d\omega dr \Big|_{0}^T \ , 
		\label{coor_div_iden}
\end{equation}
for some pair of smooth functions $\alpha,\beta\approx 1$. This yields \eqref{hardy3} using the condition $a<1$.
\end{proof}

\begin{proof}[Proof of \eqref{w_trace}]
To prove \eqref{w_trace1}
let $\chi_T(t)=\chi(t/T)$ where $\chi(s)\in C_0^\infty(s> \frac{1}{2})$ with $\chi(1)=1$.
Then \eqref{w_trace1} follows from  integration of ${\partial_u^b}( \tau_- \chi_T \phi^2)$ with respect to 
$dudx$ on the cylinder $0\leq t\leq T$
and   $\frac{1}{2}R<r<R$. In the case of \eqref{w_trace2} and  \eqref{w_trace3}
we use a similar procedure but simply drop the cutoff $\chi_T$.
\end{proof}
  
%-------------------------------------------------------------------------
%%%%%%%%%%%%%%%%%%%%%%%%%%%%%
%%%%%%%%%%%%%%%%%%%%%%%%%%%%%
%-------------------------------------------------------------------------

%-------------------------------------------------------------------------
%%%%%%%%%%%%%%%%%%%%%%%%%%%%
%%%%%%%%%%%%%%%%%%%%%%%%%%%%
%%%%%%%%%%%%%%%%%%%%%%%%%%%%
%%%%%%%%%%%%%%%%%%%
%-------------------------------------------------------------------------

\end{document}